\documentclass[a4paper]{amsart}

\usepackage[applemac]{inputenc}
\usepackage[T1]{fontenc}
\usepackage[english]{babel}
\usepackage{amsfonts}
\usepackage{amsmath}
\usepackage{amsthm}
\usepackage{amssymb}
\usepackage{graphicx}
\usepackage{enumerate}
\usepackage[all,cmtip]{xy}
\usepackage{url}

\DeclareRobustCommand{\gobblefour}[4]{}
\newcommand*{\SkipTocEntry}{\addtocontents{toc}{\gobblefour}}

%% begin definition

%lettere in grassetto
\newcommand{\A}{\mathbf{A}}

\newcommand{\C}{\mathbf{C}}

\newcommand{\F}{\mathbf{F}}
\newcommand{\Gm}{\mathbf{G}_m}

\newcommand{\N}{\mathbf{N}}
\renewcommand{\P}{\mathbf{P}}
\newcommand{\Q}{\mathbf{Q}}
\newcommand{\R}{\mathbf{R}}

\newcommand{\U}{\mathbf{U}}

\newcommand{\Z}{\mathbf{Z}}

%lettere in gotico

\renewcommand{\o}{\mathfrak{o}}

\renewcommand{\O}{\cal{O}}

%apici
\renewcommand{\d}{\textup{d}}
\newcommand{\Vv}{\textup{V}}

\newcommand{\mini}{\textup{min}}

\renewcommand{\ss}{\textup{ss}}

%\newcommand{\Top}{\textup{Top}}

% abbreviazioni funzioni
\newcommand{\too}{\longrightarrow}
\newcommand{\into}{\hookrightarrow}

\newcommand{\bs}{\boldsymbol}

\renewcommand{\phi}{\varphi}
\renewcommand{\epsilon}{\varepsilon}

\newcommand{\df}{:=}
\renewcommand{\ker}{\Ker}

\renewcommand{\em}{\textit}
\renewcommand{\hat}{\widehat}

\renewcommand{\frak}{\mathfrak}
\newcommand{\cal}{\mathcal}

\newcommand{\iso}{\simeq}

\newcommand{\quotss}[2]{{#1}/\!\!/{#2}}

\newcommand{\quadrato}{\Box}
\newcommand{\triangleup}{\Delta} %delta
\renewcommand{\triangledown}{\nabla} %nabla
%\newcommand{\quadrato}{\square} %quadrato bianco

% Declaring math operators
\DeclareMathOperator{\m}{m}
\DeclareMathOperator{\mult}{mult}

\DeclareMathOperator{\rk}{rk}
\DeclareMathOperator{\Tr}{Tr}

\DeclareMathOperator{\Sym}{Sym}

\DeclareMathOperator{\Spec}{Spec}

\DeclareMathOperator{\Proj}{Proj}

\DeclareMathOperator{\GLs}{\mathbf{GL}}

\DeclareMathOperator{\SLs}{\mathbf{SL}}

\DeclareMathOperator{\Grass}{\mathbf{Gr}}
\DeclareMathOperator{\End}{End}

\DeclareMathOperator{\Hom}{Hom}

\DeclareMathOperator{\Ker}{Ker}

\DeclareMathOperator{\id}{id}

\DeclareMathOperator{\degar}{\hat{deg}}
\DeclareMathOperator{\muar}{\hat{\mu}}

\DeclareMathOperator{\vol}{vol}

\DeclareMathOperator{\ind}{ind}
\DeclareMathOperator{\pr}{pr}

\DeclareMathOperator{\Wr}{Wr}

\newcommand{\ul}{\underline}
\newcommand{\ol}{\overline}

\newenvironment{npar}[1]{\begin{paragrafo_numerato_nome}\textbf{#1.}}{\end{paragrafo_numerato_nome}}
\newenvironment{np}{\begin{paragrafo_numerato}}{\end{paragrafo_numerato}}

%% end definition

\theoremstyle{plain}
     \newtheorem{theorem}{Theorem}[section]
    \newtheorem{prop}[theorem]{Proposition}
    \newtheorem{lem}[theorem]{Lemma}
    \newtheorem{cor}[theorem]{Corollary}

\theoremstyle{definition}

\newtheoremstyle{note}{11pt}{11pt}{}{}{\bfseries}{.}{.5em}{}
\newtheoremstyle{paragrafo}{11pt}{11pt}{}{}{\bfseries}{.}{.5em}{}

\theoremstyle{note}
    \newtheorem{deff}[theorem]{Definition}
    \newtheorem{rem}[theorem]{Remark}

\theoremstyle{paragrafo}
    \newtheorem{paragrafo_numerato}{}[subsection]
    \newtheorem{paragrafo_numerato_nome}[paragrafo_numerato]{}
      
  \numberwithin{equation}{subsection}

\begin{document}

\title{Geometric Invariant Theory and Roth's Theorem}

\author{Marco Maculan}

\thanks{Partially supported by ANR Projet Blanc ``Positive'' ANR-2010-BLAN-0119-01}

\maketitle

\begin{abstract}
We present a proof of Thue-Siegel-Roth's Theorem (and its more recent variants, such as those of Lang for number fields and that ``with moving targets'' of Vojta) as an application of Geometric Invariant Theory (GIT). Roth's Theorem is deduced from a general formula comparing the height of a semi-stable point and the height of its projection on the GIT quotient. In this setting, the role of the zero estimates appearing in the classical proof is played by the geometric semi-stability of the point to which we apply the formula.
\end{abstract}

 \tableofcontents

\setcounter{section}{-1}

\section{Introduction}\label{sec:intro}

In its original form, Roth's Theorem states that given a real algebraic number $\alpha \in \R$ which is not rational and a real number $\kappa > 2$, there exist only finitely many rational numbers $p/q \in \Q$ such that 
$$  \left|\alpha - \frac{p}{q} \right| < \frac{1}{|q|^\kappa} $$
where $p, q$ are coprime integers. 

The general strategy to prove Roth's Theorem stems back to the work of Thue. The main ingredient is the construction of an ``auxiliary'' polynomial in several variables $f$ which vanishes at high order at $(\alpha, \dots, \alpha)$: the crucial step is to prove that it does not vanish too much at rational points which ``approximate'' $(\alpha, \dots, \alpha)$. 

The original argument of Roth (generalizing those of Thue, Siegel and Gel'fond) involves arithmetic considerations about the height of the rational approximations. On the other hand, in the work of Dyson --- who proved an earlier version of Roth's Theorem --- the non-vanishing result (usually called ``Dyson's Lemma'') takes place over the complex numbers: being free from arithmetic constraints, it is said to be of geometric nature. The task to generalize Dyson's Lemma from $2$ to several variables was accomplished by Esnault-Viehweg \cite{esnault-viewheg}; afterwards Nakamaye \cite{nakamaye_id} was able to give a proof of it relying on a variant of Faltings' Product Theorem and ``elementary'' concepts of intersection theory.

The advantage of having a geometric proof of Dyson's Lemma was exploited by Bombieri in the remarkable paper \cite{bombieri}: he showed that these methods lead to new effective results in diophantine approximation available before only through the linear forms of logarithms of Baker.

Using an arithmetic variant of the Product Theorem, Faltings and W\"ustholz \cite{faltings-wustholz} gave a new proof of Schmidt's Subspace Theorem, sensibly different from the original one. Let us remark that their Zero Lemma, as in Roth and Schmidt, is of arithmetic nature. Their proof involves a notion of semi-stability for filtered vector spaces (see also \cite{faltings_icm}). The role played by semi-stability is anyway rather different from the one in the present paper: here it collects all the geometric informations coming from Dyson's Lemma (hence from the Product Theorem); in their paper it represents a combinatorial assumption that permits to perform an inductive step based on the Product Theorem.

Inspired by work of Osgood \cite{Osgood} and Steinmetz \cite{Steinmetz} Vojta proved in \cite{vojta_movingtargets} a generalised version of Roth's Theorem --- called ``with moving targets'' --- where the algebraic point can vary along with the rational approximations. Its proof is based on the use of Schmidt's Subspace Theorem. However it has been noticed by Bombieri and Gubler \cite[Theorem 6.5.2 and \S 6.6]{bombieri-gubler} that the techniques employed to prove Roth's Theorem suffice to prove the version ``with moving targets'' without recurring to Schmidt's Subspace Theorem.

\

The connections between Geometric Invariant Theory and Arakelov Geometry have been studied by several authors in the last twenty years (Burnol \cite{burnol92}, Bost \cite{bost94}, Zhang \cite{zhang94}, Gasbarri \cite{gasbarri00} and Chen \cite{chen_ss}).

The application of these techniques to diophantine approximation was largely inspired by \cite{bost94}, where Bost proves a lower bound for the height of cycles with semi-stable Chow point. Generalizing these arguments Gasbarri gave in \cite{gasbarri00} a general lower bound for the height of semi-stable points for a large class of representations. An explicit version of the latter has been then proved by Chen \cite{chen_ss} by means of Classical Invariant Theory.

In this article, we show how a simple version of this general lower bound on the height of (geometrically) semi-stable point leads to a general lower bound on the height of suitable families of points $(x_1, \dots, x_n)$ and $(a_1, \dots, a_n)$ in $\P^1(K)^n$ and $\P^1(K')^n$ respectively to the diverse $v$-adic distances (where $K$ is a number field and $K'$ is an extension of degree $\ge 2$). This lower bound, which constitutes the main result of the present note, has been established in the case $n = 2$ by Bombieri \cite[Theorem 2]{bombieri}, is effective and implies the version of Roth's Theorem we present here.

\vspace{10pt}This paper is organized as follows.

In Section \ref{sec:StatementOfResults} we review some material concerning Roth's Theorem and we state the main result of this paper (the Main Theorem, see Theorem \ref{thm:ApplyingComparisonFormula}). More precisely, we show that Roth's Theorem with moving targets is a consequence of an effective statement (the Main Effective Lower Bound, see Theorem \ref{thm:fundamental_effective_LB}) and how this last result is obtained from the Main Theorem for a suitable choice of parameters.

In Section \ref{sec:GITArakelov} we introduce the main tool of Geometric Invariant Theory (the Fundamental Formula, see Theorem \ref{thm:ComparisonFormula}) that we shall later apply to a specific ``moduli problem'' in order to get the Main Theorem: it is a formula relating the height of a semi-stable point with the height of its projection on the GIT quotient. In this general framework we also state and prove a lower bound of the height on the quotient (see Theorem \ref{thm:LBHeightQuotient}). This section resumes all the results of GIT \em{\`a la} Arakelov needed for the proof in the following sections and they are presented in a more general setting.

In Section \ref{sec:FundamentalFormulaToMainTheorem} we introduce the situation of Geometric Invariant Theory that we are interested in. Admitting the semi-stability of the point that we introduce and some intermediate computations, we show that the Fundamental Formula translates into the Main Theorem. 

These intermediate computations (upper bounds of the height and the instability measure of the point) are developed in detail in Sections \ref{sec:UBHeight} and \ref{sec:UBInstabilityMeasure}.

Finally, in Section \ref{sec:Semi-stability}, we show the semi-stability of the point, which is the crucial result in order to apply the Fundamental Formula. Our proof is based on the Higher Dimensional Dyson's Lemma by Esnault-Viehweg-Nakamaye (Theorem \ref{thm:DysonsLemma}). We also give an alternative proof in dimension $2$ based on the classical constructions of Wronskians. This argument provides a simple ``GIT proof'' of the classical Theorem of Dyson-Gelfon'd.

\SkipTocEntry\subsection*{Acknowledgements}

The results presented here are part of my doctoral thesis \cite{maculan_these} supervised by J.-B. Bost. It is a pleasure for me to thank him for his guidance and his steady encouragement. During the preparation of the present article I have been stimulated by discussions with several people: I warmly thank A. Chambert-Loir, C. Gasbarri and M. Nakamaye. This paper also benefited from the sharp advices of J. Fres\'an. Finally, I want to sincerely thank the referee for his careful reading and his series of remarks that sensibly improved the quality of the present paper.

\SkipTocEntry\subsection*{Conventions}

We list here some conventions and definitions that are used throughout the paper. 

\begin{np} Since we are interested on the action of $\SLs_2$ on the projective line $\P^1$, we cannot confuse the projective line and the dual one. If $A$ is a ring and $n$ is a positive integer we denote by $A^{n\vee}$ the dual of the $A$-module $A^n$,
$$A^{n \vee} \df \Hom_A(A^n, A).$$
With this notation the projective line $\P^1_A$ over the ring $A$ is the $A$-scheme $$\P^1_A = \Proj (\Sym A^{2\vee}).$$
\end{np}

\begin{np} Let $A$ be a ring, $M$ be an $A$-module and $n$ be a negative integer. We set
$$ M^{\otimes n} \df M^{\vee \otimes - n} = \Hom_A(M, A)^{\otimes - n}.$$
\end{np}

\begin{np} \label{par:ConvetionHermitianNorms} Let $E$, $F$ be finite dimensional complex vector spaces equipped respectively with hermitian norms $\| \cdot \|_E$, $\| \cdot \|_F$ and associated hermitian forms $\langle - , - \rangle_E$, $\langle - , - \rangle_F$. Let $r$ be a non-negative integer.
\begin{itemize}
\item On the tensor power $E \otimes_\C F$ we consider the hermitian norm $\| \cdot \|_{E \otimes F}$ associated to the hermitian form
$$ \langle v \otimes w, v' \otimes w' \rangle_{E \otimes F} \df \langle v, v' \rangle_E \cdot \langle w, w' \rangle_F $$
where $v, v' \in E$ and $w, w' \in F$.
\item On the $r$-th symmetric power $\Sym^r E$ we consider the quotient norm $\| \cdot \|_{\Sym^r E}$ with respect to the canonical surjection $E^{\otimes r} \to \Sym^r E$. If $e_1, \dots, e_n$ denotes an orthonormal basis of $E$, where $n = \dim_\C E$, for every $n$-tuple of non-negative integers $(r_1, \dots, r_n)$ such that $r_1 + \cdots + r_n = r$ we have:
$$ \| e_1^{r_1} \cdots e_n^{r_n}\|_{\Sym^r E} = \binom{r}{r_1, \dots, r_n}^{-1/2} \df \left( \frac{r!}{r_1! \cdots r_n!}\right)^{-1/2}.$$
This norm is hermitian and it is sub-multiplicative in the following sense: if $f \in \Sym^r E$ and $g \in \Sym^s E$ we have
$$ \| fg \|_{\Sym^{r+s}E} \le \| f\|_{\Sym^r E} \| g\|_{\Sym^s E}.$$
Let us also mention that the norm $\| \cdot \|_{\Sym^r E}$ is bigger than the sup-norm on the unit ball: for $f \in \Sym^r E$ we have
$$ \| f \|_{\sup} \df \sup_{0 \neq x \in E^\vee} \frac{|f(x)|}{\| x \|^r_{E^\vee}} \le \| f \|_{\Sym^r E}.$$
\item On the $r$-th external power $\bigwedge^r E$ we consider the hermitian norm $\| \cdot \|_{\bigwedge^r E}$ associated to the hermitian form
$$ \langle v_1 \wedge \cdots \wedge v_r, w_1 \wedge \cdots \wedge w_r \rangle_{\bigwedge^r E} = \det \left(  \langle v_i, w_j \rangle_E : i, j = 1, \dots, r \right)$$
where $v_1, \dots, v_r$ and $w_1, \dots, w_r$ are elements of $E$. With this notation Hadamard inequality\footnote{This inequality is Hadamard's bound of the volume of a basis of a Euclidean space and not Hermite-Hadamard's inequality concerning convex functions.} reads : 
\begin{equation} \label{eq:HadamardInequality} \| v_1 \wedge \cdots \wedge v_r \|_{\bigwedge^r E} \le \prod_{i = 1}^r \| v_i\|_E. \end{equation}
The hermitian norm $\| \cdot \|_{\bigwedge^r E}$ is \em{not} the quotient norm with respect to the canonical surjection $E^{\otimes r} \to \bigwedge^r E$, but it is $\sqrt{r!}$ times the quotient norm (see \cite[Lemma 4.1]{chen_ss}).
\item For every linear homomorphism $\phi : E \to F$ we write $\phi^\ast$ for the adjoint homomorphism (with respect to the hermitian norms $\| \cdot \|_E$ and $\| \cdot \|_F$). On the vector space $\Hom_\C(E, F)$ we consider the hermitian norm $\| \cdot \|_{\Hom(E,F)}$ associated to the hermitian form
$$ \langle \phi, \psi \rangle_{\Hom(E, F)} \df \Tr(\phi \circ \psi^\ast)$$
where $\phi, \psi \in \Hom_\C(E, F)$. If $e_1, \dots, e_n$ is an orthonormal basis of $E$ we have
$$ \| \phi \|_{\Hom(E, F)} \df \sqrt{\| \phi(e_1) \|_{F}^2 + \cdots + \| \phi(e_n) \|_{F}^2}.$$
With these conventions the natural isomorphism $E^\vee \otimes_\C F \to \Hom_\C(E, F)$ is isometric.
\end{itemize}
\end{np}

\begin{np} Let $K$ be a field complete with respect to a non-archimedean absolute value and let $\o$ be its ring of integers. In order to do some computations it is convenient to interpret $\o$-modules as $K$-vector spaces endowed with a non-archimedean norm. More precisely, for every torsion free $\o$-module $\cal{E}$ let us denote by $E \df \cal{E} \otimes_\o K$ its generic fiber and consider the following norm: for every $v \in E$ we set
$$ \| v\|_{\cal{E}} \df \inf \{ |\lambda| : \lambda \in K^\times, v / \lambda \in \cal{E} \}.$$
The norm $\| \cdot \|_{\cal{E}}$ is non-archimedean and its construction is compatible with operations on $\o$-modules: for instance, if $\phi : \cal{E} \to \cal{F}$ is an injective homomorphism with flat cokernel (resp. surjective homomorphism) between torsion free $\o$-modules then the norm $\| \cdot \|_{\cal{E}}$ induced on $E \df \cal{E} \otimes_\o K$ (resp. the norm $\| \cdot \|_{\cal{F}}$ induced on $F \df \cal{F} \otimes_\o K$) is the restriction of the norm $\| \cdot \|_{\cal{F}}$ on $F$ (resp. is the quotient norm deduced from $\| \cdot \|_{\cal{E}}$ and $\phi$, that is, the norm defined by
$$ w \mapsto \inf_{\phi(v) = w} \| v \|_{\cal{E}}$$
for every element $w$ of $F$.) 

It follows that, for a non-negative integer $r \ge 0$, the norm on symmetric powers $\Sym^{r} \cal{E}$ (resp. on exterior powers $\bigwedge^r \cal{E}$) is the norm deduced by the one on the $r$-th tensor power $\cal{E}^{\otimes r}$ through the canonical surjection $\cal{E}^{\otimes r} \to \Sym^r \cal{E}$ (resp. $\cal{E}^{\otimes r} \to \bigwedge^r \cal{E}$).  In particular, it is sub-multiplicative (resp. Hadamard inequality holds).
\end{np}

\begin{np} If $K$ is a number field, we denote by $\o_K$ its ring of integers and by $\Vv_K$ the set of its places. If $v$ is a place we denote by $K_v$ the completion of $K$ with respect to $v$ and by $\C_v$ the completion of an algebraic closure of $K_v$ endowed with the unique absolute value extending the one of $K_v$. If $v$ is an non-archimedean place extending a $p$-adic one, we normalize it by
$$ |p|_v = p^{- [K_v : \Q_p]}.$$
\end{np}

\begin{np} Let $K$ be a number field, $\o_K$ its ring of integers and $\Vv_K$ its set of places. An hermitian vector bundle $\ol{\cal{E}}$ is the data of a flat $\o_K$-module of finite type $\cal{E}$ and, for every complex embedding $\sigma : K \to \C$, an hermitian norm $\| \cdot \|_{\cal{E}, \sigma}$ on the complex vector space $\cal{E}_\sigma \df \cal{E} \otimes_\sigma \C$. These hermitian norms are supposed to be compatible to complex conjugation. For every place $v \in \Vv_K$, we denote by $\| \cdot \|_{\cal{E}, v}$ the norm induced on the $K_v$-vector space $\cal{E}_v \df \cal{E} \otimes_{\o_K} K_v$.

If $\ol{\cal{E}}$, $\ol{\cal{F}}$ are hermitian vector bundles over $\o_K$, a \em{homomorphism of hermitian vector bundles} $\phi : \ol{\cal{E}} \to \ol{\cal{F}}$ is a homomorphism of $\o_K$-modules such that, for every embedding $\sigma : K \to \C$, it decreases the norms: that is, for every $v \in \cal{E} \otimes_{\sigma} \C$ we have
$$ \| \phi(v)\|_{\cal{F}, \sigma} \le \| v \|_{\cal{E}, \sigma}. $$

If $\ol{\cal{L}}$ is an hermitian line bundle, that is an hermitian vector bundle of rank $1$, we define its \em{degree} by
$$ \degar ( \ol{\cal{L}}) \df \log \# ( \cal{L} / s \cal{L}) - \sum_{\sigma : K \to \C} \log \| s \|_{\cal{L}, \sigma} =- \sum_{v \in \Vv_K} \log \| s\|_{\cal{L}, v}$$
where $s \in \cal{L}$ is non-zero. It appears clearly from the second expression that this, according to the Product Formula, does not depend on the chosen section $s$. If $\ol{\cal{E}}$ is an hermitian vector bundle we define
\begin{itemize}
\item its \em{degree}: $$ \degar \ol{\cal{E}} \df \degar ( \textstyle \bigwedge^{\rk \cal{E}} \ol{\cal{E}} \displaystyle );$$
\item its \em{slope}:  $$ \muar (\ol{\cal{E}}) \df \frac{\degar (\ol{\cal{E}})}{\rk \cal{E}};$$
\item its \em{maximal slope}: 
$$ \muar_{\max} (\ol{\cal{E}}) \df \sup_{0 \neq \cal{F} \subset \cal{E}} \muar (\ol{\cal{F}}),$$
where the supremum is taken on all non-zero sub-modules $\cal{F}$ of $\cal{E}$ endowed with the restriction of the hermitian metric on $\cal{E}$.
\end{itemize}

\begin{prop}[{Slopes inequality, \cite{BostBourbaki}}] \label{prop:SlopeInequality} Let $\ol{\cal{E}}, \ol{\cal{F}}$ be $\o_K$-hermitian vector bundles and let $\phi : \cal{E} \otimes_{\o_K} K \to \cal{F} \otimes_{\o_K} K$ be an injective homomorphism of $K$-vector spaces. Then,
$$ \muar (\ol{\cal{E}}) \le \muar_{\max}(\ol{\cal{F}}) + \sum_{v \in \Vv_K} \log \| \phi\|_{\sup, v}$$
where for every place $v \in \Vv_K$ we set
$$ \| \phi \|_{\sup, v} \df \sup_{0 \neq s \in \cal{E}_v} \frac{\| \phi(s)\|_{\cal{F}, v}}{ \| s \|_{\cal{E}, v}}.$$
\end{prop}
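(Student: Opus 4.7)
The plan is to factor $\phi$ through the saturation of its image in $\cal{F}$ so as to reduce the statement to the case of an isomorphism, and then to read off the comparison of degrees from the product formula applied to the top exterior power.

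First, set $r \df \rk \cal{E}$ and $\cal{G} \df \phi(\cal{E}\otimes_{\o_K} K)\cap \cal{F}$, viewed as an $\o_K$-sub-module of $\cal{F}$. By construction $\cal{G}$ is saturated in $\cal{F}$ and has rank $r$, and $\phi$ factors through a $K$-linear isomorphism $\phi' : \cal{E}\otimes_{\o_K} K \iso \cal{G}\otimes_{\o_K} K$. I would endow $\cal{G}$ with the hermitian vector bundle structure coming from restriction: at archimedean places one restricts hermitian norms; at non-archimedean places, since $\cal{G}$ is saturated, its localization at every prime $\mathfrak{p}$ of $\o_K$ is a direct summand of $\cal{F}_\mathfrak{p}$, and hence the inclusion $\cal{G}\hookrightarrow\cal{F}$ is isometric at every place. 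Two consequences follow: the operator norm of $\phi'$ at every place $v$ coincides with $\|\phi\|_{\sup, v}$, and $\muar(\ol{\cal{G}})\le\muar_{\max}(\ol{\cal{F}})$ by the very definition of the maximal slope.

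Next I would pass to top exterior powers. For any hermitian line bundle $\ol{\cal{L}}$ and any non-zero $s\in\cal{L}\otimes_{\o_K}K$, the product formula yields $\degar(\ol{\cal{L}}) = -\sum_v \log\|s\|_{\cal{L}, v}$, where at finite places one uses the norm attached to the integral lattice. Applying this to a non-zero $e\in\det\cal{E}\otimes_{\o_K}K$ and to $\bigwedge^r\phi'(e)\in\det\cal{G}\otimes_{\o_K}K$, together with the elementary bound $\|\bigwedge^r\phi'(e)\|_{\det\cal{G}, v}\le\|\bigwedge^r\phi'\|_v\cdot\|e\|_{\det\cal{E}, v}$, would give
$$\degar(\ol{\cal{E}}) - \degar(\ol{\cal{G}}) \le \sum_{v\in\Vv_K}\log\bigl\|\textstyle\bigwedge^r\phi'\bigr\|_v.$$

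To conclude, I would invoke the operator-norm inequality $\|\bigwedge^r\phi'\|_v \le \|\phi'\|_v^r = \|\phi\|_{\sup, v}^r$, valid at every place (archimedean ones by the singular value decomposition, non-archimedean ones by expressing $\bigwedge^r$ as a quotient of $\otimes^r$ and invoking sub-multiplicativity of the tensor norm), divide by $r$ to pass from degrees to slopes, and combine with $\muar(\ol{\cal{G}})\le\muar_{\max}(\ol{\cal{F}})$ to obtain the desired inequality. The genuinely delicate point in this plan is the verification that $\cal{G}\hookrightarrow\cal{F}$ is isometric at non-archimedean places; everything else is bookkeeping along the product formula. That point rests on the structure theorem for finitely generated modules over a discrete valuation ring, applied to each localization $(\o_K)_\mathfrak{p}$.
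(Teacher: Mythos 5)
Your argument is correct, and in fact the paper gives no proof of this proposition at all: it is quoted from Bost's Bourbaki report, and your proof is essentially the standard one found there (saturate the image, reduce to an isomorphism onto a sub-bundle, compare determinants via the product formula, and bound the operator norm of $\bigwedge^r\phi'$ by $\|\phi\|_{\sup,v}^r$). The one point you flag as delicate — that the inclusion $\cal{G}\hookrightarrow\cal{F}$ of the saturation is isometric at finite places — is already built into the paper's conventions, which state that an injective homomorphism of torsion-free $\o_K$-modules with flat cokernel induces on the generic fibre exactly the restricted norm; since $\cal{G}=\phi(\cal{E}\otimes_{\o_K}K)\cap\cal{F}$ has torsion-free, hence flat, cokernel, this applies verbatim, and the rest of your bookkeeping (Hadamard/SVD at archimedean places, quotient-of-tensor norms at finite places, division by $r=\rk\cal{E}$, and $\muar(\ol{\cal{G}})\le\muar_{\max}(\ol{\cal{F}})$) is sound.
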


\end{np}

\section{Statement of the results} \label{sec:StatementOfResults}

\subsection{Roth's Theorem with moving targets and the Main Effective Lower Bound}

\begin{npar}{Height and distance on the projective line} In order to state the results in their most precise way it is convenient to make the following definitions. 

\begin{deff} \begin{enumerate}[(1)]
\item For a point $x = (x_0 : x_1)$ of the projective line $\P^1_\Q$ defined on a number field $K$ we consider its \em{absolute (logarithmic) height}
$$ h(x) = \frac{1}{[K : \Q]} \sum_{v \in \Vv_K} \log \| (x_0, x_1)\|_v$$
where $\Vv_K$ denotes the set of places of $K$ and for every place $v$ we write
$$ \| (x_0, x_1)\|_v \df
\begin{cases}
\vspace{7pt} \max \{|x_0|_v, |x_1|_v \} & \textup{if $v$ is non-archimedean} \\ 
\sqrt{|x_0|_v^2 + |x_1|^2_v} & \textup{if $v$ is archimedean}.
\end{cases}
$$

\item If $K$ is a number field and $v \in \Vv_K$ is a place of $K$, we consider the \em{$v$-adic spherical distance} on $\P^1$. If $x= (x_0:x_1)$ and $y = (y_0 : y_1)$ are $\C_v$-points of the projective line $\P^1$ we set
$$ \d_v(x, y) \df \frac{|x_0 y_1 - x_1 y_0|_v}{\| (x_0, x_1)\|_v \| (y_0, y_1) \|_v} \in [0, 1].$$

\item Let $x, y$ be $K$-points of $\P^1$. Then $\d_v(x, y) = 1$ for all but finitely many places $v$ of $K$ and for every subset $S \subset \Vv_K$ (not necessarily finite) we set
$$ \m_S(x, y) \df \sum_{v \in S} - \log \d_v(x, y) \in \R_{\ge 0}.$$
If $S = \{v\}$ is a singleton we just write $\m_v(x, y)$.
\end{enumerate}
\end{deff}

\begin{prop}[{\cite[Theorem 2.8.21]{bombieri-gubler}}] \label{prop:ProjectiveLiouvilleInequality} For two distinct points $x, y \in \P^1(K)$ we have 
$$ \frac{1}{[K : \Q]}\m_{\Vv_K}(x, y) = h(x) + h(y).$$ \end{prop}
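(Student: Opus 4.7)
The plan is to unpack the definitions and reduce to the product formula.

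First I would pick homogeneous coordinates $x = (x_0 : x_1)$ and $y = (y_0 : y_1)$ with $x_i, y_i \in K$. Directly from the definition of $\d_v$,
$$ -\log \d_v(x, y) = \log \|(x_0, x_1)\|_v + \log \|(y_0, y_1)\|_v - \log |x_0 y_1 - x_1 y_0|_v $$
for every place $v \in \Vv_K$. Since $x \neq y$ in $\P^1(K)$, the determinant $\Delta \df x_0 y_1 - x_1 y_0$ is a nonzero element of $K$; in particular, $\log |\Delta|_v = 0$ for all but finitely many $v$, and $\log \|(x_0, x_1)\|_v = 0 = \log \|(y_0, y_1)\|_v$ for all but finitely many $v$ as well. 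Hence $\d_v(x, y) = 1$ for almost all $v$, which incidentally justifies the remark preceding the definition of $\m_S$.

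Next I would sum the displayed equality over all $v \in \Vv_K$. The first two terms on the right give, by the very definition of the absolute logarithmic height,
$$ \sum_{v \in \Vv_K} \log \|(x_0, x_1)\|_v = [K:\Q]\, h(x), \qquad \sum_{v \in \Vv_K} \log \|(y_0, y_1)\|_v = [K:\Q]\, h(y). $$
For the third term, the product formula applied to $\Delta \in K^\times$ yields
$$ \sum_{v \in \Vv_K} \log |\Delta|_v = 0, $$
with the normalization of absolute values fixed in the conventions. Combining these three identities gives exactly
$$ \m_{\Vv_K}(x, y) = [K:\Q]\bigl( h(x) + h(y) \bigr), $$
as desired.

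There is essentially no obstacle: the content is the product formula together with the correct normalizations of the local heights and distances. The only point that requires a line of care is verifying that the sum $\m_{\Vv_K}(x,y)$ really is finite (i.e.\ that $-\log \d_v(x,y)$ vanishes outside a finite set of places), but this follows because all three summands on the right-hand side of the local identity have this property when $x_i, y_i$ are chosen in $K$.
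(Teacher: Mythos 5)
Your proof is correct: the local identity from the definition of $\d_v$, the vanishing of all but finitely many terms, and the product formula applied to the nonzero determinant $x_0 y_1 - x_1 y_0$ give exactly the stated equality. The paper itself offers no proof (it cites \cite[Theorem 2.8.21]{bombieri-gubler}), and your computation is precisely the standard argument behind that reference, so nothing further is needed.
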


\begin{deff} Let $a$ be a point of $\P^1$ defined over a finite extension $K'$ of $K$, $S \subset \Vv_K$ a finite subset of $\Vv_K$ and for every $v \in S$ let $\sigma_v : K' \to \C_v$ be a $K$-linear embedding. Then we denote by $a^{(\sigma_v)}$ the $\C_v$-point of $\P^1$ induced by $\sigma_v$ and we set:
$$ \m_S(a, x) \df \sum_{v \in S} \m_v(a^{(\sigma_v)}, x).$$
\end{deff}
\end{npar}

\begin{npar}{Roth's Theorem with moving targets} In this paper we prove the following form of Roth's Theorem with moving targets:

\begin{theorem}\label{thm:moving_targets} Let $K$ be a number field, $S \subset \Vv_K$ a finite subset, $K'$ a finite extension of $K$ and $\kappa > 2$ a real number. For every place $v \in S$ let us fix an embedding $\sigma_v : K' \to \C_v$ which respects $K$.

There is no sequence of couples $(x_i,a_i)$  with $i \in \N$ made of a $K$-rational point $x_i$ of $\P^1$ and a $K'$-rational point $a_i$ of $\P^1$ distinct from $x_i$ satisfying the following properties : 
\begin{enumerate}[(1)]
\item we have $h(a_i) = o(h(x_i))$ as $i$ goes to infinity;
\item for all $i \in \N$ the following inequality is satisfied:
$$ \frac{1}{[K : \Q]} \m_S(a_i, x_i) \ge \kappa h(x_i).$$
\end{enumerate}
\end{theorem}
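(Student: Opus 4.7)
The plan is to derive Theorem \ref{thm:moving_targets} from the Main Effective Lower Bound announced in the introduction, along the classical schema of the proof of Thue-Siegel-Roth: assume a counter-example exists, extract a suitably spaced $n$-tuple of approximations, and apply the effective bound to reach a contradiction. Suppose by contradiction that such a sequence $(x_i, a_i)_{i \in \N}$ exists. A standard Northcott argument (the $x_i$ cannot stay of bounded height without $h(a_i) = o(h(x_i))$ collapsing to the trivial case of $a_i$ eventually taking one of the finitely many values of height zero) reduces us to the case $h(x_i) \to \infty$, and after extraction we may further assume that $h(x_i)$ is strictly increasing and that $h(x_{i+1})/h(x_i) \to \infty$, while still $h(a_i) = o(h(x_i))$.

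Next I would fix an integer $n \ge 2$, to be chosen large at the end, extract from the subsequence an $n$-tuple $(x_1, \dots, x_n)$ with associated $(a_1, \dots, a_n)$, and choose positive rational weights $r_1, \dots, r_n$ such that the products $r_i h(x_i)$ are all comparable --- equal up to an arbitrarily small relative error. The fast growth of $h(x_{i+1})/h(x_i)$ makes this choice possible, and this is the customary ``normalization'' step of the Thue-Siegel-Roth argument.

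Applied to this datum, the Main Effective Lower Bound is expected to produce an upper bound of the shape
$$\frac{1}{[K : \Q]} \sum_{i=1}^n r_i \, \m_S(a_i, x_i) \le \bigl(2 + \varepsilon_1(n)\bigr) \sum_{i=1}^n r_i h(x_i) + C(n) \sum_{i=1}^n r_i h(a_i) + D(n),$$
with $\varepsilon_1(n) \to 0$ as $n \to \infty$ and $C(n), D(n)$ depending only on $n$, $K$, $K'$ and $S$. Combined with hypothesis (2), which gives $\sum_i r_i \, \m_S(a_i, x_i) \ge \kappa [K : \Q] \sum_i r_i h(x_i)$, this yields
$$\bigl(\kappa - 2 - \varepsilon_1(n)\bigr) \sum_{i=1}^n r_i h(x_i) \le C(n) \sum_{i=1}^n r_i h(a_i) + D(n).$$
By hypothesis (1) and the comparability of the $r_i h(x_i)$, the right-hand side is $o\bigl(\sum_i r_i h(x_i)\bigr)$ when the indices of the extracted tuple are pushed to infinity. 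Choosing $n$ large enough that $\varepsilon_1(n) < \kappa - 2$ then produces the required contradiction.

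The main obstacle --- and the content of the rest of the paper --- is clearly the Main Effective Lower Bound itself, whose proof hinges on the Fundamental Formula of GIT \`a la Arakelov, on the upper bounds for the height and the instability measure of a suitable family of points developed in Sections \ref{sec:UBHeight} and \ref{sec:UBInstabilityMeasure}, and on the geometric semi-stability of that point, established in Section \ref{sec:Semi-stability} via Dyson's Lemma. The deduction sketched above is, by contrast, a routine sliding-window argument once the effective bound is available.
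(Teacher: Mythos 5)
Your overall strategy (argue by contradiction, extract a rapidly spaced $n$-tuple, choose weights so that the products $r_i h(x_i)$ are essentially equal, let $n \to \infty$) is the same as the paper's, but the step you dismiss as routine hides a genuine gap: the displayed inequality you attribute to the Main Effective Lower Bound is not what Theorem \ref{thm:fundamental_effective_LB} states and does not follow from it. The actual bound controls only $\frac{1}{[K:\Q]}\, t_{q,n}(\delta)\sum_{v \in S}\min_{\sigma}\min_{i}\{ r_i \m_v(a_i^{(\sigma)}, x_i)\}$ --- a minimum over the indices $i$, place by place --- and not the sum $\sum_i r_i \m_S(a_i,x_i)$. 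A bound on a minimum never bounds the sum; indeed an inequality of the shape you wrote (sum on the left, constant $2+\varepsilon_1(n)$ on the right) would amount to an effective Roth-type estimate term by term, which is exactly what this method cannot deliver and why the $\min$-over-$i$ structure is there. The paper closes the argument differently: hypothesis (2) holds for \emph{every} $i$, so after equalizing the products $r_i h(x_i)$ one gets $\min_i\{r_i \m_S(a_i,x_i)\} \ge \kappa [K:\Q]\cdot\frac{1}{n}\sum_i r_i h(x_i)$, and the factor $n$ lost in passing from the sum to the minimum is recovered from the factor $t_{q,n}(\delta)$, whose asymptotics $\liminf_{n} n/t_{q,n}(0) = 2$ (Lemma \ref{lem:eq:MeasureConcentrationRoth}, a concentration-of-measure estimate) is literally where the exponent $2$ comes from --- not from a per-term constant as your sketch suggests.

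Two further ingredients you omit are needed even for that. First, the left-hand side of Theorem \ref{thm:fundamental_effective_LB} is $\sum_{v\in S}\min_i\{r_i\m_v\}$, and since $\sum_{v}\min_i \le \min_i\sum_v$ one cannot pass to $\min_i\{r_i\m_S\}$ directly; the paper uses Mahler's pigeonhole trick to extract a subsequence along which each place $v\in S$ carries a fixed proportion $\lambda(\epsilon,v)$ of $\m_S(a_i,x_i)$ uniformly in $i$, at the cost of a factor $1-\epsilon$. Second, there are the preliminary reductions ($K(a_i)=K'$, $q\ge 2$), the hypothesis $r_i/r_{i+1} > R_{q,n}(\delta)$ of the effective bound (your assumption on $h(x_{i+1})/h(x_i)$ makes it available, but it must be invoked), and the fact that the constants blow up like $q/\delta$, so that $\delta \to 0$ has to be taken jointly with $h(a_i) \le \delta\sqrt[n]{\delta}\, h(x_i)$ and $h(x_1)\to\infty$. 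With the correct, $\min$-based form of the bound your plan does go through and coincides with the paper's proof; as written, however, the central inequality is unsupported.
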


%Standard techniques\footnote{Namely, imposing index not only at the point $a$ but at every point $a_v$ for every $v \in S$.} permit to modify our argument in order to obtain the following:

%\begin{theorem}[{\em{cf.} \cite[Theorem 6.5.2]{bombieri-gubler}}] \label{thm:moving_targets_BombieriGubler} Let $K$ be a number field, $S \subset \Vv_K$ a finite subset, $K'$ a finite extension on $K$ and $\kappa > 2$ a real number. For every place $v \in S$ let us fix an embedding $K' \to \C_v$.

%There is no sequence of $(\# S + 1)$-tuples $(x_i, \{ a_{i,v} : v \in S\})$  with $i \in \N$ made of a $K$-rational point $x_i$ of $\P^1$ and for every $v \in S$ of a $K'$-rational point $a_{i, v}$ of $\P^1$ distinct from $x$ satisfying the following properties : 
%\begin{enumerate}[(1)]
%\item for every $v \in S$ we have $h(a_{i, v}) = o(h(x_i))$ as $i$ goes to infinity;
%\item for all $i \in \N$ the following inequality is satisfied:
%$$$ \frac{1}{[K : \Q]} \sum_{v \in S} \m_v(a_{i,v}^{(\sigma_v)}, x) \ge \kappa h(x_i).$$
%\end{enumerate}
%\end{theorem}

Vojta's original form of Roth's Theorem with moving targets is more general, in the sense that it allows the target points to be $K$-rational:

\begin{theorem}[{\textit{cf.} \cite[Theorem 1]{vojta_movingtargets}}] \label{thm:VojtaRothMovingTargets}Let $K$ be a number field, $S \subset \Vv_K$ a finite subset, $q \ge 1$ a positive integer and $\kappa > 2$ a real number.  

There is no sequence of couples $(x_i, a^{(1)}_i, \dots, a^{(q)}_i)$ with $i \in \N$ of $(q+1)$-tuples made of pairwise distinct\footnote{\em{i.e.} for all $i \in \N$ we have $a^{(\sigma)}_i \neq a^{(\tau)}_i$ for every $\sigma \neq \tau$ and we have $x_i \neq a^{(\sigma)}_i$ for every $\sigma = 1, \dots, q$.} $K$-rational points of $\P^1$ satisfying the following properties : 
\begin{enumerate}[(1)]
\item for all $\sigma = 1, \dots, q$ we have $h(a_i^{(\sigma)}) = o(h(x_i))$ as $i$ goes to infinity;
\item for all $i \in \N$ the following inequality is satisfied:
$$ \frac{1}{[K : \Q]} \sum_{\sigma = 1}^q \m_S(a_i^{(\sigma)}, x_i) \ge \kappa h(x_i).$$
\end{enumerate}
\end{theorem}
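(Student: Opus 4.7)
The plan is to adapt the argument that yields Theorem \ref{thm:moving_targets} to the multi-target situation, starting with a pigeonhole reduction that isolates, at each place, a single ``dominant'' target. Arguing by contradiction, fix a sequence $(x_i, a_i^{(1)}, \ldots, a_i^{(q)})$ as in the statement. For each $i$ and $v \in S$, let $\tau(v, i) \in \{1, \ldots, q\}$ maximize $\sigma \mapsto \m_v(a_i^{(\sigma)}, x_i)$. The elementary inequality $\min(\m_v(a,x), \m_v(b,x)) \leq \m_v(a, b) + \log 2$ on $\P^1(\C_v)$ (with $\log 2$ replaced by $0$ at non-archimedean places), combined with Proposition \ref{prop:ProjectiveLiouvilleInequality} applied to the pairs $(a_i^{(\sigma)}, a_i^{(\sigma')})$ --- whose total distance $\sum_v \m_v$ is $[K:\Q](h(a_i^{(\sigma)}) + h(a_i^{(\sigma')})) = o(h(x_i))$ by hypothesis (1) --- yields
\[
\sum_{\sigma = 1}^q \m_S(a_i^{(\sigma)}, x_i) \leq \sum_{v \in S} \m_v(a_i^{(\tau(v,i))}, x_i) + o(h(x_i)).
\]
Since there are only $q^{|S|}$ maps $\tau \colon S \to \{1, \ldots, q\}$, I pass to an infinite subsequence along which $\tau(v, i) = \tau(v)$ is independent of $i$; writing $S_j := \tau^{-1}(j)$, hypothesis (2) takes the form
\[
\sum_{j = 1}^q \m_{S_j}(a_i^{(j)}, x_i) \geq (\kappa - o(1))\, [K : \Q]\, h(x_i). \qquad (\star)
\]

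Next, I would apply the Main Effective Lower Bound (Theorem \ref{thm:fundamental_effective_LB}) --- itself deduced from the Main Theorem (Theorem \ref{thm:ApplyingComparisonFormula}) by a suitable choice of parameters --- to a configuration on $(\P^1)^n$ built out of $x_i$ and the targets $a_i^{(j)}$: each $a_i^{(j)}$ is placed at $|S_j|$ factors, with multiplicities matching the local weights encoded by $S = \bigsqcup_j S_j$. The effective upper bound provided by the GIT Fundamental Formula (Theorem \ref{thm:ComparisonFormula}) takes the shape
\[
\frac{1}{[K : \Q]} \sum_{j} \m_{S_j}(a_i^{(j)}, x_i) \leq 2\, h(x_i) + o(h(x_i)),
\]
and contradicts $(\star)$ for any $\kappa > 2$. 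The case $q = 1$ is already contained in Theorem \ref{thm:moving_targets}, and in fact trivial by Proposition \ref{prop:ProjectiveLiouvilleInequality} (since then $K' = K$).

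The main obstacle I anticipate is the verification of the geometric semi-stability of the chosen configuration on $(\P^1)^n$ under the $\SL_2$-action. In the single-target case of Theorem \ref{thm:moving_targets} (Section \ref{sec:Semi-stability}), semi-stability is extracted from the higher-dimensional Dyson's Lemma of Esnault--Viehweg--Nakamaye (Theorem \ref{thm:DysonsLemma}), whose content is purely geometric: it bounds the vanishing at a collection of marked points on $\P^1_{\ol{K}}$ independently of their field of definition. A configuration of $q$ distinct $K$-rational targets, spread across the factors of $(\P^1)^n$ with multiplicities as above, is thus eligible for the same lemma as a configuration of Galois conjugates of a single $K'$-rational point. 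What remains is the parameter bookkeeping: selecting the integer $n$, the factor-by-factor multiplicities, and the local weights so that the right-hand side of the resulting effective bound reads $2 h(x_i) + o(h(x_i))$, paralleling the optimization carried out in the proof of Theorem \ref{thm:moving_targets}.
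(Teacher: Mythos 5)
There is no proof of this statement in the paper for you to match: Theorem \ref{thm:VojtaRothMovingTargets} is quoted from Vojta, and right after stating it the author remarks that it \emph{implies} Theorem \ref{thm:moving_targets} (by passing to a Galois closure and taking the $a_i^{(\sigma)}$ to be conjugates), and explicitly adds that he does not know whether such a statement can be obtained by the methods of the present paper. Your proposal attempts exactly what is disclaimed, and the gap sits at the step you postpone as ``bookkeeping plus an anticipated obstacle''. The opening pigeonhole reduction (one dominant target per place, using $\min\{\m_v(a,x),\m_v(b,x)\}\le \m_v(a,b)+\log 2$ and Proposition \ref{prop:ProjectiveLiouvilleInequality}) is fine and standard, but it leaves you with $K$-rational targets, whereas the Main Effective Lower Bound (Theorem \ref{thm:fundamental_effective_LB}) is stated, and provable by the paper's method, only for targets with $K(a_i)=K'$ and $q=[K':K]\ge 2$.

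The reason is semi-stability, not Dyson's Lemma per se. Dyson's Lemma is indeed insensitive to fields of definition, but the proof of Theorem \ref{thm:Semi-stabilityCondition} needs more: in Lemma \ref{lem:InstabilityCoefficientAlgebraicPointGrassmannFormula} the instability point $a^{(0)}=y_\lambda$ of a one-parameter subgroup of $\SLs_2^n$ is a $K$-rational point of $(\P^1)^n$, and the separation $\pr_i(a^{(0)})\neq\pr_i(a^{(\sigma)})$, which is what allows Proposition \ref{Prop:DimensionKernelAtMultiplePoints} (3) (i.e.\ Dyson) to force $K_{q,r}(\bs{a}, t_{\bs{a}})\cap K_r(a^{(0)}, u_{q,r}(t_{\bs{a}})+\rho)=0$, holds precisely because the conjugates $a^{(\sigma)}$ are \emph{not} $K$-rational. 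If instead the targets are $K$-rational, one can choose a one-parameter subgroup whose instability point coincides coordinate-wise with one of them; the subspace of sections of index $\ge t_{\bs{a}}$ at that target then picks up a large negative Hilbert--Mumford weight, the Grassmann-formula argument collapses, and there is no reason for the point $P_{\alpha r}$ built from $q$ distinct rational targets to be semi-stable. Without semi-stability the Fundamental Formula cannot be invoked at all, so the chain Main Theorem $\Rightarrow$ Main Effective Lower Bound $\Rightarrow$ contradiction is unavailable. (Secondarily, your configuration ``each $a_i^{(j)}$ placed at $|S_j|$ factors'' misreads the role of the $n$ factors of $(\P^1)^n$: they index $n$ members of the sequence with rapidly growing heights, chosen via Northcott so that the $r_ih(x_i)$ are equalized and $\epsilon_{q,r}$ is small, not the places of $S$.) This is exactly why the paper proves only Theorem \ref{thm:moving_targets} and refers for Theorem \ref{thm:VojtaRothMovingTargets} to Vojta's original argument, or to the Roth-machinery proof in Bombieri--Gubler.
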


Note that Theorem \ref{thm:VojtaRothMovingTargets} implies Theorem \ref{thm:moving_targets} by means of extending scalars from $K$ to a Galois closure of $K'$ over $K$ and taking the points $a_i^{(\sigma)}$ to be the conjugated points of the points $a_i$. We ignore at the moment if such a statement can be obtained by the methods expounded in the present paper.

Let us conclude this introduction remarking that for $q = 1, 2$ all these results are a straightforward consequence of Proposition \ref{prop:ProjectiveLiouvilleInequality}, which moreover gives an explicit upper bound for height of the points $x_i$ in terms of the height of the points $a_i^{(\sigma)}$. However for $q \ge 3$ this result is ineffective in the sense such that an explicit bound is not known.
\end{npar}

\begin{npar}{Main Effective Lower Bound} As explained above, there is an intermediate step in the proof of Roth's Theorem which is effective and implies Roth's Theorem through an elementary argument by contradiction that we shall repeat in the next paragraph --- this is the principal cause of loss of effectiveness.

This intermediate effective result is a lower bound of the height of $K$-rational points $x_1, \dots, x_n$ in terms of their $v$-adic distances from the algebraic points $a_1, \dots, a_n$. Although this type of lower bounds plays a crucial role in the seminal work of Bombieri \cite{bombieri}, it is rarely stated as a stand-alone theorem.

We name this lower bound ``Main Effective Lower Bound'' and the aim of this paper is to prove it by means of Geometric Invariant Theory. The statement of this result involves some auxiliary real numbers of geometric nature $r_1, \dots, r_n$: in the proof they are interpreted as the multi-degree of an invertible sheaf on $(\P^1)^n$.

To state it we need to introduce the crucial concepts that govern the combinatorics in Roth's Theorem:

\begin{deff} Let $q, n \ge 1$ be positive integers and let $t \ge 0$ and $\delta \in [0, 1]$ be real numbers.
\begin{enumerate}[(1)]
\item Let us consider the set $ \triangleup_n(t) \df \{ (\zeta_1, \dots, \zeta_n) \in [0, 1]^n : \zeta_1 + \cdots + \zeta_n < t \}$.

\item Let  $t_{q, n}(\delta) \in [0, n]$ be the unique real number such that
$$ 1 - q \vol \triangleup_n(t_{q, n}(\delta)) = \delta,$$
the volume being taken with respect the Lebesgue measure of $\R^n$. 

The function $t_{q, n} : [0, 1] \to [0, n]$ defined in this way is continuous. 

\item Let $R_{q, n}(\delta)$ be the unique positive real number such that
$$ \left( 1 + \frac{q-1}{R_{q, n}(\delta)}\right)^{n-1} - 1 = \delta \sqrt[n]{\delta}.$$

\item If $r = (r_1, \dots, r_n)$ is a $n$-tuple of real numbers we write $|r| = r_1 + \cdots + r_n$.
\end{enumerate}
\end{deff}

We are now able to state the Main Effective Lower Bound (\textit{cf.} \cite[Theorem 2]{bombieri} for $n = 2$):

\begin{theorem}[Main Effective Lower Bound] \label{thm:fundamental_effective_LB} Let $K'$ be a finite extension of $K$ of degree $q \ge 2$ and let $S \subset \Vv_K$ be a finite set of finite places. 

Let $n\ge 2$ be a positive integer, let $0 <\delta <  1 / (2 \cdot n!)$ be a real number and let $r = (r_1, \dots, r_n)$ be an $n$-tuple of positive real numbers such that $r_i / r_{i + 1} > R_{q, n}(\delta)$ for all $i = 1, \dots, n -1$. 

Then, for all $i = 1, \dots, n$ and for all couples $(x_i, a_i)$ made of a $K$-rational point $x_i$ of $\P^1$ and a $K'$-rational point $a_i$ of $\P^1$ such that $K(a_i) = K'$, the following inequality holds:
\begin{multline*}
 \frac{1}{[K : \Q]} t_{q, n}(\delta)  \sum_{v \in S} \left( \min_{\sigma : K' \to \C_v} \left\{ \min_{i = 1, \dots, n} \left\{ r_i \m_v(a_i^{(\sigma)}, x_i) \right\} \right\} \right)
\\ \le (1 + 2q \sqrt[n]{\delta}) \sum_{i = 1}^n r_i h(x_i) + \frac{q}{\delta} \sum_{i = 1}^n r_i h(a_i)
+  \left( \frac{\log \sqrt{2q}}{\delta} + \log 8 \right) |r|,
\end{multline*}
where, for every place $v \in \Vv_K$, the embeddings $\sigma : K' \to \C_v$ are meant to be $K$-linear.
\end{theorem}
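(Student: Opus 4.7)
The plan is to derive Theorem \ref{thm:fundamental_effective_LB} from the Main Theorem \ref{thm:ApplyingComparisonFormula}, which will furnish an inequality of the same shape but parametrised by a free variable $t \in [0, n]$ subject to an admissibility condition and to a lower bound on each ratio $r_i / r_{i+1}$. The two auxiliary functions $t_{q,n}(\delta)$ and $R_{q,n}(\delta)$ appearing in the statement are precisely the values of these parameters that make the Main Theorem specialise to the bound claimed here; once the Main Theorem is available, the proof reduces to calibrating the constants and checking that the hypotheses on $\delta$ and on the $r_i / r_{i+1}$ translate correctly into its input data.

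Concretely, I would proceed as follows. First, set $t := t_{q,n}(\delta)$: the defining equation $1 - q \vol \triangleup_n(t_{q,n}(\delta)) = \delta$ is arranged so that the coefficient of $\sum_i r_i h(a_i)$ produced by the upper bound on the instability measure (Section \ref{sec:UBInstabilityMeasure}) becomes exactly $q/\delta$. The hypothesis $\delta < 1/(2 \cdot n!)$ guarantees that $t_{q,n}(\delta)$ sits in the admissible range where the combinatorial quantity arising from Dyson's Lemma --- the volume of the simplex $\triangleup_n(t)$ --- remains usable. Second, translate the hypothesis $r_i / r_{i+1} > R_{q,n}(\delta)$ into the ratio condition of the Main Theorem: the defining equation $(1 + (q-1)/R_{q,n}(\delta))^{n-1} - 1 = \delta \sqrt[n]{\delta}$ is set up so that the coefficient of $\sum_i r_i h(x_i)$ is bounded by $1 + 2q\sqrt[n]{\delta}$, via an elementary Bernoulli-type estimate comparing $(1 + u)^{n-1}$ with $1 + (n-1)u$ and then with $1 + 2q\sqrt[n]{\delta}$. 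Third, absorb all absolute constants appearing in the height upper bounds of Section \ref{sec:UBHeight} (binomial coefficients, factors of $2$, and the $1/\delta$ weight already invoked) into the single additive term $(\log \sqrt{2q}/\delta + \log 8)|r|$.

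The substantive obstacles do not lie in this deduction, which amounts to arithmetic bookkeeping, but upstream in the proof of the Main Theorem itself. There one must introduce a suitable $\SL_2$-equivariant projective model carrying the point $(a_1, \ldots, a_n)$ together with a polarisation encoding the multi-degree $r$, verify its geometric semi-stability by means of the Higher Dimensional Dyson's Lemma (Theorem \ref{thm:DysonsLemma}), and then combine the Fundamental Formula \ref{thm:ComparisonFormula} with the explicit upper bounds for the height and for the instability measure. The deepest point, on which the numerical threshold $R_{q,n}(\delta)$ ultimately depends, is the geometric semi-stability step: it is precisely there that the vanishing-at-high-order content of Dyson's Lemma feeds into the GIT picture and fixes the combinatorial balance between the $r_i$ that the Main Effective Lower Bound requires.
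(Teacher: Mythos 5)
Your overall route is the paper's: Theorem \ref{thm:fundamental_effective_LB} is indeed deduced by specialising the Main Theorem \ref{thm:ApplyingComparisonFormula} at $t_{\bs{a}} = t_{q,n}(\delta)$ and by using $r_i/r_{i+1} > R_{q,n}(\delta)$ to control the quantity $\epsilon_{q,r}$. But as written your sketch has a genuine gap: the Main Theorem has \emph{two} parameters, $t_{\bs{a}}$ and $t_x$, and its only substantive hypothesis is the semi-stability condition \eqref{eq:SemiStabilityConditionMainTheorem}, namely $\mu_n(u_{q,r}(t_{\bs{a}})) > \mu_n(t_x) + \epsilon_{q,r}$. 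You never choose $t_x$ and never verify that \eqref{eq:SemiStabilityConditionMainTheorem} can be satisfied, so the Main Theorem cannot yet be invoked; this is exactly the step where the hypotheses $\delta < 1/(2\cdot n!)$ and $r_i/r_{i+1} > R_{q,n}(\delta)$ are consumed. The paper's deduction runs: the ratio hypothesis gives $\epsilon_{q,r} < \delta\sqrt[n]{\delta}$ directly from the defining equation of $R_{q,n}(\delta)$; then one checks (Lemma \ref{lem:ProofOfTheMainTheoremEstimationOfTheParameters}) that $\vol\triangleup_n\bigl(u_{q,r}(t_{q,n}(\delta))\bigr) = \delta + \epsilon_{q,r} \le 1/n!$ and consequently $\mu_n\bigl(u_{q,r}(t_{q,n}(\delta))\bigr) > \epsilon_{q,r}$, so that there is a unique $w_{q,r}(\delta) \in [n/2, n)$ with $\mu_n\bigl(u_{q,r}(t_{q,n}(\delta))\bigr) = \mu_n(w_{q,r}(\delta)) + \epsilon_{q,r}$; one applies the Main Theorem with $t_x$ slightly larger than $w_{q,r}(\delta)$ and lets $t_x$ tend to $w_{q,r}(\delta)$. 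The stated constants then come from the estimates $\vol\triangledown_n(w_{q,r}(\delta)) \le \delta$, $\int_{\triangledown_n(w_{q,r}(\delta))} \zeta_1 \, d\lambda_n \le \delta$ and $\vol\triangleup_n\bigl(u_{q,r}(t_{q,n}(\delta))\bigr) - \mu_n(w_{q,r}(\delta)) \le 3\delta\sqrt[n]{\delta}$ (Lemma \ref{lem:EstimationCombinatorialTerms}), followed by division of the whole inequality by $\delta$.

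Two of your calibration claims are mis-attributed as a consequence of this omission. The factor $q/\delta$ on $\sum_i r_i h(a_i)$ is not produced by the instability-measure bound: it appears because the left-hand side of the Main Theorem carries the coefficient $(1 - q\vol\triangleup_n(t_{\bs{a}}))\, t_{\bs{a}} = \delta\, t_{q,n}(\delta)$, so the final step divides by $\delta$. Likewise $R_{q,n}(\delta)$ does not act through a Bernoulli-type comparison yielding $1 + 2q\sqrt[n]{\delta}$; its only role is to force $\epsilon_{q,r} < \bigl(1 + (q-1)/R_{q,n}(\delta)\bigr)^{n-1} - 1 = \delta\sqrt[n]{\delta}$, and the coefficient $1 + 2q\sqrt[n]{\delta}$ emerges from bounding $C^{(1)}_{q,r}(t_{\bs{a}}, t_x)$ at $t_x = w_{q,r}(\delta)$ and dividing by $\delta$. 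These points are fixable, but the deduction is incomplete precisely at the choice of $t_x$ and the verification of \eqref{eq:SemiStabilityConditionMainTheorem}, which is the nontrivial content of the passage from the Main Theorem to the Main Effective Lower Bound.
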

\end{npar}

\begin{npar}{Deducing Roth's Theorem from the Main Effective Lower Bound} \label{DeducingRothTheoremFromMELB} Let us show how the Main Effective Lower Bound (Theorem \ref{thm:fundamental_effective_LB}) implies Roth's Theorem with moving targets (Theorem \ref{thm:moving_targets}).

Let us begin with the following bound which goes back to the work of Roth and it is based on an explicit version of a phenomenon of concentration of measure (see \cite{milman}).  As we will see this is where the number $2$ in Roth's Theorem comes from.

\begin{lem} \label{lem:eq:MeasureConcentrationRoth} Let $q, n \ge 1$ be positive integers. We have $t_{q,n}(0) \ge n/2 - \sqrt{(n \log q) / 6}$.
In particular,
$$\liminf_{n \to \infty} \frac{n}{t_{q,n}(0)}= 2.$$
\end{lem}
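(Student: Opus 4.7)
The starting observation is probabilistic: if $U_1, \dots, U_n$ are independent random variables uniform on $[0,1]$, then
$$ \vol \triangleup_n(t) = \mathbf{P}(U_1 + \cdots + U_n < t). $$
Hence, by the defining equation of $t_{q,n}(0)$ and the monotonicity of $t \mapsto \vol \triangleup_n(t)$, proving $t_{q,n}(0) \ge n/2 - s$ for some $s \ge 0$ is equivalent to showing
$$ \mathbf{P}(U_1 + \cdots + U_n < n/2 - s) \le 1/q. $$
This is exactly a left-tail large-deviations estimate with mean $n/2$ and per-coordinate variance $1/12$, so we expect the bound to hold for $s$ of order $\sqrt{n(\log q)/6}$.

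The plan is to apply the Chernoff method. For $\lambda > 0$,
$$ \mathbf{P}\Bigl(\sum_i U_i < n/2 - s\Bigr) \le e^{\lambda(n/2 - s)} \mathbf{E}\bigl[e^{-\lambda \sum U_i}\bigr] = e^{-\lambda s}\left(\frac{2\sinh(\lambda/2)}{\lambda}\right)^{\!n}. $$
The crucial analytic inequality is
$$ \frac{\sinh(x)}{x} \le e^{x^2/6} \qquad \text{for all } x \ge 0, $$
which I will verify by comparing Taylor coefficients: it reduces to $(k+1)(k+2)\cdots(2k+1) \ge 6^k$, proved by a one-line induction noting that the ratio of consecutive values of the left-hand side is $2(2k+3) \ge 6$. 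Plugging this into the Chernoff bound yields
$$ \mathbf{P}\Bigl(\sum_i U_i < n/2 - s\Bigr) \le \exp\!\left( - \lambda s + \frac{n \lambda^2}{24}\right), $$
and optimizing at $\lambda = 12 s/n$ gives the clean Gaussian-type bound $\exp(-6 s^2/n)$. Taking $s = \sqrt{n(\log q)/6}$ makes the right-hand side exactly $1/q$, which, combined with the probabilistic reformulation above, proves the first assertion.

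For the ``in particular'' statement, the inequality just obtained gives
$$ \frac{n}{t_{q,n}(0)} \le \frac{1}{1/2 - \sqrt{(\log q)/(6n)}} \longrightarrow 2 \quad \text{as } n \to \infty, $$
so $\limsup_{n} n/t_{q,n}(0) \le 2$. The matching lower bound comes from the symmetry of the uniform distribution around $1/2$: since $\mathbf{P}(\sum U_i < n/2) = 1/2 \ge 1/q$ for $q \ge 2$, we have $t_{q,n}(0) \le n/2$ and therefore $n/t_{q,n}(0) \ge 2$ for every $n$, so the $\liminf$ (in fact the limit) equals $2$.

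The only delicate point is the elementary inequality $\sinh(x)/x \le e^{x^2/6}$; the rest is a routine Chernoff optimization followed by the symmetry observation. I do not foresee any substantial obstacle.
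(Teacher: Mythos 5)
Your argument is correct, and structurally it follows the same route as the paper: both reduce the lemma to the Gaussian-type tail estimate $\vol \triangleup_n\bigl((\tfrac12-\epsilon)n\bigr)\le \exp(-6n\epsilon^2)$ and then choose the parameter so that the right-hand side equals $1/q$. The difference is that the paper simply cites this estimate (Bombieri--Gubler, Lemma 6.3.5), whereas you re-derive it: your Chernoff computation for sums of uniform variables, together with the coefficientwise inequality $\sinh(x)/x\le e^{x^2/6}$ (your induction $(k+1)\cdots(2k+1)\ge 6^k$ is right), gives exactly $\vol\triangleup_n(n/2-s)\le \exp(-6s^2/n)$, which is the cited bound with $s=\epsilon n$; so your proof is self-contained where the paper outsources the key input, at the cost of a page of standard large-deviation bookkeeping. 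Your handling of the ``in particular'' is also slightly more careful than the paper's: the lower bound $n/t_{q,n}(0)\ge 2$ rests on $\vol\triangleup_n(n/2)=1/2\ge 1/q$, hence needs $q\ge 2$ (for $q=1$ one has $t_{1,n}(0)=n$ and the ratio is $1$), a restriction the paper leaves implicit but which is harmless since the lemma is only invoked with $q\ge 2$.
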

\begin{proof} According to \cite[Lemma 6.3.5]{bombieri-gubler} for every $0 \le \epsilon \le 1/2$ we have:
$$ \vol \triangleup_n\left( \left( \frac{1}{2} - \epsilon \right) n \right) \le \exp(-6n \epsilon^2).$$
The result is obtained taking $ \epsilon \df  1/2 - t_{q,n}(0) / n$.
\end{proof}

\begin{proof}[Proof of Theorem \ref{thm:moving_targets}] By contradiction, suppose that there exists an admissible sequence $\{ (x_i, a_i)\}_{i \in \N}$ verifying the conditions in the statement of Theorem \ref{thm:moving_targets}. Up to extracting a subsequence and passing to a sub-extension of $K'$, we may assume that we have $K(a_i) = K'$ for all $i \in \N$. We may also assume $q = [K' : K] \ge 2$. 

Fix a positive real number $\epsilon > 0$. By a pigeonhole argument (the so-called ``Mahler's Trick'', see \cite[6.4.2]{bombieri-gubler}) there exists an infinite subset $I_\epsilon \subset \N$ such that, for every place $v \in S$ there exists a positive real number $\lambda(\epsilon, v)$ which verifies, for every $i \in I_\epsilon$,
$$ \lambda(\epsilon, v) \m_S(a_i, x_i) \le  \m_v(a_i, x_i) \le \left( \lambda(\epsilon, v) + \frac{\epsilon}{ \#S} \right) \m_S(a_i, x_i) $$
(where we dropped the writing of the embeddings $\sigma_v$'s) and  
$$1 - \epsilon \le \sum_{v \in S} \lambda(\epsilon, v) \le 1. $$
Therefore up to renumbering the subsequence $\{ (x_i, a_i)\}_{i \in I_\epsilon}$ we may assume that the previous conditions are satisfied for all $i \in \N$.

Take an integer $n \ge 2$, a positive real number $\delta$ and $n$-tuple of positive real numbers $r$ satisfying the conditions in the statement of Theorem \ref{thm:fundamental_effective_LB}. Applying it to the couples $(x_i, a_i)$ for $i = 1, \dots, n$ and using, for every place $v \in S$,
$$\min_{\sigma : K' \to \C_v} \left\{ \min_{i = 1, \dots, n} \left\{ r_i \m_v(a_i^{(\sigma)}, x_i) \right\} \right\}  \le \min_{i = 1, \dots, n} \left\{ r_i \m_v(a_i^{(\sigma_v)}, x_i) \right\}, $$
 we obtain:
\begin{align*}
( 1 + 2q \sqrt[n]{\delta}) \sum_{i = 1}^n r_i h(x_i) + \frac{1}{\delta} \left( \sum_{i = 1}^n r_i \left( q h(a_i)+ C \right) \right) \hspace{-100pt}&  \\ 
&\ge \frac{1}{[K : \Q]} t_{q, n}(\delta)  \sum_{v \in S} \min_{i = 1, \dots, n} \left\{ r_i \m_v(a_i, x_i)  \right\}  \\
&\ge \frac{1}{[K : \Q]} t_{q, n}(\delta) (1 - \epsilon) \min_{i = 1, \dots, n} \left\{ r_i \m_S(a_i, x_i)  \right\},
\end{align*}
where, being rough, we set $C \df \log \sqrt{2q} + \log 8$.

By hypothesis for all $i = 1, \dots, n$ we have $\m_S(a_i, x_i) \ge [K : \Q] \kappa h(x_i)$. Thus,
\begin{multline*}
\kappa t_{q, n}(\delta) (1 - \epsilon) \min_{i = 1, \dots, n} \left\{ r_i h(x_i) \right\} \\ \le ( 1 + 2q \sqrt[n]{\delta} ) \sum_{i = 1}^n r_i h(x_i) + \frac{1}{\delta} \left( \sum_{i = 1}^n r_i \left(q h(a_i)+ C \right) \right).
\end{multline*}

The key point is that, since we have infinitely many $x_i$, Northcott's Principle entails that, extracting a subsequence we may suppose that the ratios of the heights $h(x_{i + 1}) / h(x_i)$ are sufficiently big (namely bigger that $R_{q, n}(\delta)$) so that we can take $r$ such that
$$ r_i h(x_i) = r_j h(x_j),$$
for every $i, j = 1, \dots, n$. Dividing the preceding inequality by $r_1 h(x_1) = \min \{ r_i h(x_i)\}$, we get: 
\begin{align*} \kappa t_{q, n}(\delta) (1 - \epsilon) &\le ( 1 + 2q \sqrt[n]{\delta}) n + \frac{q}{\delta} \sum_{i = 1}^n \frac{h(a_i)}{h(x_i)} + \frac{r_1 + \cdots + r_n}{r_1} \frac{C}{\delta h(x_1)}
\end{align*}
Now, since $h(a_i)  = o (h(x_i))$ as $i$ goes to infinity, passing to a subsequence we may suppose that we have $h(a_i) \le \delta \sqrt[n]{\delta} h(x_i)$. By Northcott's Principle, the ratios $r_i / r_{i + 1}$ and the height $h(x_1)$ can be supposed arbitrarily big. Thus,
$$ \kappa t_{q, n}(\delta) (1 - \epsilon) \le ( 1 + 3q \sqrt[n]{\delta}) n. $$
Letting $\delta$ and $\epsilon$ go to $0$ and $n$ go to infinity, according to Lemma \ref{lem:eq:MeasureConcentrationRoth} we find
$$ \kappa \le \liminf_{n \to \infty} \frac{n}{t_{q, n}(0)} = 2$$
which contradicts the hypothesis $\kappa > 2$.
\end{proof}
\end{npar}

\subsection{Statement of the Main Theorem}

\begin{npar}{More combinatorial data} It is convenient to fix some more notations on the combinatorics appearing in the study. 

\begin{deff} \label{def:DefinitionOfMoreCombinatorics} Let $q, n \ge 1$ be positive integers, $r = (r_1, \dots, r_n)$ be a $n$-tuple of positive real numbers and $t \ge 0$ be a non-negative integer.

\begin{enumerate}[(1)]
\item We consider the following subsets of $\R^n$:
\begin{align*}
\square_{r} &\df \left\{ (\zeta_1, \dots, \zeta_n) \in \R^n : 0 \le \zeta_i \le r_i \textup{ for every } i = 1, \dots, n \right\} = \prod_{i = 1}^n [0, r_i] \\
\triangledown_{r}(t) & \df \left\{ (\zeta_1, \dots, \zeta_n) \in \square_{r} : \frac{\zeta_1}{r_1} + \cdots + \frac{\zeta_n}{r_n} \ge t \right\} \\
\triangleup_{r}(t) & \df \left\{ (\zeta_1, \dots, \zeta_n) \in \square_{r} : \frac{\zeta_1}{r_1} + \cdots + \frac{\zeta_n}{r_n} < t \right\}= \square_{r} - \triangledown_{n, r}(t).
\end{align*}
We add $\Z$ in superscript to denote the intersection of these sets with $\Z^n$ (we write $\square^\Z_{r}$, $\triangledown_{r}^\Z(t)$ and $\triangleup_{r}^\Z(t)$). If $r = (1, \dots, 1)$ we write $\square_n$, $\triangledown_n(t)$ and $\triangleup_n(t)$.

\item If $\lambda_n$ is the Lebesgue measure on $\R^n$, we consider the function $\mu_n : [0, n] \to \R$,
$$ \mu_n(t) \df \int_{\nabla_n(t)}  ( 2 \zeta_1  - 1 ) \ d \lambda_n = \int_{\triangleup_n(t)} ( 1 - 2 \zeta_1)  \ d \lambda_n.$$
\item We define:
$$ \epsilon_{q, r} \df \prod_{i = 1}^{n- 1} \left( 1 +  \max_{i + 1 \le j \le n} \left\{ \frac{r_j}{r_i} \right\} (q-1)\right) - 1.$$

\item Let us denote by $u_{q, r}(t)$ the unique real number in $[0, n]$ such that
$$ \vol \triangleup_{n}(u_{q, r}(t)) = \min \left\{ \max\left\{1 + \epsilon_{q, r} - q \vol \triangleup_n(t), 0 \right\}, 1 \right\}.$$

\end{enumerate}
\end{deff}

\begin{lem} \label{lem:PropertiesFunctionMu} The function $\mu_n : [0, n] \to \R$ is strictly concave\footnote{That is, for every $t_1 < t_2$ in $[0, n]$ and every $\xi \in \left]0, 1 \right[$ we have $$\mu_n(\xi t_1 + (1 - \xi) t_2) > \xi \mu_n(t_1) + (1 - \xi) \mu_n(t_2).$$}. Moreover the following properties are satisfied:
\begin{enumerate}[(1)]
\item If $t \in [0, 1]$ we have 
$$\mu_n(t) = \frac{t^n}{n!} \left( 1 - 2 \frac{t}{n+1} \right) $$
\item We have $\mu_n(t) \ge 0$ for every $t \in [0, n]$.
\item For every $t \in [0, n]$ we have $\mu_n(n - t) = \mu_n(t)$.
\item The function $\mu_n$ is increasing on $[0, n/2]$ and decreasing on $[n/2, n]$;
\end{enumerate}
\end{lem}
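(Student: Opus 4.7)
The plan is to reduce everything to an explicit formula for the first derivative $\mu_n'$, obtained by combining the coordinate symmetry of the integrand $1 - 2\zeta_1$ with an integration-by-parts in the linear variable $s = \zeta_1 + \cdots + \zeta_n$. I would start by proving the symmetry (3): the involution $\zeta \mapsto \mathbf{1} - \zeta$ of the cube $\square_n$ carries $\triangleup_n(t)$ bijectively onto $\triangledown_n(n - t)$ and transforms the integrand $1 - 2\zeta_1$ into $2\zeta_1 - 1$, giving $\mu_n(t) = \mu_n(n - t)$. The same change of variable also justifies the equality of the two integral expressions in the definition of $\mu_n$, since $\int_{\square_n}(2\zeta_1 - 1)\, d\lambda_n = 0$.

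To obtain the closed form (1), I would observe that for $t \in [0, 1]$ the upper constraints $\zeta_i \le 1$ are vacuous on $\triangleup_n(t)$, which therefore coincides with the dilate $t \cdot \Delta_n$ of the standard open simplex $\Delta_n \df \{u \in \R^n_{\ge 0} : u_1 + \cdots + u_n < 1\}$. Rescaling $\zeta = tu$ yields
$$\mu_n(t) = t^n \vol(\Delta_n) - 2 t^{n+1} \int_{\Delta_n} u_1\, d\lambda_n,$$
and the classical identities $\vol(\Delta_n) = 1/n!$ and $\int_{\Delta_n} u_1\, d\lambda_n = 1/(n+1)!$ (a Dirichlet-type integral) produce formula (1).

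For (4) and (2) I would derive a clean formula for $\mu_n'$ valid on the whole interval. Writing $\mu_n = F_n - 2 I$ with $F_n(t) \df \vol \triangleup_n(t)$ and $I(t) \df \int_{\triangleup_n(t)} \zeta_1\, d\lambda_n$, the coordinate symmetry of $\square_n$ gives $I(t) = \frac{1}{n} \int_{\triangleup_n(t)} (\zeta_1 + \cdots + \zeta_n)\, d\lambda_n$; slicing by the linear form $s = \sum_i \zeta_i$ and integrating by parts then yields $I(t) = \frac{1}{n}\bigl(t F_n(t) - \int_0^t F_n(s)\, ds\bigr)$, from which
$$\mu_n'(t) = f_n(t)\bigl(1 - 2t/n\bigr), \qquad f_n \df F_n'.$$
Since $f_n > 0$ on $(0, n)$ and $1 - 2t/n$ changes sign exactly at $t = n/2$, property (4) is immediate; then (2) follows from (3), (4) and $\mu_n(0) = \mu_n(n) = 0$.

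The main obstacle will be the strict concavity. Differentiating once more produces
$$\mu_n''(t) = f_n'(t)(1 - 2t/n) - (2/n) f_n(t),$$
and one must verify this is strictly negative on $(0, n)$. Using (3) the problem may be halved to $t \in (0, n/2]$, where $f_n' \ge 0$ and $1 - 2t/n \ge 0$, so the first summand works against the desired inequality and one must prove $f_n'(t)(n - 2t) < 2 f_n(t)$. I would attack this by exploiting the log-concavity of the Irwin--Hall density $f_n$ (as a convolution of $n$ log-concave uniform densities) and, if needed, its explicit piecewise-polynomial form $f_n(t) = \frac{1}{(n-1)!} \sum_{k = 0}^{\lfloor t \rfloor} (-1)^k \binom{n}{k}(t - k)^{n-1}$ in order to extract sufficiently sharp bounds on the logarithmic derivative $f_n'/f_n$ near the boundary of $[0, n]$.
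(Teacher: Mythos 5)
Your arguments for (1), (2), (3) and (4) are correct, and they take a genuinely different route from the paper's. The paper proves (1) and (3) by direct computation and then tries to deduce (2) and (4) from the symmetry together with strict concavity, the latter being asserted by induction on $n$ via the slicing identity $\mu_n(t)=\int_0^{\min\{t,1\}}\mu_{n-1}(t-\zeta_n)\,d\lambda_1(\zeta_n)$. You instead compute the first derivative outright: writing $F_n(t)=\vol\triangleup_n(t)$ and $f_n=F_n'$, your symmetrization and integration by parts give $\mu_n'(t)=f_n(t)\,(1-2t/n)$, so (4) --- and with it (2) --- follows from $f_n>0$ on $(0,n)$ with no appeal to concavity at all. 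That is a real advantage, since the closed form (1) and the (strict) monotonicity in (4) are the only parts of the Lemma the paper actually uses later (for instance in defining $w_{q,r}(\delta)$).

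The gap is the strict concavity, which you leave as a programme, and that programme cannot be completed: the inequality $f_n'(t)(n-2t)<2f_n(t)$ you propose to establish on $(0,n/2]$ fails near $t=0$ as soon as $n\ge 2$, because there $f_n(t)=t^{n-1}/(n-1)!$, so the left-hand side is of order $t^{n-2}$ while the right-hand side is of order $t^{n-1}$. In fact your own formula (1) already refutes the claim: on $(0,1)$ one has $\mu_n''(t)=t^{n-2}/(n-2)!-2t^{n-1}/(n-1)!$, which is positive for small $t$ when $n\ge 2$; concretely $\mu_2(t)=t^2/2-t^3/3$ is strictly convex on $(0,1/2)$, and the midpoint inequality in the footnote fails for $t_1=0$, $t_2=1/2$. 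So the strict concavity asserted in the Lemma is true only for $n=1$, and the paper's inductive argument breaks at exactly this point (a moving-window integral of a concave function need not be concave). No sharpening of the log-concavity of the Irwin--Hall density can rescue a false statement; the honest resolution is to prove (1)--(4) exactly as you do and to drop, or suitably localize, the concavity claim --- which costs nothing downstream, since the subsequent arguments invoke only (1), (2) and the strict monotonicity your derivative formula provides.
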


\begin{proof} (1) and (3) are elementary computations that we leave to the reader. When $n= 1$ statement (1) entails the strict concavity of $\mu_1$. For $n > 1$ arbitrary the strict concavity of $\mu_n$ is proved by induction thanks to the relation
$$ \mu_n(t) = \int_{0}^{\min\{t, 1\}} \mu_{n - 1}(t - \zeta_n) \ d \lambda_{1}(\zeta_n). $$
(2) follows from $\mu_n(0) = \mu_n(n) = 0$ and the concavity of $\mu_n$. (4) follows from (3) and the strict concavity of $\mu_n$.
\end{proof}
\end{npar}

\begin{npar}{Main Theorem} Keeping the notation just introduced, the main technical result of the present paper is the following:

\begin{theorem}[Main Theorem] \label{thm:ApplyingComparisonFormula} Let $K'$ be a finite extension of $K$ of degree $q \ge 2$ and let $S \subset \Vv_K$ be a finite subset. Let $n \ge 2$ be an integer, $t_{\bs{a}}, t_x \ge 0$ non-negative real numbers and let $r = (r_1, \dots, r_n)$ be an $n$-tuple of positive real numbers. If the following inequality is satisfied,
\begin{equation}
\tag{SS} \label{eq:SemiStabilityConditionMainTheorem} \mu_n(u_{q,r}(t_{\bs{a}})) > \mu_n(t_x) + \epsilon_{q, r},
\end{equation}
then, for all $i = 1, \dots, n$ and for all couples $(x_i, a_i)$ made of a $K$-point $x$ of $\P^1$ and $K'$-point $a_i$ of $\P^1$ such that $K(a_i) = K'$, the following inequality holds:
\begin{multline*}
\frac{1}{[K : \Q]}(1 - q \vol \triangleup_n(t_{\bs{a}})) t_{\bs{a}}  \sum_{v \in S} \left( \max_{\sigma : K' \to \C_v} \left\{ \min_{i = 1, \dots, n} \left\{ r_i \m_v(a_i^{(\sigma)}, x_i)  \right\} \right\} \right)
\\
\le C_{q,r}^{(1)}(t_{\bs{a}}, t_x) \sum_{i = 1}^n r_i h(x_i)
+ q C_{q,r}^{(2)}(t_{\bs{a}}, t_x)  \sum_{i = 1}^n r_i h(a_i)
+ C_{q,r}^{(3)}(t_{\bs{a}}, t_x) |r| ,
\end{multline*}
where
\begin{align*}
C_{q,r}^{(1)}(t_{\bs{a}}, t_x) &\df \int_{\triangledown_n(t_x)} \zeta_1 \ d\lambda_n + q \frac{\vol \triangleup(u_{q,r}(t_{\bs{a}})) - \mu_n(t_x)}{2}, \\
C_{q,r}^{(2)}(t_{\bs{a}}, t_x) &\df q \vol \triangleup_n(t_{\bs{a}}) + q \frac{\vol \triangleup(u_{q,r}(t_{\bs{a}})) - \mu_n(t_x)}{2}, \\
C_{q,r}^{(3)}(t_{\bs{a}}, t_x) &\df  \vol \triangleup_n(u_{q,r}(t_{\bs{a}})) \log \sqrt{6} + \vol \triangledown_n(t_x) \log \sqrt{8} + q \vol \triangleup_n(t_{\bs{a}}) \log \sqrt{2 q}.
\end{align*}
\end{theorem}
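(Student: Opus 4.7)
My strategy is to derive the Main Theorem from the Fundamental Formula (Theorem \ref{thm:ComparisonFormula}) of GIT \`a la Arakelov, applied to a carefully chosen $K$-rational point $P$ built from the data $(x_i, a_i)$ and the multi-degree $r$. The combinatorial quantities $\vol \triangleup_n(\cdot)$, $\mu_n(\cdot)$, $\epsilon_{q,r}$, and $u_{q,r}(\cdot)$ should be the numerical invariants produced by two natural vanishing filtrations on the representation $V_r \df \bigotimes_{i=1}^n \Sym^{r_i} K^{2 \vee}$ --- the space of global sections of $\O(r_1) \boxtimes \cdots \boxtimes \O(r_n)$ on $(\P^1_K)^n$ --- equipped with an action of $\SLs_{2,K}$ coupled to the Galois conjugates of $(a_1, \dots, a_n)$ via Weil restriction from $K'$ to $K$.

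First I would set up the GIT problem of Section \ref{sec:FundamentalFormulaToMainTheorem}: the representation $V_r$, the two filtrations indexed by the Dyson indices $t_{\bs{a}}$ and $t_x$ measuring vanishing orders at $(a_1, \dots, a_n)$ and $(x_1, \dots, x_n)$, and the point $P$ in the associated projective space whose homogeneous coordinates encode these vanishing data. I would then read off the numerical condition \eqref{eq:SemiStabilityConditionMainTheorem} as the Hilbert--Mumford translation of geometric semi-stability of $P$: the difference $\mu_n(u_{q,r}(t_{\bs{a}})) - \mu_n(t_x) - \epsilon_{q,r}$ is, up to positive normalization, the excess weight of the one-parameter subgroup diagonalizing both filtrations, $u_{q,r}(t_{\bs{a}})$ being chosen so that the sections vanishing with Dyson index $\ge u_{q,r}(t_{\bs{a}})$ at $(x_1, \dots, x_n)$ still leave some room inside the subspace of sections vanishing with Dyson index $\ge t_{\bs{a}}$ at all $q$ Galois conjugates of $(a_1, \dots, a_n)$. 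The semi-stability itself is deferred to Section \ref{sec:Semi-stability}; here it is admitted.

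Next, I would apply the Fundamental Formula to $P$: it relates the arithmetic height of $P$ to the arithmetic height of its image $\pi(P)$ in the GIT quotient, up to an instability error. The left-hand side of the Main Theorem comes from the lower bound on the height of $\pi(P)$ given by Theorem \ref{thm:LBHeightQuotient} applied to a suitable $\SLs_{2,K}$-invariant expression; after unwinding, this invariant is essentially a product over $i$ of the $v$-adic distances $\d_v(a_i^{(\sigma)}, x_i)^{r_i}$, and the corresponding lower bound produces the factor $\frac{1}{[K:\Q]} t_{\bs{a}}(1 - q \vol \triangleup_n(t_{\bs{a}}))$ times $\sum_{v \in S} \max_\sigma \min_i r_i \m_v(a_i^{(\sigma)}, x_i)$. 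The right-hand side arises from two upper bounds: an upper bound on the arithmetic height of $P$ in terms of $\sum_i r_i h(x_i)$ and $\sum_i r_i h(a_i)$ (Section \ref{sec:UBHeight}, obtained via Proposition \ref{prop:SlopeInequality}), producing the coefficients $C_{q,r}^{(1)}$ and $q C_{q,r}^{(2)}$; and an upper bound on the instability measure (Section \ref{sec:UBInstabilityMeasure}), producing $C_{q,r}^{(3)} |r|$, with the constants $\log \sqrt{6}$, $\log \sqrt{8}$, $\log \sqrt{2q}$ arising respectively from norm estimates on symmetric powers, on Pl\"ucker-type wedge powers, and on the Galois-equivariant evaluation map attached to $K'/K$.

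The hardest step, and the one carrying all the genuine geometric content of Roth's Theorem, is the semi-stability of $P$ deferred to Section \ref{sec:Semi-stability}: this is where Dyson's Lemma of Esnault--Viehweg--Nakamaye enters, giving the sharp upper bound on the vanishing of a multi-degree-$r$ polynomial at a product of $n$ distinct points of $\P^1$. Every other ingredient --- the Fundamental Formula, the slope inequality, the concavity properties of $\mu_n$ recorded in Lemma \ref{lem:PropertiesFunctionMu}, the various height estimates --- is either general Arakelov-GIT machinery or routine linear-algebraic bookkeeping; once semi-stability is in hand, the Main Theorem follows by an essentially mechanical unwinding of the Fundamental Formula against the three estimates above.
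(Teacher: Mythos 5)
Your proposal reproduces the paper's overall architecture — the point $P_r$ built from the two index subspaces $K_r(x,t_x)$ and $K_{q,r}(\bs{a},t_{\bs{a}})$ (the group being $\SLs_{2}^n$ acting on $(\P^1)^n$ with the Grassmannian/Pl\"ucker--Segre embedding; no Weil restriction is involved), the Fundamental Formula, and semi-stability via Dyson's Lemma deferred to Section \ref{sec:Semi-stability}. But your accounting of where the left-hand side comes from is wrong, and as stated the argument cannot produce it. Theorem \ref{thm:LBHeightQuotient} bounds $h_{\min}$ of the quotient, a single number independent of the points $(x_i,a_i)$ and of $S$; in the situation at hand it is applied with the trivial bundles $\ol{\cal{E}}_i = \o_K^2$, so $\muar(\ol{\cal{E}}_i)=0$ and the bound is merely a constant of size $O(|r|)$, feeding only the error term $C^{(3)}_{q,r}$. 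No ``suitable invariant expression'' inside that theorem can reintroduce dependence on the $v$-adic distances. In the paper the proximity term is extracted from the local instability measures at $v \in S$: for each embedding $\sigma$ one exhibits explicit elements $g^{(\sigma)} \in \SLs_2(\C_v)^n$ adapted to the pair $(a^{(\sigma)},x)$, with $|\det g_i^{(\sigma)}|_v = \d_v(a_i^{(\sigma)},x_i)^{-1}$, and the index condition $\ind_{1/r}(f,a^{(\sigma)}) \ge t_{\bs{a}}$ (the Taylor-expansion step) gives $-\iota_v(P_r) \ge k_{q,r}(t_{\bs{a}})\, t_{\bs{a}} \min_i r_i \m_v(a_i^{(\sigma)},x_i) + \cdots$ (Propositions \ref{prop:UBInstabilityMeasure}, \ref{prop:InstabilityMeasureRationalPoint}, \ref{prop:InstabilityMeasureAlgebraicPoint}). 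In your scheme the instability measure is demoted to producing only $C^{(3)}|r|$, so the Fundamental Formula would collapse to an inequality of the form $h_{\ol{\cal{L}}_r}(P_r) \ge h_{\min} - O(|r|)$, which contains no $v$-adic information whatsoever; the theorem's left-hand side would never appear.

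There are also secondary misattributions worth noting, although they are bookkeeping rather than conceptual: the coefficients $C^{(1)}_{q,r}$ and $C^{(2)}_{q,r}$ are not produced by the height bound alone — their common piece $q\bigl(\vol\triangleup_n(u_{q,r}(t_{\bs{a}})) - \mu_n(t_x)\bigr)/2$ comes from the secondary terms of the instability bound, converted into heights through Proposition \ref{prop:ProjectiveLiouvilleInequality} — and $C^{(3)}_{q,r}$ mixes all three estimates: $\log\sqrt{2q}$ arises in the height bound at the algebraic points (Proposition \ref{prop:HeightAlgebraicPoint}, via the evaluation map for $K'/K$), while $\log\sqrt{6}$ and $\log\sqrt{8}$ each combine an archimedean error from the instability bound ($\log\sqrt{3}$, resp.\ $\log 2$) with the $\log\sqrt{2}$ coming from Theorem \ref{thm:LBHeightQuotient}. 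The essential gap to repair is the first one: the $v$-adic distances must enter through upper bounds on $\iota_v(P_r)$, not through the lower bound for the height on the quotient.
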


Theorem \ref{thm:ApplyingComparisonFormula} is interesting only when condition \eqref{eq:SemiStabilityConditionMainTheorem} is close to its limit of validity (that is, $\mu_n(u_{q,r}(t_{\bs{a}})) - \mu_n(t_x) - \epsilon_{q, r}$ is very small) and $1 - q \vol \triangleup_n(t_{\bs{a}})$ tends to zero. This will be the case that will lead us to Theorem \ref{thm:ApplyingComparisonFormula} in the proof that we shall give in the next paragraph.

The fact that this is the only interesting case may be formulated more precisely saying that, as soon as we set $1 - q \vol \triangleup_n(t_{\bs{a}}) = \delta$, Theorem \ref{thm:fundamental_effective_LB} entails Theorem \ref{thm:ApplyingComparisonFormula} with slightly bigger error terms, which are insignificant for applications and arise from simplifications in computations in the proof that follows.
\end{npar}

\subsection{From the Main Theorem to the Main Effective Lower Bound}

In this section we deduce the Main Effective Lower Bound (Theorem \ref{thm:fundamental_effective_LB}) from the Main Theorem (Theorem \ref{thm:ApplyingComparisonFormula}). First of all let us remark that since we supposed $r = (r_1, \dots, r_n)$ such that $r_i  / r_{i + 1} > R_{q, n}(\delta) $ for every $i = 1, \dots, n-1$, we have $ \epsilon_{q, r} < \delta \sqrt[n]{\delta}$.

\begin{npar}{Choice of the parameters} The Main Effective Lower Bound is deduced from Theorem \ref{thm:ApplyingComparisonFormula} setting
$$ t_{\bs{a}} \df t_{q, n}(\delta).$$
Let us also write $\tilde{u}_{q, r}(\delta) \df u_{q, r}(t_{q, n}(\delta))$.

\begin{lem} \label{lem:ProofOfTheMainTheoremEstimationOfTheParameters} With the notation introduced above, we have:
\begin{enumerate}[(1)]
\item $\vol \triangleup_n( \tilde{u}_{q, r}(\delta)) = \delta + \epsilon_{q, r} \le 1 / n!$, hence $\tilde{u}_{q, r}(\delta) \le 1$;
\item $\displaystyle \int_{\triangleup_{n}(\tilde{u}_{q, r}(\delta))} \zeta_1 \ d\lambda_n \le \frac{1}{2} (\delta + \epsilon_{q, r})^{\frac{n+1}{n}}$;
\item $\mu_n(\tilde{u}_{q,r}(\delta)) >  \epsilon_{q, r}$;
\item $\mu_n(\tilde{u}_{q,r}(\delta)) \le  \epsilon_{q, r} + \mu_n( \sqrt[n]{n! \delta})$.
\end{enumerate}
\end{lem}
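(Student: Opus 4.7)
All four statements are elementary consequences of the defining equations of $t_{q,n}(\delta)$ and $u_{q,r}(t)$, the explicit formulas $\vol \triangleup_n(t) = t^n/n!$ and $\int_{\triangleup_n(t)} \zeta_1 \, d\lambda_n = t^{n+1}/(n+1)!$ (valid for $t \in [0,1]$), and the hypotheses $\delta < 1/(2 \cdot n!)$ and $\epsilon_{q,r} < \delta \sqrt[n]{\delta}$ (the latter noted just above the lemma).

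For (1), I would substitute the identity $q \vol \triangleup_n(t_{q,n}(\delta)) = 1 - \delta$ (coming from the definition of $t_{q,n}$) into the equation defining $\tilde u_{q,r}(\delta) = u_{q,r}(t_{q,n}(\delta))$. This yields $1 + \epsilon_{q,r} - q \vol \triangleup_n(t_{q,n}(\delta)) = \delta + \epsilon_{q,r}$, a quantity which is strictly positive and at most $2\delta \le 1/n! \le 1$; the two truncations in the definition of $u_{q,r}$ are therefore inactive, whence $\vol \triangleup_n(\tilde u_{q,r}(\delta)) = \delta + \epsilon_{q,r}$. The identity $\vol \triangleup_n(t) = t^n/n!$ for $t \in [0,1]$ then shows that $\vol \triangleup_n(\tilde u_{q,r}(\delta)) \le 1/n!$ forces $\tilde u_{q,r}(\delta) \le 1$.

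Part (2) reduces to the elementary computation $\int_{\triangleup_n(t)} \zeta_1 \, d\lambda_n = t^{n+1}/(n+1)!$ for $t \le 1$ (by symmetry of the simplex or direct integration). Substituting the identity $\tilde u_{q,r}(\delta)^n = n!(\delta + \epsilon_{q,r})$ from (1) and invoking the AM-GM inequality $(n!)^{1/n} \le (n+1)/2$ for $\{1, 2, \dots, n\}$ yields the stated bound.

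Part (4) admits a clean geometric argument. Since $\epsilon_{q,r} \ge 0$, part (1) gives $\tilde u_{q,r}(\delta) \ge \sqrt[n]{n!\delta}$, so $\triangleup_n(\sqrt[n]{n!\delta}) \subset \triangleup_n(\tilde u_{q,r}(\delta))$. The integral definition of $\mu_n$ together with the pointwise bound $1 - 2\zeta_1 \le 1$ then yields
\[
\mu_n(\tilde u_{q,r}(\delta)) - \mu_n(\sqrt[n]{n!\delta}) = \int_{\triangleup_n(\tilde u_{q,r}(\delta)) \setminus \triangleup_n(\sqrt[n]{n!\delta})} (1 - 2\zeta_1)\, d\lambda_n \le \epsilon_{q,r},
\]
since the volume of the difference equals $(\delta + \epsilon_{q,r}) - \delta = \epsilon_{q,r}$.

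The main obstacle is part (3). Using Lemma~\ref{lem:PropertiesFunctionMu}(1), the equality $\mu_n(\tilde u_{q,r}(\delta)) = (\delta + \epsilon_{q,r})\bigl(1 - 2\tilde u_{q,r}(\delta)/(n+1)\bigr)$ reduces the desired strict inequality to $(n+1)\delta > 2\tilde u_{q,r}(\delta)(\delta + \epsilon_{q,r})$. Plugging in $\tilde u_{q,r}(\delta) = \sqrt[n]{n!(\delta + \epsilon_{q,r})}$ and $\epsilon_{q,r} < \delta\sqrt[n]{\delta}$ transforms this into an inequality in $\delta$ and $n$ alone, which must follow from $\delta < 1/(2 \cdot n!)$. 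The crude estimate $\tilde u_{q,r}(\delta) \le 1$ suffices to produce $(n+1)\delta > 4\delta$ for $n \ge 4$; for $n=2,3$ one has to refine it using $\tilde u_{q,r}(\delta) \le \sqrt[n]{2\delta \cdot n!}$, after which the condition reduces, for instance, to $\sqrt{3\delta} < 1$ when $n = 2$, which holds since $\delta < 1/4$.
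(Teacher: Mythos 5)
Your proof is correct and follows essentially the same route as the paper: all four parts are derived from the explicit formulas $\vol\triangleup_n(t)=t^n/n!$, $\int_{\triangleup_n(t)}\zeta_1\,d\lambda_n=t^{n+1}/(n+1)!$ and $\mu_n(t)=\frac{t^n}{n!}\bigl(1-\frac{2t}{n+1}\bigr)$ on $[0,1]$, together with the hypotheses $\delta<1/(2\cdot n!)$ and $\epsilon_{q,r}<\delta\sqrt[n]{\delta}$, exactly as in the paper (whose proof of (3)--(4) simply invokes ``the explicit expression and the hypotheses''). The only variations are cosmetic: for (4) you replace the one-line algebraic comparison $(\delta+\epsilon_{q,r})^{(n+1)/n}\ge\delta^{(n+1)/n}$ by an equivalent monotonicity-of-the-integral argument, and for (3) you spell out (correctly, if with slightly conservative constants for small $n$) the elementary case analysis the paper leaves to the reader.
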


\begin{proof} (1) follows from the definitions of $t_{q, n}(\delta)$ and $\tilde{u}_{q, r}(\delta)$ and the hypotheses on $\delta$ and $\epsilon_{q, r}$. Since $u_{q, r}(
\delta) \le 1$ we have $\tilde{u}_{q, r}(\delta) = \sqrt[n]{n!(\delta + \epsilon_{q, r})}$. 

(2) The latter expression of $\tilde{u}_{q, r}(\delta)$ gives
$$  \int_{\triangleup_{n}(\tilde{u}_{q, r}(\delta))} \zeta_1 \ d\lambda_n = \frac{\tilde{u}_{q, r}(\delta)^{n+1}}{(n+1)!} = (\delta + \epsilon_{q, r})^{\frac{n +1}{n}} \frac{\sqrt[n]{n!}}{n+1},$$
and we conclude by noticing $\sqrt[n]{n!} / (n+1) \le 1/2$ for all $n \ge 1$. 

(3) and (4) follow from the explicit expression given by Lemma \ref{lem:PropertiesFunctionMu} (1),
$$ \mu_{n}(\tilde{u}_{q, r}(\delta)) =  (\delta + \epsilon_{q, r}) \left( 1 - \frac{2}{n+1} \sqrt[n]{n!(\delta + \epsilon_{q, r})} \right)$$ and the hypotheses on $\delta$ and $\epsilon_{q, r}$. 
\end{proof}

For what concerns the choice of the parameter $t_x$, roughly speaking, we stress the validity of condition \eqref{eq:SemiStabilityConditionMainTheorem} to its limit. More precisely, since the function $\mu_n$ is strictly decreasing on $[n/2, n]$,  there exists a unique real number $w_{q, r}(\delta) \in [n /2 , n[ $ such that
$$ \mu_n(\tilde{u}_{q, r}(\delta))  =   \mu_n(w_{q, r}(\delta)) + \epsilon_{q, r}.$$

\begin{lem} \label{lem:EstimationCombinatorialTerms}With the notation introduced above we  have:
\begin{enumerate}[(1)]
\item $ \displaystyle \vol \triangledown_n(w_{q, r}(\delta)) \le \delta$;
\item $\displaystyle \int_{\triangledown_n(w_{q, r}(\delta))} \zeta_1 \ d\lambda_n \le \delta $;
\item $\displaystyle \vol \triangleup_n(\tilde{u}_{q, r}(\delta)) - \mu_n(w_{q, r}(\delta))  \le 3 \delta \sqrt[n]{\delta}$.
\end{enumerate}
\end{lem}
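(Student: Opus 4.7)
The plan is to reduce everything to an analysis of $w_{q,r}(\delta)$ through the symmetry and monotonicity properties of $\mu_n$. Set $s := n - w_{q,r}(\delta)$. By Lemma \ref{lem:PropertiesFunctionMu}(3), $\mu_n(w_{q,r}(\delta)) = \mu_n(s)$, and by strict monotonicity of $\mu_n$ on $[0,n/2]$ (Lemma \ref{lem:PropertiesFunctionMu}(4)) combined with the defining identity $\mu_n(w_{q,r}(\delta)) = \mu_n(\tilde u_{q,r}(\delta)) - \epsilon_{q,r} \le \mu_n(\tilde u_{q,r}(\delta))$ and the fact that both $s$ and $\tilde u_{q,r}(\delta)\le 1\le n/2$ lie in $[0,n/2]$ (by Lemma \ref{lem:ProofOfTheMainTheoremEstimationOfTheParameters}(1)), one gets $s \le \tilde u_{q,r}(\delta) \le 1$. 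The coordinate-wise reflection $\zeta \mapsto (1-\zeta_1,\dots,1-\zeta_n)$ is a measure-preserving involution of $\square_n$ that swaps $\triangleup_n(t)$ and $\triangledown_n(n-t)$; in particular
\[
\vol \triangledown_n(w_{q,r}(\delta)) = \vol \triangleup_n(s) = \frac{s^n}{n!}, \qquad \int_{\triangledown_n(w_{q,r}(\delta))} \zeta_1\, d\lambda_n = \int_{\triangleup_n(s)} (1-\zeta_1)\, d\lambda_n.
\]

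For (1), I would substitute the closed form $\mu_n(t) = (t^n/n!)(1 - 2t/(n+1))$ valid on $[0,1]$ (Lemma \ref{lem:PropertiesFunctionMu}(1)) on both sides of $\mu_n(s) = \mu_n(\tilde u_{q,r}(\delta)) - \epsilon_{q,r}$ and use $\vol \triangleup_n(\tilde u_{q,r}(\delta)) = \delta + \epsilon_{q,r}$ to rewrite this as
\[
\frac{s^n}{n!}\Bigl(1 - \frac{2s}{n+1}\Bigr) = \delta - \frac{2(\delta+\epsilon_{q,r})\tilde u_{q,r}(\delta)}{n+1} \le \delta\Bigl(1 - \frac{2\tilde u_{q,r}(\delta)}{n+1}\Bigr).
\]
Since $s \le \tilde u_{q,r}(\delta)$ gives $1 - 2s/(n+1) \ge 1 - 2\tilde u_{q,r}(\delta)/(n+1) > 0$, these positive factors cancel and leave $s^n/n! \le \delta$, which is (1). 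Claim (2) is then immediate from the reflection identity, which yields $\int \zeta_1\, d\lambda_n = s^n/n! - s^{n+1}/(n+1)! \le s^n/n! \le \delta$.

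For (3), the definition $\mu_n(t) = \vol \triangleup_n(t) - 2\int_{\triangleup_n(t)}\zeta_1\, d\lambda_n$ together with $\mu_n(w_{q,r}(\delta)) = \mu_n(\tilde u_{q,r}(\delta)) - \epsilon_{q,r}$ gives the key identity
\[
\vol \triangleup_n(\tilde u_{q,r}(\delta)) - \mu_n(w_{q,r}(\delta)) = 2\!\!\int_{\triangleup_n(\tilde u_{q,r}(\delta))} \!\!\zeta_1\, d\lambda_n + \epsilon_{q,r}.
\]
Lemma \ref{lem:ProofOfTheMainTheoremEstimationOfTheParameters}(2) bounds the integral by $\tfrac{1}{2}(\delta+\epsilon_{q,r})^{(n+1)/n}$. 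I would then bound $\delta + \epsilon_{q,r} \le \delta(1+\sqrt[n]{\delta})$ via $\epsilon_{q,r} < \delta\sqrt[n]{\delta}$, apply Bernoulli's inequality $(1+x)^{1/n} \le 1 + x/n$ to get $(1+\sqrt[n]{\delta})^{(n+1)/n} \le 1 + \sqrt[n]{\delta}(1+1/n) + \delta^{2/n}/n$, and finally use the standing constraint $\delta < 1/(2\cdot n!)$, which bounds $\sqrt[n]{\delta} \le 1/\sqrt[n]{2\cdot n!}$, to verify that the bracketed factor is at most $2$ for every $n \ge 2$. This yields $(\delta+\epsilon_{q,r})^{(n+1)/n} \le 2\delta\sqrt[n]{\delta}$, and adding the residual $\epsilon_{q,r} < \delta\sqrt[n]{\delta}$ gives (3).

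The only delicate point is (3): the constant $3$ is genuinely close to tight, and the naive substitution $\delta + \epsilon_{q,r} \le 2\delta$ produces a bound of the shape $2^{(n+1)/n} + 1$, which already exceeds $3$ at $n = 2$. One must instead exploit both the strict bound $\epsilon_{q,r} < \delta\sqrt[n]{\delta}$ provided by the ratio hypothesis $r_i/r_{i+1} > R_{q,n}(\delta)$ and the smallness of $\delta$ enforced by $\delta < 1/(2\cdot n!)$; Bernoulli's inequality is exactly what makes the constant work uniformly in $n$.
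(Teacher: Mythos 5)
Your proof is correct and follows essentially the same route as the paper: part (3) rests on the identical identity $\vol \triangleup_n(\tilde{u}_{q,r}(\delta)) - \mu_n(w_{q,r}(\delta)) = 2\int_{\triangleup_n(\tilde{u}_{q,r}(\delta))} \zeta_1 \, d\lambda_n + \epsilon_{q,r}$ followed by the bound of Lemma \ref{lem:ProofOfTheMainTheoremEstimationOfTheParameters} (2), while (1)--(2) use the same symmetry and monotonicity properties of $\mu_n$. The only differences are cosmetic: in (1) you compare $s = n - w_{q,r}(\delta)$ with $\tilde{u}_{q,r}(\delta)$ directly through the closed form of $\mu_n$ on $[0,1]$ rather than via the paper's intermediate bound $w_{q,r}(\delta) \ge n - \sqrt[n]{n!\,\delta}$, and in (3) you make explicit (correctly, via Bernoulli and $\delta < 1/(2\cdot n!)$) the estimate $(1+\sqrt[n]{\delta})^{(n+1)/n} \le 2$ that the paper leaves to the reader.
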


\begin{proof} (1) Indeed Lemma \ref{lem:ProofOfTheMainTheoremEstimationOfTheParameters} (3) entails $w_{q, r}(\delta) \ge n - \sqrt[n]{n! \delta}$. (2) This follows from (1). Indeed for every $t \in [n- 1, n]$ we have
$$ \int_{\vol \triangledown_n(t)} \zeta_1 \ d \lambda_n \le \vol \triangledown_n(t). $$

(3) By definition of $w_{q, r}(\delta)$ and by Definition \ref{def:DefinitionOfMoreCombinatorics} (2) we have:
\begin{align*}
\vol \triangleup_n(\tilde{u}_{q, r}(\delta)) - \mu_n(w_{q, r}(\delta))  &=   
\vol \triangleup_n(\tilde{u}_{q, r}(\delta)) + \epsilon_{q, r} -  \mu_n(\tilde{u}_{q, r}(\delta))  \\
&= 2 \int_{\triangleup_n(\tilde{u}_{q, r}(\delta))} \zeta_1 \ d \lambda_n  +  \epsilon_{q, r} \\
&\le (\delta + \epsilon_{q, r})^{\frac{n + 1}{n}} + \epsilon_{q, r},
\end{align*}
where in the last inequality we used Lemma \ref{lem:ProofOfTheMainTheoremEstimationOfTheParameters} (2). The result follows from the hypotheses $\epsilon_{q, r} < \delta \sqrt[n]{\delta}$ and $\delta < 1 / (2 \cdot n!)$. 
\end{proof}
\end{npar}

\begin{npar}{Application of the Main Theorem} Lemma \ref{lem:ProofOfTheMainTheoremEstimationOfTheParameters} (3) permits us to apply Theorem \ref{thm:ApplyingComparisonFormula}  with $t_{\bs{a}} = t_{q, n}(\delta)$ and $ t_x \in \left] w_{q, r}(\delta), n \right[$ close enough to $w_{q, r}(\delta)$. Letting $t_x$ tend to $w_{q, r}(\delta)$ and taking in account the estimates given by Lemma \ref{lem:EstimationCombinatorialTerms} we find:
\begin{multline*}
\frac{1}{[K : \Q]} \delta t_{q, n}(\delta) \sum_{v \in S} \left(  \max_{\sigma : K' \to \C_v}\left\{ \min_{i = 1, \dots, n} \left\{ r_i \m_v(a_i^{(\sigma)}, x_i) \right\} \right\} \right) \\
\le \delta \left(1 + \frac{3}{2}q \sqrt[n]{\delta} \right) \sum_{i = 1}^n r_i h(x_i) + q \sum_{i = 1}^n r_i h(a_i) + |r| C_{q, r}(\delta),
\end{multline*}
where we set $C_{q, r}(\delta) \df \delta( 1 + \sqrt[n]{\delta}) \log \sqrt{6} + \delta \log \sqrt{8} + (1 - \delta) \log \sqrt{2 q}$. This concludes the proof. \qed
\end{npar}

\section{Geometric Invariant Theory and Arakelov Geometry} \label{sec:GITArakelov}

\subsection{The Fundamental Formula}

Let $K$ be a number field and $\o_K$ its ring of integers.  

\begin{np} Let $\cal{X}$ be a projective and flat $\o_K$-scheme endowed with the action of an $\o_K$-reductive group\footnote{Over an algebraically closed field $k$ an algebraic group $G$ --- \em{i.e.} a smooth finite type affine $k$-group scheme --- is said to be \em{reductive} if it is connected and every normal smooth connected unipotent subgroup is trivial. Over an arbitrary scheme $S$ a group scheme $G$ is said to be \em{reductive} (or $G$ is a \em{$S$-reductive group}) if it satisfies the following conditions:
\begin{enumerate}
\item $G$ is affine, smooth and of finite type over $S$;
\item for all $s \in S$, the $\ol{s}$-group scheme $G_{\ol{s}} \df G \times_S \ol{s}$ is a reductive algebraic group (where $\ol{s}$ is the spectrum of an algebraic closure of the residue field $\kappa(s)$).
\end{enumerate}

Examples of $S$-reductive groups are $\GLs_{n, S}$, $\SLs_{n, S}$ and their products. In this paper we are interested in the $\o_K$-reductive group $\SLs_{2, \o_K}^n$. We refer the interested reader to \cite[Chapter IV]{borel} for the theory over a field and \cite{sga3, conrad} for the theory over an arbitrary scheme.} $\cal{G}$ and let $\cal{L}$ be a \em{very ample} $\cal{G}$-linearised invertible sheaf on $\cal{X}$. The global sections $\cal{E} = \Gamma(\cal{X}, \cal{L})$ are naturally endowed with a linear action of $\cal{G}$. Thus the reductive group $\cal{G}$ acts naturally on $\P(\cal{E}^\vee)$ and the invertible sheaf $\O_{\cal{E}^\vee}(1)$ is naturally $\cal{G}$-linearised. The closed embedding $j : \cal{X} \into \P(\cal{E}^\vee)$ and the natural isomorphism $j^\ast \O_{\cal{E}^\vee} (1)\iso \cal{L}$ are $\cal{G}$-equivariant.
\end{np}

\begin{np} A point $x \in \cal{X}$ is said to be \em{semi-stable} if there exists, for a sufficiently big $d \ge 1$, a $\cal{G}$-invariant global section $s \in \Gamma(\cal{X}, \cal{L}^{\otimes d})$ that does not vanish at $x$. 

Let us consider the $\o_K$-graded algebra of finite type $\cal{A} \df \bigoplus_{d \ge 0} \Gamma(\cal{X}, \cal{L}^{\otimes d})$. According to a theorem of Seshadri \cite[II.4, Theorem 4]{seshadri77} the graded algebra $$\cal{A}^{\cal{G}} = \bigoplus_{d \ge 0} \Gamma(\cal{X}, \cal{L}^{\otimes d})^{\cal{G}}$$ of $\cal{G}$-invariants of $\cal{A}$ is an $\o_K$-algebra of finite type and projective scheme $\cal{Y} \df \Proj \cal{A}^{\cal{G}}$ is the categorical quotient of the open subset $\cal{X}^\ss$ of semi-stable points  of $\cal{X}$ (with respect to the action of reductive group $\cal{G}$ and the invertible sheaf $\cal{L}$). For this reason we denote it  sometimes by $\quotss{\cal{X}}{\cal{G}}$ (or by $\quotss{(\cal{X}, \cal{L})}{\cal{G}}$ to keep track of the polarisation). Let $\pi : \cal{X}^\ss \to \cal{Y}$ be the quotient morphism. Since $\cal{Y}$ is of finite type, for every sufficiently divisible integer $D \ge 1$, there exists an ample invertible sheaf $\cal{M}_D$ on $\cal{Y}$ and a $\cal{G}$-equivariant isomorphism of invertible sheaves
$$ \phi_D : \pi^\ast \cal{M}_D \too \cal{L}^{\otimes D}_{\rvert \cal{X}^\ss}.$$
\end{np}

\begin{np} Let $\gamma : K \to \C$ be an embedding. Let $\| \cdot \|_{\cal{E}, \gamma}$ be an hermitian norm on $\cal{E} \otimes_\gamma \C$ which is invariant under the action of a maximal compact subgroup of $\cal{G}_\gamma(\C)$. We suppose that the family of norms $\{\| \cdot \|_{\cal{E}, \gamma} \}_{\gamma : K \to \C}$ is compatible under complex conjugation. 

Let $\| \cdot \|_{\O(1), \gamma}$ be the Fubini-Study metric on the invertible sheaf $\O_{\cal{E}^\vee}(1)$ associated to the hermitian norm $\| \cdot \|_{\cal{E}^\vee, \gamma}$ and let $\| \cdot \|_{\cal{L}, \gamma}$ be its restriction to $\cal{L}$. We denote by $\ol{\cal{L}}$ the hermitian line bundle on $\cal{X}$ obtained endowing $\cal{L}$ with the family of metrics $\{ \| \cdot \|_{\cal{L}, \gamma} \}_{\gamma : K \to \C}$.
For every $y \in \cal{Y}_\gamma(\C)$ and every $t \in y^\ast \cal{M}_D$ we set
$$ \| t \|_{\cal{M}_D, \gamma} (y) \df \sup_{\substack{x \in \cal{X}^\ss_\gamma(\C) \\ \pi(x) = y}} \| \phi_D(\pi^\ast t)\|_{\cal{L}^{\otimes D}, \gamma}(x). $$

\begin{lem} \label{lem:ExtensionOfGInvariantGlobalSections} Let $f \in \Gamma(\cal{Y}, \cal{M}_D)$ be a global section. 
\begin{enumerate}[(1)]
\item There exists a unique $\cal{G}$-invariant global section $\tilde{f} \in \Gamma(\cal{X}, \cal{L}^{\otimes D})$ which vanishes identically on $\cal{X} - \cal{X}^\ss$ and such that $\phi_D(\pi^\ast f) = \tilde{f}_{\rvert \cal{X}^\ss}$.
\item For every complex embedding $\gamma : K \to \C$ we have
$$ \sup_{y \in \cal{Y}_\gamma(\C)} \| f \|_{\cal{M}_D, \gamma}(y) = \sup_{x \in \cal{X}_\gamma(\C)} \|\tilde{f} \|_{\cal{L}^{\otimes D}, \gamma}(x). $$
\end{enumerate}
\end{lem}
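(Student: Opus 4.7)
The plan is to identify the space $\Gamma(\cal{Y}, \cal{M}_D)$ with the space $\Gamma(\cal{X}, \cal{L}^{\otimes D})^{\cal{G}}$ of $\cal{G}$-invariant global sections on the whole of $\cal{X}$ and then to exploit the defining property of the unstable locus.

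First I would recall that by the very construction $\cal{Y} = \Proj \cal{A}^{\cal{G}}$ and by the definition of $\cal{M}_D$, the isomorphism $\phi_D : \pi^{\ast} \cal{M}_D \to \cal{L}^{\otimes D}_{\rvert \cal{X}^\ss}$ comes from the inclusion $\cal{A}^{\cal{G}} \hookrightarrow \cal{A}$. Hence, for $D$ sufficiently divisible (which we may assume after passing to a Veronese subalgebra), there is a canonical isomorphism
\[
\Gamma(\cal{Y}, \cal{M}_D) \iso (\cal{A}^{\cal{G}})_D = \Gamma(\cal{X}, \cal{L}^{\otimes D})^{\cal{G}},
\]
and its composition with restriction to $\cal{X}^\ss$ coincides with $f \mapsto \phi_D(\pi^\ast f)$. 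This assigns to $f$ a $\cal{G}$-invariant section $\tilde f \in \Gamma(\cal{X}, \cal{L}^{\otimes D})$ whose restriction to $\cal{X}^\ss$ is precisely $\phi_D(\pi^\ast f)$.

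To prove the vanishing of $\tilde f$ on $\cal{X} - \cal{X}^\ss$, I would invoke directly the definition of semi-stability: a point $x \in \cal{X}$ fails to be semi-stable if and only if \emph{every} $\cal{G}$-invariant section of every power of $\cal{L}$ vanishes at $x$; in particular $\tilde f(x) = 0$ for all such $x$. Uniqueness of $\tilde f$ is then automatic, since any two candidates would agree on the open set $\cal{X}^\ss$ and both vanish on its complement, so they coincide everywhere. This establishes part (1).

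Part (2) is now a formal computation. By the definition of $\|\cdot\|_{\cal{M}_D, \gamma}$ and the fact that $\pi : \cal{X}^\ss_\gamma(\C) \to \cal{Y}_\gamma(\C)$ is surjective (by the quotient property), we have
\[
\sup_{y \in \cal{Y}_\gamma(\C)} \|f\|_{\cal{M}_D, \gamma}(y) = \sup_{x \in \cal{X}^\ss_\gamma(\C)} \|\phi_D(\pi^\ast f)\|_{\cal{L}^{\otimes D}, \gamma}(x) = \sup_{x \in \cal{X}^\ss_\gamma(\C)} \|\tilde f\|_{\cal{L}^{\otimes D}, \gamma}(x),
\]
and since $\tilde f$ vanishes on $\cal{X} - \cal{X}^\ss$ and the norm is non-negative, this last supremum equals $\sup_{x \in \cal{X}_\gamma(\C)} \|\tilde f\|_{\cal{L}^{\otimes D}, \gamma}(x)$. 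The main subtlety I anticipate is only in (1), namely making sure the identification $\Gamma(\cal{Y}, \cal{M}_D) \iso \Gamma(\cal{X}, \cal{L}^{\otimes D})^{\cal{G}}$ is genuinely an equality of global sections rather than merely of a localized/saturated piece; this is a standard consequence of the Proj construction once $D$ is taken sufficiently divisible, with some minor care since $\cal{X}$ lives over $\o_K$ rather than over a field.
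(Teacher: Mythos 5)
Your argument is essentially the paper's own proof, which simply declares part (1) to be a reformulation of the definitions of $\cal{Y}$ and $\cal{M}_D$ (via $\cal{Y} = \Proj \cal{A}^{\cal{G}}$ and the inclusion $\cal{A}^{\cal{G}} \subset \cal{A}$) and deduces (2) from (1) exactly as you do, using the definition of $\| \cdot \|_{\cal{M}_D, \gamma}$, the surjectivity of $\pi$ on semi-stable points, and the vanishing of $\tilde f$ on the unstable locus. Your write-up is just a more explicit version, and the Proj-level subtlety you flag (identifying $\Gamma(\cal{Y}, \cal{M}_D)$ with $(\cal{A}^{\cal{G}})_D$ for suitable $D$) is precisely the point the paper also leaves implicit.
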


In particular we have $\| t \|_{\cal{M}_D, \gamma} (y) < + \infty$ for every $t \in y^\ast \cal{M}_D$. Therefore the function $\| \cdot \|_{\cal{M}_D, \gamma}$ defines a metric on the invertible sheaf $\cal{M}_D$.

\begin{proof} (1) is a reformulation of the definition of $\cal{Y}$ and $\cal{M}_D$. (2) follows from (1).
\end{proof}
\end{np}

\begin{np} We denote by $\ol{\cal{M}}_D$ the associated hermitian invertible sheaf on $\cal{Y}$ and by $h_{\ol{\cal{M}}_D}$ the height function given by $\ol{\cal{M}}_D$ (see \cite[2.7.17]{bombieri-gubler}). Let us define
$$ h_{\min}\left( \quotss{(\cal{X}, \ol{\cal{L}})}{\cal{G}}\right) \df  \inf_{Q \in \cal{Y}(\ol{\Q})} \frac{1}{D} h_{\ol{\cal{M}}_D}(Q) \in [-\infty, + \infty[.$$
(which is clearly independent of $D$).

\begin{lem} \label{lem:LowestHeightOnTheQuotientIsReal} We have $h_{\min}\left( \quotss{(\cal{X}, \ol{\cal{L}})}{\cal{G}}\right) > -\infty$. \end{lem}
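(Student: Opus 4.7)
The strategy is to reduce the claim to the classical non-negativity of the Weil height on projective space, by comparing the hermitian metric $\|\cdot\|_{\cal{M}_D}$ with a Fubini--Study metric pulled back through a closed embedding of $\cal{Y}$ into a projective space.

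\textbf{Setup.} Since $\cal{M}_D$ is ample and $\cal{Y}$ is projective over $\o_K$, upon replacing $D$ by a sufficiently divisible multiple (legitimate because $h_{\min}$ is independent of $D$) I may assume $\cal{M}_D$ is very ample; choose generating global sections $s_0,\ldots,s_m\in\Gamma(\cal{Y},\cal{M}_D)$ defining a closed embedding $\iota:\cal{Y}\hookrightarrow\P^m_{\o_K}$ with $\iota^\ast\O_{\P^m}(1)\iso\cal{M}_D$ (under which $s_i$ corresponds to $\iota^\ast X_i$). Endow $\O_{\P^m}(1)$ with the Fubini--Study metric associated to the standard hermitian norm on $\C^{m+1}$ making $X_0,\ldots,X_m$ orthonormal, and let $\|\cdot\|'$ denote its pullback along $\iota$. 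The product formula (together with the pointwise inequality between the $\ell^2$-norm and the sup-norm on $\C^{m+1}$) yields
$$h_{(\cal{M}_D,\|\cdot\|')}(Q)=h_{\O(1)_{FS}}(\iota(Q))\ge 0 \quad \text{for every } Q\in\cal{Y}(\ol{\Q}).$$

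\textbf{Uniform metric comparison.} The heart of the proof is a uniform bound $\|\cdot\|_{\cal{M}_D,\gamma}\le C_\gamma\|\cdot\|'_\gamma$ on $\cal{Y}_\gamma(\C)$ for each complex embedding $\gamma:K\to\C$. Lemma \ref{lem:ExtensionOfGInvariantGlobalSections}(2) applied to each $s_i$ produces a $\cal{G}$-invariant lift $\tilde{s}_i\in\Gamma(\cal{X},\cal{L}^{\otimes D})$ satisfying
$$A_{i,\gamma}\df\sup_{y\in\cal{Y}_\gamma(\C)}\|s_i\|_{\cal{M}_D,\gamma}(y)=\sup_{x\in\cal{X}_\gamma(\C)}\|\tilde{s}_i\|_{\cal{L}^{\otimes D},\gamma}(x)<+\infty,$$
the finiteness coming from the compactness of $\cal{X}_\gamma(\C)$. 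Meanwhile the Fubini--Study normalisation forces $\sum_{i=0}^m\|s_i\|'_\gamma(y)^2=1$ at every $y\in\cal{Y}_\gamma(\C)$, so at every such $y$ there exists an index $i_y$ with $\|s_{i_y}\|'_\gamma(y)\ge(m+1)^{-1/2}$. Since the pointwise ratio $\|\cdot\|_{\cal{M}_D,\gamma}(y)/\|\cdot\|'_\gamma(y)$ is independent of the choice of non-vanishing section, evaluating it through $i_y$ gives the announced bound with $C_\gamma\df\sqrt{m+1}\cdot\max_i A_{i,\gamma}$.

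\textbf{Conclusion and main obstacle.} With this bound at hand, the standard Arakelov comparison of heights attached to two hermitian structures on the same line bundle yields
$$h_{\ol{\cal{M}}_D}(Q)\ge h_{(\cal{M}_D,\|\cdot\|')}(Q)-\frac{1}{[K:\Q]}\sum_{\gamma:K\to\C}\log C_\gamma\ge -\frac{1}{[K:\Q]}\sum_{\gamma:K\to\C}\log C_\gamma,$$
uniformly in $Q\in\cal{Y}(\ol{\Q})$; dividing by $D$ gives the desired lower bound on $h_{\min}$. The main technical point is precisely the uniform estimate of the previous paragraph: a priori, $\|\cdot\|_{\cal{M}_D,\gamma}(y)$ is defined as a supremum over the fiber $\pi^{-1}(y)\cap\cal{X}^\ss_\gamma(\C)$ sitting inside the possibly non-compact open subset $\cal{X}^\ss_\gamma(\C)$, so no naive compactness argument gives finiteness. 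Lemma \ref{lem:ExtensionOfGInvariantGlobalSections}(2) circumvents this obstacle by identifying that supremum with the sup-over-all-of-$\cal{X}_\gamma(\C)$ of the $\cal{G}$-invariant extension $\tilde{s}_i$, where the compactness of the full ambient space $\cal{X}_\gamma(\C)$ finally enters.
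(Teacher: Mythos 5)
Your proof is correct and rests on exactly the same key input as the paper's: Lemma \ref{lem:ExtensionOfGInvariantGlobalSections} (2) converts the supremum of $\| s_i \|_{\cal{M}_D, \gamma}$ over $\cal{Y}_\gamma(\C)$ into a supremum of the invariant extension over the compact space $\cal{X}_\gamma(\C)$, which is the whole point. The paper finishes more directly --- for each $Q$ it evaluates the height at a generating section not vanishing at $Q$, discards the non-negative finite-place contribution, and applies that same uniform bound --- so your Fubini--Study comparison (the closed embedding, the identity $\sum_i \| s_i \|'_\gamma(y)^2 = 1$ and the ratio trick yielding $\| \cdot \|_{\cal{M}_D, \gamma} \le C_\gamma \| \cdot \|'_\gamma$) is a correct but dispensable detour.
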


\begin{proof} Let $D$ be such that $\cal{M}_D$ is very ample and let $t_1, \dots, t_N \in \Gamma(\cal{Y}, \cal{M}_D)$ be a set of generators of the global sections. Let $K'$ be a finite extension of $K$, let $Q$ be a $K'$-point of $\cal{Y}$ and $\epsilon_Q$ the associated $\o_{K'}$-point of $\cal{Y}$ given by the valuative criterion of properness. There exists $i \in \{ 1, \dots, N\}$ such that $t_i$ does not vanish at $Q$. By definition of the height we have
\begin{align*} 
[K' : \Q] h_{\ol{\cal{M}}_D}(Q)&\df  \log \# \left( \epsilon_Q^\ast \cal{M}_D  / (\epsilon_Q^\ast t_i \cdot \epsilon_Q^\ast \cal{M}_D) \right) - \sum_{\gamma : K' \to \C} \log \| t_i \|_{\cal{M}_D, \gamma}(Q) \\
&\ \ge - [K' : K] \sum_{\gamma : K \to \C} \left(  \sup_{y \in \cal{Y}_\gamma(\C)} \log \| t_i \|_{\cal{M}_D, \gamma}(y) \right).
\end{align*}
It suffices to show that for every $i = 1, \dots, N$ and every $\gamma : K \to \C$ the function $\| t_i \|_{\cal{M}_D, \gamma}$ is uniformly bounded on $\cal{Y}_\gamma(\C)$. This follows from Lemma \ref{lem:ExtensionOfGInvariantGlobalSections} (2) and concludes the proof.
\end{proof}

\begin{rem} Proving this Lemma would have been unnecessary if knew that the metric $\| \cdot \|_{\cal{M}_D, \gamma}$ was continuous. This is actually the case but in an attempt to be self-contained we avoided the recourse to such a result (see Kirwan \cite[Chapter 8, \S2]{git}, Burnol \cite{burnol92} and Schwarz \cite[Chapter 5]{schwarz} for the continuity in this setting or Zhang \cite[Theorem 4.10]{ZhangSemistableVarieties} and \cite[Th\'{e}or\`{e}me II.2.18]{maculan_these} for a more general result).
\end{rem}
\end{np}

\begin{npar}{Instability measure} \label{par:DefinitionInstabilityMeasure} Let $v$ be a place of $K$. If $v$ is non-archimedean we denote by $\| \cdot \|_{\cal{L}, v}$ (resp. $\| \cdot \|_{\cal{M}_D, v}$) the continuous and bounded metric induced by the integral model $\cal{L}$ (resp. $\cal{M}_D$)\footnote{Let $x$ be a $\C_v$-point of $\cal{X}$. Since $\cal{X}$ is proper, the $\C_v$-point $x$ gives rise to a $\ol{\o}_v$-point $\epsilon_x$ of $\cal{X}$, where $\ol{\o}_v$ is the ring of integers of $\C_v$. The invertible sheaf $\epsilon_x^\ast \cal{L}$ is a free $\ol{\o}_v$-module of rank $1$ : choose a basis $s_0$. Every other element $s \in x^\ast \cal{L}$ can be written in a unique way as $s = \lambda s_0$ with $\lambda \in \C_v$. We set
$$ \| s \|_{\cal{L}, v} (x) \df |\lambda|_v.$$
Clearly this does not depend on the chosen basis $s_0$ of $\epsilon_x^\ast \cal{L}$. See also \cite[Example 2.7.20]{bombieri-gubler}.}. For a $\C_v$-point $x \in \cal{X}(\C_v)$ we define its \em{$v$-adic instability measure}:
$$\iota_v(x) \df - \log \sup_{g \in \cal{G}(\C_v)} \frac{\| g \cdot s\|_{\cal{L}, v}(g \cdot x)}{\| s \|_{\cal{L}, v}(x)} \in [-\infty, 0]. $$
where $s \in x^\ast \cal{L}$ is a non-zero section. 
Clearly this does not depend on the chosen section $s$. If $\hat{x}$ is a generator of the line $j(x) \in \P(\cal{E}^\vee)(\C_v)$ we have
$$ \iota_v(x) = \log \inf_{g \in \cal{G}(\C_v)} \frac{\| g \cdot \hat{x}\|_{\cal{E}^\vee, v}}{\| \hat{x}\|_{\cal{E}^\vee, v}}.$$

\begin{prop} Let $v$ be a place of $K$. For every $\C_v$-point $x \in \cal{X}^\ss(\C_v)$ and every non-zero section $t \in \pi(x)^\ast \cal{M}_D$ we have
$$\iota_v(x) \ge  - \frac{1}{D} \log \frac{\| t \|_{\cal{M}_D, v}(\pi(x))}{ \| \pi^\ast t \|_{\cal{L}^{\otimes D}, v}(x)}.$$
\end{prop}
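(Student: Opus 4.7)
The aim is to compute the instability measure using a privileged generator of the fibre $x^\ast \cal{L}^{\otimes D}$, namely $s \df \phi_D(\pi^\ast t)$. The reason this is useful is that under the $\cal{G}$-equivariance of $\phi_D$ and the $\cal{G}$-invariance of $\pi$, the fibre of $\phi_D(\pi^\ast t)$ at any translate $g \cdot x$ coincides with $g \cdot s$; morally, $s$ is the value at $x$ of the $\cal{G}$-invariant section of $\cal{L}^{\otimes D}$ corresponding to $t$, so the norm ratio appearing in $\iota_v$ is controlled by a single comparison between the norm of $t$ on $\cal{Y}$ and the norm of $\pi^\ast t$ on $\cal{X}$.

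\textbf{Main steps.} First I would observe that passing from $\cal{L}$ to $\cal{L}^{\otimes D}$ multiplies the instability measure by $D$: a generator of $x^\ast \cal{L}^{\otimes D}$ may be taken to be the $D$-th tensor power of a generator of $x^\ast \cal{L}$, so all the norm ratios defining $\iota_v$ are raised to the $D$-th power. Therefore it is enough to establish
$$
\sup_{g \in \cal{G}(\C_v)} \frac{\| g \cdot s \|_{\cal{L}^{\otimes D}, v}(g \cdot x)}{\| s \|_{\cal{L}^{\otimes D}, v}(x)} \le \frac{\| t \|_{\cal{M}_D, v}(\pi(x))}{\| \pi^\ast t \|_{\cal{L}^{\otimes D}, v}(x)},
$$
with $s = \phi_D(\pi^\ast t)$. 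Second, I would exploit that $\pi(g \cdot x) = \pi(x)$, together with the $\cal{G}$-equivariance of $\phi_D$ and the fact that the induced $\cal{G}$-linearisation on $\pi^\ast \cal{M}_D$ acts trivially on fibres (since $\cal{G}$ acts trivially on $\cal{M}_D$), to obtain the identity $\phi_D(\pi^\ast t)_{g \cdot x} = g \cdot s$ inside the fibre $(g \cdot x)^\ast \cal{L}^{\otimes D}$. Consequently the numerator of the left-hand side equals $\| \phi_D(\pi^\ast t)\|_{\cal{L}^{\otimes D}, v}(g \cdot x)$. Third, I would invoke the key majoration
$$
\| \phi_D(\pi^\ast t) \|_{\cal{L}^{\otimes D}, v}(y) \le \| t \|_{\cal{M}_D, v}(\pi(x)) \qquad \textup{for every } y \in \cal{X}^\ss(\C_v) \textup{ with } \pi(y) = \pi(x),
$$
applied to $y = g \cdot x$ (which is semi-stable since $\cal{X}^\ss$ is $\cal{G}$-invariant). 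At archimedean places this is the defining supremum of $\| \cdot \|_{\cal{M}_D, \gamma}$ recalled just after the introduction of the quotient; at non-archimedean places it follows from the compatibility of $\phi_D$ with the integral models together with the valuative criterion identification $\pi \circ \epsilon_y = \epsilon_{\pi(x)}$. Dividing by $\| \pi^\ast t \|_{\cal{L}^{\otimes D}, v}(x) = \| s \|_{\cal{L}^{\otimes D}, v}(x)$, taking the supremum over $g$, and applying $-\frac{1}{D} \log$ yields the desired estimate.

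\textbf{Main obstacle.} The only point that is not a formal manipulation of the definitions is the displayed majoration at non-archimedean places. While at archimedean embeddings it is literally how $\| \cdot \|_{\cal{M}_D, \gamma}$ was defined, at a non-archimedean place $v$ one must check that the integral model chosen for $\cal{M}_D$ dominates the norms of the corresponding invariant section along the whole semi-stable part of the fibre of $\pi$, even though the $\ol{\o}_v$-lift $\epsilon_y$ of a semi-stable $\C_v$-point $y$ may intersect the unstable locus in the special fibre. I would handle this by relating $\epsilon_{\pi(x)}$ with $\pi \circ \epsilon_y$ via the valuative criterion of properness for $\pi$ and using that $\phi_D$ is an isomorphism of integral $\cal{G}$-linearised invertible sheaves on $\cal{X}^\ss$, so that the two integral structures match wherever both are defined and the required inequality at $y$ reduces to an elementary valuation estimate.
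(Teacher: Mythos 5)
Your reduction to the single inequality $\sup_{g}\|\pi^\ast t\|_{\cal{L}^{\otimes D},v}(g\cdot x)\le \|t\|_{\cal{M}_D,v}(\pi(x))$, and your treatment of the archimedean places, coincide with the paper's argument: there the quotient metric is literally defined as a supremum over the fibre of $\pi$, so nothing more is needed. The gap is in the non-archimedean case, and it sits exactly at the obstacle you name but do not actually remove. Your proposed fix invokes ``the valuative criterion of properness for $\pi$'' and the fact that $\phi_D$ identifies the integral structures ``wherever both are defined''. But $\pi\colon\cal{X}^\ss\to\cal{Y}$ is not proper ($\cal{X}^\ss$ is open and in general not closed in the proper scheme $\cal{X}$), and when the reduction of $y=g\cdot x$ lands in the unstable locus, the $\ol{\o}_v$-point $\epsilon_y$ does not factor through $\cal{X}^\ss$, so $\pi\circ\epsilon_y$ is undefined at the closed point and $\phi_D$ is unavailable there. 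The locus ``where both integral structures are defined'' is then only the generic point of $\Spec\ol{\o}_v$, and a comparison on the generic fibre says nothing about the lattice $\epsilon_{g\cdot x}^\ast\cal{L}^{\otimes D}$ that computes $\|\cdot\|_{\cal{L}^{\otimes D},v}(g\cdot x)$. So the ``elementary valuation estimate'' you appeal to is precisely the statement that still needs a proof.

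The missing idea is a globalization step. After replacing $D$ by a multiple so that $\cal{M}_D$ is very ample, rescale $t$ to be an $\ol{\o}_v$-basis of $\epsilon_{\pi(x)}^\ast\cal{M}_D$, so $\|t\|_{\cal{M}_D,v}(\pi(x))=1$; by global generation lift it to $f\in\Gamma(\cal{Y},\cal{M}_D)\otimes\ol{\o}_v$ with $\epsilon_{\pi(x)}^\ast f=t$; then use Lemma \ref{lem:ExtensionOfGInvariantGlobalSections} to extend $\phi_D(\pi^\ast f)$ to a $\cal{G}$-invariant \emph{integral} global section $\tilde f\in\Gamma(\cal{X},\cal{L}^{\otimes D})\otimes\ol{\o}_v$ vanishing identically on $\cal{X}-\cal{X}^\ss$. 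Because $\tilde f$ is a global section of the integral model over the proper scheme $\cal{X}$, its norm is $\le 1$ at \emph{every} $\C_v$-point, in particular at $g\cdot x$ even when its reduction is unstable; this gives $\|\pi^\ast t\|_{\cal{L}^{\otimes D},v}(g\cdot x)\le 1=\|t\|_{\cal{M}_D,v}(\pi(x))$, and the proposition follows by taking the supremum over $g$. Without passing through a global invariant section extended across the unstable locus (or an equivalent global input), the purely local comparison you sketch cannot produce the required bound; note also that the inequality can be strict exactly in the unstable-reduction case, as the paper's subsequent remark indicates, so no local isometry argument can be expected to work.
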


\begin{proof} In the archimedean case this is clear by definition of the metric $\| \cdot \|_{\cal{M}_D}$ and the $\cal{G}$-invariance of $\pi$. Let us suppose that $v$ is non-archimedean. Up to taking a power of $\cal{M}_D$ we may assume that $\cal{M}_D$ is very ample.

Let $y \df \pi(x)$ and let $\epsilon_y \in \cal{Y}(\ol{\o}_v)$ the unique $\ol{\o}_v$-valued point of $\cal{Y}$ associated to $y$ by the valuative criterion of properness (where $\ol{\o}_v$ is the ring of integers of $\C_v$). Up to rescaling $t$ we may assume that $t$ is basis of the free $\ol{\o}_v$-module $\epsilon_y^\ast \cal{M}_D$ and thus $\| t \|_{\cal{M}_D, v}(y) = 1$. 
%With this notation we have
%$$ \log \frac{\| t \|_{\cal{M}_D, v}(y)}{ \| \pi^\ast t \|_{\cal{L}^{\otimes D}, v}(x)}  = 0,$$
%thus we are led back to prove $\iota_v(x) \ge 0$.

Since $\cal{M}_D$ is generated by its global sections, there exists $f \in \Gamma(\cal{Y}, \cal{M}_D) \otimes\ol{\o}_v$ such that $\epsilon_y^\ast f = t$. According to Proposition \ref{lem:ExtensionOfGInvariantGlobalSections}, the rational section $\pi^\ast f$ extends uniquely to a $\cal{G}$-invariant global section $\tilde{f} \in \Gamma( \cal{X}, \cal{L}^{\otimes D}) \otimes \ol{\o}_v$ which vanishes identically outside $\cal{X}^\ss$. 

Let us fix $g \in \cal{G}(\C_v)$. Since the section $\tilde{f}$ is integral, we have
$$\| \pi^\ast f \|_{\cal{L}^{\otimes D}, v}(g \cdot x) =  \| \tilde{f} \|_{\cal{L}^{\otimes D}, v}(g \cdot x) \le 1,$$
and recalling $\| t \|_{\cal{M}_D, v}(y) = 1$ this entails $\| \pi^\ast t \|_{\cal{L}^{\otimes D}, v}(g \cdot x) \le \| t \|_{\cal{M}_D, v}(y)$. Taking the supremum over all $g \in \cal{G}(\C_v)$ we find
\begin{align*}
\iota_v(x) &= - \frac{1}{D} \log \sup_{g \in \cal{G}(\C_v)} \frac{\| \pi^\ast t \|_{\cal{L}^{\otimes D}, v}(g \cdot x)}{\| \pi^\ast t \|_{\cal{L}^{\otimes D}, v}(x)} \ge - \frac{1}{D} \log \frac{\| t \|_{\cal{M}_D, v}(y)}{ \| \pi^\ast t \|_{\cal{L}^{\otimes D}, v}(x)},
\end{align*}
which is what we wanted to prove.
\end{proof}

\begin{rem} For a non-archimedean place $v$, it follows from the proof that in the preceding Proposition we have equality if the reduction $\tilde{x}$ of the point $x$ at the place $v$ is semi-stable, \em{i.e.} it is a semi-stable $\ol{\F}_v$-point of the scheme $\cal{X} \times_{\o_K} \ol{\F}_v$ under the action of $\cal{G}(\ol{\F}_v)$ (where $\ol{\F}_v$ is the residue field of $\C_v$).
\end{rem}

\end{npar}

\begin{npar}{Fundamental formula} Summing up the previous considerations we obtain : 

\begin{theorem}[Fundamental Formula] \label{thm:ComparisonFormula} Let $P \in \cal{X}(K)$ be a semi-stable point. Then for almost all places $v \in \Vv_K$ the instability measure $\iota_v(P)$ is zero and we have the inequality : 
$$ h_{\ol{\cal{L}}}(P) + \frac{1}{[K : \Q]} \sum_{v \in \Vv_K} \iota_v(P) \ge \frac{1}{D} h_{\ol{\cal{M}}_D}(\pi(P)).$$
\end{theorem}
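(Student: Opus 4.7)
The strategy is a direct translation of the proposition immediately preceding the theorem, once summed over all places and combined with the definition of the height via global sections.

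Since $P$ is semi-stable, there exists (for some $D\ge 1$ sufficiently divisible so that $\cal{M}_D$ exists and is very ample) a $\cal{G}$-invariant global section $\tilde{f}\in\Gamma(\cal{X},\cal{L}^{\otimes D})$ with $\tilde{f}(P)\ne 0$. By Lemma \ref{lem:ExtensionOfGInvariantGlobalSections}, $\tilde{f}$ descends to a unique $f\in\Gamma(\cal{Y},\cal{M}_D)$ with $\phi_D(\pi^{\ast}f)=\tilde{f}_{\rvert\cal{X}^\ss}$; in particular $f$ does not vanish at $\pi(P)$.

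Fix any place $v\in\Vv_K$. Applying the preceding proposition to $x=P$ and $t=f_{\rvert \pi(P)}$, and recalling that $\pi^{\ast}f$ corresponds under $\phi_D$ to $\tilde{f}$, we obtain
\begin{equation*}
D\,\iota_v(P)\ \ge\ \log\|\tilde{f}\|_{\cal{L}^{\otimes D},v}(P)\ -\ \log\|f\|_{\cal{M}_D,v}(\pi(P)).
\end{equation*}
Summing over all $v\in\Vv_K$ and using that the height of a point, computed via any nonvanishing section, equals minus the normalised sum of the logarithmic norms (cf.\ the very definition of $\degar$ in the conventions, applied to $P^{\ast}\cal{L}^{\otimes D}$ and $\pi(P)^{\ast}\cal{M}_D$), one finds
\begin{equation*}
\frac{D}{[K:\Q]}\sum_{v\in\Vv_K}\iota_v(P)\ \ge\ -D\,h_{\ol{\cal{L}}}(P)\ +\ h_{\ol{\cal{M}}_D}(\pi(P)),
\end{equation*}
which after dividing by $D$ is exactly the desired inequality.

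It remains to check the ``almost all places'' claim. On the one hand $\iota_v(P)\le 0$ always, by taking $g=e$ in the definition. On the other hand, because $\tilde{f}$ is a \emph{global} integral section, for every non-archimedean place $v$ one has $\|\tilde{f}\|_{\cal{L}^{\otimes D},v}(Q)\le 1$ at every $\C_v$-point $Q$ of $\cal{X}$; combined with the $\cal{G}$-invariance of $\tilde{f}$, the standard manipulation on the quotient of norms gives
\begin{equation*}
\sup_{g\in\cal{G}(\C_v)}\frac{\|g\cdot s\|_{\cal{L}^{\otimes D},v}(g\cdot P)}{\|s\|_{\cal{L}^{\otimes D},v}(P)}\ \le\ \frac{1}{\|\tilde{f}\|_{\cal{L}^{\otimes D},v}(P)},
\end{equation*}
where $s=\tilde{f}(P)$. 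For all but finitely many non-archimedean $v$ the integral section $\tilde{f}$ generates $\epsilon_P^{\ast}\cal{L}^{\otimes D}$, so the right-hand side equals $1$ and hence $D\,\iota_v(P)\ge 0$; combined with $\iota_v(P)\le 0$ this forces $\iota_v(P)=0$. The only step requiring any care is the passage between the global section $\tilde{f}$ and its pullback via arbitrary $g\in\cal{G}(\C_v)$; once one observes that $\cal{G}$-invariance of $\tilde{f}$ reduces the quotient of norms to the single ratio $\|\tilde{f}\|_{\cal{L}^{\otimes D},v}(g\cdot P)/\|\tilde{f}\|_{\cal{L}^{\otimes D},v}(P)$, no further difficulty arises.
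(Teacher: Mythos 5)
Your argument is correct and is essentially the paper's own proof, which consists precisely in taking a $\cal{G}$-invariant integral section $\tilde{f}$ not vanishing at $P$ (available by semi-stability), applying the preceding proposition place by place with the corresponding section $f$ of $\cal{M}_D$, and summing over $\Vv_K$ while computing both heights via these non-vanishing sections; your verification that $\iota_v(P)=0$ at almost all places, via $\iota_v(P)\le 0$ together with integrality of $\tilde{f}$ and the fact that $\epsilon_P^\ast\tilde{f}$ generates $\epsilon_P^\ast\cal{L}^{\otimes D}$ at all but finitely many finite places, is likewise the intended argument. The only points to keep in mind are routine: the identification $\Gamma(\cal{Y},\cal{M}_D)\simeq\Gamma(\cal{X},\cal{L}^{\otimes D})^{\cal{G}}$ (so that $\tilde{f}$ indeed comes from some $f$, after possibly raising the section furnished by semi-stability to a power to reach a sufficiently divisible $D$) and the independence of $\tfrac{1}{D}h_{\ol{\cal{M}}_D}$ of $D$, both of which the paper itself takes for granted.
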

%As mentioned before one can prove that this is actually an equality (that is why we name this Theorem as ``Fundamental Formula'' --- see \cite[Scholie III.2.2]{maculan_these}) but we will not need this. 
In practice we use Theorem \ref{thm:ComparisonFormula} through this immediate Corollary : 
\begin{cor} \label{cor:ComparisonInequality} For every semi-stable point $P \in \cal{X}^\ss(K)$ we have
$$ h_{\ol{\cal{L}}}(P) + \frac{1}{[K : \Q]} \sum_{v \in \Vv_K} \iota_v(P) \ge  h_{\min}\left( \quotss{(\cal{X}, \ol{\cal{L}})}{\cal{G}} \right).$$
\end{cor}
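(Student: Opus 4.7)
The plan is to deduce the Fundamental Formula directly from the Proposition of paragraph \ref{par:DefinitionInstabilityMeasure} by making a single, well-adapted choice of invariant section. Since $P$ is semi-stable, for $D\ge 1$ sufficiently divisible (taken so that $\cal{M}_D$ is defined and ample) there exists a $\cal{G}$-invariant global section $\tilde{f}\in \Gamma(\cal{X},\cal{L}^{\otimes D})$ not vanishing at $P$. By Lemma \ref{lem:ExtensionOfGInvariantGlobalSections}\,(1), this $\tilde{f}$ descends to a section $f\in \Gamma(\cal{Y},\cal{M}_D)$ via $\tilde{f}_{\rvert \cal{X}^\ss}=\phi_D(\pi^\ast f)$; in particular $f$ does not vanish at $y:=\pi(P)$.

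With such a compatible pair fixed, both heights unfold through the product formula applied to integral non-vanishing sections: $D\,[K:\Q]\,h_{\ol{\cal{L}}}(P)=-\sum_v\log\|\tilde{f}\|_{\cal{L}^{\otimes D},v}(P)$ and $[K:\Q]\,h_{\ol{\cal{M}}_D}(y)=-\sum_v\log\|f\|_{\cal{M}_D,v}(y)$. Applying the Proposition of paragraph \ref{par:DefinitionInstabilityMeasure} to $P$ and to the germ $y^\ast f\in y^\ast\cal{M}_D$ produces, at every place $v$, the inequality
$$\iota_v(P)\;\ge\;\frac{1}{D}\bigl(\log\|\tilde{f}\|_{\cal{L}^{\otimes D},v}(P)-\log\|f\|_{\cal{M}_D,v}(y)\bigr).$$
Summing over $v$ and dividing by $[K:\Q]$ assembles the stated lower bound essentially verbatim; this step is pure bookkeeping.

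The remaining point is that $\iota_v(P)=0$ for almost all $v$. The inequality $\iota_v(P)\le 0$ is immediate by taking $g=e$ in the defining supremum. For the reverse direction at a non-archimedean $v$, the integrality of $\tilde{f}$ forces $\|\tilde{f}\|_{\cal{L}^{\otimes D},v}(Q)\le 1$ for every $\C_v$-point $Q$ (this follows at once from the definition of the integral metric recalled in the footnote of paragraph \ref{par:DefinitionInstabilityMeasure}); combined with the $\cal{G}$-invariance $g\cdot\tilde{f}=\tilde{f}$, the supremum in the definition of $\iota_v(P)$ is then bounded above by $1/\|\tilde{f}\|_{\cal{L}^{\otimes D},v}(P)$. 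Since the zero locus of $\tilde{f}$ is a proper closed subscheme of $\cal{X}$ missing $P$, for all but finitely many $v$ it also misses the reduction of $P$; hence $\|\tilde{f}\|_{\cal{L}^{\otimes D},v}(P)=1$ at such $v$, forcing $\iota_v(P)\ge 0$ and therefore $\iota_v(P)=0$.

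The main obstacle I anticipate is simply keeping the identification $\tilde{f}\leftrightarrow f$ compatible with the normalisations of the two heights on either side of $\phi_D$; once this correspondence is stated cleanly, the Proposition, Lemma \ref{lem:ExtensionOfGInvariantGlobalSections}, and the product formula combine mechanically to give the claimed inequality.
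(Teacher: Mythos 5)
Your proposal is correct and takes essentially the paper's own route: you reconstruct the Fundamental Formula by applying the Proposition of paragraph \ref{par:DefinitionInstabilityMeasure} place by place to a compatible pair $(f,\tilde f)$ with $\phi_D(\pi^\ast f)=\tilde f_{\rvert \cal{X}^\ss}$, express both heights by the product formula for a non-vanishing section, sum over places, and then pass to $h_{\min}$ via its definition as an infimum over $\cal{Y}(\ol{\Q})$, with the vanishing of $\iota_v(P)$ at almost all $v$ handled exactly as in the paper's integrality argument. The only small slip is the citation: Lemma \ref{lem:ExtensionOfGInvariantGlobalSections}\,(1) extends a section of $\cal{M}_D$ to an invariant section of $\cal{L}^{\otimes D}$, not the reverse; for your descent step either invoke the construction of $\cal{Y}=\Proj \cal{A}^{\cal{G}}$ and $\cal{M}_D$ (a $\cal{G}$-invariant element of degree $D$ is by construction a section of $\cal{M}_D$ compatible with $\phi_D$), or avoid it altogether by first choosing $f\in\Gamma(\cal{Y},\cal{M}_D)$ non-vanishing at $\pi(P)$ (possible since $\cal{M}_D$ is very ample) and then extending it by the Lemma.
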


\end{npar}

\subsection{Lower bound of the height on the quotient} \label{sec:LowerBoundHeightQuotient}

\begin{npar}{Statement of the lower bound} Let $\ol{\cal{E}} = (\ol{\cal{E}}_1, \dots, \ol{\cal{E}}_n)$ be a $n$-tuple of $\o_K$-hermitian vector bundles of positive ranks. Suppose we are given a $\o_K$-hermitian vector bundle $\ol{\cal{F}}$ and a representation, that is a morphism of $\o_K$-group schemes,
$$ \rho : \GLs(\cal{E})\df \GLs(\cal{E}_1) \times_{\o_K} \cdots \times_{\o_K} \GLs(\cal{E}_n) \too \GLs(\cal{F}),$$
 which is \em{unitary}, \em{i.e.}, for every embedding $\sigma : K \to \C$, the action of the compact subgroup $$\U(\cal{E})_\sigma \df \U(\| \cdot \|_{\cal{E}_1, \sigma}) \times \cdots \times \U(\| \cdot \|_{\cal{E}_n, \sigma}) \subset \GLs(d)_\sigma(\C)$$ respects the hermitian norm $\| \cdot \|_{\cal{F}, \sigma}$.

\begin{theorem} \label{thm:LBHeightQuotient} With the notation introduced above, suppose that we are given an $n$-tuple of integers $b = (b_1, \dots, b_n)$ and a homomorphism of hermitian vector bundles
$$ \varpi : \ol{\cal{E}}_1^{\otimes b_1} \otimes \cdots \otimes \ol{\cal{E}}_n^{\otimes b_n} \too \ol{\cal{F}}$$
generically surjective and $\GLs(\cal{E})$-equivariant. Then, 
$$ h_{\mini}\left( \quotss{(\P(\cal{F}^\vee), \O_{\ol{\cal{F}}^\vee}(1))}{\SLs(\cal{E})} \right) \ge \sum_{i = 1}^n b_i \muar(\ol{\cal{E}}_i) - \frac{1}{2} \sum_{i = 1}^n |b_i| \log \rk \cal{E}_i,$$
where $\O_{\ol{\cal{F}}^\vee}(1)$ is equipped with the natural Fubini-Study metric given by $\ol{\cal{F}}$ and $\SLs(\cal{E})$ is the $\o_K$-reductive group $\SLs(\cal{E}_1) \times_{\o_K} \cdots \times_{\o_K} \SLs(\cal{E}_n)$.
\end{theorem}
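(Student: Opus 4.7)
The strategy is to use Classical Invariant Theory to produce enough $\SLs(\cal{E})$-invariant sections of $\O(D)$ on $\P(\cal{F}^\vee)$ with controlled hermitian norms, and then to lower-bound the height of any $Q \in \cal{Y} := \quotss{(\P(\cal{F}^\vee), \O(1))}{\SLs(\cal{E})}$ by invoking Lemma \ref{lem:ExtensionOfGInvariantGlobalSections} together with the sup-norm inequality of Conventions \ref{par:ConvetionHermitianNorms}.

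Set $r_i := \rk \cal{E}_i$ and fix $D \ge 1$ divisible by each $r_i$. The antisymmetrization inclusion $\det \cal{E}_i \hookrightarrow \cal{E}_i^{\otimes r_i}$ (replaced by $\det \cal{E}_i^\vee \hookrightarrow (\cal{E}_i^\vee)^{\otimes r_i}$ for those $i$ with $b_i < 0$) has operator norm $\sqrt{r_i!}$ at each archimedean place and $1$ at each non-archimedean one, by the normalization of exterior powers recalled in the conventions. Iterating and tensoring these inclusions, and partitioning the $D|b_i|$ slots of $\bigotimes_i \cal{E}_i^{\otimes D b_i} = \cal{F}^{\otimes D}$ (after reordering) into blocks of size $r_i$, yields $\SLs(\cal{E})$-equivariant maps $\psi : \bigotimes_i (\det \cal{E}_i)^{\otimes D b_i/r_i} \to \cal{F}^{\otimes D}$. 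Post-composing with the quotient $\cal{F}^{\otimes D} \to \Sym^D \cal{F}$ and with $\Sym^D \varpi$, both of operator norm at most $1$ at every place, produces maps
$$ \bar\psi : \bigotimes_{i=1}^n (\det \cal{E}_i)^{\otimes D b_i/r_i} \too (\Sym^D \cal{F})^{\SLs(\cal{E})} $$
of operator norm at most $\prod_i (r_i!)^{D|b_i|/(2 r_i)}$ at archimedean places and at most $1$ at non-archimedean ones.

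By the First Fundamental Theorem of Classical Invariant Theory for $\SL_n$ (applied factor by factor, noting that for each $i$ the representation $\cal{F}$ involves $\cal{E}_i$ or $\cal{E}_i^\vee$ but not both, so no trace contractions appear) and the generic surjectivity of $\Sym^D \varpi$ on $\SLs(\cal{E})$-invariants (via the Reynolds operator in characteristic zero), the sections $\bar\psi(t)$ span $(\Sym^D \cal{F})^{\SLs(\cal{E})}$ as $\bar\psi$ and $t$ vary. Hence, for $D$ sufficiently divisible so that $\cal{M}_D$ is very ample, any $Q \in \cal{Y}(K')$ admits a choice of $s := \bar\psi(t)$ with $s(Q) \ne 0$. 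By Lemma \ref{lem:ExtensionOfGInvariantGlobalSections}(2) and the sup-norm inequality of the conventions at archimedean places, and by the integral model of $\cal{M}_D$ at non-archimedean ones,
$$ \|s\|_{\cal{M}_D, v}(Q) \le \|s\|_{\Sym^D \cal{F}, v} \le \|\bar\psi\|_v \cdot \|t\|_v $$
at every place $v$ of $K$.

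Taking $-\log$, summing over all places, and using the product formula together with the elementary slope computation $\muar\bigl(\bigotimes_i (\det \ol{\cal{E}}_i)^{\otimes D b_i/r_i}\bigr) = D \sum_i b_i \muar(\ol{\cal{E}}_i)$, one obtains
$$ \frac{1}{D} h_{\ol{\cal{M}}_D}(Q) \;\ge\; \sum_{i=1}^n b_i\, \muar(\ol{\cal{E}}_i) - \frac{1}{2}\sum_{i=1}^n \frac{|b_i|}{r_i}\log r_i! \;\ge\; \sum_{i=1}^n b_i\, \muar(\ol{\cal{E}}_i) - \frac{1}{2}\sum_{i=1}^n |b_i|\log r_i, $$
where the last inequality uses the bound $\log r_i! \le r_i \log r_i$. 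Taking the infimum over $Q$ gives the theorem. The main obstacle lies in the classical invariant theory step — verifying that the antisymmetric invariants, pushed through $\Sym^D\varpi$, suffice to separate points of $\cal{Y}$ — together with the precise norm bookkeeping in which the factor $\sqrt{r_i!}$ arising from the normalization of $\det \cal{E}_i \hookrightarrow \cal{E}_i^{\otimes r_i}$ is crucial.
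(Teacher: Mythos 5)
Your proposal is correct, and its overall architecture is the one the paper uses: reduce to $\ol{\cal{F}} = \ol{\cal{E}}^{\otimes b}$ via the norm-decreasing, generically surjective $\varpi$ and the Reynolds operator; produce an explicit spanning family of $\SLs(\cal{E})$-invariant sections of $\O(D)$ from the First Fundamental Theorem; pick, for each $Q$ on the quotient, an invariant section not vanishing at $Q$ and bound its sup-norm at every place (Lemma \ref{lem:ExtensionOfGInvariantGlobalSections} plus the sup-norm $\le$ Sym-norm inequality of the conventions at archimedean places, integrality at the others). Where you genuinely diverge is in how the invariants are written down and how their norms are estimated. The paper parametrizes them as $\phi \circ \Phi(\eta(\sigma) \otimes \delta_1^{\otimes Db_1/e_1} \otimes \cdots)$, i.e.\ permutation endomorphisms of $\cal{E}^{\otimes Db}$ contracted against determinants, and controls their size through the $\epsilon$/$\pi$-norm formalism and the operator-norm bound $\|\eta(\sigma)\|_{\epsilon,2} \le \sqrt{e_1^{Db_1}\cdots e_n^{Db_n}}$ (Lemmas \ref{lem:PhiIsNormsDecreasing} and \ref{lem:SizePermutations}); you instead realize (essentially the same) invariants directly as permuted tensor powers of the antisymmetrized determinant vectors $\det\cal{E}_i \hookrightarrow \cal{E}_i^{\otimes r_i}$, whose hermitian norm is computed exactly ($\sqrt{r_i!}$ per block, $\le 1$ non-archimedean), so the whole non-hermitian tensor-norm section of the paper becomes unnecessary. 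The payoff is a marginally sharper error term $\tfrac{1}{2}\sum_i \tfrac{|b_i|}{r_i}\log r_i! \le \tfrac{1}{2}\sum_i |b_i|\log r_i$, which recovers the stated bound. Two small points of hygiene: the identification you write as $\bigotimes_i \cal{E}_i^{\otimes Db_i} = \cal{F}^{\otimes D}$ only makes sense after composing with $\varpi^{\otimes D}$ (or after the reduction to $\cal{F} = \cal{E}^{\otimes b}$), and you do not need the invariants to \emph{separate} points of $\cal{Y}$ — only that, for $D$ with $\cal{M}_D$ very ample, they span $\Gamma(\cal{Y}, \cal{M}_D)\otimes K \cong (\Sym^D \cal{F}_K)^{\SLs(\cal{E})}$, which your Schur--Weyl/FFT spanning argument (per factor, then tensoring, then Reynolds for the two equivariant surjections) does give.
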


\begin{rem}
This statement is more general than \cite[Theorem 4.2]{chen_ss} in the following sense: with our notation Chen proves that for every semi-stable $K$-point $P$ of $\P(\cal{F}^\vee)$ we have
$$ h_{\O_{\ol{\cal{F}}^\vee}(1)}(P) \ge \sum_{i = 1}^n b_i \muar(\ol{\cal{E}}_i) - \frac{1}{2} \sum_{i = 1}^n |b_i| \log \rk \cal{E}_i.$$
Chen's result is deduced from Theorem \ref{thm:LBHeightQuotient} thanks to the inequality given by Corollary \ref{cor:ComparisonInequality}
$$  h_{\O_{\ol{\cal{F}}^\vee}(1)}(P) \ge h_{\mini}\left( \quotss{(\P(\cal{F}^\vee), \O_{\ol{\cal{F}}^\vee}(1))}{\SLs(\cal{E})} \right).$$
\end{rem}

\begin{rem} In the proof of Theorem \ref{thm:LBHeightQuotient} we can limit ourselves to consider the case where the integer $b_i$ are non-negative. Indeed, if the integers $b_i$ are not necessarily non-negative, we can consider, for every $i = 1, \dots, n$,
$$ \ol{\cal{E}}_i' = \begin{cases}
\ol{\cal{E}}_i & \textup{if } b_i \ge 0 \\
\ol{\cal{E}}_i^\vee & \textup{otherwise}.
\end{cases}
$$
Let us set $\GLs(\cal{E}') \df \GLs(\cal{E}_1') \times \cdots \times \GLs(\cal{E}_n')$. If $\varpi : \ol{\cal{E}}_1^{\otimes b_1} \otimes \cdots \otimes \ol{\cal{E}}_n^{\otimes b_n} \to \ol{\cal{F}}$ is a homomorphism of hermitian vector bundles as in the statement of Theorem \ref{thm:LBHeightQuotient}, it induces a generically surjective and $\GLs(\cal{E}')$ homomorphism of hermitian vector bundles
$$ \varpi' : {\ol{\cal{E}}_1'}^{\otimes |b_1|} \otimes \cdots \otimes {\ol{\cal{E}}_n'}^{\otimes |b_n|} \too \ol{\cal{F}}.$$

The quotients of $\P(\cal{F}^\vee)$ by $\SLs(\cal{E})$ and $\SLs(\cal{E}') \df \SLs(\cal{E}_1') \times \cdots \times \SLs(\cal{E}_n')$ are canonically identified and the metrics induced on the polarisation $\cal{M}_D$ are clearly the same. In particular we have
$$ h_{\mini}\left( \quotss{(\P(\cal{F}^\vee), \O_{\ol{\cal{F}}^\vee}(1))}{\SLs(\cal{E})} \right) = h_{\mini}\left( \quotss{(\P(\cal{F}^\vee), \O_{\ol{\cal{F}}^\vee}(1))}{\SLs(\cal{E}')} \right).$$
\end{rem}

The remainder of this section is devoted to the proof of Theorem \ref{thm:LBHeightQuotient} when the integers $b_1, \dots, b_N$ are non-negative.
\end{npar}

\subsubsection*{Invariant theory for a product of linear groups}

\begin{np} Let $k$ be a field. Let $n \ge 1$ be a positive integer and $E = (E_1, \dots, E_n)$ a $n$-tuple of non-zero $k$-vector spaces of finite dimension. We define
\begin{align*}
\GLs(E) &\df \GLs(E_1) \times_k \cdots \times_k \GLs(E_n), \\
\SLs(E) &\df \SLs(E_1) \times_k \cdots \times_k \SLs(E_n).
\end{align*}

\begin{deff} Let $F$ be a non-zero $k$-vector space of finite dimension. A representation, \em{i.e.} a morphism of $k$-group schemes, $\rho : \GLs(E) \to \GLs(F)$ is said to be \em{homogeneous of weight $b = (b_1, \dots, b_n) \in \Z^n$} if, for every $k$-scheme $S$ and all $S$-points $t_1,\dots,t_n \in \Gm(S)$, we have
$$ \rho(t_1 \cdot \id, \dots, t_n \cdot \id) = t_1^{b_1} \cdots t_n^{b_n} \cdot \id_{F}.$$
\end{deff}

\begin{prop} \label{prop:BasicPropertiesHomogeneousRepresentations}
Let $\rho : \GLs(E) \to \GLs(F)$ be a homogeneous representation of weight $b = (b_1, \dots, b_n)$ and suppose that the subspace of $\SLs(E)$-invariant elements of $F$ is non-trivial. Then : 
\begin{enumerate}[(1)]
\item for every $i = 1, \dots, n$ the dimension $e_i$ of $E_i$ divides the integer $b_i$;
\item for every $k$-scheme $S$, any $S$-point $(g_1, \dots, g_n)$ of $\GLs(E)$ and any $\SLs(E)$-invariant element $w$ of $F$ we have : 
\begin{equation} \label{eq:HomRepresentationInvariantElement} \rho(g_1, \dots, g_n) \cdot w = \det(g_1)^{b_1 / e_1} \cdots \det(g_n)^{b_n / e_n} \cdot w. \end{equation}
\end{enumerate}
\end{prop}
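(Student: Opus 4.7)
The plan is to analyse how the central torus $Z \df \Gm^n \hookrightarrow \GLs(E)$ of scalar matrices --- defined functorially by $(t_1, \dots, t_n) \mapsto (t_1 \cdot \id_{E_1}, \dots, t_n \cdot \id_{E_n})$ --- acts on the subspace $F^{\SLs(E)}$ in two different ways, and then to compare the resulting characters. By the homogeneity hypothesis, $Z$ acts on all of $F$ through the character $\chi_b \colon (t_1, \dots, t_n) \mapsto t_1^{b_1} \cdots t_n^{b_n}$.

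On the other hand, since $\SLs(E)$ is normal in $\GLs(E)$, the subspace $F^{\SLs(E)}$ is preserved by the $\GLs(E)$-action, and this action factors through the quotient $\GLs(E)/\SLs(E)$, which is isomorphic to $\Gm^n$ via the $n$-tuple of determinants $(\det_1, \dots, \det_n)$. Since $\Gm^n$ is a split torus acting linearly on a finite dimensional $k$-vector space, $F^{\SLs(E)}$ decomposes as a direct sum of character eigenspaces $\bigoplus_c (F^{\SLs(E)})_c$ indexed by weights $c = (c_1, \dots, c_n) \in \Z^n$, on each of which $(t_1, \dots, t_n) \in \Gm^n$ acts by $\prod_i t_i^{c_i}$. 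The two viewpoints are related through the composition
$$Z \into \GLs(E) \onto \GLs(E)/\SLs(E) \iso \Gm^n,$$
which sends $(t_1, \dots, t_n)$ to $(t_1^{e_1}, \dots, t_n^{e_n})$, simply because $\det(t_i \cdot \id_{E_i}) = t_i^{e_i}$. Hence on a nonzero eigenspace $(F^{\SLs(E)})_c$ the torus $Z$ acts through the character $(t_1, \dots, t_n) \mapsto \prod_i t_i^{c_i e_i}$.

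Comparing with $\chi_b$ forces $c_i e_i = b_i$ for each $i$ whenever $(F^{\SLs(E)})_c \neq 0$. Since the hypothesis ensures some such eigenspace is non-zero, this already yields $e_i \mid b_i$, proving (1), and shows that the only weight that can appear is $c = (b_1/e_1, \dots, b_n/e_n)$. Consequently the entire subspace $F^{\SLs(E)}$ lies in a single character eigenspace, so formula \eqref{eq:HomRepresentationInvariantElement} holds for every $\SLs(E)$-invariant element $w$; the statement for arbitrary $S$-points follows because both sides define morphisms of $k$-schemes $\GLs(E) \to \GLs(F)$ agreeing on all $\bar{k}$-points of the geometrically reduced group scheme $\GLs(E)$. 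I do not foresee a serious obstacle: the argument is essentially a weight decomposition under a split torus, and the only substantive inputs are the normality of $\SLs(E)$ in $\GLs(E)$ and the identification of $\GLs(E)/\SLs(E)$ with $\Gm^n$ via determinants, both entirely standard.
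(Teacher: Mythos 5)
Your argument is correct and is essentially the paper's: the paper's one-line proof ("characters of the general linear group are powers of the determinant") unwinds to exactly your reasoning — normality of $\SLs(E)$ makes $F^{\SLs(E)}$ a $\GLs(E)$-subrepresentation on which the action factors through $\GLs(E)/\SLs(E)\iso\Gm^n$ via the determinants, and homogeneity, tested on the central torus, pins the character down to $\prod_i \det_i^{b_i/e_i}$, giving both the divisibility and the formula.
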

\begin{proof} This follows from the fact that characters of the general linear group are powers of the determinant.
\end{proof}
\end{np}

\begin{np} For every non-negative integer $N$ let us denote by $\frak{S}_N$ the group of permutations on $N$ elements (if $N = 0$, then $\frak{S}_0 = \{ \id_{\emptyset}\}$). If $E$ is a $k$-vector space the group $\frak{S}_N$ acts on the $N$-th tensor product $E^{\otimes N}$ permuting factors. Explicitly, if $\sigma \in \frak{S}_N$ is a permutation and $x_1, \dots, x_N$ are elements of $E$ we have
$$ \sigma \ast (x_1 \otimes \cdots \otimes x_N) = x_{\sigma(1)} \otimes \cdots \otimes x_{\sigma(N)}.$$ 

\begin{deff} The preceding action defines a homomorphism of non-commutative $k$-algebras $\frak{S}_{|N|} \to \End_k(E^{\otimes N})$ that we denote by $\eta_{E, N}$.
\end{deff}
\end{np}

\begin{np} Let $b = (b_1, \dots, b_n)$ be a $n$-tuple of non-negative integers. The group $\frak{S}_{b_1} \times \cdots \times \frak{S}_{b_n}$, which we denote by $\frak{S}_{b}$, acts component-wise on the $k$-vector space $E^{\otimes b} \df E_1^{\otimes b_1} \otimes \cdots \otimes E_n^{\otimes b_n}$. The $k$-group scheme $\GLs(E)$ acts by conjugation on the $k$-vector space
$$ \End_k(E^{\otimes b}) = \End_k(E_1^{\otimes b_1}) \otimes_k \cdots \otimes_k \End_k(E_n^{\otimes b_n})$$
and the representation $\GLs(E) \to \GLs(\End(E^{\otimes b}))$ is homogeneous of weight $0 = (0, \dots, 0)$. Proposition \ref{prop:BasicPropertiesHomogeneousRepresentations} (2) entails that the invariant elements of $\End(E^{\otimes D b})$ with respect to the action of $\GLs(E)$ and to the action of $\SLs(E)$ are the same.

\begin{deff} \label{deff:DefinitionEta} The action of $\frak{S}_{b}$ on $E^{\otimes b}$ defines a homomorphism of non-commutative $k$-algebras $\bigotimes_{i = 1}^n k[\frak{S}_{b_i}] \to \End(E^{\otimes b})$ that we denote by $\eta_{E, b}$.
\end{deff}
The image of $\eta_{E, b}$ is contained in the subspace of invariants of $\End(E^{\otimes b})$. The First Main Theorem of Invariant Theory affirms that in characteristic $0$ the converse inclusion holds too (\em{cf.} \cite[Chapter III]{Weyl}, \cite[Theorem 3.1, Corollary]{chen_ss} and \cite[Appendix 1]{Atiyah}):

\begin{theorem}[First Main Theorem of Invariant Theory] Suppose that the characteristic of $k$ is zero. The subspace of $\SLs(E)$-invariant elements of the $k$-vector space $\End(E^{\otimes b})$  is the image of the homomorphism $\eta_{E, b}$.
\end{theorem}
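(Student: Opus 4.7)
The plan is to reduce the statement to the classical one-factor case (Schur--Weyl duality), which can then be invoked from the cited literature.

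First, I would exploit the fact, already recorded in the paragraph preceding Definition \ref{deff:DefinitionEta}, that the representation of $\GLs(E)$ on $\End_k(E^{\otimes b})$ is homogeneous of weight $0$, so that (by Proposition \ref{prop:BasicPropertiesHomogeneousRepresentations} (2)) $\SLs(E)$-invariants and $\GLs(E)$-invariants coincide. This allows me to work throughout with the larger, reductive group $\GLs(E) = \prod_i \GLs(E_i)$.

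Next I would factor the problem across the $n$ indices. Under the canonical identification
$$ \End_k(E^{\otimes b}) \iso \bigotimes_{i=1}^n \End_k(E_i^{\otimes b_i}), $$
each factor $\GLs(E_j)$ acts only on the $j$-th tensor slot and trivially on the others. In characteristic zero every $\GLs(E_j)$ is linearly reductive, so taking $\GLs(E_j)$-invariants is an exact functor, and therefore commutes with tensoring over $k$ with a representation on which the group acts trivially. Iterating this observation over $j = 1, \dots, n$ yields
$$ \End_k(E^{\otimes b})^{\GLs(E)} = \bigotimes_{i=1}^n \End_k(E_i^{\otimes b_i})^{\GLs(E_i)}. $$

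The remaining (and essential) step is the \emph{single-factor} First Main Theorem: for each $i$, $\End_k(E_i^{\otimes b_i})^{\GLs(E_i)}$ coincides with the image of $\eta_{E_i, b_i} : k[\frak{S}_{b_i}] \to \End_k(E_i^{\otimes b_i})$. This is exactly the classical Schur--Weyl duality, for which I would quote one of the standard references (\cite[Chapter III]{Weyl} or \cite[Appendix 1]{Atiyah}); the hard content of the theorem is concentrated in this step, and there is no point in re-proving it here. Granting it, the tensor product of the images $\eta_{E_i, b_i}(k[\frak{S}_{b_i}])$ is precisely the image of $\eta_{E, b} : \bigotimes_i k[\frak{S}_{b_i}] \to \End_k(E^{\otimes b})$, and combining with the two reductions above concludes the proof.

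The main obstacle, as indicated, is the classical one-variable Schur--Weyl statement; the multi-factor reduction itself is formal and relies only on linear reductivity in characteristic zero. One small point to be careful about is that the compatibility of invariants with tensor products does require the characteristic zero assumption (or at least linear reductivity of each $\GLs(E_i)$), so this hypothesis cannot be dropped.
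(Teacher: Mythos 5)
Your argument is correct, but it is organised differently from the paper, which in fact offers no proof at all: the theorem is stated there with a bare citation (Weyl, Atiyah, and Chen \cite[Theorem 3.1, Corollary]{chen_ss}, the last of which is precisely the multi-factor statement for a product of linear groups). You instead supply the formal multi-factor reduction yourself — passing from $\SLs(E)$- to $\GLs(E)$-invariants via the weight-$0$ homogeneity of the conjugation action (exactly the remark the paper makes just before Definition \ref{deff:DefinitionEta}), factoring $\End_k(E^{\otimes b}) \iso \bigotimes_i \End_k(E_i^{\otimes b_i})$ and the invariants accordingly, and only then quoting the classical one-factor Schur--Weyl duality. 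What this buys is a more self-contained statement of where the hard content lies: everything is reduced to the single-group case, for which only the classical references are needed. Two small remarks: the identification of conjugation-invariants with the commutant of the group action (so that the one-factor case really is Schur--Weyl) deserves a sentence, including the observation that invariance as an algebraic-group action agrees with invariance under $k$-points since $k$ is infinite in characteristic zero; and your caveat at the end is stronger than necessary — the identity $\bigl(\bigotimes_i V_i\bigr)^{\prod_i G_i} = \bigotimes_i V_i^{G_i}$, for groups acting factor-wise, holds by a direct basis argument with no reductivity or characteristic hypothesis, and the characteristic-zero assumption is needed only in the one-factor Schur--Weyl step itself. Neither point is a gap; the proof is sound.
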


\begin{deff} \label{deff:DefinitionPhi} With the notation introduced above let us suppose that $e_i$ divides $b_i$ for every $i = 1, \dots, n$. We consider the natural homomorphism of $k$-vector spaces 
$$ \Phi_{E_i, b_i} : \End\left(E_i \right)^{\otimes b_i} \otimes \det(E_i)^{\otimes b_i / e_i} \too E_i^{\otimes  b_i}$$ 
defined as the composition of the following homomorphisms:
$$ \xymatrix@R=15pt@C=50pt{
%& \displaystyle k[\frak{S}_{|b_i|}] \otimes \det(E_i)^{\otimes b_i / e_i} \ar^{\eta_{E_i, b_i} \otimes \id}[r] 
 \displaystyle \End\left(E_i \right)^{\otimes b_i} \otimes \det(E_i)^{\otimes b_i / e_i} \ar@{=}[d]
&\displaystyle E_i^{\otimes  b_i} 
 \\
 \displaystyle E_i^{\otimes  b_i} \otimes \left( E_i^{\otimes e_i} \right)^{\vee  \otimes  b_i / e_i}  \otimes \det(E_i)^{\otimes  b_i / e_i} \ar^{\id \otimes \det \otimes \id}[r]
& \displaystyle   E_i^{\otimes  b_i} \otimes \left( \det(E_i)^{\vee}  \otimes \det(E_i) \right)^{\otimes  b_i / e_i} \ar@{=}[u]
}$$
Furthermore we set $\Phi_{E, b} \df \Phi_{E_1, b_1} \otimes \cdots \otimes \Phi_{E_n, b_n}$.
\end{deff}

\begin{cor} \label{cor:FirstThmInvariantTheory+HomRepresentations} Suppose that the characteristic of $k$ is zero. Let $F$ be a non-zero $k$-vector space of finite dimension and $\rho : \GLs(E) \to \GLs(F)$ be a representation. Let $b = (b_1, \dots, b_n)$ be a $n$-tuple of non-negative integers and 
$$ \phi : \bigotimes_{i = 1}^n E_i^{\otimes b_i} \too F $$
be a surjective and $\GLs(E)$-equivariant homomorphism  of $k$-vector spaces. The representation $\rho$ is homogeneous of weight $b = (b_1, \dots, b_n)$ and if the subspace of $\SLs(E)$-invariant elements of $F$ is non-zero we have:
\begin{enumerate}[(1)]
\item For every $i = 1, \dots, n$ the dimension $e_i$ of $E_i$ divides the integer $b_i$.
\item The subspace of $\SLs(E)$-invariants of $F$  is the image of the homomorphism 
$$\phi \circ \Phi_{E, b} \circ (\eta_{E, b} \otimes \id) : \bigotimes_{i = 1}^n k[\frak{S}_{b_i}] \otimes \bigotimes_{i = 1}^n \det(E_i)^{\otimes b_i / e_i} \too F.$$

\end{enumerate}
\end{cor}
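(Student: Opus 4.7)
\medskip

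The plan is to pull the $\SLs(E)$-invariants of $F$ back along $\phi$ to an explicit algebraic model on which the First Main Theorem of Invariant Theory stated just above applies directly.

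\medskip

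First I would observe that $\rho$ is homogeneous of weight $b$. The central scalar subtorus $\Gm^n \subset \GLs(E)$ acts on $\bigotimes_{i=1}^n E_i^{\otimes b_i}$ through the character $(t_1, \dots, t_n) \mapsto t_1^{b_1} \cdots t_n^{b_n}$; since $\phi$ is $\GLs(E)$-equivariant and surjective, $\rho$ restricted to this subtorus must coincide with the same character. Under the hypothesis $F^{\SLs(E)} \neq 0$, assertion (1) is then an immediate consequence of Proposition \ref{prop:BasicPropertiesHomogeneousRepresentations} (1).

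\medskip

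For (2), the key observation is that each $\Phi_{E_i, b_i}$ is surjective: after the canonical identification $\End(E_i)^{\otimes b_i} \iso E_i^{\otimes b_i} \otimes (E_i^\vee)^{\otimes b_i}$ and a regrouping of the dual factors into $b_i/e_i$ blocks of size $e_i$, the map reduces to $\id \otimes \det^{\otimes b_i / e_i}$, where $\det : (E_i^{\otimes e_i})^\vee \twoheadrightarrow \det(E_i)^\vee$ is the surjection dual to the antisymmetrisation $\det(E_i) \hookrightarrow E_i^{\otimes e_i}$, followed by the canonical trivialisations $\det(E_i)^\vee \otimes \det(E_i) \iso k$. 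Hence $\Phi_{E, b}$ and therefore $\Psi \df \phi \circ \Phi_{E, b}$ are $\GLs(E)$-equivariant surjections
$$ \Psi : \End(E^{\otimes b}) \otimes \bigotimes_{i = 1}^n \det(E_i)^{\otimes b_i / e_i} \too F, $$
where I also used the canonical identification $\bigotimes_{i = 1}^n \End(E_i)^{\otimes b_i} = \End(E^{\otimes b})$. In characteristic zero the reductive group $\SLs(E)$ has completely reducible representations (Weyl), so the functor of $\SLs(E)$-invariants is exact and $\Psi$ restricts to a surjection on invariants. As every $\det(E_i)$ is $\SLs(E)$-trivial, the source invariants equal $\End(E^{\otimes b})^{\SLs(E)} \otimes \bigotimes_{i = 1}^n \det(E_i)^{\otimes b_i / e_i}$, and by the First Main Theorem this coincides with $\im(\eta_{E, b}) \otimes \bigotimes_{i = 1}^n \det(E_i)^{\otimes b_i / e_i}$. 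Applying $\Psi$ gives exactly the description of $F^{\SLs(E)}$ claimed in (2).

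\medskip

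The only delicate step is the surjectivity of $\Phi_{E, b}$, which amounts to a careful but routine bookkeeping through the canonical identifications of tensor, dual, and determinant factors. Once this is in hand, the remainder is a formal combination of Proposition \ref{prop:BasicPropertiesHomogeneousRepresentations}, Weyl's complete reducibility for reductive groups in characteristic zero, and the First Main Theorem stated just above.
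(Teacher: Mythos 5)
Your argument is correct and follows essentially the same route as the paper: the paper's (very terse) proof is exactly the combination of the Reynolds-operator remark (your appeal to complete reducibility in characteristic zero, giving surjectivity on $\SLs(E)$-invariants) with the First Main Theorem identifying $\End(E^{\otimes b})^{\SLs(E)}$ as $\im(\eta_{E,b})$, together with the triviality of the $\SLs(E)$-action on the $\det(E_i)$ factors. Your explicit verification that $\Phi_{E,b}$ is surjective and $\GLs(E)$-equivariant only makes precise a step the paper leaves implicit, so there is no substantive difference.
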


This is just a combination of the First Main Theorem of Invariant Theory with following:
\begin{rem}
In characteristic $0$ a linear algebraic group is reductive if and only if for every linear representation $E$ of $G$ there exists a \em{unique} $G$-equivariant projection $R_E : E \to E^G$ (the so-called Reynolds operator). The uniqueness entails the functoriality of the projection on the invariants: for every $G$-equivariant linear homomorphism $\psi : E \to F$ between linear representations of $G$ we have $R_F \circ \psi = \psi \circ R_E$. In particular, if $\psi$ is surjective the induced homomorphism $\phi : E^G \to F^G$ is surjective too. For details, refer to \cite[page 182]{MumfordSuominen} and \cite[Chapter1, \S 1]{git}.
\end{rem}

\end{np}

\subsubsection*{Non-hermitian norms and tensor product} In this paragraph we briefly discuss norms on tensor products which are not hermitian. We refer the interested reader to \cite{GrothendieckProduitTensoriels} for the case of two vector spaces and \cite[Normes tensorielles, page 33]{gaudron} for the present setting.

Let $N \ge 1$ be a positive integer and for every $i = 1, \dots, N$ let $V_i$ be a finite-dimensional complex vector space endowed with a norm $\| \cdot \|_{V_i}$. Let $\| \cdot \|_{V_i^\vee}$ be the operator norm on $V_i^\vee$.

\begin{deff} The $\epsilon$-norm (resp. $\pi$-norm) on the tensor product $V \df V_1 \otimes_\C \cdots \otimes_\C \otimes V_N$ is the norm defined for every element $v \in V$ as
\begin{align*} \| v \|_{V, \epsilon} &\df \sup_{\substack{\phi_i \in V_i^\vee - \{0 \} \\ i = 1, \dots,N }} \frac{|\phi_1 \otimes \cdots \otimes \phi_N (v)|}{\| \phi_1\|_{V_1^\vee} \cdots \| \phi_N\|_{V_N^\vee}}, \\
\Bigg( \textup{resp. }  \| v \|_{V, \pi} \ &\ = \inf \left\{ \sum_{\alpha = 1}^R \| v_{\alpha 1}\|_{V_1} \cdots \| v_{\alpha N}\|_{V_N} : v = \sum_{\alpha = 1}^R v_{\alpha 1} \otimes \cdots \otimes v_{\alpha N}   \right\} \Bigg).
\end{align*}
We denote by $V_1 \otimes_\epsilon \cdots \otimes_\epsilon V_N$ (resp. $V_1 \otimes_\pi \cdots \otimes_\pi V_N$) the vector space $V$ equipped with the norm $\| \cdot \|_{V, \epsilon}$ (resp. $\| \cdot \|_{V, \epsilon}$). If all the norms $\| \cdot \|_{V_i}$ are all hermitians we denote by $V_1 \otimes_2 \cdots \otimes_2 V_N$ the vector space $V$ with the natural hermitian norm on the tensor product.
\end{deff}

\begin{prop} \label{prop:PropertiesOfEpsilonAndPiNorms} With the notation introduced above, the following properties are satisfied:
\begin{enumerate}[(1)]
\item The $\epsilon$-norm $\| \cdot \|_\epsilon$ (resp. the $\pi$-norm $\| \cdot \|_\pi$) is the smallest (resp. the biggest) amongst the norms $\| \cdot \|$ on $V$ such that for every $i = 1, \dots, N$ and every $v_i \in V_i$ we have
\begin{align*} 
\| v_1 \otimes \cdots \otimes v_N\| &\ge \| v_1\|_{V_1^{\vee\vee}} \cdots \| v_N\|_{V_N^{\vee\vee}}, \\
\left(\textup{resp. } \| v_1 \otimes \cdots \otimes v_N\| \right. & \left. \le \| v_1\|_{V_1} \cdots \| v_N\|_{V_N} \right), 
\end{align*}
and for every $i = 1, \dots,n$ and every $\phi_i \in V_i^\vee$ we have
\begin{align*} 
\| \phi_1 \otimes \cdots \otimes \phi_N\|^\vee &\le \| \phi_1\|_{V_1^{\vee}} \cdots \| \phi_N\|_{V_N^{\vee}}, \\
\left(\textup{resp. } \| \phi_1 \otimes \cdots \otimes \phi_N\|^\vee \right. & \left. \ge \| \phi_1\|_{V_1^\vee} \cdots \| \phi_N\|_{V_N^\vee} \right), 
\end{align*}
where $\| \cdot \|^\vee$ denotes the operator norm induced by $\| \cdot \|$ on $V^\vee$. 

\item For every $v \in V$ we have $\| v \|_{V, \epsilon} \le \| v \|_{V,\pi}$.
\item \textup{(Duality)} The natural isomorphism $V^\vee \iso V_1^\vee \otimes_\C \cdots \otimes_\C V_N^\vee$ induces the following isometries:
\begin{align*} 
(V_1 \otimes_\epsilon \cdots \otimes_\epsilon V_N)^\vee &\xrightarrow{\hspace{2.2pt}\sim\hspace{2.2pt}} V_1^\vee \otimes_\pi \cdots \otimes_\pi V_N^\vee, \\
(V_1 \otimes_\pi \cdots \otimes_\pi V_N)^\vee &\xrightarrow{\hspace{2.2pt}\sim\hspace{2.2pt}} V_1^\vee \otimes_\epsilon \cdots \otimes_\epsilon V_N^\vee.
 \end{align*}
%Let $\| \cdot \|_{V, \epsilon}^\vee$ (resp. $\| \cdot \|_{V, \pi}^\vee$) be the operator norm on $V^\vee$ associated to the norm $\| \cdot \|_{V, \epsilon}$ (resp. $\| \cdot \|_{V, \pi}$) and let $\| \cdot \|_{V^\vee, \epsilon}$ (resp. $\| \cdot \|_{V^\vee, \pi}$) be the $\epsilon$-norm (resp. $\pi$-norm) on the tensor product $V^\vee = V^\vee_1 \otimes \cdots \otimes V^\vee_N$ associated to the norms $\| \cdot \|_{V^\vee_1}, \dots, \| \cdot \|_{V^\vee_1}$. Then we have:
%$$ \| \cdot \|_{V, \epsilon}^\vee = \| \cdot \|_{V^\vee, \pi}, \quad \| \cdot \|_{V, \pi}^\vee = \| \cdot \|_{V^\vee, \epsilon}$$
\item \textup{(Fonctoriality)} For every $i = 1, \dots, N$ let $W_i$ be a finite-dimensional complex vector space equipped with a norm $\| \cdot \|_{W_i}$ and let $\phi_i : V_i \to W_i$ be a linear map decreasing the norms. Then, the induced maps
\begin{align*} 
\phi_1 \otimes \cdots \otimes \phi_N :V_1 \otimes_\epsilon \cdots \otimes_\epsilon V_N &\too W_1 \otimes_\epsilon \cdots \otimes_\epsilon W_N, \\
 \phi_1 \otimes \cdots \otimes \phi_N : V_1 \otimes_\pi \cdots \otimes_\pi V_N &\too W_1 \otimes_\pi \cdots \otimes_\pi W_N
 \end{align*}
decrease the norms.
\item Let $L$ be a normed vector space of dimension $1$. Then we have
\begin{align*} 
\left( V_1 \otimes_\epsilon \cdots \otimes_\epsilon V_N \right) \otimes_\epsilon L &= 
V_1 \otimes_\epsilon \cdots \otimes_\epsilon V_N \otimes_\epsilon L, \\
 \left( V_1 \otimes_\pi \cdots \otimes_\pi V_N \right) \otimes_\pi L &= 
V_1 \otimes_\pi \cdots \otimes_\pi V_N \otimes_\pi L.
\end{align*}
\end{enumerate}
\end{prop}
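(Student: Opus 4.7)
All five assertions are classical features of the injective and projective tensor norms; in finite dimension (as here) the only inputs needed are the Hahn--Banach theorem (equivalently the isometry $V_i^{\vee\vee}=V_i$), the triangle inequality, and the definitions.

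I would start with (1), which is the heart of the proposition. Checking that $\|\cdot\|_\epsilon$ satisfies both listed inequalities is easy: the dual inequality is immediate from the definition, and on a pure tensor $v_1\otimes\cdots\otimes v_N$ one produces, by Hahn--Banach, functionals $\phi_i\in V_i^\vee$ with $\|\phi_i\|_{V_i^\vee}=1$ and $\phi_i(v_i)=\|v_i\|_{V_i^{\vee\vee}}$, whose tensor product witnesses the required lower bound. The analogous verification for $\|\cdot\|_\pi$ is symmetric: the pure-tensor upper bound is the trivial one-term decomposition, and the dual-side lower bound comes from testing a pure tensor of functionals against near-optimal pure tensors of vectors. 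For extremality, if $\|\cdot\|$ is any norm on $V$ satisfying the dual inequality, then
$$\|v\|_\epsilon=\sup_{\phi_i\ne 0}\frac{|\phi_1\otimes\cdots\otimes\phi_N(v)|}{\prod\|\phi_i\|_{V_i^\vee}}\le\sup_{\phi_i\ne 0}\frac{\|\phi_1\otimes\cdots\otimes\phi_N\|^\vee\,\|v\|}{\prod\|\phi_i\|_{V_i^\vee}}\le\|v\|,$$
so $\|\cdot\|_\epsilon$ is the smallest; dually, if $\|\cdot\|$ satisfies the pure-tensor upper bound and $v=\sum_\alpha v_{\alpha 1}\otimes\cdots\otimes v_{\alpha N}$ is any decomposition, the triangle inequality gives $\|v\|\le\sum_\alpha\prod_i\|v_{\alpha i}\|_{V_i}$, and taking the infimum yields $\|v\|\le\|v\|_\pi$.

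Parts (2), (4), and (5) follow quickly. (2) is immediate from (1), since $\|\cdot\|_\pi$ automatically satisfies the dual inequality used to characterize $\|\cdot\|_\epsilon$ as the smallest norm. (4) unwinds the definitions: for $\epsilon$, precomposition of the test functionals by the norm-decreasing $\phi_i^\vee$ shrinks every numerator in the defining supremum, while for $\pi$, the image under $\phi_1\otimes\cdots\otimes\phi_N$ of a decomposition of $v$ is a decomposition of the image whose sum of products of norms is no larger. (5) is purely formal: a one-dimensional factor contributes the same scalar $\|\ell\|_L$ to both norms, because the optimal choice of test-functional (resp.\ the optimal decomposition) on the $L$-slot is unique up to scalar.

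The crux is (3). A linear form on $V_1\otimes\cdots\otimes V_N$ canonically identifies with an element of $V_1^\vee\otimes\cdots\otimes V_N^\vee$; under this identification I would verify that the operator dual norm of $\|\cdot\|_\epsilon$ satisfies both conditions characterizing $\|\cdot\|_\pi$ on $V_1^\vee\otimes\cdots\otimes V_N^\vee$ via (1). On a pure tensor $\phi_1\otimes\cdots\otimes\phi_N$ the upper bound $\le\prod\|\phi_i\|_{V_i^\vee}$ is precisely the dual inequality for $\|\cdot\|_\epsilon$ established above; on a pure tensor of bidual functionals, identified via $V_i^{\vee\vee}=V_i$ with a pure tensor of vectors, the lower bound $\ge\prod\|\cdot\|_{V_i^{\vee\vee}}$ is the pure-tensor inequality for $\|\cdot\|_\epsilon$. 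The maximality characterization in (1) then forces $(V_1\otimes_\epsilon\cdots\otimes_\epsilon V_N)^\vee\simeq V_1^\vee\otimes_\pi\cdots\otimes_\pi V_N^\vee$ isometrically, and the converse identification is entirely symmetric. The main obstacle is purely clerical: keeping track of which of the four norms (primal $\epsilon$, primal $\pi$, dual $\epsilon$, dual $\pi$) plays which role at each step, and consistently invoking $V_i^{\vee\vee}=V_i$ as isometries so that the two branches of (1) apply verbatim.
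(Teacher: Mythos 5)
Parts (1), (2), (4) and (5) of your plan are correct and are essentially the same elementary verifications the paper has in mind (its own proof is only a sketch saying these are "elementary considerations", with (2) deduced from (1) as you do). The problem is in (3), which you rightly call the crux. Your argument shows that the operator norm $\| \cdot \|_\epsilon^\vee$ on $V_1^\vee \otimes \cdots \otimes V_N^\vee$ satisfies the two inequalities with respect to which $\| \cdot \|_\pi$ is characterized as the \emph{largest} norm; but maximality then yields only the one-sided estimate $\| \cdot \|_\epsilon^\vee \le \| \cdot \|_\pi$, not the isometry. The reverse inequality $\| \psi \|_\pi \le \| \psi \|_\epsilon^\vee$ is precisely the nontrivial half of the duality (it says that every functional of $\epsilon$-operator norm at most $1$ admits a decomposition $\sum_\alpha \phi_{\alpha 1} \otimes \cdots \otimes \phi_{\alpha N}$ with $\sum_\alpha \prod_i \| \phi_{\alpha i}\|_{V_i^\vee} \le 1$), and nothing in your plan addresses it; the closing remark that "the converse identification is entirely symmetric" hides the asymmetry rather than resolving it.

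The standard way to close the gap, consistent with the paper's indication that (3) "follows from (1) and (2) by duality", is to prove the \emph{second} isometry first: for $\psi \in (V_1 \otimes_\pi \cdots \otimes_\pi V_N)^\vee$ the supremum of $|\psi|$ over the $\pi$-unit ball equals its supremum over elementary tensors $v_1 \otimes \cdots \otimes v_N$ with $\prod_i \| v_i \|_{V_i} \le 1$, because the $\pi$-unit ball is (the closure of) the absolutely convex hull of such tensors and a linear functional attains the same supremum on a set and on its convex hull; this gives $\| \psi \|_\pi^\vee = \| \psi\|_\epsilon$ (using $V_i^{\vee\vee} = V_i$ isometrically). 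Then apply this identity with the $V_i^\vee$ in place of the $V_i$ and dualize once more, invoking the finite-dimensional biduality of norms $\| \cdot \|^{\vee\vee} = \| \cdot \|$ (Hahn--Banach, equivalently the bipolar theorem), to obtain $\| \cdot \|_\epsilon^\vee = \| \cdot \|_\pi$; combined with the inequality you already have, this yields both isometries. Alternatively one can argue directly that the unit ball of $\| \cdot \|_\epsilon$ is the polar of the set of elementary tensors of functionals of norm at most one, so that by the bipolar theorem its dual unit ball is the absolutely convex hull of that set, i.e.\ the $\pi$-unit ball. Either way, some Hahn--Banach/bipolar input beyond the extremal characterization in (1) is needed, and it should be stated.
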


\begin{proof}[Sketch of the proof] (1), (4) and (5) are elementary considerations on the definitions of the norms. (2) Indeed, by bi-duality, for all $i = 1, \dots, N$ and all $v_i \in V_i$ the very definition of the $\epsilon$-norms entails
$$ \| v_1 \otimes \cdots \otimes v_N\|_\epsilon = \prod_{i = 1}^N \| v_i \|_{V_i^{\vee \vee}} \le \prod_{i = 1}^N \| v_i \|_{V_i},$$
and one concludes thanks to (1). (3) follows from (1) and (2) by duality.
%\cite[{\S 1.1, Th\'eor\`eme 2}]{GrothendieckProduitTensoriels} (2) \cite[{\S 1.2, Th\'eor\`eme 3}]{GrothendieckProduitTensoriels} (3) and (4) are clear by definition of the norms.
\end{proof}

\begin{prop} \label{prop:EpsilonNormAsAnOperatorNorm} Let $V$ and $W$ be finite-dimensional normed vector spaces. The operator norm on $\Hom_\C(V,W)$ coincides with the $\epsilon$-norm on $V^\vee \otimes_\C W$ through the canonical isomorphism
$$ V^\vee \otimes_\C W \xrightarrow{\hspace{2.2pt}\sim\hspace{2.2pt}} \Hom_\C(V, W) $$
\end{prop}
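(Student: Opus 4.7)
The plan is to unwind the two norms through the canonical isomorphism and check they agree. Let $u \in V^\vee \otimes_\C W$ and denote by $T_u \in \Hom_\C(V, W)$ its image under the canonical isomorphism; if $u = \sum_{\alpha} \phi_\alpha \otimes w_\alpha$, then $T_u(v) = \sum_\alpha \phi_\alpha(v) w_\alpha$ for every $v \in V$.

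First I would unwind the $\epsilon$-norm. By definition,
$$ \| u \|_{V^\vee \otimes W, \epsilon} = \sup_{\substack{\xi \in V^{\vee\vee} - \{0\} \\ \psi \in W^\vee - \{0\}}} \frac{|(\xi \otimes \psi)(u)|}{\| \xi \|_{V^{\vee\vee}} \| \psi\|_{W^\vee}}. $$
The key observation is that, under the canonical identification, for every $\xi \in V^{\vee\vee}$ and $\psi \in W^\vee$ one has $(\xi \otimes \psi)(u) = \sum_\alpha \xi(\phi_\alpha) \psi(w_\alpha)$; when $\xi = \mathrm{ev}_v$ is evaluation at a vector $v \in V$, this equals $\sum_\alpha \phi_\alpha(v) \psi(w_\alpha) = \psi(T_u(v))$.

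Next I would use that $V$ is finite-dimensional, so the canonical map $V \to V^{\vee\vee}$ is an isometric isomorphism for the iterated dual norms (this is the standard Hahn-Banach argument in finite dimension). Under this identification the supremum over $\xi \in V^{\vee\vee}$ above reduces to a supremum over $v \in V - \{0\}$ with $\| \xi \|_{V^{\vee\vee}} = \| v \|_V$. Similarly $\| w \|_W = \sup_{\psi \neq 0} |\psi(w)| / \| \psi\|_{W^\vee}$ for every $w \in W$. Combining these,
$$ \| u \|_{V^\vee \otimes W, \epsilon} = \sup_{\substack{v \in V - \{0\} \\ \psi \in W^\vee - \{0\}}} \frac{|\psi(T_u(v))|}{\| v \|_V \| \psi \|_{W^\vee}} = \sup_{v \in V - \{0\}} \frac{\| T_u(v)\|_W}{\| v\|_V}, $$
which is exactly the operator norm of $T_u$.

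There is no real obstacle here: the content of the statement lies entirely in unravelling the definition of the $\epsilon$-norm and invoking the finite-dimensional bi-duality (which identifies $V$ with $V^{\vee\vee}$ isometrically). The only point deserving a line of care is the exchange of suprema, which is harmless since all quantities are non-negative.
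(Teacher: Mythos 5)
Your argument is correct. Under the canonical isomorphism, for $u=\sum_\alpha \phi_\alpha\otimes w_\alpha$ and $\xi\in V^{\vee\vee}$, $\psi\in W^\vee$ one indeed has $(\xi\otimes\psi)(u)=\psi(T_u(v))$ when $\xi=\mathrm{ev}_v$, and the two applications of finite-dimensional biduality (the isometry $V\iso V^{\vee\vee}$ to replace the supremum over $\xi$ by one over $v$, and $\|w\|_W=\sup_{\psi\neq 0}|\psi(w)|/\|\psi\|_{W^\vee}$ to recover $\|T_u(v)\|_W$ from the inner supremum) are exactly what is needed; the exchange of suprema is harmless as you note. The paper, however, does not argue this way at all: it simply quotes Grothendieck's Th\'eor\`eme 1 of \cite{GrothendieckProduitTensoriels} (applied with $E=V$, $F=\C$, $G=W$), which identifies $\epsilon$-tensor products with spaces of bounded operators in the general Banach-space setting. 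Your route is more elementary and self-contained --- it uses nothing beyond the definition of the $\epsilon$-norm and Hahn--Banach in finite dimension, which is all the paper ever needs --- while the paper's citation buys brevity and places the statement inside the standard theory of tensor norms, where it holds well beyond finite dimensions. Either is acceptable; if you keep your version, it would be worth stating explicitly that the norm on $V^\vee$ entering the $\epsilon$-norm is the operator (dual) norm, since that is what makes the bidual identification isometric.
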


\begin{proof} This is Theorem \cite[{\S 1.1, Th\'eor\`eme 1}]{GrothendieckProduitTensoriels} for $E = V$, $F = \C$ and $G = W$.
\end{proof}

\begin{rem} \label{rem:EndomorphismsNormedVectorLine} Let $L$ be a normed complex vector line. It follows from the preceding Proposition that through the natural isomorphism $L \otimes L^{\vee} \iso \C$ the $\epsilon$-norm on $L \otimes L^{\vee}$ induces the natural absolute value on $\C$.
\end{rem}

\begin{prop} \label{prop:PiNormAndDeterminant} Let $W$ be an hermitian vector space and let $r \ge 1$ be a positive integer. Let us endow the exterior powers $\bigwedge^r W$ with the hermitian norm defined in \ref{par:ConvetionHermitianNorms}. Then the canonical map $\det: W^{\otimes_\pi r} \to \bigwedge^r W$ decreases the norms.
\end{prop}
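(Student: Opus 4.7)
The plan is to reduce the statement to Hadamard's inequality via the defining property of the projective tensor norm. First I would recall from the explicit formula
$$\|v\|_{W^{\otimes_\pi r}} = \inf\Bigl\{\sum_\alpha \|w_{\alpha,1}\|_W \cdots \|w_{\alpha,r}\|_W : v = \sum_\alpha w_{\alpha,1} \otimes \cdots \otimes w_{\alpha,r}\Bigr\}$$
the general principle that a linear map $T \colon W^{\otimes r} \to Z$ into an arbitrary normed complex vector space $Z$ decreases norms (with source endowed with the $\pi$-norm) as soon as $\|T(w_1 \otimes \cdots \otimes w_r)\|_Z \le \|w_1\|_W \cdots \|w_r\|_W$ for every pure tensor. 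Indeed, for any decomposition $v = \sum_\alpha w_{\alpha,1} \otimes \cdots \otimes w_{\alpha,r}$ one has
$$\|T(v)\|_Z \le \sum_\alpha \|T(w_{\alpha,1} \otimes \cdots \otimes w_{\alpha,r})\|_Z \le \sum_\alpha \|w_{\alpha,1}\|_W \cdots \|w_{\alpha,r}\|_W,$$
and taking the infimum over decompositions yields $\|T(v)\|_Z \le \|v\|_{W^{\otimes_\pi r}}$. (This is nothing but the universal property of the projective tensor norm, and can alternatively be read off from the characterisation of $\| \cdot \|_\pi$ given in Proposition \ref{prop:PropertiesOfEpsilonAndPiNorms}(1).)

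Next I would apply this criterion to the determinant map $\det \colon W^{\otimes r} \to \bigwedge^r W$, which sends the pure tensor $w_1 \otimes \cdots \otimes w_r$ to $w_1 \wedge \cdots \wedge w_r$. The required estimate on pure tensors,
$$\|w_1 \wedge \cdots \wedge w_r\|_{\bigwedge^r W} \le \prod_{i=1}^r \|w_i\|_W,$$
is precisely Hadamard's inequality \eqref{eq:HadamardInequality} from \ref{par:ConvetionHermitianNorms}, applied to the hermitian space $W$.

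There is no genuine obstacle here: the whole content of the proposition is the combination of Hadamard's inequality with the very definition of the $\pi$-norm. The only point worth underlining is that it is essential to equip $\bigwedge^r W$ with the \emph{hermitian} norm of \ref{par:ConvetionHermitianNorms} (which equals $\sqrt{r!}$ times the quotient norm deduced from $W^{\otimes r} \twoheadrightarrow \bigwedge^r W$) and not with the quotient norm itself; otherwise the inequality would fail by a factor $\sqrt{r!}$, in perfect agreement with the failure of Hadamard's inequality for the quotient norm.
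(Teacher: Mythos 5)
Your proof is correct and is essentially the paper's own argument: both reduce the statement to Hadamard's inequality \eqref{eq:HadamardInequality} on pure tensors and then take the infimum over decompositions defining the $\pi$-norm (the paper expands $\langle \det w, \det w\rangle$ bilinearly where you simply use the triangle inequality, a cosmetic difference). One caveat: your closing remark is backwards --- since the hermitian norm on $\bigwedge^r W$ is $\sqrt{r!}$ \emph{times} the quotient norm, the quotient norm is the smaller one, so Hadamard's inequality (and hence the proposition) holds for it a fortiori; the point of using the hermitian norm is that it makes the statement stronger, not that the quotient norm would make it fail.
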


\begin{proof}  For every element $w \in W^{\otimes r}$ and every writing  $w = \sum_{\alpha = 1}^R w_{\alpha 1} \otimes  \cdots \otimes w_{\alpha r}$
the Hadamard inequality \eqref{eq:HadamardInequality} yields
\begin{align*} 
\langle \det w, \det w\rangle_{\det E_i}
&= \sum_{\alpha, \beta = 1}^R \langle w_{\alpha 1} \wedge  \cdots \wedge w_{\alpha r}, w_{\beta 1} \wedge  \cdots \wedge w_{\beta r} \rangle_{\det E_i}  \\
&\le  \sum_{\alpha, \beta = 1}^R \|w_{\alpha 1}\|_{E_i}  \cdots \| w_{\alpha r} \|_{E_i} \|w_{\beta 1}\|_{E_i}  \cdots \| w_{\beta r} \|_{E_i} \\
&= \left( \sum_{\alpha = 1}^R \|w_{\alpha 1}\|_{E_i} \cdots \| w_{\alpha r} \|_{E_i}  \right)^2,
\end{align*}
which concludes the proof.
\end{proof}

\subsubsection*{Application to the lower bound of the height on the quotient}

\begin{np} Let us go back to the proof of Theorem \ref{thm:LBHeightQuotient} in the case when the integers $b_i$ are non-negative. Let us denote by $\cal{Y}$ the quotient of semi-stable points of $\P(\cal{F})$ by $\SLs(\cal{E})$ and, for every sufficiently divisible $D$, by $\ol{\cal{M}}_D$ the hermitian invertible sheaf on $\cal{Y}$ induced by $\O_{\ol{\cal{F}}}(D)$. Let us fix $D$ such that $\cal{M}_D$ is very ample.
\end{np}

\begin{npar}{Application of the First Main Theorem of Invariant Theory} Since the characteristic of $K$ is zero and the homomorphism $\varpi$ decreases the norms, one reduces to the case $\ol{\cal{F}} = \ol{\cal{E}}^{\otimes b} \df \ol{\cal{E}}_1^{\otimes b_1} \otimes_{\o_K} \cdots \otimes_{\o_K} \ol{\cal{E}}_n^{\otimes b_n}$. For every $i = 1, \dots, n$ let us denote by $E_i$ the $K$-vector space $\cal{E}_i \otimes_{\o_K} K$ and by $E^{\otimes b}$ the $K$-vector space $E_1^{\otimes b_1} \otimes_K \cdots \otimes_K E_n^{\otimes b_n}$. 

Remark that subspace of $\SLs(E)$-invariant elements of $\Sym^D F$ is non-zero because $\cal{M}_D$ is very ample. Therefore for every $i = 1, \dots, n$ the integer $e_i \df \dim_K \cal{E}_i$ divides $D b_i$. Let us consider the maps $\eta \df \eta_{E, Db}$ and $\Phi \df \Phi_{E, D b}$ (see Definitions \ref{deff:DefinitionEta} and \ref{deff:DefinitionPhi}) and the natural surjection:
$$ \phi: E^{\otimes D b} \too \Sym^D (E^{\otimes b}),$$
where $E^{\otimes Db} \df E_1^{\otimes D b_1} \otimes_K \cdots \otimes_K E_n^{\otimes D b_n}$.
%$$\phi \circ \Phi(E,  D b)   : \bigotimes_{i = 1}^n k[\frak{S}_{D|b_i|}] \otimes \det(E_i)^{\otimes D b_i / e_i} \too \Sym^D F.$$

\begin{lem} \label{lem:LowerBoundingHeightWithSizeOfPermutations}For every $i = 1, \dots, n$ let $\delta_i \in \det(E_i)$ be non-zero. 
\begin{enumerate}[(1)]
\item A set of generators of the $\SLs(E)$-invariant elements of $\Sym^D E^{\otimes b} = \Gamma(\P(E^{\otimes b}), \O(D))$ is given by the image through $\phi \circ \Phi$ of the elements
$$f_\sigma\df \eta(\sigma) \otimes \left( \delta_1^{\otimes D b_1 / e_1} \otimes \cdots \otimes \delta_1^{\otimes D b_n / e_n} \right)$$
where $\sigma = (\sigma_1, \dots, \sigma_n)$ ranges in $\frak{S}_{D b} \df \frak{S}_{D b_1} \times \cdots \times \frak{S}_{D b_n}$. 
\item Through the identification $\Gamma(\cal{Y}, \cal{M}_D) \otimes_{\o_K} K \iso \Gamma(\P(E^{\otimes b}), \O(D))^{\SLs(E)}$ we have:
\begin{multline*}
h_{\min}( \quotss{(\P(\cal{E}^{\otimes b}), \O_{\ol{\cal{F}}}(1))}{\SLs(\cal{E})}) \\ \ge - \frac{1}{D} \sup_{\sigma \in \frak{S}_{D|b|}} \left\{ \sum_{v \in \Vv_K} \log \sup_{\cal{Y}(\C_v)} \left\| (\phi \circ \Phi)(f_\sigma) \right\|_{\cal{M}_D,v} \right\}.
\end{multline*}
\end{enumerate}
\end{lem}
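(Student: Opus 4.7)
The strategy splits as the lemma does. For (1), I would apply Corollary \ref{cor:FirstThmInvariantTheory+HomRepresentations} to the canonical $\GLs(E)$-equivariant surjection $\phi\colon E^{\otimes Db} \onto \Sym^D(E^{\otimes b})$, whose target is a homogeneous representation of weight $Db=(Db_1,\ldots,Db_n)$. The very ampleness of $\cal{M}_D$ gives $\Gamma(\cal{Y},\cal{M}_D)\neq 0$, and through the canonical identification $\Gamma(\cal{Y},\cal{M}_D)\otimes_{\o_K} K \simeq (\Sym^D E^{\otimes b})^{\SLs(E)}$ this forces the space of $\SLs(E)$-invariants in $\Sym^D(E^{\otimes b})$ to be non-zero; hence the hypotheses of the corollary are met, giving both $e_i \mid Db_i$ and the identification of the invariants with the image of $\phi \circ \Phi \circ (\eta \otimes \id)$ applied to $\bigotimes_i K[\frak{S}_{Db_i}] \otimes \bigotimes_i \det(E_i)^{\otimes Db_i/e_i}$. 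Since each $\det(E_i)$ is the line spanned by $\delta_i$ and each $K[\frak{S}_{Db_i}]$ is $K$-spanned by its group elements, the family $\{(\phi\circ\Phi)(f_\sigma)\}_{\sigma\in\frak{S}_{Db}}$ drops out as the asserted generating set.

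For (2), the bridge to the height inequality rests on the fact that, by (1) together with the very ampleness of $\cal{M}_D$, the family $\{(\phi\circ\Phi)(f_\sigma)\}_{\sigma\in\frak{S}_{Db}}$ generates $\cal{M}_D$ as an invertible sheaf on $\cal{Y}$; in particular, for every $Q\in\cal{Y}(\overline{\Q})$ there exists $\sigma(Q)\in\frak{S}_{Db}$ with $(\phi\circ\Phi)(f_{\sigma(Q)})(Q)\neq 0$. Applying the product-formula expression of the absolute height as a sum of $-\log$ of local norms (exactly as in the proof of Lemma \ref{lem:LowestHeightOnTheQuotientIsReal}) to this global section, and bounding each local value $\|\cdot\|_{\cal{M}_D,v}(Q)\le \sup_{\cal{Y}(\C_v)}\|\cdot\|_{\cal{M}_D,v}$, yields
$$\frac{1}{D}h_{\ol{\cal{M}}_D}(Q) \;\ge\; -\frac{1}{D}\sum_{v\in\Vv_K}\log \sup_{\cal{Y}(\C_v)}\left\|(\phi\circ\Phi)(f_{\sigma(Q)})\right\|_{\cal{M}_D,v}.$$
Since $\sigma(Q)\in\frak{S}_{Db}$, replacing $\sigma(Q)$ with the supremum over all $\sigma\in\frak{S}_{Db}$ only weakens the bound; the weakened inequality is now independent of $Q$, so taking the infimum over $Q\in\cal{Y}(\overline{\Q})$ delivers the claimed lower bound for $h_{\min}$.

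The only non-elementary ingredient is the First Main Theorem of Invariant Theory, already packaged in Corollary \ref{cor:FirstThmInvariantTheory+HomRepresentations}, so no step above presents a real obstacle: the lemma is essentially a packaging of known inputs. The genuine work toward Theorem \ref{thm:LBHeightQuotient} will instead lie in the subsequent \emph{explicit} estimation of the suprema $\sup_{\cal{Y}(\C_v)}\|(\phi\circ\Phi)(f_\sigma)\|_{\cal{M}_D,v}$, via sub-multiplicativity of the $\epsilon$- and $\pi$-tensor norms and Hadamard-type inequalities; the present lemma is the structural reduction that makes such an estimation feasible.
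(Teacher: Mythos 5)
Your proposal is correct and follows essentially the same route as the paper: part (1) is exactly the application of Corollary \ref{cor:FirstThmInvariantTheory+HomRepresentations} to the surjection $\phi$ (with the non-vanishing of invariants supplied by the very ampleness of $\cal{M}_D$), and part (2) is the paper's argument of picking, for each $Q$, a non-vanishing section among the $(\phi\circ\Phi)(f_\sigma)$, writing the height as a sum of $-\log$ of local norms, bounding each by the supremum over $\cal{Y}(\C_v)$, and then passing to the supremum over $\sigma$ and the infimum over $Q$.
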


\begin{proof} (1) This is Corollary \ref{cor:FirstThmInvariantTheory+HomRepresentations} (applied to the representation $\Sym^D E^{\otimes b} $ and the $\SLs(V)$-equivariant surjection $\phi$).

(2) Let $Q \in \cal{Y}(\ol{\Q})$ be a point defined on a finite extension $K'$ of $K$. Since $\cal{M}_D$ is very ample, according to (1) there exists $\sigma \in \frak{S}_{D|b|}$ such that the $\SLs(E)$-invariant polynomial  $\phi \circ \Phi(f_\sigma)$ --- seen as a global section of $\cal{M}_D$ --- does not vanish at $Q$. By definition of the height we have:
\begin{align*}
h_{\cal{M}_D}(Q) &= \sum_{v \in \Vv_K} - \log \| \phi \circ \Phi(f_\sigma)\|_{\cal{M_D}, v}(Q) \ge \sum_{v \in \Vv_K} - \log \sup_{y \in \cal{Y}(\C_v)} \| \phi \circ \Phi(f_\sigma)\|_{\cal{M_D}, v}(y),
\end{align*}
from which the conclusion of the Lemma directly follows.
\end{proof}

\end{npar}

\begin{npar}{Size of the invariants} Consider the $\o_K$-module $\cal{E}^{\otimes D b} = \cal{E}_1^{\otimes D b_1} \otimes \cdots \otimes \cal{E}_n^{\otimes D b_n} $ and denote by $\ol{\cal{E}}^{\otimes_\epsilon D b}$ (resp. $\ol{\cal{E}}^{\otimes_2 D b}$)
the $\o_K$-module $\cal{E}^{\otimes Db}$ endowed for every embedding $\gamma : K \to \C$ with the $\epsilon$-norm on the normed vector space
$$ (\ol{\cal{E}}_{1, \gamma}^{\otimes_\epsilon e_1})^{\otimes_\epsilon D b_1 / e_1} \otimes_\epsilon \cdots \otimes_\epsilon (\ol{\cal{E}}_{n, \gamma}^{\otimes_\epsilon e_n})^{\otimes_\epsilon D b_n / e_n}$$
(resp. with the natural hermitian norm on tensor product) that we denote by $\| \cdot \|_\epsilon$ (resp. $\| \cdot \|_2$). We consider the $\o_K$-module $\End_{\o_K}(\cal{E}^{\otimes Db})$ endowed for every embedding $\gamma : K \to \C$ with the operator norm $\| \cdot \|_{\epsilon, 2, \gamma}$ on
$$ \End_{\epsilon, 2}( \ol{\cal{E}}_\gamma^{\otimes D b}) \df \Hom (\ol{\cal{E}}_\gamma^{\otimes_\epsilon D b}, \ol{\cal{E}}^{\otimes_2 D b}_\gamma).$$
We denote the resulting normed $\o_K$-module by $\End_{\epsilon, 2}( \ol{\cal{E}}^{\otimes D b})$. For every $\gamma : K \to \C$ let us moreover endow the complex vector space $\Sym^D ( \cal{E}^{\otimes b}) \otimes_\gamma \C$ with sup-norm on polynomials (see paragraph \ref{par:ConvetionHermitianNorms}).

\begin{lem} \label{lem:PhiIsNormsDecreasing} With the notation introduced above, the map $\Phi_{E, Db} \circ \phi$ defines an homomorphism of $\o_K$-modules
$$ \phi \circ \Phi : \End_{\epsilon, 2}( \ol{\cal{E}}^{\otimes D b})
\otimes_\epsilon 
\sideset{}{_\epsilon}\bigotimes_{i = 1}^n (\det \ol{\cal{E}}_i)^{\otimes_\epsilon D b_i / e_i}
\too \Sym^D ( \cal{E}^{\otimes b}),
$$
which decreases the norms.
\end{lem}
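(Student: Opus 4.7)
My plan is to interpret $\Phi$ via an explicit antisymmetric inclusion $\iota : M \to \cal{E}^{\otimes Db}$, where
\[ M := \bigotimes_{i=1}^n (\det \cal{E}_i)^{\otimes Db_i/e_i} \]
is a \emph{one-dimensional} $\o_K$-module (being the tensor product of the one-dimensional modules $\det \cal{E}_i$), and then to bound $\|\phi \circ \Phi(x)\|_{\sup}$ by dualizing against test covectors $\xi \in (\cal{E}^{\otimes b})^\vee$. The one-dimensionality of $M$ is what saves the argument from an otherwise delicate nuclear-versus-operator-norm discrepancy.

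First I rewrite $\Phi$ concretely. For each $i$ let $\iota_i : \det \cal{E}_i \hookrightarrow \cal{E}_i^{\otimes e_i}$ be the antisymmetric inclusion characterized by $\phi(\iota_i(\omega)) = \langle \alpha(\phi), \omega\rangle$ for every $\phi \in (\cal{E}_i^\vee)^{\otimes e_i}$ and $\omega \in \det \cal{E}_i$, where $\alpha$ is the antisymmetrization appearing in Definition \ref{deff:DefinitionPhi}. Tensoring and iterating over $i$ yields $\iota : M \to \cal{E}^{\otimes Db}$, and unwinding the definition shows $\Phi(T \otimes \omega) = T(\iota(\omega))$ for every $T \in \End(\cal{E}^{\otimes Db})$. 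Proposition \ref{prop:PiNormAndDeterminant} applied with $W = \cal{E}_i^\vee$ gives $\|\alpha(\phi)\|_{(\det \cal{E}_i)^\vee} \le \|\phi\|_\pi$, whence the supremum definition of the $\epsilon$-norm yields $\|\iota_i(\omega)\|_{\cal{E}_i^{\otimes_\epsilon e_i}} \le \|\omega\|_{\det \cal{E}_i}$. Functoriality and associativity of $\otimes_\epsilon$ (Proposition \ref{prop:PropertiesOfEpsilonAndPiNorms}(4)--(5)) propagate this to $\iota : M \to \cal{E}^{\otimes_\epsilon Db}$.

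Next I bound the sup-norm by duality. Fix $\xi \in (\cal{E}^{\otimes b})^\vee$; the equality $\phi \circ \Phi(x)(\xi) = \xi^{\otimes D}(\Phi(x))$ defines a linear functional $L_\xi(T \otimes \omega) := \xi^{\otimes D}(T(\iota(\omega)))$ on the source. Writing $T = \psi \otimes v$ with $\psi \in \cal{E}^{\vee \otimes Db}$ and $v \in \cal{E}^{\otimes Db}$ shows $L_\xi(\psi \otimes v \otimes \omega) = \psi(\iota(\omega)) \cdot \xi^{\otimes D}(v)$, which is a simple tensor in the dual of the source. Proposition \ref{prop:EpsilonNormAsAnOperatorNorm} identifies $\End_{\epsilon,2}(\cal{E}^{\otimes Db}) \simeq \cal{E}^{\vee \otimes_\pi Db} \otimes_\epsilon \cal{E}^{\otimes_2 Db}$, and Proposition \ref{prop:PropertiesOfEpsilonAndPiNorms}(3) then identifies the dual of the source as $\cal{E}^{\otimes_\epsilon Db} \otimes_\pi \cal{E}^{\otimes_2 Db} \otimes_\pi M^\vee$. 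In this dual, $L_\xi = \iota \otimes \xi^{\otimes D}$, where $\iota$ is viewed as an element of $\cal{E}^{\otimes_\epsilon Db} \otimes_\pi M^\vee$ and $\xi^{\otimes D}$ (via hermitian self-duality) as an element of $\cal{E}^{\otimes_2 Db}$. Because $M^\vee$ is one-dimensional, the $\pi$-tensor with it is trivial and $\|\iota\|$ reduces to the operator-norm bound from the previous step, which is $\le 1$; combined with $\|\xi^{\otimes D}\|_{\otimes_2} = \|\xi\|^D$ and the multiplicativity of the $\pi$-norm on simple tensors, this yields $\|L_\xi\| \le \|\xi\|^D$. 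Finally $|L_\xi(x)| \le \|\xi\|^D \|x\|_\epsilon$, and taking the supremum over $\xi$ gives $\|\phi \circ \Phi(x)\|_{\sup} \le \|x\|_\epsilon$.

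The main obstacle is the careful bookkeeping of the four different tensor norms at play ($\otimes_\epsilon$, $\otimes_\pi$, hermitian $\otimes_2$, and the operator norm $\End_{\epsilon,2}$) and their dualities: verifying that after all the identifications, $L_\xi$ is genuinely a simple tensor in the stated triple tensor product so that the $\pi$-norm is multiplicative on it. Without the one-dimensionality of $M$, the factor $\|\iota\|_{\cal{E}^{\otimes_\epsilon Db} \otimes_\pi M^\vee}$ would be a nuclear norm and the naive bound could fail by a dimension factor.
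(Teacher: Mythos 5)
Your argument is correct, and it is in substance the transposed version of the paper's proof: the paper factors $\phi \circ \Phi$ as a composition of four norm-decreasing maps (the surjection onto $\Sym^D$ with hermitian-versus-sup comparison, the contraction with the line $\bigotimes_i (\det \ol{\cal{E}}_i)^{\otimes D b_i/e_i} \otimes (\cdots)^\vee$, the determinant map $(\ol{\cal{E}}_i^{\otimes_\epsilon e_i})^\vee \to \det \ol{\cal{E}}_i^\vee$ via Proposition \ref{prop:PiNormAndDeterminant} and the $\epsilon$--$\pi$ duality, and the operator-norm identification of Proposition \ref{prop:EpsilonNormAsAnOperatorNorm}), whereas you dualize: you rewrite $\Phi(T \otimes \omega) = T(\iota(\omega))$ with $\iota$ the transpose of that same determinant map, bound $\|\iota\| \le 1$ by exactly the same Hadamard input, and then control the sup-norm by evaluation against test covectors $\xi$. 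The ingredients are therefore identical; only the bookkeeping is reorganized. One remark: your middle step identifying the dual of the source as a triple $\pi$-tensor product (and worrying whether $L_\xi$ is a simple tensor there) is dispensable. Since $M$ is a line, every element of the source is of the form $T \otimes \omega_0$, and the estimate is the one-line chain
$$ |\xi^{\otimes D}(T(\iota(\omega_0)))| \le \|\xi\|^D \, \|T(\iota(\omega_0))\|_2 \le \|\xi\|^D \, \|T\|_{\epsilon,2} \, \|\iota(\omega_0)\|_\epsilon \le \|\xi\|^D \, \|T\|_{\epsilon,2} \, \|\omega_0\|_M, $$
which uses only hermitian duality, the operator norm, and your bound on $\iota$; this also sidesteps any associativity questions for the mixed tensor norms. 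Finally, the lemma also asserts that $\phi \circ \Phi$ is defined at the level of $\o_K$-modules; the paper dismisses this as clear, but you should at least record it (e.g.\ note that $\iota_i$ sends a generator of $\det \cal{E}_i$ into $\cal{E}_i^{\otimes e_i}$), since the integrality is what is used at the non-archimedean places in the sequel.
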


\begin{proof} The fact that the homomorphism $\Phi_{E, Db} \circ \phi$ is defined at the level of $\o_K$-module is clear. Remark that $\phi \circ \Phi$ is defined as a composition of the following natural maps:
\begin{enumerate}[(1)]
\item $\ol{\cal{E}}^{\otimes_2 D b} \too \Sym^D (\ol{\cal{E}}^{\otimes_2 b})$;
\item $\displaystyle
\ol{\cal{E}}^{\otimes_2 D b} \otimes_\epsilon \left( \sideset{}{_\epsilon}\bigotimes_{i = 1}^n  (\det \ol{\cal{E}}_i)^{\otimes_\epsilon D b_i / e_i}  \right)^\vee
\otimes_\epsilon 
\sideset{}{_\epsilon}\bigotimes_{i = 1}^n  (\det \ol{\cal{E}}_i)^{\otimes_\epsilon D b_i / e_i}   \xrightarrow{\hspace{2.2pt}\sim\hspace{2.2pt}} \ol{\cal{E}}^{\otimes_2 D b}$;
\item $\det : \left( \ol{\cal{E}}_i^{\otimes_\epsilon e_i} \right)^\vee \too \det \ol{\cal{E}}_i^\vee$;
\item $\displaystyle \End_{\epsilon, 2}( \ol{\cal{E}}^{\otimes D b}) \too \ol{\cal{E}}^{\otimes_2 D b} \otimes_\epsilon \left(  \sideset{}{_\epsilon} \bigotimes_{i = 1}^n   (\det \ol{\cal{E}}_i)^{\otimes_\epsilon D b_i / e_i} \right)^\vee $.
\end{enumerate}
We claim that for every $\gamma : K \to \C$ each of these maps reduces the norms. Indeed, for (1) it is a reformulation of the fact that the hermitian norm on polynomials defined in paragraph \ref{par:ConvetionHermitianNorms} is bigger than the sup norm; for (2) it follows from Propositions \ref{prop:PropertiesOfEpsilonAndPiNorms} (5) and Remark \ref{rem:EndomorphismsNormedVectorLine}; for (3) it is Proposition \ref{prop:PiNormAndDeterminant} and the isometric isomorphism $(\ol{\cal{E}}_i^\vee)^{\otimes_\pi e_i} \iso (\ol{\cal{E}}_i^{\otimes_\epsilon e_i})^\vee$ given by Proposition \ref{prop:PropertiesOfEpsilonAndPiNorms} (3); for
(4) it follows from (3) and Proposition \ref{prop:EpsilonNormAsAnOperatorNorm}.
\end{proof}

\begin{lem} \label{lem:SizePermutations} Let $\sigma= (\sigma_1, \dots, \sigma_n) \in \frak{S}_{D b}$. For every $\gamma : K \to \C$ we have:
$$ \| \eta(\sigma)\|_{\epsilon, 2, \gamma} \le \sqrt{e_1^{D b_1 } \cdots e_N^{D b_N }}.$$
\end{lem}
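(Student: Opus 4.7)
The proof rests on two observations. First, $\eta(\sigma)$ acts by permuting the tensor factors of $\cal{E}^{\otimes Db}_\gamma$: each component $\sigma_k$ permutes the $Db_k$ copies of $\cal{E}_{k, \gamma}$ amongst themselves. The hermitian norm $\|\cdot\|_{2, \gamma}$ on a tensor product is invariant under permutation of factors (choosing orthonormal bases makes this immediate), so $\eta(\sigma)$ is an isometry for $\|\cdot\|_{2, \gamma}$. Therefore
$$ \|\eta(\sigma)\|_{\epsilon, 2, \gamma} \;=\; \sup_{v \neq 0} \frac{\|\eta(\sigma) v\|_{2, \gamma}}{\|v\|_{\epsilon, \gamma}} \;=\; \sup_{v \neq 0} \frac{\|v\|_{2, \gamma}}{\|v\|_{\epsilon, \gamma}}, $$
and it suffices to bound the operator norm of the identity map from the $\epsilon$-normed space to the hermitian one by $\sqrt{e_1^{Db_1} \cdots e_n^{Db_n}}$.

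To bound this second ratio, I would fix for each $k = 1, \dots, n$ an orthonormal basis $e^{(k)}_1, \dots, e^{(k)}_{e_k}$ of $\cal{E}_{k, \gamma}$. The resulting elementary tensors $e_I$, where $I$ ranges over tuples specifying a basis vector for each of the $Db$ factors, form an orthonormal basis of $\cal{E}^{\otimes Db}_\gamma$ with respect to $\|\cdot\|_{2, \gamma}$. For $v = \sum_I a_I e_I$, each coefficient is $a_I = e_I^\ast(v)$, where $e_I^\ast$ is the elementary dual tensor built from the dual basis vectors of the $e^{(k)}_j$. Each dual basis vector has operator norm $1$, so by the defining property of the $\epsilon$-norm recalled in Proposition \ref{prop:PropertiesOfEpsilonAndPiNorms}(1) we obtain $|a_I| \leq \|v\|_{\epsilon, \gamma}$. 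Summing over $I$,
$$ \|v\|_{2, \gamma}^2 \;=\; \sum_I |a_I|^2 \;\le\; (\#I)\, \|v\|_{\epsilon, \gamma}^2 \;=\; \Bigl(\prod_{k=1}^n e_k^{Db_k}\Bigr) \|v\|_{\epsilon, \gamma}^2, $$
which gives the desired inequality.

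The only mildly delicate point is making sure that the bound $|e_I^\ast(v)| \le \|v\|_{\epsilon, \gamma}$ really holds for the $\epsilon$-norm as it is bracketed in the definition preceding the lemma, namely $(\cal{E}_1^{\otimes_\epsilon e_1})^{\otimes_\epsilon Db_1/e_1} \otimes_\epsilon \cdots$. This follows by applying Proposition \ref{prop:PropertiesOfEpsilonAndPiNorms}(1) iteratively: at each bracketing level, the dual of an $\epsilon$-tensor of hermitian factors pairs with an elementary tensor built from unit-norm dual vectors via a quantity of operator norm $1$, so the product dual vector $e_I^\ast$ has norm at most $1$ in the full dual. With this verified, the estimate above yields exactly $\|\eta(\sigma)\|_{\epsilon, 2, \gamma} \le \sqrt{e_1^{Db_1} \cdots e_n^{Db_n}}$.
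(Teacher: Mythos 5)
Your proof is correct and follows essentially the same route as the paper: the paper likewise observes that $\eta(\sigma)$ permutes an orthonormal basis of $\ol{\cal{E}}^{\otimes Db}$ (hence preserves $\|\cdot\|_2$) and then bounds $\|T\|_2^2=\sum_R|T_R|^2$ by $\#\cal{R}\cdot\max_R|T_R|^2$ with $\max_R|T_R|\le\|T\|_\epsilon$, which is exactly your comparison of the hermitian norm with the $\epsilon$-norm via coefficients against elementary dual tensors. Your extra remark on the bracketed $\epsilon$-norm, handled by iterating Proposition \ref{prop:PropertiesOfEpsilonAndPiNorms}(1), just makes explicit a point the paper leaves implicit.
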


\begin{proof} For every $i = 1, \dots, n$ let $v_{i1}, \dots, v_{i e_i}$ be an orthonormal basis of $\ol{\cal{E}}_i$. Consider the set $\cal{R}$ of indices $ R = (r_{i,j} : 1 \le i \le n, 1 \le j \le D |b_i|)$ with integral entries satisfying $1 \le r_{i,j} \le e_i$ for every $i, j$. For every $R \in \cal{R}$ let us set
$$ v_R \df \bigotimes_{i = 1}^n \bigotimes_{j = 1}^{D b_i / e_i} \bigotimes_{\alpha = 1}^{e_i} v_{i r_{i, j e_i + \alpha}} = \bigotimes_{i = 1}^n \bigotimes_{j = 1}^{D b_i / e_i} v_{i r_{i, j e_i + 1}} \otimes \cdots \otimes v_{i r_{i, j e_i + e_i}}.$$
The vectors $v_R$ for $R \in \cal{R}$ form an orthonormal basis of $\ol{\cal{E}}^{\otimes D b}$. For every 
 $T \in \ol{\cal{E}}^{\otimes D b }$ let us write $T = \sum_{R \in \cal{R}} T_R v_R$. 
With this notation we have:
$$ \| T\|_{2}^2 = \sum_{R \in \cal{R}} |T_R|^2, \qquad \| T\|_{\epsilon} \ge \max_{R \in \cal{R}} |T_R|.$$
For every $R \in \cal{R}$ let us write $\sigma(R) = (r_{i , \sigma_i(j)})_{i, j}$. By definition of $\eta(\sigma)$ for every $T$ we have $ \eta(\sigma)(T) = \sum_{R \in \cal{R}} T_R v_{\sigma(R)}$. Therefore we get $ \| \eta(\sigma)(T)\|_2 = \| T \|_2$ and 
\begin{align*} 
\sup_{T \neq 0} \frac{\| \eta(\sigma)(T)\|_2^2}{\| T\|_\epsilon^2} 
= \sup_{T \neq 0} \frac{\|T\|_2^2}{\| T\|_\epsilon^2} \le \sup_{T \neq 0} \frac{\displaystyle \sum_{R \in \cal{R}} |T_R|^2}{\displaystyle  \max_{R \in \cal{R}} \left\{ |T_R| ^2 \right\}} = \# \cal{R} =e_1^{D b_1 } \cdots e_n^{D b_n}
\end{align*}
(note that the last supremum is attained for $T= \sum_{R \in \cal{R}} v_R$).
\end{proof}
\end{npar}

\begin{npar}{End of the proof of Theorem \ref{thm:LBHeightQuotient}} For every $i = 1, \dots, n$ let $\delta_i \in \det(E_i)$ be non-zero. For every $\sigma = (\sigma_1, \dots, \sigma_n) \in \frak{S}_{Db}$ let us consider
$$f_\sigma\df \eta(\sigma) \otimes \left( \delta_1^{\otimes D b_1 / e_1} \otimes \cdots \otimes \delta_1^{\otimes D b_n / e_n} \right) \in \End(E^{\otimes Db}) \otimes \bigotimes_{i = 1}^n \det(E_i)^{\otimes D b_i / e_i}$$
Since the elements $\eta$ are integral and the map $\phi \circ \Phi$ is defined at the level of $\o_K$-modules we have, for every non-archimedean place $v$,
\begin{align*}
\sup_{y \in \cal{Y}(\C_v)} \| \phi \circ \Phi(f_\sigma)\|_{\cal{M}_D, v} (y) \le \prod_{i = 1}^n \| \delta_i \|^{D b_i / e_i}_{\det E_i, v}.
\end{align*}
On the other hand according to Lemmata \ref{lem:PhiIsNormsDecreasing} and \ref{lem:SizePermutations} for every embedding $\gamma : K \to \C$ we have:
\begin{align*}
\sup_{y \in \cal{Y}(\C)} \| \phi \circ \Phi(f_\sigma)\|_{\cal{M}_D, \gamma} (y) &\le \| \phi \circ \Phi(f_\sigma)\|_{\sup, \gamma}
\le \| \eta(\sigma) \|_{\epsilon, 2} \cdot \prod_{i = 1}^n \| \delta_i \|^{D b_i / e_i}_{\det E_i, \gamma} \\
&\le \sqrt{e_1^{D b_1} \cdots e_N^{D b_N }} \cdot \prod_{i = 1}^n \| \delta_i \|^{D b_i / e_i}_{\det E_i, \gamma}.
\end{align*}
According to Lemma \ref{lem:LowerBoundingHeightWithSizeOfPermutations} we get
\begin{align*} 
[K : \Q] h_{\min}( \quotss{(\P(\cal{E}^{\otimes b}), \O_{\ol{\cal{F}}}(1))}{\SLs(\cal{E})}) \hspace{-70pt}& \\ &\ge - \sum_{i = 1}^n  \left( \frac{ b_i}{e_i} \left( \sum_{v \in \Vv_K}  \log \| \delta_i\|_{\det E_i, v} \right) \right) -  \log \sqrt{e_1^{b_1} \cdots e_N^{b_N}} \\
&\ge \frac{ b_i}{e_i} \degar \ol{\cal{E}_i} - \frac{1}{2} \sum_{i = 1}^n b_i \log \rk \cal{E}_i,
\end{align*}
and one concludes recalling $\muar(\cal{E}_i) = \degar(\cal{E}_i) / e_i$. \qed
\end{npar}

\section{From the Fundamental Formula to the Main Theorem} \label{sec:FundamentalFormulaToMainTheorem}

\subsection{Interlude on the index}

Let $K$ be a field of characteristic $0$.

\begin{npar}{Index} Let $n \ge 1$ be a positive integer and $\P = (\P^1)^n$ be the product of $n$ copies of the projective line over $K$. For every $i = 1, \dots, n$ let $\pr_i : \P \to \P^1$ be the projection onto the $i$-th factor.

Let $z = (z_1, \dots, z_n)$ be a $K$-point of $\P$ and $b = (b_1, \dots, b_n)$ be a $n$-tuple of positive real numbers. For every $i = 1, \dots, n$ let $t_i$ be a local parameter around $z_i \in \P^1(K)$.

\begin{deff} Let $f \in \O_{\P, z}$ be a regular function on $\P$ defined on an open neighbourhood of $z$. The function $f$ develops into power series 
$$ f = \sum_{\ell = (\ell_1, \dots, \ell_n) \in \N^n} f_\ell t_1^{\ell_1} \cdots t_n^{\ell_n},$$
with $f_\ell \in K$. If $f$ is non-zero, then we define the \em{index of $f$ at $z$ with respect to the weight $b$} as the real number
$$ \ind_b(f, z) \df \min \left\{b_1 \ell_1 + \cdots + b_n \ell_n : f_\ell \neq 0 \right\};$$
if $f = 0$ we set $\ind_b(0, z) \df + \infty$.
\end{deff}

If $b = (b_1, \dots, b_n)$ is a $n$-tuple of positive real numbers we write $1 / b$ to denote the $n$-tuple $(1 / b_1, \dots, 1 / b_n)$. Let the index with the respect the weight $1 / b$ be denoted by $\ind_{1 / b}$. The notion of index can be naturally extended to meromorphic sections $s$ of an invertible sheaf $L$ on $\P$: it suffices to choose a trivialising section $s_0$ of $L$ around $z$ and set
$$ \ind_b(s, z) \df \ind_b(s/s_0, z).$$
\end{npar}

\begin{npar}{Higher dimensional Dyson's Lemma}
The main result concerning the index is the Higher Dimensional Dyson's Lemma: the version stated here is due to Nakamaye \cite{nakamaye_id}. The original version of Esnault-Viehweg \cite{esnault-viewheg} (which has a slightly bigger error term) would work as well.

Let $r = (r_1, \dots, r_n)$ be a $n$-tuple of positive integers. We consider the following invertible sheaf on the projective scheme $\P$ : 
$$ \O_\P(r) \df \pr_1^\ast \O_{\P^1}(r_1) \otimes \cdots \otimes \pr_n^\ast \O_{\P^1}(r_n).$$

\begin{theorem}[Higher dimensional Dyson's Lemma] \label{thm:DysonsLemma} Let $z^{(0)}, \dots, z^{(q)}$ be $K$-points of $\P$ and $t^{(0)}, \dots, t^{(q)}$ be non-negative real numbers. Suppose that
\begin{itemize}
\item for every $i = 1, \dots, n$ and any $\sigma \neq \tau$ we have $\pr_i(z^{(\sigma)}) \neq \pr_i(z^{(\tau)})$;
\item there exists a non-zero global section $f \in \Gamma(\P, \O_\P(r))$ such that for every $\sigma = 0, \dots, q$ we have
$$ \ind_{1/r} (f, z^{(\sigma)}) \ge t^{(\sigma)}.$$
\end{itemize}
Then the following inequality is satisfied:
$$ \sum_{\sigma = 0}^{q} \vol \triangleup_n(t^{(\sigma)}) \le 1 + \epsilon_{q, r},$$
where
$$ \epsilon_{q, r} \df \prod_{i = 1}^{n - 1} \left( 1 +  \max_{i + 1 \le j \le n} \left\{ \frac{r_j}{r_i} \right\} \max \{ q - 1, 0\}\right) - 1.$$
\end{theorem}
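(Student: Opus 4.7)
The plan is to proceed by induction on the number of factors $n$, reducing the multi-variable vanishing estimate to a chain of one-variable degree bounds. First, I would unpack what the index hypothesis says combinatorially: if $f$ is expanded around $z^{(\sigma)}$ in local coordinates $(t_1, \dots, t_n)$ adapted to the factors, then $\ind_{1/r}(f, z^{(\sigma)}) \ge t^{(\sigma)}$ exactly means that the coefficient of $t_1^{\ell_1} \cdots t_n^{\ell_n}$ vanishes whenever $(\ell_1 / r_1, \dots, \ell_n / r_n)$ lies in $\triangleup_n(t^{(\sigma)})$. Since $\Gamma(\P, \O_\P(r))$ has dimension $\prod_i (r_i + 1) \approx \prod_i r_i \cdot \vol \square_n$, the naive dimension count would say the fractions of ``killed'' monomial directions at distinct points sum to at most $1$; Dyson's lemma makes this rigorous modulo the correction $\epsilon_{q, r}$.

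For the base case $n = 1$, the section $f \in \Gamma(\P^1, \O_{\P^1}(r_1))$ is a non-zero degree-$r_1$ polynomial and $\ind_{1/r_1}(f, z^{(\sigma)}) \ge t^{(\sigma)}$ means exactly that $f$ vanishes at $z^{(\sigma)}$ to order $\ge r_1 t^{(\sigma)}$. Because the points are pairwise distinct, summing vanishing orders gives $\sum_\sigma \min(t^{(\sigma)}, 1) \le 1$, which is the desired bound since $\vol \triangleup_1(t) = \min(t, 1)$ and $\epsilon_{q, r} = 0$ when $n = 1$ (the empty product).

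For the inductive step $n - 1 \to n$, I would order the factors so that $r_1$ is the largest (the combinatorial weight $\epsilon_{q, r}$ prefers this arrangement) and associate to $f$ a non-zero derived section on $(\P^1)^{n - 1}$ whose index at the projected points $\pr_{[1, n-1]}(z^{(\sigma)})$ still dominates $t^{(\sigma)}$, up to a correction controlled by $r_n$ and $q$. Two routes are available: a generalized Wronskian of $f$ viewed as a polynomial in the $n$-th variable (Esnault--Viehweg \cite{esnault-viewheg}), or Nakamaye's intersection-theoretic argument on a sequence of blowups along the vanishing subschemes (\cite{nakamaye_id}). Either construction trades $f$ on $(\P^1)^n$ of multi-degree $r$ for a non-vanishing section on $(\P^1)^{n - 1}$ whose effective multi-degree has grown by a factor of order $1 + (r_n / r_i)(q - 1)$ in the $i$-th direction; applying the induction hypothesis on $(\P^1)^{n - 1}$ then yields the volume inequality with the accumulated multiplicative error.

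The main obstacle is tracking $\epsilon_{q, r}$ sharply through the iteration. Each elimination step costs a multiplicative factor of the form $1 + \max_{j > i}(r_j / r_i)(q - 1)$, reflecting how much extra room one needs to separate the additional $q$ points from $z^{(0)}$ along the variable being eliminated; these factors compound across the $n - 1$ passes to give exactly the product defining $\epsilon_{q, r}$. The combinatorial reason one does not simply get $\sum \vol \triangleup_n(t^{(\sigma)}) \le 1$ is that the ``killed'' subspaces associated to different points fail to be in direct sum by a measurable transversality defect, and the entire force of the Wronskian/Product-Theorem machinery is to show that this defect is no worse than $\epsilon_{q, r}$. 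The delicate point in writing a clean proof is therefore to choose, at each inductive step, the right variable to eliminate and the right auxiliary blowup, so that the defect is absorbed exactly into one factor of the product and does not propagate uncontrollably into later steps.
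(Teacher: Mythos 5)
The paper does not actually prove Theorem \ref{thm:DysonsLemma}: it is imported as a black box from Esnault--Viehweg \cite{esnault-viewheg} and Nakamaye \cite{nakamaye_id}, and the only Dyson-type statement proved in the text is the two-variable, two-weight variant of Section 6.3 (Theorem \ref{thm:DysonsLemmaTwoWeights}), obtained by a Wronskian argument special to $n=2$. So your proposal has to be judged as a self-contained argument, and as it stands it is an outline rather than a proof. The base case $n=1$ is fine: a non-zero section of $\O_{\P^1}(r_1)$ has total vanishing order at most $r_1$ at distinct points, which gives $\sum_\sigma t^{(\sigma)} \le 1$ with $\epsilon_{q,r}=0$.

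The genuine gap is the inductive step $n-1 \to n$, which is the entire content of the theorem. You assert that a generalized Wronskian in the $n$-th variable, or Nakamaye's blowup argument, produces a non-zero section on $(\P^1)^{n-1}$ whose index at the projected points is still $\ge t^{(\sigma)}$ up to a loss of exactly $1 + \max_{j>i}(r_j/r_i)(q-1)$ per eliminated variable, but no such construction is exhibited and no proof of the claimed index propagation is given. Neither reference proceeds by such a variable-by-variable elimination: Esnault--Viehweg argue on the full product via positivity and vanishing theorems for sheaves of differential operators, and Nakamaye uses intersection theory together with a variant of Faltings' Product Theorem; the reason is precisely that for $n\ge 3$ the Wronskian trick does not transfer index conditions at the remaining $q$ points to the derived section with a controlled loss, which is why the higher-dimensional statement resisted for decades after Dyson's $n=2$ case. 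Saying that ``the defect is absorbed exactly into one factor of the product'' restates the shape of $\epsilon_{q,r}$ without deriving it. As written, your argument either collapses to citing \cite{esnault-viewheg} or \cite{nakamaye_id} --- which is exactly what the paper does --- or it has a hole at its central step.
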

\end{npar}

\begin{npar}{Index at a single point} Let $z = (z_1, \dots, z_n)$ be a $K$-point of $\P$, let $r = (r_1, \dots, r_n)$ be a $n$-tuple of positive integers and let $t \ge 0$ be a non-negative real number. 

\begin{deff} Let $Z_{q,r}(z, t)$ be the subscheme of $\P$ defined by the ideal sheaf of regular sections $f$ such that $\ind_{1/r}(f, z) \ge t$. We consider the following linear subspace of $\Gamma(\P, \O_\P(r))$:
\begin{align*}
K_{r}(z, t) &\df \ker \left(\Gamma(\P, \O_\P(r)) \to \Gamma(Z_{r}(z, t), \O_\P(r)) \right) \\
&\ = \left\{ f \in \Gamma(\P, \O_\P(r)) : \ind_{1/r}(f, z) \ge t \right\}.
\end{align*}
\end{deff}

\begin{prop} \label{prop:DimensionKernelSinglePoint} Keeping the notation introduced above, for every $i = 1, \dots, n$ let $T_{i0}, T_{i1}$ be a basis of $K^{2\vee}$ such that $T_{i1}$ vanishes at $z_i$.
\begin{enumerate}[(1)]
\item The monomials $T_z(\ell) = \bigotimes_{i = 1}^n T_{i0}^{r_i - \ell_i} T_{i1}^{\ell_i}$ for $\ell \in \triangledown_r^\Z(t)$ form a basis of the $K$-vector space $K_{r}(z, t)$.
\item We have $ \dim_K K_{r}(z, t) = \# \triangledown_r^\Z(t)$. In particular,
$$ \lim_{\alpha \to \infty} \frac{\dim_K K_{\alpha r}(z, t)}{\alpha^n (r_1 \cdots r_n)} = \vol \triangledown_n(t).$$
\item We have $ \dim_K \Gamma(Z_{r}(z, t), \O_\P(r)) = \# \triangleup_r^\Z(t)$. In particular,
$$ \lim_{\alpha \to \infty} \frac{\dim_K \Gamma(Z_{\alpha r}(z, t), \O_\P(\alpha r))}{\alpha^n (r_1 \cdots r_n)} = \vol \triangleup_n(t).$$
\end{enumerate}
\end{prop}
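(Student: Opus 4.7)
The plan is to reduce everything to direct computations in the monomial basis of $\Gamma(\P, \O_\P(r))$ given by Künneth's formula. Since $T_{i0}, T_{i1}$ form a basis of $K^{2\vee}$ with $T_{i1}(z_i) = 0$, one has $T_{i0}(z_i) \neq 0$; hence $t_i \df T_{i1}/T_{i0}$ is a local parameter at $z_i$, and $T_z(0) = \bigotimes_i T_{i0}^{r_i}$ trivialises $\O_\P(r)$ on a neighbourhood of $z$.

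For part (1), I would unwind the definition of the index in this trivialisation. Künneth identifies $\Gamma(\P, \O_\P(r))$ with $\bigotimes_i \Sym^{r_i} K^{2\vee}$, which has $K$-basis $\{T_z(\ell) : \ell \in \square_r^\Z\}$. Given $f = \sum_{\ell} c_\ell T_z(\ell)$, the local expansion reads $f/T_z(0) = \sum_\ell c_\ell t_1^{\ell_1}\cdots t_n^{\ell_n}$, so by the very definition of the index, $\ind_{1/r}(f, z) = \min\{\ell_1/r_1 + \cdots + \ell_n/r_n : c_\ell \neq 0\}$. The condition $f \in K_r(z, t)$ thus amounts to $c_\ell = 0$ for every $\ell \notin \triangledown_r^\Z(t)$, which exhibits the claimed basis.

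Part (2) is then a direct dimension count. For the asymptotic, the rescaling $\ell \mapsto \ell/\alpha$ identifies $\triangledown_{\alpha r}^\Z(t)$ with the lattice points of $\tfrac{1}{\alpha}\Z^n$ inside $\triangledown_r(t)$, and a standard Riemann-sum argument gives $\#\triangledown_{\alpha r}^\Z(t)/\alpha^n \to \vol \triangledown_r(t)$; the substitution $\zeta_i = r_i u_i$ turns the last volume into $r_1 \cdots r_n \vol \triangledown_n(t)$.

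For part (3) the plan is to use the defining exact sequence $0 \to K_r(z, t) \to \Gamma(\P, \O_\P(r)) \to \Gamma(Z_r(z, t), \O_\P(r))$ together with surjectivity of the restriction map, which yields $\dim \Gamma(Z_r(z, t), \O_\P(r)) = \#\square_r^\Z - \#\triangledown_r^\Z(t) = \#\triangleup_r^\Z(t)$; the volume limit then follows from the same Riemann-sum argument applied to $\triangleup_{\alpha r}^\Z(t)$. The main obstacle I foresee will be establishing the surjectivity of this restriction map. I expect to handle it either through a cohomological vanishing $H^1(\P, \mathcal{I}_{Z_r(z,t)} \otimes \O_\P(r)) = 0$ exploiting the very ampleness of $\O_\P(r)$ and the explicit monomial structure of $\mathcal{I}_{Z_r(z,t)}$ near $z$, or more concretely by a direct local computation at the stalk at $z$, where $\Gamma(Z_r(z,t), \O_\P(r))$ is identified with $\O_{\P,z}$ modulo the monomial ideal generated by $\{t^\ell : \ell \in \triangledown_r^\Z(t)\}$, admitting as $K$-basis precisely the classes of $t^\ell$ for $\ell \in \triangleup_r^\Z(t)$.
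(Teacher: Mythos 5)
Your treatment of (1) and (2) is correct and is surely the intended ``easy exercise'': expanding a global section in the monomial basis $T_z(\ell)$, observing that $T_z(0)$ trivialises $\O_\P(r)$ near $z$ and that $T_{i1}/T_{i0}$ are local parameters, you read off the index as $\min\{\ell_1/r_1+\cdots+\ell_n/r_n : c_\ell\neq 0\}$, and the dimension count plus a Riemann sum gives (2).

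The gap is in (3). The restriction map $\Gamma(\P,\O_\P(r))\to\Gamma(Z_r(z,t),\O_\P(r))$ is in general \emph{not} surjective, and your proposed local identification is incorrect. The ideal of $Z_r(z,t)$ at $z$ is $\{f\in\O_{\P,z} : \ind_{1/r}(f,z)\ge t\}$, i.e.\ the monomial ideal generated by \emph{all} $t^\ell$ with $\ell\in\N^n$ and $\ell_1/r_1+\cdots+\ell_n/r_n\ge t$, not only those with $\ell_i\le r_i$; the quotient $\O_{\P,z}/I_z$ has as basis the classes of all monomials of index $<t$, with no box constraint, and this index set coincides with $\triangleup_r^\Z(t)$ only when $t\le 1+1/\max_i r_i$. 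Concretely, for $n=2$, $r=(1,10)$, $t=1.9$ one has $I_z=(t_1^2,\,t_1t_2^9,\,t_2^{19})$, so $\dim_K \O_{\P,z}/I_z=28$ while $\#\triangleup_r^\Z(t)=20$; the class of $t_2^{11}$ is not hit by any section of bidegree $(1,10)$, so surjectivity, equivalently $\H^1(\P,\cal{I}_{Z}\otimes\O_\P(r))=0$, genuinely fails and no very-ampleness argument can rescue it. Moreover the ideal generated only by the boxed monomials $\{t^\ell : \ell\in\triangledown_r^\Z(t)\}$ is in this example the principal ideal $(t_1t_2^9)$, of infinite codimension, so the basis claim cannot hold for it either. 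This range of $t$ is not dismissible: the proposition is applied with $t=t_{\bs{a}}$, which exceeds $1$ already for $n=3$, $q=2$ (and the asymptotic in (3), read literally, would then tend to $t^n/n!$ rather than $\vol\triangleup_n(t)$). The way to complete (3) is to prove it in the form in which it is used later in the paper: the image of the restriction map, i.e.\ the quotient $\Gamma(\P,\O_\P(r))/K_r(z,t)$, has dimension $\#\triangleup_r^\Z(t)$, with basis the classes of the $T_z(\ell)$ for $\ell\in\triangleup_r^\Z(t)$; this is an immediate consequence of your parts (1) and (2) and requires no surjectivity or cohomological input.
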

\begin{proof} Left to the reader as an easy exercise. \end{proof}
\end{npar}

\begin{npar}{Index at multiple points} Let $r = (r_1, \dots, r_n)$ be a $n$-tuple of positive integers and let $t \ge 0$ be a non-negative real number and let $N \ge 1$ be a positive integer.

For every $\sigma = 1, \dots, q$ let $z^{(\sigma)} = (z_1^{(\sigma)}, \dots, z_n^{(\sigma)})$ be a $K$-point of $\P$. Let us suppose that for every $\sigma \neq \tau$ and for every $i = 1, \dots, n$ we have $z_i^{(\sigma)} \neq z_i^{(\tau)}$. 

\begin{deff} Let us consider the $q$-tuple $\bs{z} = (z^{(1)}, \dots, z^{(q)})$. We consider the closed subscheme of $\P$,
$$ Z_{q,r}(\bs{z}, t) \df \bigcup_{\sigma = 1}^q Z_r(z^{(\sigma)}, t).$$
We consider the following linear subspace of $\Gamma(\P, \O_\P(r))$:
\begin{align*} 
K_{q,r}(\bs{z}, t) &\df \ker(\Gamma(\P, \O_\P(r)) \to \Gamma(Z_{q,r}(\bs{z}, t), \O_\P(r))) \\
&\ = \left\{ f \in \Gamma(\P, \O_\P(r)) : \ind_{1/r}(f, z^{(\sigma)}) \ge t \textup{ for all } \sigma = 1, \dots, q  \right\}.
\end{align*}
\end{deff}
Recall that $u_{q,r}(t)$ is defined as the unique real number belonging to $[0, n]$ and such that
$$ \vol \triangleup_{n}(u_{q, r}(t)) = \min \left\{ \max\left\{1 + \epsilon_{q, r} - q \vol \triangleup_n(t), 0 \right\}, 1 \right\}.$$
\begin{prop} \label{Prop:DimensionKernelAtMultiplePoints}Keeping the notation introduced above, we have:
\begin{enumerate}[(1)]
\item $\displaystyle \Gamma(Z_{q,r}(\bs{z}, t), \O_\P(r) ) = \bigoplus_{\sigma = 1}^q \Gamma(Z_r(z^{(\sigma)}, t), \O_\P(r) )$. 
\item $ \displaystyle \dim_K K_{q,r}(\bs{z}, t) \ge \prod_{i = 1}^n (r_i + 1) - q \# \triangleup_r^\Z(t)$. In particular,
\begin{equation} \label{eq:LBDimensionKernelAlgebraicPoint} \liminf_{\alpha \to \infty} \frac{\dim_K K_{q, \alpha r}(\bs{z}, t)}{\alpha^n (r_1 \cdots r_n)} \ge 1 - q \vol \triangleup_n(t). \end{equation}
\item Suppose $u_{q,r}(t) < n$. Let $z^{(0)} \in \P^1(K)$ be a point such that for every $i = 1, \dots, n$ and every $\sigma =1, \dots, q$ we have $\pr_i(z^{(0)}) \neq \pr_i( z^{(\sigma)})$. For every $t^{(0)} > u_{q,r}(t)$ we have 
$$K_{q,r}(\bs{z}, t) \cap K_r(z^{(0)}, t^{(0)}) = 0. $$
\item $\displaystyle  \limsup_{\alpha \to \infty} \frac{\dim_K K_{q, \alpha r}(\bs{z}, t)}{\alpha^n (r_1 \cdots r_n)} \le \vol \triangleup_n(u_{q,r}(t)).$

\end{enumerate}
\end{prop}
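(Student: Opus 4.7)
The four parts split cleanly: (1) is a disjointness statement, (2) is dimension counting, (3) is the one step that uses Dyson's Lemma, and (4) packages (3) asymptotically.

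For (1), the hypothesis that the projections $\pr_i(z^{(\sigma)})$ are pairwise distinct for each $i$ forces the supports of the subschemes $Z_r(z^{(\sigma)}, t) \subset \P$ to be pairwise disjoint, so the scheme-theoretic union is in fact a disjoint union and the structure sheaves decompose, giving the direct-sum identity on global sections after twisting by $\O_\P(r)$. For (2), the injection $\Gamma(\P, \O_\P(r))/K_{q, r}(\bs{z}, t) \hookrightarrow \bigoplus_\sigma \Gamma(Z_r(z^{(\sigma)}, t), \O_\P(r))$ together with Proposition \ref{prop:DimensionKernelSinglePoint}(3) (which gives dimension $\# \triangleup_r^\Z(t)$ on each factor) yields
$$\dim_K K_{q, r}(\bs{z}, t) \ge \prod_i (r_i + 1) - q \cdot \# \triangleup_r^\Z(t),$$
and the asymptotic follows by replacing $r$ with $\alpha r$, dividing by $\alpha^n r_1 \cdots r_n$, and letting $\alpha \to \infty$.

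The heart of the proof is (3). Suppose for contradiction that there exists a non-zero $f \in K_{q, r}(\bs{z}, t) \cap K_r(z^{(0)}, t^{(0)})$, so $f$ is a non-zero global section of $\O_\P(r)$ satisfying $\ind_{1/r}(f, z^{(\sigma)}) \ge t$ for $\sigma = 1, \dots, q$ and $\ind_{1/r}(f, z^{(0)}) \ge t^{(0)}$. The $q+1$ points $z^{(0)}, z^{(1)}, \dots, z^{(q)}$ are pairwise distinct in each $\P^1$-factor (by the hypothesis on $z^{(0)}$ and on the $z^{(\sigma)}$), so the Higher Dimensional Dyson's Lemma (Theorem \ref{thm:DysonsLemma}) applies and yields
$$\vol \triangleup_n(t^{(0)}) + q \vol \triangleup_n(t) \le 1 + \epsilon_{q, r}.$$
Rearranging gives $\vol \triangleup_n(t^{(0)}) \le 1 + \epsilon_{q, r} - q \vol \triangleup_n(t) \le \vol \triangleup_n(u_{q, r}(t))$, where the last inequality unpacks the definition of $u_{q, r}(t)$ and uses $u_{q, r}(t) < n$ to trivialise the $\min\{\cdot, 1\}$ clamp. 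Since $t \mapsto \vol \triangleup_n(t)$ is strictly increasing, $t^{(0)} \le u_{q, r}(t)$, contradicting the hypothesis $t^{(0)} > u_{q, r}(t)$.

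Part (4) is then a dimension-counting consequence of (3). If $u_{q, r}(t) = n$, the claim reduces to $\vol \triangleup_n(n) = 1$, which is automatic. Otherwise, fix any $z^{(0)} \in \P(K)$ with $\pr_i(z^{(0)}) \ne \pr_i(z^{(\sigma)})$ for all $i$ and $\sigma$ (possible since $K$ is infinite in characteristic zero), and fix $t^{(0)} > u_{q, r}(t)$. Apply (3) to $\alpha r$, noting that $\epsilon_{q, \alpha r} = \epsilon_{q, r}$ depends only on the ratios $r_j/r_i$, so $u_{q, \alpha r}(t) = u_{q, r}(t)$ and (3) still gives $K_{q, \alpha r}(\bs{z}, t) \cap K_{\alpha r}(z^{(0)}, t^{(0)}) = 0$. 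Hence
$$\dim_K K_{q, \alpha r}(\bs{z}, t) + \dim_K K_{\alpha r}(z^{(0)}, t^{(0)}) \le \prod_i(\alpha r_i + 1),$$
and combining with Proposition \ref{prop:DimensionKernelSinglePoint}(2) (which computes the second summand as $\# \triangledown_{\alpha r}^\Z(t^{(0)})$), then dividing by $\alpha^n r_1 \cdots r_n$ and sending $\alpha \to \infty$, yields $\limsup \le \vol \triangleup_n(t^{(0)})$. Letting $t^{(0)} \downarrow u_{q, r}(t)$ concludes. The main obstacle is genuinely Part (3): this is where the geometric input from the Higher Dimensional Dyson's Lemma is converted, via the combinatorial bookkeeping defining $u_{q, r}$, into the asserted threshold; everything else is routine dimension counting matched to that definition.
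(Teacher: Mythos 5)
Your proof is correct and follows essentially the same route as the paper's: disjointness of the fat-point subschemes for (1), the resulting dimension count for (2), the contradiction via the Higher Dimensional Dyson's Lemma combined with the definition of $u_{q,r}(t)$ for (3), and the complementary dimension count applied to the multiples $\alpha r$ with $t^{(0)}$ tending to $u_{q,r}(t)$ for (4). The one point you make explicit that the paper leaves implicit—that $\epsilon_{q,\alpha r}=\epsilon_{q,r}$, hence $u_{q,\alpha r}(t)=u_{q,r}(t)$, so (3) applies uniformly in $\alpha$—is a welcome clarification but not a different argument.
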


Let us emphasize that (3) and (4) are consequences of the Higher Dimensional Dyson's Lemma.

\begin{proof} (1) This is because the closed subschemes $Z_r(z^{(\sigma)}, t)$'s are pairwise disjoint (see \cite[Lemma 2.8]{esnault-viewheg}).

(2) Using the definition of $K_{q,r}(\bs{z}, t)$ as $\ker(\Gamma(\P, \O_\P(r)) \to \Gamma(Z_{q,r}(\bs{z}, t), \O_\P(r)))$, the preceding point yields
\begin{align*}
\dim_K K_{q,r}(\bs{z}, t) &\ge \dim_K \Gamma(\P, \O_\P(r)) - \dim_K \Gamma(Z_{q,r}(\bs{z}, t), \O_\P(r)) \\
&= \dim_K \Gamma(\P, \O_\P(r)) - \sum_{\sigma = 1}^q \dim_K \Gamma(Z_r(z^{(\sigma)}, t), \O_\P(r)) \\
&= \prod_{i = 1}^n (r_i + 1) -  q \# \triangleup_r^\Z(t),
\end{align*}
where in the last equality we used Proposition \ref{prop:DimensionKernelSinglePoint} (3).

(3) By contradiction let us suppose that there exists a non-zero element $f$ in the intersection $K_{q,r}(\bs{z}, t) \cap K_r(z^{(0)}, t^{(0)})$. The Higher Dimensional Dyson's Lemma entails
$$ \sum_{\sigma =1}^q \vol \triangleup_n(t) + \vol \triangleup_n(t^{(0)}) \le 1+ \epsilon_{q, r},$$
and thus $ \vol \triangleup_n(t^{(0)}) \le \vol \triangleup_n(u_{q,r}(t))$. This yields $t^{(0)} \le u_{q,r}(t)$ which contradicts the hypothesis $t^{(0)} > u_{q,r}(t)$.

(4) Remark that if $\vol \triangleup_{n}(u_{q, r}( t)) = 1$, that is $u_{q,r}(t) = n$, the statement is trivial. Hence we assume $u_{q,r}(t) < n$.  Let $z^{(0)} \in \P^1(K)$ be a point such that for every $i = 1, \dots, n$ and every $\sigma =1, \dots, q$ we have $\pr_i(z^{(0)}) \neq \pr_i( z^{(\sigma)})$. According to (3), for every $t^{(0)} > u_{q,r}(t)$ we have 
$$K_{q,r}(\bs{z}, t) \cap K_r(z^{(0)}, t^{(0)}) = 0. $$
Therefore Grassman's formula of dimensions gives
\begin{align*}
 \dim_K K_{q,r}(\bs{z}, t) &\le \dim_K \Gamma(\P, \O_\P(r)) - \dim_K K_r(z^{(0)}, t^{(0)}) \\
 &= \dim_K \Gamma(\P, \O_\P(r)) - \# \triangledown_r^\Z(t^{(0)}),
 \end{align*}
 where we used Proposition \ref{prop:DimensionKernelSinglePoint} (2) in the last equality. The statement is then obtained by applying this inequality to any positive multiple of $r$ and then letting $t^{(0)}$ tend to $u_{q,r}(t)$.
\end{proof}
\end{npar}

\subsection{Definition of the ``moduli problem''} \label{subsec:DefinitionModuliProblem}

Let $K$ be a number field and let $\Vv_K$ be its set of places. 

\begin{npar}{Linear actions on grassmannians} Let $\cal{E}$ be a flat  $\o_K$-module of finite rank. For every non-negative integer $N$ we consider the grassmannian $\Grass_{N}(\cal{E})$ of subspaces of rank $N$ of $\cal{E}$, \em{i.e.} the $\o_K$-scheme representing the functor
$$ \xymatrix@R=0pt{
\ul{\textup{Gr}}_N(\cal{E}) : \hspace{-25pt} & \hspace{16pt}\big\{  \text{ $\o_K$-schemes }  \big\}  \ar[r] &  \text{~$\bigl\{$ sets~$\bigr\}$ } \hspace{136pt}\\
&  (f : X \to \Spec \o_K) \ar@{|->}[r] & \left\{ \txt{locally free sub-$\O_X$-modules $\cal{F}$\\ of $f^\ast \cal{E}$ of rank $N$ with flat cokernel} \right\}.
}$$
Suppose that an $\o_K$-group scheme $\cal{G}$ acts linearly on the $\o_K$-module $\cal{E}$. Then, for every integer $N \ge 0$, the $\o_K$-group scheme $\cal{G}$ acts naturally on the grassmannian $\Grass_N(\cal{E})$ of subspaces of rank $N$, on the projective space $\P( \bigwedge^N \cal{E})$ and in an equivariant way on the invertible sheaf $\O_{\bigwedge^N \cal{E}}(1)$. Moreover, the Pl\"ucker embedding $ \varpi : \Grass_N(\cal{E}) \to \P \big(\textstyle \bigwedge^N \displaystyle \cal{E}\big)$ is $\cal{G}$-equivariant.
\end{npar}

\begin{npar}{Back to the Main Effective Lower Bound} \label{par:KernelsInGrassmannians} Let $K'$ be a finite extension of $K$ of degree $q \ge 2$. Let $n \ge 1$ be a positive integer. Let $\P = (\P^1_{\o_K})^n$ be the product of $n$ copies of the projective line over $\o_K$. Let $r = (r_1, \dots, r_n)$ be a $n$-tuple of positive integers and let $\O_\P(r)$ be the following invertible sheaf on $\P$,
$$ \O_\P(r) \df \pr_1^\ast \O_{\P^1}(r_1) \otimes \cdots \otimes \pr_n^\ast \O_{\P^1}(r_n).$$

For all $i = 1, \dots, n$ let $x_i$ be a $K$-point of $\P^1_{\o_K}$ and let $a_i$ be a $K'$-point of $\P^1_{\o_K}$ such that $K(a_i) = K'$. Consider the following points of $\P$:
\begin{align*}
x &\df (x_1, \dots, x_n),  \\
a &\df (a_1, \dots, a_n).
\end{align*}
We furthermore set $\bs{a} \df \{ a^{(\sigma)} :  \sigma \in \Hom_{K\textup{-alg}}( K',\ol{\Q}) \}$. Let $t_x, t_{\bs{a}} \ge 0$ be non-negative real numbers and let us consider the following $K$-vector spaces
\begin{align*}
K_r(x, t_x) &\df \left\{ f \in \Gamma(\P_K, \O_\P(r))  : \ind_{1/r}(f, x) \ge t_x \right\}, \\
K_{q,r}(\bs{a}, t_{\bs{a}}) &\df \left\{ f \in \Gamma(\P_K, \O_\P(r))   : \ind_{1/r}(f, a) \ge t_{\bs{a}} \right\}
\end{align*}
where $\P_K$ denotes the generic fiber of $\P$.\footnote{Here the index of the section $f$, which is defined over $K$, at the point $a$, which is defined over $K'$, means the index of the extension of $f$ to $K'$. Alternatively, one may define the $\ol{\Q}$-vector space
$$ \ol{K}_{q,r}(\bs{a}, t_{\bs{a}}) \df \left\{ f \in \Gamma(\P, \O_\P(r)) \otimes_{\o_K} \ol{\Q}   : \ind_{1/r}(f, a^{(\sigma)}) \ge t_{\bs{a}} \textup{ for all } \sigma : K' \to \ol{\Q}  \right\}$$
and notice that it is invariant under Galois action, thus it comes from a $K$-vector space $K_{q, r}(\bs{a}, t_{\bs{a}})$. In any case we have
$$ K_{q, r}(\bs{a}, t_{\bs{a}}) \otimes_K \ol{\Q} = \bigcap_{\sigma : K' \to \ol{\Q}} \ol{K}_r(a^{(\sigma)}, t_{\bs{a}}), $$
where $ \ol{K}_{r}(a^{(\sigma)}, t_{\bs{a}}) \df \left\{ f \in \Gamma(\P, \O_\P(r)) \otimes_{\o_K} \ol{\Q}   : \ind_{1/r}(f, a^{(\sigma)}) \ge t_{\bs{a}}  \right\}$.} Since $f$ is $K$-rational and $a$ is not, imposing index at $a$ automatically imposes the same index condition at all conjugates of $a$: this is the reason why we introduced the bold letter $\bs{a}$.

Let us denote $k_{q, r}(t_{\bs{a}})$ and $k_r(t_x)$ respectively the dimension of the $K$-vector spaces $K_r(x, t_x)$ and $K_{q,r}(\bs{a}, t_{\bs{a}})$. In such a way, these sub-vector spaces of the global sections $\Gamma(\P_K, \O_\P(r))$ define the following $K$-points of grassmannians : 
\begin{align*}
[K_r(x, t_x)] &\in \Grass_{k_r(t_x)}(\Gamma(\P, \O_\P(r))), \\
[K_{q,r}(\bs{a}, t_{\bs{a}})] &\in \Grass_{k_{q, r}(t_{\bs{a}})} (\Gamma(\P, \O_\P(r))).
\end{align*}
The $\o_K$-reductive group $\SLs_{2, \o_K}^n$ acts naturally on the product $\P = \left( \P^1_{\o_K} \right)^n$ and we consider the natural action induced on the grassmannians mentioned above. If we write
\begin{align*}
 \cal{F}_r(t_x) &\df \bigwedge^{k_r(t_x)} \Gamma(\P, \O_\P(r)), \\
\cal{F}_{q,r}(t_{\bs{a}}) &\df \bigwedge^{k_{q, r}(t_{\bs{a}})} \Gamma(\P, \O_\P(r)),
\end{align*}
the Pl\"ucker embeddings, which are equivariant morphisms with respect to the action of $\SLs_{2, \o_K}^n$, are maps
\begin{align*}
 \Grass_{k_r(t_x)} (\Gamma(\P, \O_\P(r)) ) &\too \P( \cal{F}_r(t_x)), \\
 \Grass_{k_{q, r}(t_{\bs{a}})} (\Gamma(\P, \O_\P(r)) ) & \too \P( \cal{F}_{q,r}(t_{\bs{a}})).
\end{align*}
\end{npar}

\begin{npar}{The geometric invariant theory data} \label{par:git_data} We apply the Fundamental Formula to the following situation :
\begin{align*}
P_r &= ([K_r(x, t_x)], [K_{q,r}(\bs{a}, t_{\bs{a}})]), \\
\cal{X}_r &= \Grass_{k_r(t_x)}(\Gamma(\P, \O_\P(r))) \times_{\o_K} \Grass_{k_{q, r}(t_{\bs{a}})}(\Gamma(\P, \O_\P(r)) ), \\
\cal{G} &= \SLs_{2, \o_K}^n, \\
\cal{L}_r &= \textup{polarization given by the Pl\"ucker embeddings of the grassmannians},
\end{align*}
and the closed embedding:
$$ j_r : \cal{X}_r \too \P(\cal{F}_r(t_x)) \times_{\o_K} \P(\cal{F}_{q,r}(t_{\bs{a}})) \too \P(\cal{F}_r(t_x) \otimes_{\o_K} \cal{F}_{q,r}(t_{\bs{a}})).$$
(The first arrow is the Pl\"ucker embedding of the Grassmannians and the second one is the Segre embedding).
For every embedding $\gamma : K \to \C$ the complex vector spaces 
\begin{align*}
\cal{F}_r(t_x) \otimes_\gamma \C &= \bigwedge^{k_{r}(t_x)} \left( \bigotimes_{i = 1}^n \Sym^{r_i} \C^{2 \vee} \right), \\
\cal{F}_{q,r}(t_{\bs{a}}) \otimes_\gamma \C &= \bigwedge^{k_{q, r}(t_{\bs{a}})} \left( \bigotimes_{i = 1}^n \Sym^{r_i} \C^{2 \vee} \right),
\end{align*}
are respectively equipped with the hermitian norms $\| \cdot \|_{\cal{F}_r(t_x), \gamma}$ and $\| \cdot \|_{\cal{F}_{q,r}(t_{\bs{a}}), \gamma}$ obtained by tensor operations (see \ref{par:ConvetionHermitianNorms}). We endow the complex vector space $\cal{F}_r(t_x)_\gamma \otimes_{\C} \cal{F}_{q,r}(t_{\bs{a}})_\gamma$ with the tensor norm associated to these norms. The result hermitian norm is clearly invariant under the action of $\textbf{SU}_2^n$. We denote $\ol{\cal{L}}_r$ for the associated hermitian invertible sheaf on $\cal{X}_r$.
\end{npar}

\subsection{Proof of the Main Theorem}

\begin{np} In this section we show Theorem \ref{thm:ApplyingComparisonFormula} admitting a semi-stability result (Theorem \ref{thm:Semi-stabilityCondition}) that we will prove in section \ref{sec:ProofOfSemiStabilityTheorem} and some intermediate computations (namely Propositions \ref{prop:UBHeightRationalPoint}, \ref{prop:HeightAlgebraicPoint} and \ref{prop:UBInstabilityMeasure}) that we shall prove in sections \ref{sec:UBHeight} and \ref{sec:UBInstabilityMeasure}. 

In order to prove Theorem \ref{thm:ApplyingComparisonFormula},  by an approximation argument, we may suppose that $n$-tuple of positive real numbers $r = (r_1, \dots, r_n)$ in the statement is made of positive rational numbers. Since the Main Effective Lower Bound is homogeneous in $r$, we may therefore assume that $r$ is made of positive integers. 
\end{np}

\begin{npar}{Semi-stability conditions} To prove Theorem \ref{thm:ApplyingComparisonFormula} we apply the Fundamental Formula to the point $P_r$. Therefore we have to show that it is semi-stable. In Section \ref{sec:ProofOfSemiStabilityTheorem} we prove the following:

\begin{theorem} \label{thm:Semi-stabilityCondition} Let $n \ge 1$ be a positive integer and $r = (r_1, \dots, r_n)$ be a $n$-tuple of positive integers. Let $t_x, t_{\bs{a}} \ge 0$ be real numbers with $t_{\bs{a}} < t_{q, n}(0)$. If the inequality
\begin{equation*}
\mu_n(u_{q,r}(t_{\bs{a}})) > \mu_n(t_x) + \epsilon_{q, r},
\end{equation*}
is satisfied then there exists a positive integer $\alpha_0 = \alpha_0(q, n, r, t_{\bs{a}}, t_x)$ such that, for every integer $\alpha \ge \alpha_0$, the $K$-point $P_{\alpha r} \in \cal{X}_{\alpha r}(K)$
is semi-stable under the action of $\SLs_2^n$ with respect to the polarization given by the Pl\"ucker embeddings.
\end{theorem}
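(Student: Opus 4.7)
The plan is to verify semi-stability via the Hilbert--Mumford numerical criterion. Since $\SLs_2^n$ is connected and reductive, $P_{\alpha r}$ is semi-stable iff for every non-trivial 1-parameter subgroup $\lambda:\Gm\to\SLs_2^n$,
\[
\mu([K_{\alpha r}(x,t_x)],\lambda)+\mu([K_{q,\alpha r}(\bs{a},t_{\bs{a}})],\lambda)\ge 0,
\]
and after conjugating by $\SLs_2^n(K)$ I may take $\lambda$ diagonal with respect to suitable coordinates: $\lambda$ is then determined by its fixed points $z^\pm=(z_1^\pm,\dots,z_n^\pm)\in(\P^1)^n$ and positive rational weights $(\alpha_1,\dots,\alpha_n)$. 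In the monomial basis $T_{z^-}(\ell)=\bigotimes_iT_{i,0}^{\alpha r_i-\ell_i}T_{i,1}^{\ell_i}$ of $V=\Gamma(\P,\O_\P(\alpha r))$, with $T_{i,1}$ vanishing at $z_i^-$, the $\lambda$-weight of $T_{z^-}(\ell)$ equals $\omega(\ell)=\sum_i\alpha_i(\alpha r_i-2\ell_i)$, and for every subspace $W\subset V$,
\[
\mu([W],\lambda)=-\sum_\omega\omega\bigl(\dim(W\cap V_{\ge\omega})-\dim(W\cap V_{>\omega})\bigr),
\]
i.e.\ minus the smallest $\lambda$-weight appearing in $\wedge^{\dim W}W$.

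The heart of the proof is the asymptotic analysis of the intersection dimensions as $\alpha\to\infty$. Applying the Higher Dimensional Dyson's Lemma (Theorem~\ref{thm:DysonsLemma}) to the two points $x$ and $z^-$ with $q=1$ (and hence $\epsilon_{1,r}=0$) gives $K_{\alpha r}(x,t_x)\cap K_{\alpha r}(z^-,t)=0$ whenever $\vol\triangleup_n(t_x)+\vol\triangleup_n(t)>1$, provided $z_i^-\neq x_i$ for every $i$. Combined with the Grassmann bound $\dim(U\cap U')\ge\dim U+\dim U'-\dim V$ and Proposition~\ref{prop:DimensionKernelSinglePoint}, this pins down $\dim(W_x\cap K_{\alpha r}(z^-,t))$ to leading order in $\alpha$. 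For the algebraic-point kernel $W_a=K_{q,\alpha r}(\bs{a},t_{\bs{a}})$, Proposition~\ref{Prop:DimensionKernelAtMultiplePoints}(3)---itself a consequence of Dyson's Lemma applied to $z^-$ together with the $q$ conjugates of $a$---yields $K_{\alpha r}(z^-,t)\cap W_a=0$ for $t>u_{q,r}(t_{\bs{a}})$, while the estimates in Propositions~\ref{prop:DimensionKernelSinglePoint}(3) and~\ref{Prop:DimensionKernelAtMultiplePoints}(2) control the complementary direction.

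Normalising by $\alpha^{n+1}r_1\cdots r_n$ and letting $\alpha\to\infty$, one finds for the ``worst-case'' family of 1-PS---typically those with $z_i^+\in\{x_i\}\cup\{a_i^{(\sigma)}:\sigma\}$ for some $i$, so that one of $\mu([W_x],\lambda)$ or $\mu([W_a],\lambda)$ is asymptotically negative---an estimate
\[
\mu([W_x],\lambda)+\mu([W_a],\lambda)\ \sim\ C(\alpha)\cdot\alpha^{n+1}\prod_i r_i\cdot\bigl(\mu_n(u_{q,r}(t_{\bs{a}}))-\mu_n(t_x)-\epsilon_{q,r}\bigr),
\]
where $C(\alpha)>0$ depends only on the normalized weight vector and is bounded below on the compact quotient of 1-PSs by conjugation and scaling. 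Hypothesis \eqref{eq:SemiStabilityConditionMainTheorem} forces the right-hand side to be strictly positive. For 1-PS in generic position (both $z^\pm$ disjoint from $x$ and the $a_i^{(\sigma)}$), both $\mu([W_x],\lambda)$ and $\mu([W_a],\lambda)$ are themselves asymptotically non-negative, so the inequality holds automatically; the boundary cases are then handled by an upper semi-continuity argument in $\lambda$. Consequently for every $\alpha\ge\alpha_0(q,n,r,t_{\bs{a}},t_x)$ the Hilbert--Mumford weight is non-negative against every $\lambda$, yielding semi-stability.

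The main obstacle is this uniform asymptotic estimate, especially for non-multiradial 1-PS where $\alpha_i$ is not proportional to $1/r_i$: the weight filtration $V_{\ge\omega}$ then cuts the cube $\square_{\alpha r}^\Z$ along a half-space not aligned with the index function $\ell\mapsto\sum\ell_i/(\alpha r_i)$ defining the spaces $K_r(z^-,t)$. The intersection dimensions $\dim(W\cap V_{\ge\omega})$ must be bounded by integrating the one-sided vanishings provided by Dyson's Lemma over index levels, and the identification of the resulting volume integral with the combinatorial function $\mu_n$ of Definition~\ref{def:DefinitionOfMoreCombinatorics} rests on the symmetry $\zeta\mapsto 1-\zeta$ on $[0,1]^n$---which identifies $\triangleup_n(t)$ with $\triangledown_n(n-t)$---and the Fubini-type identity $n\,\mu_n(t)=-\int_{\triangledown_n(t)}(n-2\sum_i\zeta_i)\,d\lambda_n$.
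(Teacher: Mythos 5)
Your framework (Hilbert--Mumford criterion, kernels of index conditions as points of Grassmannians, Dyson's Lemma supplying trivial intersections with a single-point kernel attached to the fixed locus of the one-parameter subgroup) is the right one, but the proof has a genuine gap exactly where you flag it: the passage from the trivial-intersection statements to the claimed asymptotic
$\mu([W_x],\lambda)+\mu([W_a],\lambda)\sim C(\alpha)\,\alpha^{n+1}r_1\cdots r_n\,(\mu_n(u_{q,r}(t_{\bs{a}}))-\mu_n(t_x)-\epsilon_{q,r})$
is asserted, not derived, and it is the whole content of the theorem. For a general $\lambda$ the weight of the monomial $T(\ell)$ is $\sum_i m_{\lambda,i}(\alpha r_i-2\ell_i)$ with an arbitrary non-negative integer vector $m_\lambda$, so the filtration $V_{\ge\omega}$ slices $\square_{\alpha r}^\Z$ along hyperplanes not aligned with the index function; you acknowledge that bounding $\dim(W\cap V_{\ge\omega})$ uniformly in $m_\lambda$ is ``the main obstacle'' and then do not overcome it. Two further steps would fail as written: (i) you invoke Dyson's Lemma at the pair $(x,z^-)$ ``provided $z_i^-\neq x_i$ for every $i$'', but the dangerous one-parameter subgroups are precisely those whose fixed point shares coordinates with $x$, where this hypothesis is violated; (ii) the proposed rescue by ``upper semi-continuity in $\lambda$'' has no content here --- the set of one-parameter subgroups carries no topology in which the Hilbert--Mumford weight is semi-continuous in a way that lets generic $\lambda$ control the degenerate ones, and in fact the degenerate configurations are where $\mu(\lambda,[K_{\alpha r}(x,t_x)])$ is genuinely negative and must be beaten quantitatively.

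Compare with how the paper closes these gaps. At the rational point no Dyson-type input is needed: Proposition \ref{prop:InstabilityCoefficientAtSinglePoint} computes $\mu(\lambda,[K_{\alpha r}(x,t_x)])$ \emph{exactly} from the monomial basis, as $\sum_i(-1)^{\chi_{\lambda,i}(x)}m_{\lambda,i}\,\mu^{\Z}_{\alpha r,i}(t_x)$, the sign recording whether the fixed point of $\lambda$ meets $x$ in the $i$-th coordinate --- this is how the ``worst cases'' are handled, with no case split left over. At the algebraic point, instead of estimating $\dim(W_a\cap V_{\ge\omega})$, the paper introduces the auxiliary kernel $K_{\alpha r}(y_\lambda,u_{q,r}(t_{\bs{a}})+\rho)$ at the $K$-rational instability point $y_\lambda$ of $\lambda$ (automatically distinct, coordinate-wise, from all conjugates $a^{(\sigma)}$ since $K(a_i)=K'$), gets $K_{q,\alpha r}(\bs{a},t_{\bs{a}})\cap K_{\alpha r}(y_\lambda,u_{q,r}(t_{\bs{a}})+\rho)=0$ from Proposition \ref{Prop:DimensionKernelAtMultiplePoints} (3), and then converts this via the Grassmann and inclusion formulas for instability coefficients (Proposition \ref{prop:PropertiesInstabilityCoefficientLinearSubspaces}) into the lower bound of Proposition \ref{prop:InstabilityCoefficientAtTargetPoints}. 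The decisive feature is that both resulting expressions are \emph{linear} in $m_\lambda$ with coefficients independent of $\lambda$, so hypothesis \eqref{eq:SemiStabilityConditionMainTheorem} needs only to be checked coefficient-wise (condition (SS$'$) in the paper) and the uniformity over one-parameter subgroups that blocks your argument never has to be addressed. To complete your proof you would either have to reproduce this linearization or genuinely prove your uniform volume estimate for arbitrary weight directions, which your sketch does not do.
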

\end{npar}

\begin{npar}{Applying the Fundamental Formula} The numerical condition appearing in the previous statement is exactly the condition \eqref{eq:SemiStabilityConditionMainTheorem} in Theorem \ref{thm:ApplyingComparisonFormula}. Hence according to Theorem \ref{thm:Semi-stabilityCondition} there exists a positive integer $\alpha_0 = \alpha_0(q, n, r, t_{\bs{a}}, t_x)$ such that, for every integer $\alpha \ge \alpha_0$, the $K$-point 
$$ P_{\alpha r} = ([K_{\alpha r}(x, t_x)], [K_{q, \alpha r}(\bs{a}, t_{\bs{a}})])$$
is semi-stable. The Fundamental Formula (or, better, Corollary \ref{cor:ComparisonInequality}) applied for every $\alpha \ge \alpha_0$ to the point $P_{\alpha r}$ gives  the following inequality: 
$$ h_{\ol{\cal{L}}_{\alpha r}}(P_{\alpha r}) + \frac{1}{[K : \Q]} \sum_{v \in S} \iota_v(P_{\alpha r}) \ge h_{\min}\left( \quotss{(\cal{X}_{\alpha r}, \ol{\cal{L}}_{\alpha r})}{\cal{G}}\right),$$
where we used that the instability measures are non-positive. Dividing by $\alpha^{n+1} (r_1 \cdots r_n) $ and letting $\alpha$ go to infinity we get
\begin{multline} 
- \frac{1}{[K : \Q]} \sum_{v \in S} \limsup_{\alpha \to \infty} \frac{\iota_v(P_{\alpha r})}{\alpha^{n +1} (r_1 \cdots r_n)} \\ \le \limsup_{\alpha \to \infty} \frac{h_{\ol{\cal{L}}_{\alpha r}} (P_{\alpha r})}{\alpha^{n+1} (r_1 \cdots r_n)}  -  \limsup_{\alpha \to \infty} \frac{h_{\min}\left( \quotss{(\cal{X}_{\alpha r}, \ol{\cal{L}}_{\alpha r})}{\cal{G}}\right)}{\alpha^{n+1} (r_1 \cdots r_n)}. \label{eq:ApplyingComparisonFormulaEq0} \end{multline}

In the following paragraphs we estimate the terms appearing in the preceding inequality. 

The bound of the term involving the height of the point $P_r$ will make appear the height of the points $x_i$'s and $a_i$'s. It is the counterpart of the classical upper bound of the size of the auxiliary polynomial made by means of Siegel's Lemma. Here it will be a direct consequence of basic definitions in Arakelov geometry.

The term where the instability measure occurs is of local nature and will make naturally intervene the distance between the algebraic and the rational point. In the classical framework this corresponds to the Taylor expansion of the auxiliary polynomial around the algebraic point. 

The term involving the lowest height on the quotient will finally play the role of the constant terms.
\end{npar}

\begin{npar}{Upper bound of the height} The Pl\"ucker embeddings give a closed isometric embedding of the product of grassmannians $ \cal{X}_r$  into the product $\P(\cal{F}_{q,r}(t_{\bs{a}})) \times \P(\cal{F}_r(t_x))$. Hence we have : 
$$ h_{\ol{\cal{L}}_{r}} (P_{r})= h_{\ol{\cal{F}}_r(t_x)}([K_r(x, t_x)]) + h_{\ol{\cal{F}}_r(t_{\bs{a}})}([K_{q,r}(\bs{a}, t_{\bs{a}})]). $$
Now some elementary estimates of Arakelov degrees (see Propositions \ref{prop:UBHeightRationalPoint}-\ref{prop:HeightAlgebraicPoint}) give : 
\begin{align*}
h_{\ol{\cal{F}}_r(t_x)}([K_r(x, t_x)]) &\le \sum_{i = 1}^n \sum_{\ell \in \triangledown_r^\Z( t_x)} \ell_i h(x_i) ,\\
h_{\ol{\cal{F}}_r(t_{\bs{a}})}([K_{q,r}(\bs{a}, t_{\bs{a}})]) &\le \left( \prod_{i = 1}^n (r_i + 1) - k_{q, r}(t_{\bs{a}})\right) \left(  q \left( \sum_{i = 1}^n r_i h(a_i)\right) + |r| \log \sqrt{2 q}\right).
\end{align*}
Applying these estimates to every positive integer multiple of $r$ we get: 
\begin{multline} 
\limsup_{\alpha \to \infty} \frac{h_{\ol{\cal{L}}_{\alpha r}} (P_{\alpha r})}{\alpha^{n+1} (r_1 \cdots r_n)} \le \left( \int_{\triangledown_n(t_x)} \zeta_1 \ d \lambda \right) \sum_{i = 1}^n r_i h(x_i) \\
+ q \vol \triangleup_n(t_{\bs{a}}) \left( q \left( \sum_{i = 1}^n r_i h(a_i) \right) +|r|  \log \sqrt{2 q} \right). \label{eq:UBHeightRothIntro}
\end{multline}

\end{npar}

\begin{npar}{Upper bound of the instability measure} \label{par:UBInstabilityMeasureRoth} Let $v$ be a place of $K$. 
If the place $v$ is non-archimedean we find: 
\begin{align*}
- \iota_v(P_r) \ge \max_{\sigma : K' \to \C_v} \Bigg\{ & k_{q, r}(t_{\bs{a}}) t_{\bs{a}}  \min_{i = 1, \dots, n} \left\{ r_i \m_v(a_i^{(\sigma)}, x_i) \right\}  \\
&+ \sum_{i = 1}^n \left( \sum_{\ell \in \triangledown_r^\Z(t_x)} \ell_i  - \frac{k_r(t_x) + k_{q, r}(t_{\bs{a}})}{2}r_i\right) \m_v(a_i^{(\sigma)}, x_i) \Bigg\},
\end{align*}
whereas in the archimedean case the previous lower bound holds when the error term
$$ k_{q, r}(t_{\bs{a}}) \sum_{i = 1}^n \log \sqrt{r_i + 1} +  k_{q, r}(t_{\bs{a}}) |r| \log \sqrt{3} + k_r(t_x) |r| \log 2 $$
is subtracted from the right-hand side of the previous lower bound. These bounds are proved in Section \ref{sec:UBInstabilityMeasure} (see Proposition \ref{prop:UBInstabilityMeasure}). If $v$ is non-archimedean, then applying these estimates to any positive integer multiple of $f$, and using Propositions \ref{Prop:DimensionKernelAtMultiplePoints} (2), \ref{prop:DimensionKernelSinglePoint} (2) and \ref{Prop:DimensionKernelAtMultiplePoints} (4), we obtain:
\begin{align*} 
- \limsup_{\alpha \to \infty} \frac{\iota_v(P_{\alpha r})}{\alpha^{n+1} (r_1 \cdots r_n)} \ge  
\max_{\sigma : K' \to \C_v} \Bigg\{ (1 - q \vol \triangleup_n(t_{\bs{a}})) t_{\bs{a}}    \min_{i = 1, \dots, n}  \left\{ r_i \m_v(a_i^{(\sigma)}, x_i) \right\}  \\ 
+ \frac{\mu_n(t_x) - \vol \triangleup_n(u_{q,r}(t_{\bs{a}}))}{2} \left(  \sum_{i = 1}^n r_i \m_v(a_i^{(\sigma)}, x_i) \right) \Bigg\}
\end{align*}
If $v$ is archimedean the term $ |r|( \vol \triangleup_n(u_{q,r}(t_{\bs{a}})) \log \sqrt{3} + \vol \triangledown_n(t_x) \log 2)$ has to be subtracted from the right-hand side. By Definition \ref{def:DefinitionOfMoreCombinatorics} (2) we have $\mu_n((u_{q,r}(t_{\bs{a}})) \le \vol \triangleup_n(u_{q,r}(t_{\bs{a}}))$, therefore condition \eqref{eq:SemiStabilityConditionMainTheorem} entails $ \mu_n(t_x) - \vol \triangleup_n(u_{q,r}(t_{\bs{a}})) < 0$. We coarsely bound from above the term $\m_v(a_i^{(\sigma)}, x_i)$:
$$ \m_v(a_i^{(\sigma)}, x_i) \le \sum_{w \mid v} \m_w(a_i, x_i),$$
 the sum being taken over the places $w$ of $K'$ over $v$. Therefore taking the sum over the places of $S$ and noticing that by Proposition \ref{prop:ProjectiveLiouvilleInequality} we have,
 $$ \sum_{v \in S} \sum_{w \mid v} \m_w(a_i, x_i) \le \sum_{v \in S} \sum_{w \in \Vv_{K'}} \m_w(a_i, x_i) = [K' : \Q] (h(x) + h(a_i)),$$
 we conclude that the term
$$ - \frac{1}{[K : \Q]} \sum_{v \in S}\limsup_{\alpha \to \infty} \frac{\iota_v(P_{\alpha r})}{\alpha^{n+1} (r_1 \cdots r_n)} , $$
is bounded below by
\begin{multline} \label{eq:UBInstabilityMeasureNARothIntro} 
\frac{1}{[K : \Q]} (1 - q \vol \triangleup_n(t_{\bs{a}})) t_{\bs{a}}   \sum_{v \in S} \left( \max_{\sigma : K' \to \C_v} \left\{   \min_{i = 1, \dots, n}  \left\{ r_i \m_v(a_i^{(\sigma)}, x_i) \right\} \right\} \right) \\ 
+ \frac{\mu_n(t_x) - \vol \triangleup_n(u_{q,r}(t_{\bs{a}}))}{2} \left(  \sum_{i = 1}^n r_i \left(  q h(x_i) + q h(a_i) \right) \right) \\
-  |r|( \vol \triangleup_n(u_{q,r}(t_{\bs{a}})) \log \sqrt{3} + \vol \triangledown_n(t_x) \log 2).
\end{multline}
\end{npar}

\begin{npar}{Lower bound of the height on the quotient} For every $i = 1, \dots, n$  let us set $\ol{\cal{E}}_i = \o_K^2$ and $b_i = - r_i (k_{q, r}(t_{\bs{a}}) +  k_r(t_x))$. Let us apply the lower bound given by Theorem \ref{thm:LBHeightQuotient} to the representation  
$$ \cal{G} = \SLs(\cal{E}_1) \times_{\o_K} \cdots \times_{\o_K} \SLs(\cal{E}_n) \too \GLs(\cal{F}_{q,r}(t_{\bs{a}}) \otimes \cal{F}_r(t_x))$$
and to the natural surjection
$$\varpi : \bigotimes_{i=1}^n \cal{E}_i^{\otimes b_i} =  \bigotimes_{i = 1}^n \left(\o_K^{2 \vee} \right)^{\otimes r_i (k_{q, r}(t_{\bs{a}}) +  k_r(t_x))}  \too \cal{F}_{q,r}(t_{\bs{a}}) \otimes \cal{F}_r(t_x).$$
Since the hermitian vector bundle $\ol{\cal{E}}_i$ is trivial we have $\muar(\ol{\cal{E}}_i) = 0$ for every $i = 1, \dots, n$. Through the closed $\cal{G}$-equivariant  embedding $j_r : \cal{X}_r \to \P(\cal{F}_{q,r}(t_{\bs{a}}) \otimes \cal{F}_r(t_x))$ Theorem \ref{thm:LBHeightQuotient} yields
\begin{align} 
h_{\min}\left(\quotss{(\cal{X}_r, \ol{\cal{L}}_r)}{\cal{G}} \right) 
&\ge h_{\mini}\left( \quotss{(\P(\cal{F}_{q,r}(t_{\bs{a}}) \otimes \cal{F}_r(t_x)), \O(1))}{\cal{G}} \right) \nonumber 
\\
&\ge  - \big(k_{q, r}(t_{\bs{a}}) + k_r(t_x) \big) |r| \log \sqrt{2} - \frac{1}{2} \left( \log k_{q, r}(t_{\bs{a}})! + \log k_r(t_x)! \right) \nonumber,
\end{align}
where the term $- 1/2 \left( \log k_{q, r}(t_{\bs{a}})! + \log k_r(t_x)! \right)$ is due to the ratio between the hermitian norm on the alternating product and the quotient norm with the respect to surjection $\varpi$ (see \ref{par:ConvetionHermitianNorms}). Thanks to Stirling's approximation one sees easily that
$$ \lim_{\alpha \to \infty} \frac{\log k_{q, r}(t_{\bs{a}})!}{\alpha^{n+1} (r_1 \cdots r_n)} = 0$$
and similarly for $k_r(t_x)$. The previous estimates, applied to any positive multiple of $r$, give: 
\begin{multline} - \limsup_{\alpha \to \infty} \frac{h_{\min}\left( \quotss{(\cal{X}_{\alpha r}, \ol{\cal{L}}_{\alpha r})}{\cal{G}}\right)}{\alpha^{n+1} (r_1 \cdots r_n)} \\ 
\le \left( \vol \triangleup_n(u_{q,r}(t_{\bs{a}})) + \vol \triangledown_n(t_x)\right)|r| \log \sqrt{2}. 
\label{eq:LBHeightQuotientRothIntro}
\end{multline}
\end{npar}

To conclude the proof of the Main Theorem it suffices to bound the asymptotic terms in \eqref{eq:ApplyingComparisonFormulaEq0} taking in account the inequalities  \eqref{eq:UBHeightRothIntro}, \eqref{eq:UBInstabilityMeasureNARothIntro} and \eqref{eq:LBHeightQuotientRothIntro}. 

\section{Upper bound of the height} \label{sec:UBHeight}

We go back to the notation introduced in Section \ref{subsec:DefinitionModuliProblem}.

\subsection{Rational point}

\begin{prop} \label{prop:UBHeightRationalPoint}With the notation introduced above, we have
$$ h_{\ol{\cal{F}}_r(t_x)}([K_r(x, t_x)]) \le \sum_{i = 1}^n \sum_{\ell \in \triangledown_n^\Z(r, t_x)} \ell_i h(x_i).$$
\end{prop}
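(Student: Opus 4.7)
The plan is to exploit the monomial basis of $K_r(x, t_x)$ given in Proposition \ref{prop:DimensionKernelSinglePoint}, represent the Plücker image as an explicit wedge, and estimate its norm at each place via the Hadamard inequality together with the submultiplicativity of the symmetric- and tensor-product norms recalled in \S 0.3. More precisely, set
\begin{equation*}
\omega \df \bigwedge_{\ell \in \triangledown_r^\Z(t_x)} T_x(\ell) \in \cal{F}_r(t_x) \otimes_{\o_K} K,
\end{equation*}
which is non-zero because the $T_x(\ell)$ form a $K$-basis of $K_r(x, t_x)$; its projective class is by construction the Plücker image of $[K_r(x, t_x)]$. By the standard formula for the height of a projective point (and the product formula, which makes the value independent of the lift),
\begin{equation*}
h_{\ol{\cal{F}}_r(t_x)}([K_r(x, t_x)]) = \frac{1}{[K : \Q]} \sum_{v \in \Vv_K} \log \| \omega \|_v.
\end{equation*}

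At each place, Hadamard's inequality for the exterior product (for the hermitian norm at archimedean places, see \S 0.3; for the quotient norm at non-archimedean places, by definition of the norm on $\bigwedge$) gives $\| \omega \|_v \le \prod_{\ell} \| T_x(\ell) \|_v$. Applying the multiplicativity of the tensor-product norm factor by factor and then the submultiplicativity of the symmetric-power norm yields
\begin{equation*}
\| T_x(\ell) \|_v \le \prod_{i = 1}^n \| T_{i0} \|_v^{r_i - \ell_i} \| T_{i1} \|_v^{\ell_i}.
\end{equation*}
Taking logarithms, summing over $v$, and dividing by $[K : \Q]$ reduces the desired bound to computing the global quantities $\tilde{h}(T_{ij}) \df \frac{1}{[K : \Q]} \sum_v \log \| T_{ij} \|_v$ for each chosen linear form.

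Finally, I will exploit the freedom left in the choice of $T_{i0}, T_{i1}$: writing $x_i = (\tilde{x}_{i0} : \tilde{x}_{i1})$ with $(\tilde{x}_{i0}, \tilde{x}_{i1}) \in \o_K^2$ a coprime integral representative, and letting $T_0, T_1 \in \o_K^{2\vee}$ denote the standard dual basis, I set $T_{i1} \df \tilde{x}_{i1} T_0 - \tilde{x}_{i0} T_1$ (which vanishes at $x_i$) and take $T_{i0}$ to be whichever of $T_0, T_1$ does not vanish at $x_i$; at least one of these choices produces a basis, since $(\tilde{x}_{i0}, \tilde{x}_{i1}) \neq (0,0)$. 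At every place, archimedean or not, one then has $\| T_{i0} \|_v = 1$ and $\| T_{i1} \|_v = \| (\tilde{x}_{i0}, \tilde{x}_{i1}) \|_v$, hence $\tilde{h}(T_{i0}) = 0$ and $\tilde{h}(T_{i1}) = h(x_i)$. Substituted into the estimates above, this yields precisely $h_{\ol{\cal{F}}_r(t_x)}([K_r(x, t_x)]) \le \sum_{i} \sum_{\ell} \ell_i \, h(x_i)$. No step is substantial; the only point worth double-checking is the identification of the height on $\P(\cal{F}_r(t_x))$ with $\frac{1}{[K:\Q]} \sum_v \log \|\omega\|_v$, which is routine given the paper's projective-space conventions.
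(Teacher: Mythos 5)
Your argument is correct and essentially the same as the paper's: pick for each $i$ a basis $T_{i0}, T_{i1}$ of $K^{2\vee}$ with $T_{i1}$ vanishing at $x_i$, wedge the resulting monomial basis of $K_r(x,t_x)$, apply Hadamard plus sub-multiplicativity place by place, and sum. One small remark: the ``coprime integral representative'' you insist on need not exist over a general number field (coprimality of an integral lift forces the ideal generated by its coordinates to be principal, which can fail when the class group is nontrivial), but this is harmless because you never actually use it --- by the product formula $\frac{1}{[K:\Q]}\sum_{v}\log\|(\tilde{x}_{i0},\tilde{x}_{i1})\|_v = h(x_i)$ for \emph{any} representative, exactly as in the paper, which simply takes an arbitrary generator with $x_{i0}\neq 0$ and sets $T_{i0}=T_0$, $T_{i1}=x_{i0}T_1-x_{i1}T_0$.
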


\begin{proof}
Let $T_0, T_1$ be the canonical basis of $K^{2 \vee}$. For every $i = 1, \dots, n$ let $(x_{i0}, x_{i1}) \in K^2$ be a generator of the  line $x_i \in \P^1(K)$. We may suppose that $x_{i0}$ is non-zero. For every $n$-tuple of non-negative integers $\ell \in \quadrato_{r}$ define
$$ T(\ell) \df \bigotimes_{i = 1}^n T_0^{r_i - \ell_i} T_{x_i}^{\ell_i}$$
where $T_{x_i} = x_{i0} T_1 - x_{i1} T_{i0}$. A basis of the $K$-vector space $K_r(x, t_x)$ is given by the elements $T(\ell)$ while $\ell$ ranges in $\triangledown_r^\Z(t_x)$.

Let $v$ be a place of $K$. The Hadamard inequality \eqref{eq:HadamardInequality} gives
$$ \log \left\| \bigwedge_{\ell \in \triangledown_r^\Z(t, x)} T(\ell) \right\|_{\ol{\cal{F}}_r(t_x), v} \le \sum_{\ell \in \triangledown_r^\Z(t_x)} \log \| T(\ell)\|_{\Gamma(\P, \O_\P(r)), v}.$$
For every $n$-tuple of non-negative integers $\ell \in \quadrato_{r}$ the sub-multiplicativity of the norm on symmetric powers gives
\begin{align*}
\log \left\| T(\ell) \right\|_{\Gamma(\P, \O_\P(r)), v} &= \sum_{i = 1}^n \log \| T_0^{r_i - \ell_i} T_{x_i}^{\ell_i} \|_{v} \\ &\le \sum_{i=1}^n (r_i - \ell_i) \log \| T_0\|_{v} + \sum_{i = 1}^n \ell_i \log \| T_{x_i}\|_{v} = \sum_{i = 1}^n \ell_i \log \| x_i \|_{v}.
\end{align*}
Thanks to the Hadamard inequality, we conclude the proof by taking the sum over all places.
\end{proof}

\subsection{Target points} \label{subsec:UBHeightAlgebraicPoint}

\begin{prop} \label{prop:HeightAlgebraicPoint} With the notation introduced above, we have
$$  h_{\ol{\cal{F}}_r(t_{\bs{a}})}([K_{q,r}(\bs{a}, t_{\bs{a}})]) \le \left(\prod_{i = 1}^n (r_i + 1) - k_{q, r}(t_{\bs{a}}) \right) \left( q \sum_{i = 1}^n r_i h(a_i) + |r| \log \sqrt{2 q} \right).$$
\end{prop}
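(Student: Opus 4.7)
The strategy differs from that of Proposition \ref{prop:UBHeightRationalPoint}: here there is no obvious $\o_K$-basis of $K_{q,r}(\bs a, t_{\bs a})$ to expand the wedge $e_1 \wedge \cdots \wedge e_k$ against, because the target point $a$ is only $K'$-rational and its Galois conjugates $a^{(\sigma)}$ must be treated simultaneously. My plan is instead to argue via the \emph{quotient} and apply the Slopes Inequality (Proposition \ref{prop:SlopeInequality}). Let $\cal K \subset \Gamma(\P, \O_\P(r))$ denote the $\o_K$-saturation of $K_{q,r}(\bs a, t_{\bs a})$ and $\cal Q := \Gamma(\P, \O_\P(r))/\cal K$. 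The identification of the Plücker height with the Arakelov degree of the sub-hermitian bundle, combined with the additivity of $\degar$ along the short exact sequence $0 \to \ol{\cal K} \to \ol{\Gamma(\P, \O_\P(r))} \to \ol{\cal Q} \to 0$ (with induced/quotient metrics), gives
$$[K : \Q] \, h_{\ol{\cal F}_r(t_{\bs a})}([K_{q,r}(\bs a, t_{\bs a})]) = -\degar(\ol{\cal K}) = \degar(\ol{\cal Q}) - \degar(\ol{\Gamma(\P, \O_\P(r))}).$$
Since $\degar(\ol{\Gamma(\P, \O_\P(r))})$ is explicitly non-negative (it is computed from the Fubini--Study metric on products of symmetric powers of $\o_K^{2 \vee}$), it suffices to bound $\degar(\ol{\cal Q})$ from above.

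Next, the key geometric observation is that the $K$-linear restriction map $\Gamma(\P, \O_\P(r)) \to \Gamma(Z_r(a, t_{\bs a}), \O_\P(r))$, with the target viewed over $K$ by restriction of scalars from $K'$, has kernel exactly $K_{q,r}(\bs a, t_{\bs a})$: by Galois-equivariance of Taylor coefficients, a $K$-rational section vanishes to index $\ge t_{\bs a}$ at $a$ iff it does so at every conjugate $a^{(\sigma)}$. Hence $\cal Q$ embeds as an $\o_K$-submodule of $\cal W := \Res_{\o_{K'}/\o_K} \Gamma(Z_r(a, t_{\bs a}), \O_\P(r))$, whose base change to $\ol \Q$ is the orthogonal direct sum $\bigoplus_\sigma \Gamma(Z_r(a^{(\sigma)}), \O_\P(r))$ (Proposition \ref{Prop:DimensionKernelAtMultiplePoints}~(1)). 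I equip each summand with the hermitian structure induced from the Fubini--Study metric on the ambient $\Gamma(\P, \O_\P(r))$ via evaluation of Taylor coefficients at $a^{(\sigma)}$. The Slopes Inequality applied to $\cal Q \hookrightarrow \cal W$ then yields
$$\muar(\ol{\cal Q}) \le \muar_{\max}(\ol{\cal W}) + \sum_{v \in \Vv_K} \log \|\iota\|_{\sup, v}.$$

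The first term is controlled by the height of $a$: evaluating a multidegree-$r$ monomial section at $a^{(\sigma)}$ produces a factor $\prod_i \|a_i\|_{v, \sigma}^{r_i}$ at each place; summing over the $q$ conjugates and applying the Product Formula gives a bound $\muar_{\max}(\ol{\cal W}) \le [K : \Q] \cdot q \sum_i r_i h(a_i)$, the $q$ arising from the Weil restriction. The second term controls the combinatorial mismatch between the sup-norm and the Fubini--Study norm on $\Sym^{r_i}(\o_K^{2 \vee})$: sub-multiplicativity of the Fubini--Study norm (paragraph \ref{par:ConvetionHermitianNorms}), the inequality $\|\cdot\|_{\sup} \le \|\cdot\|_{\Sym^{r_i}}$, and the identity $\sum_{\ell_i = 0}^{r_i} \binom{r_i}{\ell_i} = 2^{r_i}$, together with the $q$-fold Galois averaging, produce the correction $|r| \log \sqrt{2q} \cdot [K : \Q]$. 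Multiplying through by $\rk \cal Q = \prod_i (r_i + 1) - k_{q,r}(t_{\bs a})$ then gives the claimed upper bound on $\degar(\ol{\cal Q})$, and hence the proposition.

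The main obstacle will be the precise archimedean bookkeeping needed to obtain the sharp constant $\sqrt{2q}$ rather than a weaker one: one must combine the $q$ Taylor-coefficient functionals jointly via Cauchy--Schwarz, while simultaneously invoking sub-multiplicativity of the Fubini--Study norm on $\Sym^{r_i}$, and verify that the hermitian structure on $\cal W$ coming from the Weil restriction is compatible (up to the metric-scaling factors recorded in paragraph \ref{par:ConvetionHermitianNorms}) with its description as an orthogonal direct sum over the Galois orbit of $a$.
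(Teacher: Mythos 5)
Your overall skeleton is the same as the paper's: identify the Pl\"ucker height with $-\degar$ of the metrized sub-bundle, pass to the quotient $\ol{\cal{C}}$ (your $\ol{\cal{Q}}$) using $-\degar \ol{\cal{K}} = \degar \ol{\cal{C}} - \degar \ol{\Gamma}(\P, \O_\P(r)) \le \degar \ol{\cal{C}}$, inject $\cal{C}$ into $\bigoplus_{\sigma} \Gamma(Z_r(a^{(\sigma)}, t_{\bs{a}}), \O_\P(r))$ over a Galois closure, and apply the slope inequality. But there is a genuine gap at the decisive quantitative step: you put a ``quotient/induced'' hermitian structure on the direct sum and then assert $\muar_{\max}(\ol{\cal{W}}) \le [K:\Q]\, q \sum_i r_i h(a_i)$ by ``evaluating a multidegree-$r$ monomial at $a^{(\sigma)}$ and using the Product Formula.'' That heuristic is an operator-norm computation, not a maximal-slope bound: $\muar_{\max}$ is a supremum over \emph{all} sub-bundles of a bundle of rank $\#\triangleup_r^\Z(t_{\bs{a}})$ (large in the regime of interest), and with quotient metrics you have no handle on it beyond comparing to an explicitly metrized model --- which is exactly the computation you have not done. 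As written, the two terms of the slope inequality are each left unproved: the $\muar_{\max}$ bound is asserted, and the archimedean bookkeeping producing $|r|\log\sqrt{2q}$ is explicitly deferred (``the main obstacle''), with the proposed mechanism (sup-norm versus Fubini--Study norm and $\sum_{\ell}\binom{r_i}{\ell_i}=2^{r_i}$) not matching any estimate that actually closes the argument.

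The paper resolves precisely this point by a different allocation: on each summand $E^{(\sigma)}$ it declares the conjugate-adapted monomials $\bigotimes_i T_0^{r_i-\ell_i} T_{a_i^{(\sigma)}}^{\ell_i}$ (with $T_{a_i^{(\sigma)}} = T_1 - a_{i1}^{(\sigma)} T_0$) to be an \emph{orthonormal} basis, so that $\ol{\cal{E}}_\Omega$ is trivial and $\muar_{\max}(\ol{\cal{E}}_\Omega) = 0$; all of the right-hand side then comes from the operator norm of the evaluation map, bounded via the automorphisms $\phi_i^{(\sigma)}: T_0 \mapsto T_0$, $T_1 \mapsto T_{a_i^{(\sigma)}}$, whose inverses have norm $\log\|(a_{i0}^{(\sigma)}, a_{i1}^{(\sigma)})\|_v$ (plus $\log\sqrt 2$ per $r_i$ at archimedean places), the extra $\log\sqrt q$ coming from assembling the $q$ evaluation maps into an orthogonal direct sum and the factor $q$ in $q\sum_i r_i h(a_i)$ from coarsely bounding $\max_\sigma$ by $\sum_\sigma$. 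If you want to keep your allocation, you would have to prove the $\muar_{\max}$ bound for the quotient-metrized Weil restriction, and the natural proof is again a comparison with the trivially metrized model through these same $\phi_i^{(\sigma)}$ --- i.e.\ you would be reproducing the paper's computation rather than bypassing it.
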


The rest of this section is devoted to the proof of this upper bound.

\begin{np} We begin equipping the $\o_K$-module $\Gamma(\P, \O_\P(r))$ with the natural hermitian metric induced by the identification
$$ \Gamma(\P, \O_\P(r)) = \bigotimes_{i = 1}^n \Sym^{r_i} \left(\o_K^{2 \vee}\right).$$
We denote by $\ol{\Gamma}(\P, \O_\P(r))$ the resulting $\o_K$-hermitian vector bundle. We remark that the $\o_K$-hermitian vector bundle $\ol{\Gamma}(\P, \O_\P(r))$ is \em{not} trivial since the basis of $\Gamma(\P, \O_\P(r))$ given by the elements 
$$ T(\ell) = \bigotimes_{i = 1}^n T_0^{r_i - \ell_i} T_1^{\ell_i}$$
is orthogonal but not orthonormal. Anyway, for every place $v$, the sub-multiplicativity of the norm on symmetric powers gives
$$\log \| T(\ell)\|_{\ol{\Gamma}(\P, \O_\P(r)), v} \le \sum_{i = 1}^n (r_i - \ell_i) \log \| T_0\|_v + \sum_{i = 1}^n \ell_i \log \| T_1\|_v = 0.$$
In particular we have
\begin{equation} \label{eq:SlopeSymmetricPowers}
\muar \big( \ol{\Gamma}(\P, \O_\P(r)) \big)  \ge - \sum_{v \in \Vv_K} \sum_{\ell \in \square_r^\Z} \log \| T(\ell)\|_{\ol{\Gamma}(\P, \O_\P(r)), v} \ge 0.
\end{equation}
\end{np}

\begin{np} We endow the $K$-vector space $K_{q,r}(\bs{a}, t_{\bs{a}})$ with the structure of $\o_K$-hermitian vector bundle induced by the one of $\ol{\Gamma}(\P, \O_\P(r))$. Namely, we consider the $\o_K$-module
$$ \cal{K}_{q,r}(\bs{a}, t_{\bs{a}}) = \Gamma(\P, \O_\P(r)) \cap K_{q,r}(\bs{a}, t_{\bs{a}})$$
equipped with the restriction of the hermitian norms on $\ol{\Gamma}(\P, \O_\P(r))$. Let us then consider
$$ \cal{C} = \Gamma(\P, \O_\P(r)) / \cal{K}_{q,r}(\bs{a}, t_{\bs{a}})$$
and endow it with quotient norms with respect to the surjection $\Gamma(\P, \O_\P(r)) \to \cal{C}$. We denote by $\ol{\cal{C}}$ the $\o_K$-hermitian vector bundle obtained in this way. With these choices and according to \eqref{eq:SlopeSymmetricPowers} we have
\begin{align*}
[K : \Q] h_{\ol{\cal{F}}_r(t_{\bs{a}})}([K_{q,r}(\bs{a}, t_{\bs{a}})]) &= - \degar \ol{\cal{K}}_{q,r}(\bs{a}, t_{\bs{a}})  \\
&= \degar \ol{\cal{C}} - \degar \ol{\Gamma}(\P, \O_\P(r)) \le \degar \ol{\cal{C}}.
\end{align*}
\end{np}

\begin{np} Let us denote by $E$ the $K$-vector space $\Gamma(Z_{q,r}(\bs{a}, t_{\bs{a}}), \O_\P(r))$ and let $\Omega$ be a Galois closure of $K'$ over $K$.  Let us endow the $\Omega$-vector space $E \otimes_K \Omega$ with a structure of $\o_\Omega$-hermitian vector bundle as follows. According to Proposition \ref{Prop:DimensionKernelAtMultiplePoints} we have
\begin{align} \label{eq:HeightAlgebraicPoint1} E \otimes_K \Omega = \bigoplus_{\sigma = 1}^q E^{(\sigma)} \end{align}
where for every embedding $\sigma : K' \to \ol{\Q}$ we wrote $ E^{(\sigma)} \df \Gamma (Z_r(a^{(\sigma)}, t_{\bs{a}}), \O_{\P_\Omega}(r))$\footnote{Here the point $a^{(\sigma)}$ is seen as an $\Omega$-point of $\P^1_\Omega = \P^1_K \times_K \Omega$ and $Z_r(a^{(\sigma)}, t_{\bs{a}})$ denotes the subscheme of $\P^1_\Omega$ of index $t_{\bs{a}}$ on the point $a^{(\sigma)}$.}. 
For every $i = 1, \dots, n$ let $(a_{i0}^{(\sigma)}, a_{i1}^{(\sigma)}) \in \Omega^2$ a generator of the line $a_i^{(\sigma)} \in \P^1(K)$. Since $a^{(\sigma)}$ is not $K$-rational we may assume $a_{i0}^{(\sigma)} \neq 0$. Therefore up to rescaling $(a_{i0}^{(\sigma)}, a_{i1}^{(\sigma)})$ we suppose $a_{i0}^{(\sigma)} = 1$.

For any $\sigma : K' \to \ol{\Q}$ a basis of the $\Omega$-vector space $E^{(\sigma)}$ is given by the elements
$$ T_{a^{(\sigma)}}(\ell) = \bigotimes_{i = 1}^n T_0^{r_i - \ell_i} T_{a_i^{(\sigma)}}^{\ell_i}$$
where  $T_{a_i^{(\sigma)}} = T_1 - a_{i1}^{(\sigma)} T_0$ and $\ell = (\ell_1, \dots, \ell_n)$ ranges in the elements of $\triangleup_r^\Z(t_{\bs{a}})$. We consider the $\o_\Omega$-submodule $\cal{E}^{(\sigma)} \subset E^{(\sigma)}$ generated by the elements $T(\ell)$'s and we equip it with the hermitian norm having the elements $T(\ell)$'s as an orthonormal basis. We denote by $\ol{\cal{E}}^{(\sigma)}$ the associated $\o_\Omega$-hermitian vector bundle.

Finally, according with \eqref{eq:HeightAlgebraicPoint1}, we endow $K$-vector space $E \otimes_K \Omega$ with the structure of $\o_\Omega$-hermitian vector bundle given by the orthogonal direct sum of the $\o_\Omega$-hermitian vector bundles $\ol{\cal{E}}^{(\sigma)}$'s. We denote by $\ol{\cal{E}}_\Omega$ the so-obtained hermitian vector bundle.
\end{np}

\begin{np} The evaluation homomorphism $\eta : \Gamma(\P_K, \O_\P(r)) \to E = \Gamma(Z_{q,r}(\bs{a}, t_{\bs{a}}), \O_\P(r))$ factors through an injection $\epsilon : \cal{C} \otimes_{\o_K} K \to E$. Applying the slope inequality (Proposition \ref{prop:SlopeInequality}), one gets
\begin{equation} \label{eq:HeightAlgebraicPoint0} 
 h_{\ol{\cal{F}}_r(t_{\bs{a}})}([K_{q,r}(\bs{a}, t_{\bs{a}})]) 
\le \frac{ \degar \ol{\cal{C}}}{[K : \Q]} \le \frac{\rk \cal{C}}{[\Omega : \Q]} \left( \muar_{\max}(\ol{\cal{E}}_\Omega) + \sum_{v \in \Vv_\Omega} \log \| \epsilon \|_{\sup, v}
\right) \end{equation}
where for every place $v \in \Vv_\Omega$ we denoted by $\| \epsilon\|_{\sup, v}$ the $v$-adic operator norm of the injection $\epsilon$, 
$$ \| \epsilon \|_{\sup, v} \df \sup_{0 \neq f \in \cal{C} \otimes \Omega} \frac{\| \epsilon(f) \|_{\cal{E}, v}}{\| f \|_{\cal{C}, v}} .$$
%= \sup_{0 \neq f \in \Gamma(\P_K, \O_\P(r))} \frac{\| \epsilon( f) \|_{\cal{E}, v}}{\| f \|_{\Gamma(\P, \O_\P(r)), v}}
Clearly this coincides with the operator norm $\| \eta\|_{\sup, v}$ of the evaluation morphism $\eta$. Let us also remark that, by definition, the $\o_\Omega$-hermitian vector bundle $\ol{\cal{E}}$ is trivial hence $\muar_{\max}(\ol{\cal{E}}_\Omega) = 0$.
\end{np}

\begin{np} We are left with bounding the $v$-adic size of the evaluation homomorphism $\eta$. For every embedding $\sigma = 1, \dots, q$ let us consider the composition $\eta^{(\sigma)} : \Gamma(\P_\Omega, \O_\P(r)) \to E_\sigma$ of the homomorphism $\eta$ with the canonical projection $E \otimes_K \Omega \to E^{(\sigma)}$. Let us also denote by $\| \eta^{(\sigma)} \|_{\sup, v}$ the operator norm of $\eta^{(\sigma)}$. With this notation we have : 

\begin{itemize}
\item $v$ non-archimedean: $\displaystyle \| \eta \|_{\sup, v} = \max_{\sigma = 1, \dots, q} \| \eta^{(\sigma)} \|_{\sup, v}$;
\item $v$ archimedean: $\displaystyle  \| \eta \|_{\sup, v} \le \sqrt{q}  \max_{\sigma = 1, \dots, q} \left\{ \| \eta^{(\sigma)} \|_{\sup, v} \right\} $.
\end{itemize}
For every $\sigma = 1, \dots, q$ and any $i = 1, \dots, n$ let us consider the automorphism $\phi_{i}^{(\sigma)}$ of the $\Omega$-vector space $\Omega^{2 \vee}$ defined by
$$ \phi_{i}^{(\sigma)} : \begin{cases}
T_0 \mapsto T_0 \\
T_1 \mapsto T_{a_i^{(\sigma)}} = T_1 - a_{i1}^{(\sigma)} T_0.\\
\end{cases} $$
We consider the linear automorphism $\phi^{(\sigma)}_r = \Sym^{r_1} \phi_1^{(\sigma)} \otimes \cdots \otimes \Sym^{r_n} \phi_n^{(\sigma)}$ on the $\Omega$-vector space 
$$\Gamma(\P_K, \O_\P(r)) \otimes_K \Omega = \Sym^{r_1} (\Omega^{2 \vee}) \otimes_\Omega \cdots \otimes_\Omega \Sym^{r_n} (\Omega^{2 \vee}),$$ 
where for any $i = 1, \dots, n$ the linear automorphism $\phi^{(\sigma)}_i$ its acting on the $i$-th factor through its natural action on symmetric powers. 

With this notation the homomorphism $\eta^{(\sigma)} \circ \phi^{(\sigma)}_r : \Gamma(\P_\Omega, \O_\P(r)) \to E^{(\sigma)}$ coincides with the evaluation morphism at the closed subscheme $Z_r((1 : 0), t_{\bs{a}})$, \em{i.e.} it is described as follows
$$ T(\ell) = \bigotimes_{i = 1}^n T_0^{r_i - \ell_i} T_1^{\ell_i} \mapsto 
\begin{cases}
T_{a^{(\sigma)}}(\ell) & \text{if $\ell \in \triangleup_r^\Z(t_{\bs{a}})$} \\
0 & \text{otherwise}.
\end{cases}
$$
By definition the elements $T_{a^{(\sigma)}}(\ell)$'s form an orthonormal basis of the trivial $\o_\Omega$-hermitian vector bundle $\ol{\cal{E}}^{(\sigma)}$. Thus we have $ \| \eta^{(\sigma)} \circ \phi^{(\sigma)}_r \|_{\sup, v} \le 1$ and we deduce 
$$ \| \eta^{(\sigma)}\|_{\sup, v}  \le \| (\phi^{(\sigma)}_r)^{-1} \|_{\sup, v}. $$

Recalling that for an endomorphism $\psi$ of a $\o_K$-hermitian vector bundle $\ol{\cal{V}}$ the sup-norm of $\psi$ is smaller than its norm as an element of $\ol{\cal{V}}^\vee \otimes \ol{\cal{V}}$, we have the following inequalities:
\begin{align*}
\|(\phi^{(\sigma)}_r)^{-1}\|_{\sup, v}
&\le \log \left\|(\phi^{(\sigma)}_r)^{-1} \right\|_{\End(\ol{\Gamma}(\P, \O_\P(r))), v}\\
&\le \sum_{i = 1}^n r_i \log \| (\phi^{(\sigma)}_i)^{-1} \|_{ \End(\o_K^{2 \vee}), v}.
\end{align*}
Now one has to treat separately the archimedean and the non-archimedean cases. By definition of $\phi_i^{(\sigma)}$ we have $(\phi^{(\sigma)}_i)^{-1}(T_0) = (1, 0)$ and $(\phi^{(\sigma)}_i)^{-1}(T_1) = ( a_{i1}^{(\sigma)}, 1)$, thus
\begin{itemize}
\item if $v$ is non-archimedean:
\begin{align*} 
\log \| (\phi^{(\sigma)}_i)^{-1} \|_{ \End(\o_\Omega^{2 \vee}), v} &= \log \max \left\{ \| (\phi^{(\sigma)}_i)^{-1}(T_0) \|_{v}, \| (\phi^{(\sigma)}_i)^{-1}(T_1) \|_{v} \right\}\\
&= \log \| (a_{i0}^{(\sigma)}, a_{i1}^{(\sigma)}) \|_v.
\end{align*}
\item if is $v$ archimedean:
\begin{align*} 
\log \| (\phi^{(\sigma)}_i)^{-1} \|_{ \End(\o_\Omega^{2 \vee}), v} &= \log  \sqrt{\| (\phi^{(\sigma)}_i)^{-1}(T_0) \|_{v}^2 +\| (\phi^{(\sigma)}_i)^{-1}(T_1) \|^2_{v} }\\
&\le \log \| (a_{i0}^{(\sigma)}, a_{i1}^{(\sigma)}) \|_v + \log \sqrt{2}.
\end{align*}
\end{itemize}
Taking the sum over all the places of $K$ we finally get the following chains of inequalities
\begin{align*}
\sum_{v \in \Vv_\Omega} \log \| \eta\|_{\sup, v} &\le \left( \sum_{v \in \Vv_\Omega} \max_{\sigma : K' \to \ol{\Q}} \left\{ \sum_{i = 1}^n r_i \log \| (a_{i0}^{(\sigma)}, a_{i1}^{(\sigma)}) \|_v \right\} \right) + |r| [\Omega : \Q] \log \sqrt{2q}  \\
&\le \left( \sum_{v \in \Vv_\Omega} \sum_{\sigma : K' \to \ol{\Q}} \sum_{i = 1}^n r_i \log \| (a_{i0}^{(\sigma)}, a_{i1}^{(\sigma)}) \|_v \right) + |r| [\Omega : \Q] \log \sqrt{2q}  \\
&= [\Omega : \Q] \left( \left( \sum_{\sigma : K' \to \ol{\Q}} \sum_{i = 1}^n r_i h(a_i^{(\sigma)}) \right) + |r|  \log \sqrt{2q} \right).
\end{align*}
Hence dividing by $[\Omega : \Q]$ and writing $h(a_i^{(\sigma)}) = h(a_i)$ for every $\sigma : K' \to \ol{\Q}$, according to \eqref{eq:HeightAlgebraicPoint0} we get
\begin{align*} 
h_{\ol{\cal{F}}_r(t_{\bs{a}})}([K_{q,r}(\bs{a}, t_{\bs{a}})]) \le \rk \cal{C} \left( q \left( \sum_{i = 1}^n r_i h(a_i) \right) + |r| \log \sqrt{2q} \right)
\end{align*}
We conclude by remarking that we have $\rk \cal{C} = \rk \Gamma(\P, \O_\P(r)) - \rk \cal{K}_{q,r}(\bs{a}, t_{\bs{a}})$ and recalling that we have $\rk \Gamma(\P, \O_\P(r)) = \prod_{i = 1}^n (r_i + 1)$ and that, by the very definition of $k_{q, r}(t_{\bs{a}})$, we have $k_{q, r}(t_{\bs{a}}) = \rk \cal{K}_{q,r}(\bs{a}, t_{\bs{a}})$. \qed
\end{np}

\section{Upper bound of the instability measure} \label{sec:UBInstabilityMeasure}

\subsection{Notations and first reductions} \label{subsec:NotationsUpperBoundInstabilityMeasure}

Let $v$ be a place of $K$ and let us go back to the notations defined in Section \ref{subsec:DefinitionModuliProblem}.

\begin{prop} \label{prop:UBInstabilityMeasure}
With the notation introduced above if the place $v$ is non-archimedean we have: 
\begin{align*}
\iota_v(P_r) \le - \max_{\sigma : K ' \to \C_v} \Bigg\{ & k_{q, r}(t_{\bs{a}}) \left( t_{\bs{a}} \min_{i = 1, \dots, n} \left\{ r_i \m_v(a_i^{(\sigma)}, x_i) \right\} \right) \\
&+ \sum_{i = 1}^n \left( \left( \sum_{\ell \in \triangledown_r^\Z(t_x)} \ell_i  - \frac{k_r(t_x) + k_{q, r}(t_{\bs{a}})}{2}r_i\right) \m_v(a_i^{(\sigma)}, x_i) \right) \Bigg\},
\end{align*}
whereas, in the $v$ archimedean case, the previous inequality holds with
$$ k_{q, r}(t_{\bs{a}}) \sum_{i = 1}^n \log \sqrt{r_i + 1} +  k_{q, r}(t_{\bs{a}}) |r| \log \sqrt{3} + k_r(t_x) |r| \log 2 $$
added on the right-hand side.
\end{prop}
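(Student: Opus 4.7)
The plan is to realize the upper bound on $\iota_v(P_r)=\log\inf_g\|g\cdot\hat P_r\|_v/\|\hat P_r\|_v$ by constructing, for each embedding $\sigma\colon K'\to\C_v$, an explicit element $g^{(\sigma)}\in\SL_2(\C_v)^n$ adapted to the pair of configurations $(a_i^{(\sigma)})_i$ and $(x_i)_i$, and by estimating the ratio $\|g^{(\sigma)}\cdot\hat P_r\|_v/\|\hat P_r\|_v$ directly in terms of the $v$-adic distances $\m_v(a_i^{(\sigma)},x_i)$. Concretely, for each $i$ I would pick unit-norm representatives $v_{a,i}^{(\sigma)},v_{x,i}\in\C_v^2$ of $a_i^{(\sigma)}$ and $x_i$, and define $g_i^{(\sigma)}\in\SL_2(\C_v)$ to act diagonally in the basis $(v_{a,i}^{(\sigma)},v_{x,i})$ with eigenvalues $(\mu_i,\mu_i^{-1})$ for a scalar $\mu_i\in\C_v^\times$ of absolute value $e^{\m_v(a_i^{(\sigma)},x_i)/2}$. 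Dually, letting $s_{a,i},s_{x,i}\in\C_v^{2\vee}$ be the unit-norm forms vanishing respectively at $a_i^{(\sigma)}$ and $x_i$ (so that $|\det(s_{a,i},s_{x,i})|_v=\d_v(a_i^{(\sigma)},x_i)$), the induced action on $\Gamma(\P,\O_\P(r))\otimes_{\o_K}\C_v=\bigotimes_i\Sym^{r_i}\C_v^{2\vee}$ is diagonalised by the monomial basis $M_k=\prod_i s_{a,i}^{k_i}s_{x,i}^{r_i-k_i}$, $k\in\square_r^\Z$, with eigenvalue $\prod_i\mu_i^{2k_i-r_i}$.

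Next I would exploit the clean description of both Pl\"ucker coordinates in this eigenbasis. Arguing as in the proof of Proposition \ref{prop:UBHeightRationalPoint} with $T_0$ replaced by $s_{a,i}$ (licit because $s_{a,i}$ does not vanish at $x_i$), the subspace $K_r(x,t_x)$ admits $\{M_{r-\ell}:\ell\in\triangledown_r^\Z(t_x)\}$ as a basis, so $\hat{[K_r(x,t_x)]}$ is itself a $g^{(\sigma)}$-eigenvector of eigenvalue $\prod_i\mu_i^{k_r(t_x)r_i-2S_i}$, with $S_i=\sum_{\ell\in\triangledown_r^\Z(t_x)}\ell_i$. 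Moreover the inclusion $K_{q,r}(\bs a,t_{\bs a})\subset K_r(a^{(\sigma)},t_{\bs a})=\mathrm{span}\{M_k:k\in\triangledown_r^\Z(t_{\bs a})\}$ forces $\hat{[K_{q,r}(\bs a,t_{\bs a})]}$ to decompose in the eigenbasis $\{M_J=\bigwedge_{k\in J}M_k\}$ indexed by $m$-subsets $J\subset\triangledown_r^\Z(t_{\bs a})$, with $m=k_{q,r}(t_{\bs a})$, the eigenvalue of $M_J$ being $\prod_i\mu_i^{2S_i(J)-mr_i}$ where $S_i(J)=\sum_{k\in J}k_i$. Applying the non-archimedean (resp.\ archimedean) triangle inequality to the norm of the resulting sum of eigenvectors, one reduces to bounding the maximum of $|\prod_i\mu_i^{2S_i(J)-mr_i}|_v$ over such $J$. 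The key linear-programming estimate $\sum_i k_i\m_v(a_i^{(\sigma)},x_i)\ge t_{\bs a}\min_i r_i\m_v(a_i^{(\sigma)},x_i)$ --- valid for every $k\in\triangledown_r^\Z(t_{\bs a})$ and immediate by writing $\sum_i k_i\m_v(a_i^{(\sigma)},x_i)=\sum_i r_i\m_v(a_i^{(\sigma)},x_i)\cdot(k_i/r_i)\ge\min_i(r_i\m_v(a_i^{(\sigma)},x_i))\cdot\sum_i(k_i/r_i)$ --- summed over the $m$ elements of $J$ produces the first term $k_{q,r}(t_{\bs a})t_{\bs a}\min_i r_i\m_v(a_i^{(\sigma)},x_i)$ of the announced bound, while the deterministic part of the eigenvalues combined with the eigenvalue of $\hat{[K_r(x,t_x)]}$ produces the remaining linear combination $\sum_i(S_i-\tfrac{k_r(t_x)+k_{q,r}(t_{\bs a})}{2}r_i)\m_v(a_i^{(\sigma)},x_i)$.

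The main obstacle is the careful interplay between the eigenbasis $\{M_k\}$ and the standard $v$-adic norm on $\Gamma(\P,\O_\P(r))$: because the change of basis from $(s_{a,i},s_{x,i})$ to the canonical $(T_0,T_1)$ has determinant $\d_v(a_i^{(\sigma)},x_i)\ne 1$, the monomial eigenbasis is not orthonormal, and a naive estimate of the norm of a sum of eigenvectors threatens losses of order $e^{\sum_i r_i\m_v(a_i^{(\sigma)},x_i)}$ per symmetric-power factor. The specific choice $|\mu_i|_v=e^{\m_v(a_i^{(\sigma)},x_i)/2}$ is essentially the unique one that makes these apparent losses cancel against the modulus of the eigenvalues of the monomials $M_k$ whose indices $k$ belong to the supports $\triangledown_r^\Z(t_x)$ and $\triangledown_r^\Z(t_{\bs a})$, leaving no net $v$-adic loss in the balance. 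In the archimedean case the $\ell^2$-vs-$\ell^\infty$ comparison between the $\{M_J\}$-basis and the sum defining $\hat{[K_{q,r}(\bs a,t_{\bs a})]}$, the sub-multiplicativity constant of the Hermitian norm on symmetric powers (paragraph \ref{par:ConvetionHermitianNorms}), and the analogous estimate for the expansion of $\hat{[K_r(x,t_x)]}$ in the eigenbasis account respectively for the additional error terms $k_{q,r}(t_{\bs a})|r|\log\sqrt 3$, $k_{q,r}(t_{\bs a})\sum_i\log\sqrt{r_i+1}$, and $k_r(t_x)|r|\log 2$ of the archimedean bound.
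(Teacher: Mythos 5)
Your proposal replaces the paper's group element by a torus element, diagonal in the basis adapted to the pair $(a_i^{(\sigma)},x_i)$, and this is where the argument breaks down: a determinant-one element that is diagonal in that basis must expand one of the two eigen-directions while contracting the other, whereas the two Pl\"ucker factors pull in opposite directions ($[K_r(x,t_x)]$ is supported on monomials of high $s_{x,i}$-degree, $[K_{q,r}(\bs{a},t_{\bs{a}})]$ on monomials of high $s_{a,i}$-degree). With your stated normalization ($|\mu_i|=e^{\m_v(a_i^{(\sigma)},x_i)/2}$ and eigenvalue $\prod_i\mu_i^{2k_i-r_i}$ on $M_k$), the monomials indexed by $\triangledown_r^\Z(t_{\bs{a}})$ are \emph{expanded}, so your inequality $\sum_i k_i\m_v\ge t_{\bs{a}}\min_i r_i\m_v$ bounds the relevant eigenvalues from \emph{below}, not above, and cannot produce the term $k_{q,r}(t_{\bs{a}})\,t_{\bs{a}}\min_i r_i\m_v$ (the maximal eigenvalue over $J\subset\triangledown_r^\Z(t_{\bs{a}})$ is attained near $k=r$ and loses that term entirely). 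If instead you flip the sign of $\log|\mu_i|$ so that the $\bs{a}$-side contracts, then the Pl\"ucker vector of $K_r(x,t_x)$ --- which is \emph{exactly} an eigenvector, so no estimate can improve on its eigenvalue --- contributes $+\tfrac12\sum_i\bigl(\sum_{\ell\in\triangledown_r^\Z(t_x)}(2\ell_i-r_i)\bigr)\m_v$ instead of the required $-\tfrac12\sum_i\bigl(\sum_{\ell\in\triangledown_r^\Z(t_x)}(2\ell_i-r_i)\bigr)\m_v$; the shortfall is asymptotically $\mu_n(t_x)\sum_i r_i\m_v(a_i^{(\sigma)},x_i)$, a main-order quantity (it would degrade the final exponent from $2$ to roughly $2(1+q)$). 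So with either sign your element provably falls short of the stated inequality.

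There is a second, independent gap: the claimed cancellation of the skew-basis losses. The discrepancy between $\|w_a\|_v$ and the coefficients $c_J$ in the non-unimodular basis $\{M_J\}$ can be as large as powers of $\d_v(a_i^{(\sigma)},x_i)^{-1}$, and this ratio does not involve $\mu_i$ at all, so no choice of $|\mu_i|$ can make it cancel; you would need a genuine argument here, and none is sketched. The paper's proof avoids both problems simultaneously: its element $g^{(\sigma)}$ is \emph{not} diagonal --- up to the determinant-one rescaling by $\theta_i^{(\sigma)}$ (which is what produces the term $\tfrac{k_r(t_x)+k_{q,r}(t_{\bs{a}})}{2}\sum_i r_i\m_v$) it is the inverse transpose of the matrix with rows $(a_{i0}^{(\sigma)},a_{i1}^{(\sigma)})$ and $(x_{i0},x_{i1})$, which sends \emph{both} unit forms vanishing at $a_i^{(\sigma)}$ and at $x_i$ to vectors of norm $\d_v(a_i^{(\sigma)},x_i)$ (possible precisely because the two forms are nearly parallel) while essentially preserving the unit ball; and for the $\bs{a}$-factor it expands sections in a unimodular (resp.\ orthonormal) basis adapted to $a^{(\sigma)}$ alone, so that $\|f\|_v$ is exactly the maximum (resp.\ $\ell^2$-norm) of the coefficients, the index condition kills the coefficients on $\triangleup_r^\Z(t_{\bs{a}})$, and Hadamard's inequality handles the wedge. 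Your exact eigenvalue computation for $[K_r(x,t_x)]$ is correct and even cleaner than the paper's estimate there, but the treatment of $[K_{q,r}(\bs{a},t_{\bs{a}})]$ needs the paper's (or an equivalent non-diagonal) mechanism.
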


Throughout this section we fix for every $i = 1, \dots, n$ a generator $(x_{i0}, x_{i1}) \in K_v^2$ of the line $x_i \in \P^1(K_v)$ such that $ \| (x_{i0}, x_{i1})\|_v = 1$. For every $i = 1, \dots,n$ and every $\sigma : K' \to \C_v$ we fix a generator $(a_{i0}^{(\sigma)}, a_{i1}^{(\sigma)}) \in \C_v^2$ of the line $a_i^{(\sigma)} \in \P^1(\C_v)$ such that  $\| (a_{i0}^{(\sigma)}, a_{i1}^{(\sigma)})\|_v = 1$. We finally fix a square root $\theta_i^{(\sigma)} \in \C_v$ of 
$$ \frac{1}{a_{i0}^{(\sigma)} x_{i1} - a_{i1}^{(\sigma)} x_{i0}}. $$

\begin{npar}{Elements of $\SLs_2$ measuring distances}

\begin{deff} For every $i = 1, \dots, n$ and every $\sigma = 1, \dots, q$ let $g^{(\sigma)}_i$ be the linear automorphism of $\C_v^2$ which is given by the following matrix with respect to the canonical basis $e_0, e_1$ of $\C_v^2$:
$$
g_i^{(\sigma)} \df
\left( \begin{pmatrix} 
a_{i0}^{(\sigma)} & a_{i1}^{(\sigma)} \\
x_{i0} & x_{i1}
\end{pmatrix}^\top\right)^{-1} = \frac{1}{a_{i0}^{(\sigma)} x_{i1} - a_{i1}^{(\sigma)} x_{i0}}
{\begin{pmatrix} 
x_{i1} & - x_{i0} \\
- a_{i1}^{(\sigma)} &  a_{i0}^{(\sigma)}
\end{pmatrix}} \in \GLs_2(\C_v).$$ 
Let us consider the $n$-tuple $g^{(\sigma)} \df (g_1^{(\sigma)}, \dots, g_n^{(\sigma)}) \in \GLs_2(\C_v)^n$.
\end{deff}

\begin{prop} \label{prop:PropertiesElementOfTheGroup} With the notation introduced above, for every $i = 1, \dots, n$ and every $\sigma = 1, \dots, q$ the following properties are satisfied:
\begin{enumerate}[(1)]
\item We have
$$ \det g_i^{(\sigma)} = \frac{1}{a_{i0}^{(\sigma)} x_{i1} - a_{i1}^{(\sigma)} x_{i0}}.$$
In particular $|\det g_i^{(\sigma)}|_v =  \d_v(a_i^{(\sigma)}, x_i)^{-1}$.
\item For every non-zero vector $(y_0, y_1) \in \C_v^2$ such that $\| (y_0, y_1)\|_v = 1$ we have
$$
 \| g_i^{(\sigma)} \ast (y_0 T_1 - y_1 T_0)\|_v = 
\begin{cases}
\vspace{7pt}\max \left\{\d_v(a_i^{(\sigma)}, [y]), \d_v(x_i, [y]) \right\} &\textup{$v$ is non-archimedean} \\
\sqrt{\d_v(a^{(\sigma)}_i, [y])^2 + \d_v(x_i, [y])^2} & \textup{$v$ archimedean}
\end{cases} $$
where $\ast$ denotes the dual action, $T_0, T_1$ the canonical basis of $\C_v^{2\vee}$ and $[y] \in \P^1(\C_v)$ denotes the line generated by $(y_0, y_1)$.
\end{enumerate}
\end{prop}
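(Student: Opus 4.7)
This is a direct computation, and no step is especially subtle; the content is that the matrix $g_i^{(\sigma)}$ was cooked up precisely so that $(g_i^{(\sigma)})^{-1}$ has columns equal to the chosen generators of $a_i^{(\sigma)}$ and $x_i$.

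For part (1), I would just take the determinant of the explicit $2\times 2$ matrix given in the definition: the $\frac{1}{a_{i0}^{(\sigma)}x_{i1}-a_{i1}^{(\sigma)}x_{i0}}$ scalar contributes a square in the denominator, while the integer matrix has determinant $a_{i0}^{(\sigma)}x_{i1}-a_{i1}^{(\sigma)}x_{i0}$, yielding $\det g_i^{(\sigma)}=1/(a_{i0}^{(\sigma)}x_{i1}-a_{i1}^{(\sigma)}x_{i0})$. For the second assertion, since we have normalized both $\|(a_{i0}^{(\sigma)},a_{i1}^{(\sigma)})\|_v=1$ and $\|(x_{i0},x_{i1})\|_v=1$, the definition of the $v$-adic spherical distance gives $\d_v(a_i^{(\sigma)},x_i)=|a_{i0}^{(\sigma)}x_{i1}-a_{i1}^{(\sigma)}x_{i0}|_v$, whence $|\det g_i^{(\sigma)}|_v=\d_v(a_i^{(\sigma)},x_i)^{-1}$.

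For part (2), the key observation is that the inverse of $g_i^{(\sigma)}$ is readily computed from the formula: one checks
$$(g_i^{(\sigma)})^{-1}=\begin{pmatrix} a_{i0}^{(\sigma)} & x_{i0} \\ a_{i1}^{(\sigma)} & x_{i1} \end{pmatrix},$$
so that $(g_i^{(\sigma)})^{-1}e_0=(a_{i0}^{(\sigma)},a_{i1}^{(\sigma)})$ and $(g_i^{(\sigma)})^{-1}e_1=(x_{i0},x_{i1})$. By the defining property of the dual action, $(g\ast T)(v)=T(g^{-1}v)$ for $T\in\C_v^{2\vee}$ and $v\in\C_v^2$. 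Applying this to $T=y_0T_1-y_1T_0$ (which sends $(v_0,v_1)$ to $y_0v_1-y_1v_0$) and computing the coefficients of the resulting linear form in the basis $T_0,T_1$, I would obtain
$$g_i^{(\sigma)}\ast(y_0T_1-y_1T_0)=(y_0a_{i1}^{(\sigma)}-y_1a_{i0}^{(\sigma)})\,T_0+(y_0x_{i1}-y_1x_{i0})\,T_1.$$

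Finally, I would identify the two coefficients as distances: under the normalizations $\|(a_{i0}^{(\sigma)},a_{i1}^{(\sigma)})\|_v=\|(x_{i0},x_{i1})\|_v=\|(y_0,y_1)\|_v=1$, the definition of $\d_v$ gives $|y_0a_{i1}^{(\sigma)}-y_1a_{i0}^{(\sigma)}|_v=\d_v(a_i^{(\sigma)},[y])$ and $|y_0x_{i1}-y_1x_{i0}|_v=\d_v(x_i,[y])$. Taking the max of these two absolute values when $v$ is non-archimedean, or their Euclidean sum of squares when $v$ is archimedean, yields exactly the two formulas stated. The only thing worth a line of care is the sign convention for the dual action (so that the computation of $g^{-1}$ goes through as above); everything else is a routine unwinding of definitions.
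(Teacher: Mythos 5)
Your proof is correct and follows essentially the same route as the paper: the paper's proof likewise reduces (2) to the observation that $g_i^{(\sigma)}$ acts on $\C_v^{2\vee}$ through the transposed inverse, whose matrix in the basis $T_0, T_1$ is $\begin{pmatrix} a_{i0}^{(\sigma)} & a_{i1}^{(\sigma)} \\ x_{i0} & x_{i1}\end{pmatrix}$ (equivalently, your computation of $(g_i^{(\sigma)})^{-1}$ and the contragredient formula), and treats (1) as immediate from the normalization of the generators. Your unwinding of the coefficients as the two spherical distances and the identification of the dual norm (max, resp.\ Euclidean) is exactly the "elementary computation" the paper leaves to the reader.
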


\begin{proof} (1) Recalling that the points $(x_{i0}, x_{i1})$ and $(a_{i0}^{(\sigma)}, a_{i1}^{(\sigma)})$ are of norm $1$, this is clear.
(2) The automorphism $g^{(\sigma)}_i$ of $\C_v^2$ acts on the dual vector space $\C_v^{2 \vee}$ through the transposed inverse automorphism, whose matrix (with respect to the canonical basis $T_0$, $T_1$) is
$$
\begin{pmatrix} 
a_{i0}^{(\sigma)} & a_{i1}^{(\sigma)} \\
x_{i0} & x_{i1}
\end{pmatrix}.$$
The remainder is an elementary computation.\end{proof}
\end{npar}

\begin{npar}{Proof of Proposition \ref{prop:UBInstabilityMeasure}} In this paragraph we prove Proposition \ref{prop:UBInstabilityMeasure} admitting two independent computations (Proposition \ref{prop:InstabilityMeasureRationalPoint} and \ref{prop:InstabilityMeasureAlgebraicPoint}) that we prove in the following sections.

\begin{deff}
For every $h = (h_1, \dots, h_n) \in \GLs_2(\C_v)$ let us define
\begin{itemize}
\item $\displaystyle \iota_v(h, [K_r(x, t_x)]) \df \log \frac{\| h \ast w_x \|_{\cal{F}_r(t_x), v}}{\| w_x \|_{\cal{F}_r(t_x), v}}$,
\item $\displaystyle  \iota_v(h, [K_{q,r}(\bs{a}, t_{\bs{a}})]) \df \log \frac{\| h \ast w_a \|_{\cal{F}_{q,r}(t_{\bs{a}}), v}}{\| w_a \|_{\cal{F}_{q,r}(t_{\bs{a}}), v}} $,
\end{itemize}
where $w_x \in \cal{F}_r(x, t_x) \otimes K$ (resp. $w_{\bs{a}} \in \cal{F}_r(\bs{a}, t_{\bs{a}}) \otimes K$) is a non-zero representative of Pl\"ucker embedding of the point $[K_r(x, t_x)]$ (resp. $[K_{q,r}(\bs{a}, t_{\bs{a}})]$).
\end{deff}

\begin{deff} For every $i = 1, \dots, n$ and every $\sigma : K' \to \C_v$ let us consider the linear automorphism $\tilde{g}_i^{(\sigma)} \df g_i^{(\sigma)} / \theta_i^{(\sigma)}$, which is of determinant $1$. We set $ \tilde{g}^{(\sigma)} \df (\tilde{g}_1^{(\sigma)}, \dots,\tilde{g}_n^{(\sigma)})$.
\end{deff}

Employing this notation the instability measure $\iota_v(P_r)$ can be written as
\begin{align*} 
\iota_v(P_r) &= \inf_{h \in \SLs_2(\C_v)^n} \left\{ \iota_v(h, [K_r(x, t_x)]) + \iota_v(h, [K_{q,r}(\bs{a}, t_{\bs{a}})]) \right\} \\
&\le \min_{\sigma : K' \to \C_v} \left\{ \iota_v(\tilde{g}^{(\sigma)}, [K_r(x, t_x)]) + \iota_v(\tilde{g}^{(\sigma)}, [K_{q,r}(\bs{a}, t_{\bs{a}})]) \right\}.
\end{align*}
The representations $\cal{F}_r(t_x)$ and $\cal{F}_r(t_{\bs{a}})$ of $\GLs_{2, \o_K}^n$ are respectively homogeneous of weights $k_r(t_x)r$ and $k_r(t_{\bs{a}})r$. By Proposition \ref{prop:PropertiesElementOfTheGroup} (1) we have $\log |\theta_i^{(\sigma)}| = \m_v(a_i^{(\sigma)}, x_i)/2$ and this yields
\begin{align*}
\iota_v(\tilde{g}^\sigma, [K_r(x, t_x)]) &= \iota_v(g^{(\sigma)}, [K_r(x, t_x)]) + \frac{k_r(t_x)}{2} \left( \sum_{i = 1}^n r_i \m_v(a_i^{(\sigma)}, x_i)\right), \\
\iota_v(\tilde{g}^\sigma, [K_{q,r}(\bs{a}, t_{\bs{a}})])
&= \iota_v(g^{(\sigma)}, [K_{q,r}(\bs{a}, t_{\bs{a}})]) + \frac{k_{q, r}(t_{\bs{a}})}{2} \left( \sum_{i = 1}^n r_i \m_v(a_i^{(\sigma)}, x_i)\right).
\end{align*}
One concludes the proof of Proposition \ref{prop:UBInstabilityMeasure} applying the following:

\begin{prop} \label{prop:InstabilityMeasureRationalPoint} With the notation introduced above, if $v$ is non-archimedean we have
$$ \iota_v(g^{(\sigma)}, [K_r(x, t_x)] ) \le - \sum_{i = 1}^n \sum_{\ell \in \triangledown_r^\Z(t_x)} \ell_i  \m_v(a_i^{(\sigma)}, x_i) $$
whereas, if $v$ is archimedean, the preceding inequality holds with $k_r(t_x) |r| \log 2$ be added on the right-hand side.
\end{prop}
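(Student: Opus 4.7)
The plan is to exploit the fact that $\iota_v(g^{(\sigma)}, [K_r(x,t_x)])$ is independent of the choice of non-zero Plücker lift: at each place $v$ I would lift $[K_r(x, t_x)]$ to a vector $w_x^{(v)}$ built from a basis of $\C_v^{2\vee}$ adapted to $v$, compute $g^{(\sigma)} \ast w_x^{(v)}$ explicitly using the identity $g^{(\sigma)}_i \ast T_{x_i} = -\delta_i T_0$ (where $\delta_i := a_{i0}^{(\sigma)} x_{i1} - a_{i1}^{(\sigma)} x_{i0}$, so that $\log|\delta_i|_v = -\m_v(a_i^{(\sigma)}, x_i)$ by the normalizations in Section~\ref{subsec:NotationsUpperBoundInstabilityMeasure}), and bound the norm of the image by Hadamard's inequality together with the sub-multiplicativity of the symmetric-power norm.

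For each $i$, I would introduce a vector $e_i \in \C_v^{2\vee}$ such that $\{e_i, T_{x_i}\}$ is adapted to the place $v$. In the non-archimedean case, $e_i \in \{T_0, T_1\}$ is picked so that $\{e_i, T_{x_i}\}$ generates the $v$-adic integer lattice of $K_v^{2\vee}$ (feasible because $\max(|x_{i0}|_v, |x_{i1}|_v) = 1$ and the determinant $e_i \wedge T_{x_i}$ is then a unit). In the archimedean case, $e_i := \overline{x_{i0}} T_0 + \overline{x_{i1}} T_1$, so that $\{e_i, T_{x_i}\}$ is an orthonormal basis of $\C_v^{2\vee}$. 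Set $T_x^{[e]}(\ell) := \bigotimes_i e_i^{r_i - \ell_i} T_{x_i}^{\ell_i}$ for $\ell \in \triangledown_r^\Z(t_x)$, $w_x^{(v)} := \bigwedge_\ell T_x^{[e]}(\ell)$, and $s_i := \sum_{\ell \in \triangledown_r^\Z(t_x)} \ell_i$. The identity $g^{(\sigma)}_i \ast T_{x_i} = -\delta_i T_0$ then yields
$$g^{(\sigma)} \ast T_x^{[e]}(\ell) = (-1)^{|\ell|} \prod_i \delta_i^{\ell_i} \cdot \bigotimes_i (g^{(\sigma)}_i \ast e_i)^{r_i - \ell_i} T_0^{\ell_i},$$
and a direct computation from the matrix of $g^{(\sigma)}_i$, using $\|(a_{i0}^{(\sigma)}, a_{i1}^{(\sigma)})\|_v = \|(x_{i0}, x_{i1})\|_v = 1$, gives $\|g^{(\sigma)}_i \ast e_i\|_v \le 1$ in the non-archimedean case, respectively $\|g^{(\sigma)}_i \ast e_i\|_v \le \sqrt 2$ in the archimedean case.

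In the non-archimedean case, the $T_x^{[e]}(\ell)$ form part of an integral basis of $\Gamma(\P, \O_\P(r)) \otimes \o_{K_v}$ (namely the subset, indexed by $\triangledown_r^\Z(t_x)$, of the basis $\{\bigotimes_i e_i^{r_i - k_i} T_{x_i}^{k_i}\}_{k \in \square_r^\Z}$), so $w_x^{(v)}$ is primitive with $\|w_x^{(v)}\|_v = 1$. Sub-multiplicativity and Hadamard then give $\|g^{(\sigma)} \ast w_x^{(v)}\|_v \le \prod_i |\delta_i|_v^{s_i}$, which is precisely the claimed bound. In the archimedean case the orthonormality of $\{e_i, T_{x_i}\}$ makes $\{T_x^{[e]}(\ell)\}_\ell$ pairwise orthogonal with $\|T_x^{[e]}(\ell)\|_v^2 = \prod_i \binom{r_i}{\ell_i}^{-1}$, hence $\|w_x^{(v)}\|_v^2 = \prod_\ell \prod_i \binom{r_i}{\ell_i}^{-1}$; on the other hand, Hadamard plus sub-multiplicativity gives $\|g^{(\sigma)} \ast w_x^{(v)}\|_v \le \prod_i |\delta_i|_v^{s_i} (\sqrt 2)^{k_r(t_x)|r| - \sum_i s_i}$. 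Taking the log of the ratio and invoking the elementary bound $\binom{r_i}{\ell_i} \le 2^{r_i}$ then controls the archimedean correction by $\frac{1}{2}(k_r(t_x)|r| - \sum_i s_i) \log 2 + \frac{1}{2} \sum_\ell \sum_i r_i \log 2 \le k_r(t_x)|r| \log 2$, as required.

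The hard part will be the archimedean case: no integral lift is available to normalize the denominator $\|w_x^{(v)}\|_v$, so it must be computed directly. The key trick is to replace the naive basis $\{T_0, T_{x_i}\}$ of $\C_v^{2\vee}$ by the orthonormal basis $\{e_i, T_{x_i}\}$; this simultaneously makes $\|w_x^{(v)}\|_v$ computable via orthogonality and ensures that the extra $(\sqrt 2)^{r_i - \ell_i}$ factors coming from sub-multiplicativity applied to $(g^{(\sigma)}_i \ast e_i)^{r_i - \ell_i}$ are precisely absorbed into the binomial weights $\binom{r_i}{\ell_i}^{-1/2}$ via $\binom{r_i}{\ell_i} \le 2^{r_i}$, yielding exactly the claimed $k_r(t_x)|r| \log 2$ archimedean overhead.
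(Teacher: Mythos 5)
Your proof is correct and follows essentially the same route as the paper's own: your $T_{x_i}$ and $e_i$ are exactly the paper's adapted basis $T_{i1}$, $T_{i0}$ (your explicit choices of $e_i$ being particular instances of the integral, resp.\ orthonormal, completion used there), and your identity $g^{(\sigma)}_i \ast T_{x_i} = -\delta_i T_0$ is just the explicit form of Proposition \ref{prop:PropertiesElementOfTheGroup}~(2). The denominator computation ($\|w_x^{(v)}\|_v = 1$ at non-archimedean places; orthogonality plus $\binom{r_i}{\ell_i} \le 2^{r_i}$ at archimedean ones) and the Hadamard/sub-multiplicativity bound on the numerator coincide with the paper's argument and yield the same error term $k_r(t_x)|r|\log 2$.
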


\begin{prop} \label{prop:InstabilityMeasureAlgebraicPoint} With the notation introduced above, if $v$ is non-archimedean we have
\begin{align*} \iota_v(g^{(\sigma)}, [K_{q,r}(\bs{a}, t_{\bs{a}})] ) \le - k_{q, r}(t_{\bs{a}}) t_{\bs{a}} \min_{i = 1, \dots, n} \left\{ r_i \m_v(a_i^{(\sigma)}, x_i) \right\} ,
\end{align*}
whereas, if $v$ is archimedean, the preceding inequality holds with
$$ k_{q, r}(t_{\bs{a}}) \left( \frac{1}{2}  \sum_{i = 1}^n \log (r_i + 1) +  |r| \log \sqrt{3} \right) $$
added on the right-hand side.
\end{prop}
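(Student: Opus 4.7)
The plan is to mimic the direct calculation used in Proposition~\ref{prop:InstabilityMeasureRationalPoint}, replacing the point $x$ by the algebraic point $a^{(\sigma)}$ and using the more restrictive index condition at $a^{(\sigma)}$. I would pick an integral basis $f_1, \dots, f_k$ of the $\o_v$-lattice $\cal{K}_{q, r}(\bs{a}, t_{\bs{a}}) := K_{q, r}(\bs{a}, t_{\bs{a}}) \cap (\Gamma(\P, \O_\P(r)) \otimes \o_v)$, with $k = k_{q, r}(t_{\bs{a}})$, normalised so that $\|f_1 \wedge \cdots \wedge f_k\|_v = 1$. Since each $f_j$ has index at least $t_{\bs{a}}$ at $a^{(\sigma)}$, expanding in the basis of $K_r(a^{(\sigma)}, t_{\bs{a}})$ adapted to the pair $(x, a^{(\sigma)})$ (\emph{cf.}~Proposition~\ref{prop:DimensionKernelSinglePoint}) yields
\[
f_j = \sum_{\ell \in \triangledown_r^\Z(t_{\bs{a}})} c_{j\ell}^{(\sigma)} \bigotimes_{i = 1}^n T_{x_i}^{r_i - \ell_i} T_{a_i^{(\sigma)}}^{\ell_i}.
\]

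Next, I would apply the explicit formulas $g_i^{(\sigma)} * T_{x_i} = -(\theta_i^{(\sigma)})^{-2} T_0$ and $g_i^{(\sigma)} * T_{a_i^{(\sigma)}} = (\theta_i^{(\sigma)})^{-2} T_1$ derived from Proposition~\ref{prop:PropertiesElementOfTheGroup}. These send each adapted basis vector to $(-1)^{|r - \ell|} \prod_i (\theta_i^{(\sigma)})^{-2 r_i} \cdot T(\ell)$ with $T(\ell) := \bigotimes_i T_0^{r_i - \ell_i} T_1^{\ell_i}$, and Proposition~\ref{prop:PropertiesElementOfTheGroup}(1) yields $|\prod_i (\theta_i^{(\sigma)})^{-2 r_i}|_v = \exp(-\sum_i r_i \m_v(a_i^{(\sigma)}, x_i))$. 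Consequently
\[
g^{(\sigma)} * (f_1 \wedge \cdots \wedge f_k) = \prod_{i = 1}^n (\theta_i^{(\sigma)})^{-2 k r_i} \sum_{\substack{L \subset \triangledown_r^\Z(t_{\bs{a}}) \\ |L| = k}} \varepsilon_L \det\bigl(c_{j\ell}^{(\sigma)}\bigr)_{j, \ell \in L} \bigwedge_{\ell \in L} T(\ell),
\]
and in the non-archimedean case the $v$-adic norm is the maximum of the absolute values of these Plücker coefficients, since the $T(\ell)$'s form an orthonormal system.

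The decisive combinatorial ingredient is the inequality
\[
\sum_{i = 1}^n \ell_i \m_v(a_i^{(\sigma)}, x_i) = \sum_{i = 1}^n \tfrac{\ell_i}{r_i} \cdot r_i \m_v(a_i^{(\sigma)}, x_i) \ge t_{\bs{a}} \min_{1 \le i \le n} r_i \m_v(a_i^{(\sigma)}, x_i),
\]
valid for every $\ell \in \triangledown_r^\Z(t_{\bs{a}})$ by definition of that set. Summing this bound over the $k$ indices of any subset $L \subset \triangledown_r^\Z(t_{\bs{a}})$ gives the uniform lower bound $k \, t_{\bs{a}} \min_i r_i \m_v(a_i^{(\sigma)}, x_i)$ on $\sum_{\ell \in L} \sum_i \ell_i \m_v(a_i^{(\sigma)}, x_i)$; combining this with Hadamard-style bounds on the minors $\det(c_{j\ell}^{(\sigma)})_{j, \ell \in L}$ and with the integrality normalisation $\|f_1 \wedge \cdots \wedge f_k\|_v = 1$ should produce the asserted inequality. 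The archimedean case is handled by the same scheme but with two additional sources of error tracked throughout: the sub-multiplicativity defects of the hermitian norm on $\Sym^{r_i}$, which introduce factors involving $\sqrt{r_i + 1}$ when passing from monomials in $(T_0, T_1)$ to the symmetric-power norm, and Cauchy--Schwarz inequalities applied to the expansion of $T_{x_i}$ and $T_{a_i^{(\sigma)}}$ in $(T_0, T_1)$, together responsible for the $\log \sqrt{3}$ term per unit of $|r|$.

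The point I expect to be the most delicate is the bound on $\det(c_{j\ell}^{(\sigma)})_{j, \ell \in L}$: a naive operator-norm estimate for $(\phi_r^{(\sigma)})^{-1}$ gives $\max_\ell |c_{j\ell}^{(\sigma)}|_v \le \prod_i \d_v(a_i^{(\sigma)}, x_i)^{-r_i}$, which when fed into Hadamard's inequality produces only $\iota_v \le 0$. The refinement must exploit that the integrality of the $f_j$'s in the \emph{standard} $(T_0, T_1)$-basis forces the coefficients $c_{j\ell}^{(\sigma)}$ in the \emph{adapted} basis to be controlled in a way that interacts with the index filtration: this is achieved, for instance, by choosing the $f_j$'s in echelon form with respect to the filtration of $K_r(a^{(\sigma)}, t_{\bs{a}})$ refining the index at $a^{(\sigma)}$, so that each column of the matrix $(c_{j\ell}^{(\sigma)})$ carries an extra factor of $\prod_i \d_v(a_i^{(\sigma)}, x_i)^{r_i - \ell_i}$ which, after multiplication by the overall $(\Lambda^{(\sigma)})^k$ factor, yields precisely the exponent $t_{\bs{a}} \min_i r_i \m_v(a_i^{(\sigma)}, x_i)$ advertised in the statement.
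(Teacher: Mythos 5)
Your opening matches the paper's strategy (the same elements $g^{(\sigma)}$, an expansion of the kernel space adapted to $a^{(\sigma)}$, and the inequality $\sum_i \ell_i \m_v(a_i^{(\sigma)},x_i) \ge t_{\bs{a}} \min_i r_i \m_v(a_i^{(\sigma)},x_i)$ on $\triangledown_r^\Z(t_{\bs{a}})$), but the step you yourself single out as delicate is where the argument genuinely breaks. Write $\d_i \df \d_v(a_i^{(\sigma)},x_i)$. After extracting the global factor $\prod_i \Delta_i^{k r_i}$, a column-by-column accounting of the minors $\det(c_{j\ell})_{j,\ell\in L}$ can only reach the exponent $k\,t_{\bs{a}} \min_i r_i \m_v$ if each column satisfies $|c_{j\ell}|_v \le \prod_i \d_i^{-(r_i-\ell_i)}$; the bound you actually assert, namely the naive $\prod_i\d_i^{-r_i}$ improved by the ``extra factor'' $\prod_i\d_i^{\,r_i-\ell_i}$, i.e. $|c_{j\ell}|_v\le\prod_i\d_i^{-\ell_i}$, gives, after multiplying by the column's share $\prod_i\d_i^{\,r_i}$ of the global factor, only $\prod_i\d_i^{\,r_i-\ell_i}$, and $\sum_i(r_i-\ell_i)\m_v$ vanishes for deep columns (e.g. $\ell=r$, which lies in $\triangledown_r^\Z(t_{\bs{a}})$), so no decay survives and you recover only $\iota_v\le 0$. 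Worse, the stronger columnwise bound that your scheme would need is false, echelon choice or not, because it is claimed to follow from ``integrality plus the index filtration at $a^{(\sigma)}$'' alone: take one factor, $r=2$, $t_{\bs{a}}=1/2$, $a=(1:\alpha)$, $x=(1:\xi)$ with $|\alpha|,|\xi|\le 1$ and $|\alpha-\xi|_v=\d<1$; the integral section $f=T_0T_a+T_a^2$ (index $1/2$ at $a$) has, in the basis $T_x^{2-\ell}T_a^{\ell}$, coefficients $c_1=(\alpha-\xi)^{-1}$ and $c_2=1-(\alpha-\xi)^{-1}$, so $|c_2|_v=\d^{-1}$ instead of the required $\le 1$ (and with $k=1$ the ``echelon'' basis is forced, so no choice of basis helps). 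What is true, and what actually suffices, is only the uniform bound $\max_{j,\ell}|c_{j\ell}|_v\le\exp\bigl(\sum_i r_i\m_v - t_{\bs{a}}\min_i r_i\m_v\bigr)$, and proving it forces you through a lattice basis adapted to $a^{(\sigma)}$ alone.

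That is precisely how the paper proceeds, and it dispenses with the minors entirely: for each $i$ choose an $\ol{\o}_v$-basis (orthonormal basis in the archimedean case) $T_{i0},T_{i1}$ of $\ol{\o}_v^{2\vee}$ with $T_{i1}$ vanishing at $a_i^{(\sigma)}$. Any $f$ in the kernel space then reads $f=\sum_{\ell\in\triangledown_r^\Z(t_{\bs{a}})}f_\ell T(\ell)$ with $\|f\|_v=\max_\ell|f_\ell|_v$, Proposition \ref{prop:PropertiesElementOfTheGroup} (2) gives $\|g_i^{(\sigma)}\ast T_{i1}\|_v=\d_i$ and $\|g_i^{(\sigma)}\ast T_{i0}\|_v\le 1$ (resp. $\le\sqrt2$), and your combinatorial inequality immediately yields $\log\bigl(\|g^{(\sigma)}\cdot f\|_v/\|f\|_v\bigr)\le -t_{\bs{a}}\min_i r_i\m_v(a_i^{(\sigma)},x_i)$ for every single $f$; Hadamard's inequality applied to the wedge of a lattice (resp. orthonormal) basis of $K_{q,r}(\bs{a},t_{\bs{a}})$ then gives the Proposition. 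Note also that in the archimedean case the error term does not come from sub-multiplicativity defects of the symmetric-power norm as you suggest: the $\frac12\sum_i\log(r_i+1)$ comes from the $\ell^1$--$\ell^2$ comparison over the $\prod_i(r_i+1)$ monomials, and the $|r|\log\sqrt3$ from the identity $\sum_{a=0}^{b}\binom{b}{a}2^{b-a}=3^{b}$.
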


%\end{proof}
\end{npar}

\subsection{Taylor expansion at the single point : proof of Proposition \ref{prop:InstabilityMeasureRationalPoint}}

Let us keep the notations introduced in Section \ref{subsec:NotationsUpperBoundInstabilityMeasure}. In this section we prove Proposition \ref{prop:InstabilityMeasureRationalPoint}.  

\begin{np} Let $i \in \{ 1, \dots, n\}$ and let us consider the linear form
$$ T_{i1} \df - x_{i1} T_0 + x_{i0} T_{1} \in K_v^{2\vee}.$$
Since the point $(x_{i0}, x_{i1}) \in K_v^2$ is of norm $1$ the linear form $T_{i1}$ is of norm $1$. If $v$ is non-archimedean let $T_{i0} \in \o_v^{2 \vee}$ be a linear form such that $T_{i0}$, $T_{i1}$ is a basis of the $\o_v$-module $\o_v^{2 \vee}$. If $v$ is archimedean let $T_{i0} \in K_v^{2 \vee}$ be a linear form such that $T_{i0}$, $T_{i1}$ is an orthonormal basis of $K_v^{2 \vee}$.

Since the linear form $T_{i1}$ vanishes at $x_i$ for every $i = 1, \dots, n$, Proposition \ref{prop:DimensionKernelSinglePoint} (1) implies that a basis of the $K$-vector space $K_r(x, t_x)$ is given by the monomials
$$ T(\ell) = \bigotimes_{i = 1}^n T_{i0}^{r_i - \ell_i} T_{i1}^{\ell_i}$$
where $\ell = (\ell_1, \dots, \ell_n)$ ranges in $\triangledown_r^\Z(t_x)$. Therefore the following vector of $\cal{F}_r(x, t_x) \otimes_{\o_K} K_v$, 
$$ w \df \bigwedge_{\ell \in \triangledown_r^\Z(t_x)} T(\ell)$$
is a non-zero representative of the Pl\"ucker embedding of $[K_r(x, t_x)]$. If $v$ is non-archimedean the elements $T(\ell)$'s form a basis of the $\o_v$-module $\left( K_r(x, t_x) \otimes K_v \right) \cap \left( \Gamma(\P, \O_\P(r)) \otimes \o_v\right)$.  Hence
$$ \log \left\| w \right\|_{\cal{F}_r(t_x), v} = 0.$$
If $v$ is archimedean the elements $T(\ell)$'s are orthogonal but they are not of norm $1$ and we have
\begin{align*}
 \log \| w \|_{\cal{F}_r(t_x), v} 
 = \sum_{\ell \in \triangledown_r^\Z(t_x)}  \log \| T(\ell) \|_{\Gamma(\P, \O_\P(r)), v} 
&= - \frac{1}{2}  \sum_{\ell \in \triangledown_r^\Z(t_x)}  \sum_{i = 1}^n \log \binom{r_i}{\ell_i}.
\end{align*}
Bounding the binomial $\binom{r_i}{\ell_i}$ by $2^{r_i}$ we obtain
\begin{align*}  \log \| w \|_{\cal{F}_r(t_x), v}&= - \frac{1}{2} \sum_{\ell \in \triangledown_r^\Z(t_x)}  \sum_{i = 1}^n \log \binom{r_i}{\ell_i}  \ge - \frac{1}{2} \sum_{\ell \in \triangledown_r^\Z(t_x)}  \sum_{i = 1}^n r_i \log 2 \\ &= - k_r(t_x) |r| \log \sqrt{2}.
\end{align*}
\end{np}

\begin{np} \label{par:UBSizeTransformedMonomials}  For any $\ell \in \triangledown_r^\Z(t)$ the sub-multiplicativity of the norm on symmetric powers yields
$$ \log \| g^{(\sigma)} \cdot T(\ell)\|_{\Gamma(\P, \O_\P(r)), v} \le \sum_{i = 1}^n \left( (r_i - \ell_i) \log \|g^{(\sigma)}_i \cdot T_{i0}\|_{v} +  \ell_i \log \| g^{(\sigma)}_i \cdot T_{i1}\|_v \right). $$
Therefore applying the Hadamard inequality we get
\begin{align*}
\log \| g^{(\sigma)} \cdot w \|_{\cal{F}_r(t_x), v}
&\le \sum_{\ell \in\triangledown_r^\Z(t_x)} \log \| g^{(\sigma)} \cdot T(\ell) \|_{\Gamma(\P, \O_\P(r)), v} \\
&\le \sum_{\ell \in\triangledown_r^\Z(t_x)} \sum_{i = 1}^n \left( (r_i - \ell_i) \log \| g^{(\sigma)} \cdot T_{i0} \|_v + \ell_i \log \| g^{(\sigma)} \cdot T_{i1} \|_v \right).
\end{align*}
For every $i = 1, \dots, n$ Proposition \ref{prop:PropertiesElementOfTheGroup} (2) entails
\begin{itemize}
\item $\displaystyle \| g_i^{(\sigma)} \cdot T_{i0} \|_v  \le
\begin{cases}
\vspace{3pt} 1 & \textup{$v$ non-archimedean}\\
\sqrt{2} &\textup{$v$ archimedean}
\end{cases}$

\item $\| g_i^{(\sigma)} \cdot T_{i1} \|_v = \d_v(a_i^{(\sigma)}, x_i)$.
\end{itemize}
Summarising if $v$ is non-archimedean we obtain:
$$ \iota_v(g^{(\sigma)}, [K_r(x, t_x)]) = \log \| g^{(\sigma)} \cdot w \|_{\cal{F}_r(t_x), v} \le - \sum_{i = 1}^n \left( \sum_{\ell \in\triangledown_r^\Z(t_x)} \ell_i \right) \m_v(a_i^{(\sigma)}, x_i).$$
On the other hand if $v$ is archimedean we have:
\begin{align*}
\iota_v(g^{(\sigma)}, [K_r(x, t_x)]) &\le \log \| g^{(\sigma)} \cdot w \|_{\cal{F}_r(t_x), v} + k_r(t_x) |r| \log \sqrt{2} \\
&\le - \sum_{i = 1}^n \left( \sum_{\ell \in\triangledown_r^\Z(t_x)} \ell_i \right) \m_v(a_i^{(\sigma)}, x_i) + k_r(t_x) |r| \log 2,
\end{align*}
which concludes the proof. \qed
\end{np}

\subsection{Taylor expansion at the algebraic points : proof of Proposition \ref{prop:InstabilityMeasureAlgebraicPoint}}

Let us keep the notations introduced in Section \ref{subsec:NotationsUpperBoundInstabilityMeasure}. In this section we prove Proposition \ref{prop:InstabilityMeasureAlgebraicPoint}.

\begin{np} If $v$ is non-archimedean let $\o_v$ be the ring of $K_v$ and let $f_1, \dots, f_{k_{q, r}(t_{\bs{a}})}$ be a basis of the $\o_v$-module:
$$\left( K_{q,r}(\bs{a}, t_{\bs{a}}) \otimes_K K_v \right) \cap \left( \Gamma(\P, \O_\P(r) \otimes_{\o_K} \o_v \right).$$
If $v$ is  archimedean let $f_1, \dots, f_{k_{q, r}(t_{\bs{a}})}$ be an orthonormal basis of $K_{q,r}(\bs{a}, t_{\bs{a}}) \otimes K_v$. With these notations the vector of $\cal{F}_{q, r}(\bs{a}, t_{\bs{a}}) \otimes_{\o_K} K_v$,
$$ w \df \bigwedge_{\alpha = 1}^{k_{q, r}(t_{\bs{a}})} f_\alpha$$
is a non-zero representative of the Pl\"ucker embedding of $[K_{q,r}(\bs{a}, t_{\bs{a}})]$. In order to simplify notation let us denote by $\| \cdot \|_v$ the induced norm on the $\C_v$-vector space $\Gamma(\P, \O_\P(r)) \otimes_{\o_K} \C_v$. With this notation Hadamard's inequality \eqref{eq:HadamardInequality} entails
$$
\iota_v(g^{(\sigma)}, [K_{q,r}(\bs{a}, t_{\bs{a}})]) 
= \log \left\|g^{(\sigma)} \cdot w \right\|_{\cal{F}_{q,r}(t_{\bs{a}}), v} \le \sum_{\alpha = 1}^{k_{q, r}(t_{\bs{a}})} \log \| g^{(\sigma)} \cdot f_\alpha\|_v.
$$
and we are thus left with proving the following lemma:
\begin{lem} Let $f$ be a non-zero element of $K_{q,r}(\bs{a}, t_{\bs{a}})$. With the notation introduced above, if $v$ is non-archimedean we have
$$ \log \frac{\| g^{(\sigma)} \cdot f \|_v}{\| f\|_v} \le t_{\bs{a}} \max_{i = 1, \dots,n} \left\{r_i \log \d_v(a_i^{(\sigma)}, x_i) \right\}$$
whereas, if $v$ is archimedean, the preceding inequality holds with
$$ \frac{1}{2}  \sum_{i = 1}^n \log (r_i + 1) +  |r| \log \sqrt{3} $$
added on the right-hand side.
\end{lem}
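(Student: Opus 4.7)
The strategy is to diagonalize the action of $g^{(\sigma)}$ by introducing, for each $i = 1,\dots,n$, a basis $(S_{i0}^{(\sigma)}, S_{i1}^{(\sigma)})$ of $\C_v^{2\vee}$ with $S_{i1}^{(\sigma)}$ proportional (by a unit) to the linear form $-a_{i1}^{(\sigma)} T_0 + a_{i0}^{(\sigma)} T_1$ vanishing at $a_i^{(\sigma)}$. In the non-archimedean case, since $\|(a_{i0}^{(\sigma)}, a_{i1}^{(\sigma)})\|_v = 1$ at least one coordinate is a unit, so one can complete $S_{i1}^{(\sigma)}$ to an $\overline{\o}_v$-basis of $\overline{\o}_v^{2\vee}$; in the archimedean case, one takes $(S_{i0}^{(\sigma)}, S_{i1}^{(\sigma)})$ orthonormal. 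The monomials $S^{(\sigma)}(\ell) := \bigotimes_i (S_{i0}^{(\sigma)})^{r_i - \ell_i} (S_{i1}^{(\sigma)})^{\ell_i}$, for $\ell \in \square_r^\Z$, form a basis of $\Gamma(\P, \O_\P(r)) \otimes \C_v$, and the definition of $K_{q,r}(\bs{a}, t_{\bs{a}})$ forces the expansion $f = \sum_\ell c_\ell S^{(\sigma)}(\ell)$ to be supported on $\ell \in \triangledown_r^{\Z}(t_{\bs{a}})$.

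Building on Proposition \ref{prop:PropertiesElementOfTheGroup}, a direct computation gives $\|g_i^{(\sigma)} \cdot S_{i1}^{(\sigma)}\|_v = \d_v(a_i^{(\sigma)}, x_i)$. Moreover, both $g_i^{(\sigma)} T_0$ and $g_i^{(\sigma)} T_1$ have $v$-adic norm at most $1$ (non-archimedean) or $\sqrt{2}$ (archimedean), which by the operator-norm bound (respectively the Hilbert-Schmidt inequality) yields $\|g_i^{(\sigma)} \cdot S_{i0}^{(\sigma)}\|_v \le 1$ (resp. $\le \sqrt{2}$). Sub-multiplicativity of the symmetric power norm then gives
$$
\|g^{(\sigma)} \cdot S^{(\sigma)}(\ell)\|_v \le \prod_{i=1}^n \|g_i^{(\sigma)} \cdot S_{i0}^{(\sigma)}\|_v^{r_i - \ell_i}\, \d_v(a_i^{(\sigma)}, x_i)^{\ell_i},
$$
and the distance factor is controlled, for $\ell \in \triangledown_r^\Z(t_{\bs{a}})$ and using the non-positivity of $\log \d_v$, by
$$
\sum_{i=1}^n \ell_i \log \d_v(a_i^{(\sigma)}, x_i) \;=\; \sum_{i=1}^n \frac{\ell_i}{r_i}\cdot r_i \log \d_v(a_i^{(\sigma)}, x_i) \;\le\; t_{\bs{a}} \max_{i=1,\dots,n} \{r_i \log \d_v(a_i^{(\sigma)}, x_i)\}.
$$

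In the non-archimedean case, the integral basis property gives $\|f\|_v = \max_\ell |c_\ell|_v$, so the ultrametric inequality combined with the above bounds yields the claimed estimate directly. In the archimedean case, the $(S^{(\sigma)}(\ell))_\ell$ are orthogonal with $\|S^{(\sigma)}(\ell)\|_v = \prod_i \binom{r_i}{\ell_i}^{-1/2}$, hence $|c_\ell| \le \|f\|_v \prod_i \binom{r_i}{\ell_i}^{1/2}$. Combining this with the triangle inequality and a single Cauchy--Schwarz application to
$$
\sum_{\ell \in \triangledown_r^\Z(t_{\bs{a}})} \prod_{i=1}^n \binom{r_i}{\ell_i}^{1/2} \|g_i^{(\sigma)} S_{i0}^{(\sigma)}\|_v^{r_i - \ell_i} \;\le\; \sqrt{\#\triangledown_r^\Z(t_{\bs{a}})}\, \sqrt{\prod_{i=1}^n \bigl(\|g_i^{(\sigma)} S_{i0}^{(\sigma)}\|_v^2 + 1\bigr)^{r_i}} \;\le\; \sqrt{\prod_{i=1}^n (r_i+1)}\cdot (\sqrt{3})^{|r|},
$$
where the inner sum (extended to $\square_r^\Z$ by positivity) is evaluated via the binomial formula and then bounded using $\|g_i^{(\sigma)} S_{i0}^{(\sigma)}\|_v^2 + 1 \le 3$, produces exactly the archimedean bound with the additional error terms $\tfrac{1}{2} \sum_i \log(r_i+1) + |r| \log \sqrt{3}$.

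The only delicate step is the choice of the complementary basis $S_{i0}^{(\sigma)}$ together with the correct matching of constants in the archimedean Cauchy--Schwarz step: this is what ensures that the factor $(\sqrt{3})^{|r|}$ (rather than a larger one) is obtained. Once the basis is fixed, the proof is a routine combination of sub-multiplicativity, the index condition on $f$, and Cauchy--Schwarz.
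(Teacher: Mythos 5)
Your proof is correct and follows essentially the same route as the paper: expand $f$ in the monomial basis adapted to $a^{(\sigma)}$ (integral, resp.\ orthonormal, complement of the form vanishing at $a_i^{(\sigma)}$), use the index condition to restrict the support to $\triangledown_r^\Z(t_{\bs a})$, bound $\|g_i^{(\sigma)}\cdot S_{i1}^{(\sigma)}\|_v=\d_v(a_i^{(\sigma)},x_i)$ and $\|g_i^{(\sigma)}\cdot S_{i0}^{(\sigma)}\|_v\le 1$ or $\sqrt2$, and in the archimedean case apply Cauchy--Schwarz together with the binomial identity producing $\sqrt{\prod_i(r_i+1)}\,(\sqrt3)^{|r|}$. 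The only deviation is a harmless reorganization of the archimedean bookkeeping (you bound each coefficient by $\|f\|_v\binom{r}{\ell}^{1/2}$ and evaluate $\prod_i(c_i^2+1)^{r_i}$ exactly, whereas the paper compares weighted $\ell^1$ and $\ell^2$ sums and bounds the maximal weight ratio by $3^{|r|}$), yielding the same constants.
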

\end{np}

\begin{np} For every $i = 1, \dots, n$ let us consider the linear form:
$$ T_{i1} \df - a_{i1}^{(\sigma)} T_0 + a_{i0}^{(\sigma)} T_1 \in \C_v^{2 \vee}.$$
Since the point $(a_{i0}^{(\sigma)}, a_{i1}^{(\sigma)}) \in \C_v^2$ is of norm $1$ the linear form $T_{i1}$ is of norm $1$. If $v$ is non-archimedean let $\ol{\o}_v$ be the ring of integers of $\C_v$ and let $T_{i0} \in \ol{\o}_v^{2 \vee}$ be a linear form such that $T_{i0}$, $T_{i1}$ is a basis of the $\ol{\o}_v$-module $\ol{\o}_v^{2 \vee}$. If $v$ is archimedean let $T_{i0} \in \C_v^{2 \vee}$ be a linear form such that $T_{i0}$, $T_{i1}$ is an orthonormal basis of $\C_v^{2 \vee}$. For every $n$-tuple of integers $\ell = (\ell_1, \dots, \ell_n) \in \square_r$ let us define
$$ T(\ell) = \bigotimes_{i = 1}^n T_{i0}^{r_i - \ell_i} T_{i1}^{\ell_i}.$$
The monomials $T(\ell)$'s form a basis of the $\C_v$-vector space $\Gamma(\P, \O_\P(r)) \otimes_K \C_v$. If $v$ is non-archimedean the elements $T(\ell)$'s form a basis of the $\ol{\o}_v$-module $\Gamma(\P, \O_\P(r)) \otimes_{\o_K} \ol{\o}_v$. If $v$ is archimedean the monomials $T(\ell)$'s are orthogonal and for every $\ell \in \square_r$ we have
$$ \| T(\ell) \|_v = \binom{r}{\ell}^{-1/2} \df \prod_{i = 1}^n \binom{r_i}{\ell_i}^{-1 / 2}.$$
\end{np}

\begin{np} A computation similar to the one in paragraph \ref{par:UBSizeTransformedMonomials} yields if $v$ is non-archimedean
$$ \log \| g^{(\sigma)} \cdot T(\ell)\|_v \le \sum_{i = 1}^n \ell_i \log \d_v(a_i^{(\sigma)}, x_i) = - \sum_{i = 1}^n \ell_i \m_v(a_i^{(\sigma)}, x_i),$$
whereas, if $v$ is archimedean, the preceding inequality holds when $k_{q, r}(t_{\bs{a}}) |r| \log \sqrt{2}$ is added to the right-hand side.
\end{np}

\begin{np} Let us write $f = \sum_{\ell} f_{\ell} T(\ell)$ with $f_\ell \in \C_v$. If $v$ is non-archimedean we have 
$$ \| f\|_v = \max \left\{ |f_\ell|_v :  \ell \in \square_r^\Z  \right\}.$$
If $v$ is archimedean we have
$$ \| f\|_v^2 =  \sum_{\ell \in \square_r^\Z} |f_\ell|_v^2 \binom{r}{\ell}^{-1}.$$
Since the real numbers $\m_v(a_i^{(\sigma)}, x_i)$ are non-negative for every $\ell \in \triangledown_r^\Z(t_{\bs{a}})$ we have:
\begin{align*} 
\sum_{i = 1}^n \ell_i \m_v(a_i^{(\sigma)}, x_i)
&\ge \left(\sum_{i = 1}^n  \frac{\ell_i}{r_i} \right) \min_{i = 1, \dots, n} \left\{ r_i \m_v(a_i^{(\sigma)}, x_i) \right\} \\
&\ge t_{\bs{a}} \min_{i = 1, \dots, n} \left\{ r_i \m_v(a_i^{(\sigma)}, x_i) \right\}.
\end{align*}
By definition the global section $f$ satisfies $\ind_{1/r}(f, a^{(\sigma)}) \ge t_{\bs{a}}$, \em{i.e.} we have $f_\ell = 0$  for every $\ell \in \triangleup_r^\Z(t_{\bs{a}})$. In the non-archimedean case this yields:
\begin{align*} 
\log \| g^{(\sigma)} \cdot f \|_v
&\le \max_{\ell \in \triangledown_r^\Z(t_{\bs{a}})} \left\{ \log |f_\ell|_v + \log \| g^{(\sigma)} \ast  T(\ell)  \|_v \right\}\\
&\le-  t_{\bs{a}} \min_{i = 1, \dots, n} \left\{ r_i \m_v(a_i^{(\sigma)}, x_i) \right\} + \log \| f \|_v,
\end{align*}
which actually concludes the proof in the non-archimedean case. 
\end{np}

\begin{np}
Let us suppose henceforth that $v$ is archimedean. Proposition \ref{prop:PropertiesElementOfTheGroup} (2) and the triangle inequality give:
\begin{align} 
 \| g^{(\sigma)} \cdot f \|_{v}
& \le \sum_{\ell \in \triangledown_r^\Z(t_{\bs{a}})} |f_\ell|_v  \|g^{(\sigma)} \ast T(\ell) \|_v \nonumber \\
&\le \max_{i = 1, \dots, n} \left\{ \d_v(a_i^{(\sigma)}, x_i)^{r_i} \right\}^{t_{\bs{a}}} \left( \sum_{\ell \in \triangledown_r^\Z(t_{\bs{a}})} |f_\ell|_v \prod_{i =1}^n \sqrt{2}^{r_i - \ell_i}  \right). \label{eq:InstabilityMeasureAlgebraicPoint1}
\end{align}
Comparing $\ell^1$ and $\ell^2$ norms on $\Gamma(\P, \O_\P(r))$ thanks to Jensen's inequality we have:
$$ \sum_{\ell \in \triangledown_r^\Z(t_{\bs{a}})} |f_\ell|_v \prod_{i =1}^n \sqrt{2}^{r_i - \ell_i} \le \sqrt{ \left( \prod_{i = 1}^n (r_i + 1) \right) \sum_{\ell \in \triangledown_r^\Z(t_{\bs{a}})} |f_\ell|^2_v \prod_{i =1}^n 2^{r_i - \ell_i}  },$$
where we used $\rk \Gamma(\P, \O_\P(r)) = \prod_{i = 1}^n (r_i + 1)$. The right term can be compared with the norm of $f$:
\begin{align*}
\sum_{\ell \in \triangledown_r^\Z(t_{\bs{a}})} |f_\ell|_v^2 \prod_{i =1}^n 2^{r_i - \ell_i} 
&\le \max_{\ell \in \triangledown_r^\Z(t_{\bs{a}})} \left\{ \binom{r}{\ell} \prod_{i =1}^n 2^{r_i - \ell_i}  \right\} \left( \sum_{\ell \in \triangledown_r^\Z(t_{\bs{a}})} |f_\ell|_v^2 \binom{r}{\ell}^{-1} \right)  \\
&= \max_{\ell \in \triangledown_r^\Z(t_{\bs{a}})} \left\{ \binom{r}{\ell} \prod_{i =1}^n 2^{r_i - \ell_i}  \right\} \| f\|^2_v.
\end{align*}
Using $\sum_{a=0}^b \binom{b}{a} 2^{b-a} = 3^b$ we find
$$ \max_{\ell \in \triangledown_r^\Z(t_{\bs{a}})} \left\{ \binom{r}{\ell} \prod_{i =1}^n 2^{r_i - \ell_i}  \right\} \le 3^{|r|}$$
According to \eqref{eq:InstabilityMeasureAlgebraicPoint1} this concludes the proof.
\qed
\end{np}

\section{Semi-stability} \label{sec:Semi-stability}

\subsection{Basic facts about the semi-stability of subspaces}

\begin{npar}{Instability coefficient} Let $K$ be a field and let $G$ a $K$-reductive group acting on a proper $K$-scheme $X$ equipped with a $G$-equivariant invertible sheaf $L$. Let $x$ be a $K$-point of $X$. Let $\lambda : \Gm \to G$ be a one-parameter subgroup of $G$ (which means that $\lambda$ is a morphism of algebraic groups) and consider the morphism $\lambda_x : \Gm \to X$ given by 
$$ \lambda_x(\tau) \df \lambda(\tau) \cdot x.$$
By properness of $X$, the morphism $\lambda_x$ extends in a unique way to a morphism $\ol{\lambda}_x : \A^1 \to X$. We denote by $x_0$ the $K$-point $\ol{\lambda}_x(0)$. Since it is a fixed point under the action of $\Gm$, then $\Gm$ acts on the $K$-vector space $x_0^\ast L$ through a character $$ \tau \mapsto \tau^{- \mu_L(\lambda, x)}$$
with $\mu_L(\lambda, x) \in \Z$. We call it the \em{instability coefficient of $x$} with respect to the one-parameter subgroup $\lambda$ and the invertible sheaf $L$.\footnote{We follow here the convention adopted in \cite[Definition 2.2]{git}.}

\begin{theorem}[Hilbert-Mumford criterion] \label{thm:HilbertMumford} Let us suppose that $K$ is perfect and $L$ is ample. With the notation introduced above, the point $x$ is semi-stable if and only if
$$ \mu_L(\lambda, x) \ge 0$$
for every one-parameter subgroup $\lambda : \Gm \to G$.
\end{theorem}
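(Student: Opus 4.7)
The result is classical and due to Mumford; I sketch the strategy, which splits into two directions.

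For necessity, suppose $x$ is semi-stable and pick a $G$-invariant section $s \in \Gamma(X, L^{\otimes d})^G$ with $s(x) \neq 0$, for some $d \ge 1$. For any one-parameter subgroup $\lambda$, the pullback $\ol\lambda_x^\ast s$ is a global section of the $\Gm$-equivariant line bundle $\ol\lambda_x^\ast L^{\otimes d}$ on $\A^1$: it is non-zero because $s(x) \neq 0$, and it is $\Gm$-invariant because $s$ is $G$-invariant and $\Gm$ acts on $\A^1$ by scaling. Every $\Gm$-equivariant line bundle on $\A^1$ is classified up to equivariant isomorphism by the weight of its fiber at the origin --- which equals $-d\,\mu_L(\lambda, x)$ by the very definition of the instability coefficient --- and a direct analysis of $\Gm$-invariant regular sections of such a bundle shows that a non-zero invariant section exists only when this weight has the appropriate sign. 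This yields the required inequality on $\mu_L(\lambda, x)$.

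For sufficiency, we argue contrapositively. Replacing $L$ by a sufficiently divisible tensor power (which scales $\mu_L$ by a positive integer and preserves its sign), we may assume $L$ is very ample and choose a $G$-equivariant closed embedding $\iota : X \hookrightarrow \P(E^\vee)$ with $E = \Gamma(X, L)$ and $\iota^\ast \O_{E^\vee}(1) = L$. Choose a non-zero lift $\hat{x} \in E^\vee(K)$ of $x$. The standard correspondence between $G$-invariant global sections of $L^{\otimes d}$ and $G$-invariant homogeneous polynomials of degree $d$ on $E^\vee$, combined with the separation of disjoint closed orbits by invariants for reductive group actions on affine varieties, shows that $x$ fails to be semi-stable if and only if the origin $0$ lies in the Zariski closure $\overline{G \cdot \hat{x}}$ inside $E^\vee_{\ol K}$.

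The heart of the argument is then to extract an honest one-parameter subgroup of $G$ from this geometric degeneration. This is the celebrated theorem of Kempf and Rousseau (refining Mumford's original characteristic-zero argument, see \cite[Chapter 2]{git}): whenever a reductive $K$-group $G$ acts linearly on a vector space $V$ and $0$ belongs to the closure of the orbit of a $K$-rational point $v$, there exists a $K$-rational one-parameter subgroup $\lambda : \Gm \to G$ such that $\lim_{\tau \to 0} \lambda(\tau) \cdot v = 0$. Applying this to $V = E^\vee$ and $v = \hat x$, a direct weight-space computation on the fiber $L_{x_0}$ at the induced limit point $x_0 \in \P(E^\vee)$ produces a one-parameter subgroup violating the stated inequality. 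The main obstacle is precisely the Kempf--Rousseau step: over an algebraic closure it reduces to elementary linear algebra on weight decompositions, but the descent to a $K$-rational one-parameter subgroup is exactly where the hypothesis that $K$ is perfect enters in an essential way.
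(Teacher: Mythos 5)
The paper does not actually prove this statement: it is quoted from the literature, with Mumford's GIT cited for the algebraically closed case and Kempf and Rousseau for the general case over a perfect field. Your sketch is the standard argument and is sound in outline (easy direction via an invariant section pulled back along $\ol{\lambda}_x$ to an equivariant line bundle on $\A^1$, hard direction via the affine cone), but its decisive step --- producing a $K$-rational destabilizing one-parameter subgroup --- is precisely the Kempf--Rousseau theorem, which you cite rather than prove, so in substance you rest on the same references the paper itself invokes.
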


When $K$ is algebraically closed, this theorem has been proved by Mumford \cite[Theorem 2.1]{git}. The general case has been proved independently by Kempf \cite[Theorem 4.2]{kempf} and Rousseau \cite{rousseau} \footnote{In order to understand that \cite[Theorem 4.2]{kempf} translates into Theorem \ref{thm:HilbertMumford} it is useful to consult the dictionary between Kempf's and Mumford's notations given in the table in \cite[Appendix to Chapter 2, section B]{git}.}. 
\end{npar}

\begin{npar}{Instability coefficient of linear subspaces} \label{par:InstabilityCoefficientLinearSubspaces} Let $V$ be a finite dimension $K$-vector space and $r$ be a non-negative integer. We consider the grassmannian of $r$-dimensional subspaces $\Grass_r(V)$ and its Pl\"ucker embedding $\varpi : \Grass_r(V) \to \P( \bigwedge^r V )$. 

Suppose that a $K$-reductive group $G$ acts linearly on $V$. Then it acts on the grassmannian $\Grass_r(V)$, on the projective space $\P( \bigwedge^r V )$ and in a equivariant way on the invertible sheaf $\O(1)$ on $\P( \bigwedge^r V )$. Since the Pl\"ucker embedding $\varpi$ is $G$-equivariant with respect to this action, the ample invertible sheaf $\varpi^\ast \O(1)$ on $\Grass_r(V)$ is naturally endowed with a $G$-equivariant action.

\begin{deff} Let $\lambda : \Gm \to G$ be a one-parameter subgroup.

\begin{enumerate}[(1)] 

\item Let $W \subset V$ be a linear subspace of dimension $r$. We set
$$ \mu(\lambda, [W]) \df \mu_{\varpi^\ast \O(1)}(\lambda, [W])$$
omitting the polarisation $\varpi^\ast \O(1)$.

\item For every integer $p \in \Z$ consider the subspace $ V_{\lambda, p} \df \{ v \in V : \lambda(\tau) \ast v = \tau^p v\}$. We define $p_{\lambda, \min}$ (resp. $p_{\lambda, \max}$) denotes the smallest (resp. the biggest) integer $p$ such that $V_{\lambda, p}$ is non-zero.

\item For every integer $p \in \Z$ we set $V[p] \df \bigoplus_{q \ge p} V[q]$.
\end{enumerate}
\end{deff}

Remark that, since the action of a torus is diagonalisable, we have $V = \bigoplus_{p \in \Z} V_{\lambda, p}$. In particular, 
$$V[p] = 
\begin{cases}
0 & \textup{if } p > p_{\lambda, \max} \\
V & \textup{if } p < p_{\lambda, \min}. 
\end{cases}
$$

\begin{prop} \label{prop:BasicPropertiesInstabilityCoefficientLinearSubspaces} Let $W \subset V$ be a linear subspace of dimension $r$. For every integer $p$ we define $W[p] \df W \cap V[p]$.
\begin{enumerate}[(1)]
\item The subspaces $W[p]$ form a decreasing filtration of $W$ and we have
\begin{align*} 
\mu(\lambda, [W]) &= \sum_{p \in \Z} p \left( \dim_K W[p] - \dim_K W[p + 1] \right)\\
&= - p_{\lambda, \min} \dim_K W - \sum_{p = p_{\lambda, \min} + 1}^{p_{\lambda, \max}} \dim_K W[p].
\end{align*}
\item Let $w_1, \dots, w_r$ be a basis of $W$. For every $i = 1, \dots, r$ let $\mu(\lambda, [w_i])$ be the instability coefficient of the point $[w_i] \in \P(V)$. Then the vector $w_i$ writes as
$$ \lambda(\tau) \ast w_i = \tau^{- \mu(\lambda, [w_i])} w_{i, \min} +  \textup{terms of higher order in } \tau,$$
with $w_{i, \min} \in V$. If the elements $w_{1, \min}, \dots, w_{r, \min} \in V$ are linearly independent, then
$$ \mu(\lambda, [W]) = \sum_{i = 1}^r \mu(\lambda, [w_i]). $$
\item With the notations of (2), there exists a basis $w_1, \dots, w_r$ of $W$ such that their components of minimal weight $w_{1, \min}, \dots, w_{r, \min} \in V$ are linearly independent.
\end{enumerate}

\begin{proof} This is a reformulation of the computations in \cite[Chapter 4, \S 4]{git}. See also \cite[\S 2, Lemma 2]{Totaro}. 
\end{proof}

\end{prop}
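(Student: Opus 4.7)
The proposition breaks into three parts, and the natural order of attack is to prove (3) first, then (2), and finally deduce (1) from (2) and (3) by a bookkeeping argument.

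For part (3), my plan is to construct a basis of $W$ adapted to the filtration $\{W[p]\}_{p \in \Z}$. Since $V = \bigoplus_{p \in \Z} V_{\lambda, p}$, the natural surjection $V[p] \twoheadrightarrow V[p]/V[p+1] \simeq V_{\lambda, p}$ restricts, on $W[p]$, to a map $W[p]/W[p+1] \hookrightarrow V_{\lambda, p}$. For each $p$ with $W[p]/W[p+1] \neq 0$, I would lift a basis of $W[p]/W[p+1]$ to elements of $W[p]$; concatenating these over all $p$ produces a basis $w_1, \dots, w_r$ of $W$ (the cardinality matches since $\dim W = \sum_p \dim W[p]/W[p+1]$). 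By construction, each $w_i$ belongs to some $W[p_i] \setminus W[p_i + 1]$, so its leading component $w_{i, \min}$ lies in $V_{\lambda, p_i}$; and the leading components are linearly independent because those at distinct weights live in distinct eigenspaces of $V$, while those at the same weight $p$ project to a basis of $W[p]/W[p+1]$ inside $V_{\lambda, p}$.

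For part (2), the Pl\"ucker representative of $[W]$ is $\hat{W} = w_1 \wedge \cdots \wedge w_r \in \bigwedge^r V$, and by the equivariance of the Pl\"ucker embedding the instability coefficient $\mu(\lambda, [W])$ is read off from the $\Gm$-action on $\bigwedge^r V$. Expanding each $w_i = w_{i, \min} + (\text{terms of weight } > p_i)$ and using multilinearity of the wedge gives
\begin{equation*}
\lambda(\tau) \ast \hat{W} = \tau^{p_1 + \cdots + p_r} \bigl( w_{1, \min} \wedge \cdots \wedge w_{r, \min} \bigr) + (\text{higher-order terms in } \tau).
\end{equation*}
When the $w_{i, \min}$ are linearly independent, the leading coefficient is non-zero, so the formula for $\mu$ on a projective point gives $\mu(\lambda, [W]) = \sum_{i=1}^r \mu(\lambda, [w_i])$.

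For part (1), the decreasing property of $\{W[p]\}$ is immediate from the decreasing property of $\{V[p]\}$. To get the formula, apply (3) to produce an adapted basis and (2) to write $\mu(\lambda, [W]) = \sum_{i=1}^r p_i$; then rewrite the sum by grouping indices with the same leading weight, using $\#\{i : p_i = p\} = \dim W[p] - \dim W[p+1]$, to get the first equality. The second equality is an Abel summation: collecting telescoping contributions against the finite range $[p_{\lambda, \min}, p_{\lambda, \max}]$ and using $\sum_{p} (\dim W[p] - \dim W[p+1]) = \dim W$. The main obstacle is bookkeeping in (2), where one must verify that the non-vanishing of the leading wedge really is equivalent to the linear independence of the $w_{i, \min}$; everything else is either a reformulation of elementary linear algebra or a summation-by-parts identity.
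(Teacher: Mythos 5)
Your treatment of (3) and (2) is correct and is the standard Mumford computation (the paper itself gives no argument beyond a citation to Mumford and Totaro, so supplying it this way is the intended route): lifting bases of the graded pieces $W[p]/W[p+1]$ produces a basis whose minimal components are linearly independent, and expanding $\lambda(\tau)\ast(w_1\wedge\cdots\wedge w_r)$ shows its minimal weight is $p_1+\cdots+p_r$, where $p_i$ is the minimal weight occurring in $w_i$; since the normalization fixed by the statement of (2) means $\mu(\lambda,[w_i])=-p_i$, this yields $\mu(\lambda,[W])=-(p_1+\cdots+p_r)=\sum_i\mu(\lambda,[w_i])$.

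The gap is in your part (1), and it is a sign problem you did not notice. You assert that (2) gives $\mu(\lambda,[W])=\sum_{i=1}^r p_i$; but with your own notation ($p_i$ the leading weight of $w_i$), (2) gives $\mu(\lambda,[W])=\sum_i\mu(\lambda,[w_i])=-\sum_i p_i$. More seriously, the two displayed equalities of the Proposition cannot both be obtained by your bookkeeping, because as printed they are negatives of each other: grouping by leading weight gives
\begin{equation*}
\sum_{i=1}^r p_i=\sum_{p\in\Z}p\left(\dim_K W[p]-\dim_K W[p+1]\right),
\end{equation*}
and summation by parts turns the right-hand side into $p_{\lambda,\min}\dim_K W+\sum_{p=p_{\lambda,\min}+1}^{p_{\lambda,\max}}\dim_K W[p]$, which is exactly minus the second line of the statement. (Test case: $W$ spanned by a single eigenvector of weight $1$ inside a two-dimensional space with weights $0$ and $1$: the first line gives $+1$, the second gives $-1$, and by (2) the true value is $-1$.) So your ``Abel summation'' step fails as described; the first displayed equality is a sign typo in the statement, and the chain consistent with (2) and with the way the formula is used later in the paper (the inclusion and Grassmann formulas, the computation of the instability coefficients in the semi-stability section) is $\mu(\lambda,[W])=-\sum_p p\left(\dim_K W[p]-\dim_K W[p+1]\right)=-p_{\lambda,\min}\dim_K W-\sum_{p=p_{\lambda,\min}+1}^{p_{\lambda,\max}}\dim_K W[p]$. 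A complete proof of (1) has to detect and resolve this discrepancy rather than claim both lines follow from the same bookkeeping.
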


\begin{prop} \label{prop:PropertiesInstabilityCoefficientLinearSubspaces} Let $W_1$, $W_2$ be subvector spaces of $V$. 
We have the following properties:
\begin{enumerate}[(1)]
\item \label{eq:InclusionFormulaInstabilityCoefficient} \textup{(Inclusion formula)} If $W_1$ is contained in $W_2$ we have:
\begin{equation}  \mu(\lambda, [W_1]) \ge \mu(\lambda, [W_2]) - p_{\lambda, \min}(\dim_K W_1 - \dim_K W_2). \end{equation}
\item \label{eq:GrassmannFormulaInstabilityCoefficient} \textup{(Grassmann formula)}
\begin{equation}  \mu(\lambda, [W_1]) + \mu(\lambda, [W_2]) \ge \mu(\lambda, [W_1+ W_2]) + \mu(\lambda, [W_1\cap W_2]). \end{equation}
\end{enumerate}
\end{prop}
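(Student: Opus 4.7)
The plan is to derive both inequalities algebraically from the explicit combinatorial expression
$$\mu(\lambda, [W]) = - p_{\lambda, \min} \dim_K W - \sum_{p = p_{\lambda, \min} + 1}^{p_{\lambda, \max}} \dim_K W[p]$$
provided by Proposition \ref{prop:BasicPropertiesInstabilityCoefficientLinearSubspaces} (1), together with elementary observations on how the filtrations $W[p] = W \cap V[p]$ interact with inclusion, sum and intersection. Note that the range of summation only depends on $\lambda$ and $V$, so it is the same for every subspace that appears below.

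For the inclusion formula, I would start from the observation that $W_1 \subset W_2$ forces $W_1[p] = W_1 \cap V[p] \subset W_2 \cap V[p] = W_2[p]$ and hence $\dim_K W_1[p] \le \dim_K W_2[p]$ for every $p$. Inserting this termwise into the formula above yields
$$\mu(\lambda, [W_1]) \ge - p_{\lambda, \min} \dim_K W_1 - \sum_{p = p_{\lambda, \min}+1}^{p_{\lambda, \max}} \dim_K W_2[p],$$
and rewriting the sum on the right in terms of $\mu(\lambda, [W_2])$ gives exactly the claimed inequality.

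For the Grassmann formula, the crucial input is the compatibility of the filtration $V = \bigoplus_p V_{\lambda, p}$ with intersections together with its one-sided compatibility with sums:
$$(W_1 \cap W_2)[p] = W_1[p] \cap W_2[p], \qquad W_1[p] + W_2[p] \subset (W_1 + W_2)[p].$$
Combining these with the usual Grassmann dimension formula applied to the pair $(W_1[p], W_2[p])$ produces the termwise inequality
$$\dim_K W_1[p] + \dim_K W_2[p] \le \dim_K (W_1 \cap W_2)[p] + \dim_K (W_1 + W_2)[p].$$
Summing over $p$ and using the global identity $\dim_K W_1 + \dim_K W_2 = \dim_K (W_1 \cap W_2) + \dim_K (W_1 + W_2)$ to handle the term containing $p_{\lambda, \min}$, substitution into the expression for $\mu$ yields the Grassmann inequality.

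The whole argument reduces to linear algebra once one trusts the combinatorial description of $\mu$, and I do not expect any genuine obstacle. The only subtle point worth flagging is that the inclusion $W_1[p] + W_2[p] \subset (W_1 + W_2)[p]$ is in general strict --- an element of $(W_1 + W_2)[p]$ need not decompose as a sum of elements lying respectively in $W_1[p]$ and $W_2[p]$ --- and this strictness is precisely why the Grassmann formula takes the form of an inequality rather than an equality.
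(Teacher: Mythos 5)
Your argument is correct and is essentially the paper's own proof written out in full: the paper also deduces both parts from the combinatorial formula of Proposition \ref{prop:BasicPropertiesInstabilityCoefficientLinearSubspaces} (1), using termwise monotonicity of $\dim_K W[p]$ for the inclusion formula and the exact Grassmann identity for the pair $(W_1[p], W_2[p])$ together with the inclusion $W_1[p]+W_2[p]\subset (W_1+W_2)[p]$ (and the equality $(W_1\cap W_2)[p]=W_1[p]\cap W_2[p]$) for part (2). Your closing remark correctly identifies the possible strictness of that inclusion as the source of the inequality; no gaps.
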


\begin{proof} (1) Clear. (2) In fact for every integer $p$ the usual Grassmann formula for linear subspaces gives
$$ \dim_K W_1[p] + \dim_K W_2[p] = \dim_{K} (W_1[p] + W_2[p]) + \dim_{K} (W_1[p] \cap W_2[p])$$
and we conclude noticing that $W_1[p] + W_2[p] \subset (W_1 + W_2)[p]$.
\end{proof}
\end{npar}
\subsection{Asymptotic semi-stability : proof of Theorem \ref{thm:Semi-stabilityCondition}} \label{sec:ProofOfSemiStabilityTheorem}

\begin{np} We go back to the notation introduced in Section \ref{subsec:DefinitionModuliProblem}. The construction of invariant elements is compatible with flat base change \cite[\S 2 Lemma 2]{seshadri77}. It follows that the semi-stability of the points $P_{\alpha r}$ is only a matter of the generic fiber of $\cal{X}_{\alpha r}$. From now on we silently work over $K$ (for instance $\P$ will denote the projective scheme $(\P^1_K)^n$).

\end{np}

\begin{npar}{Computation of the instability coefficients} \label{par:ComputationInstabilityCoefficients} For every $n$-tuple of positive integers $r = (r_1, \dots, r_n)$, every non-negative real number $t \ge 0$ and every $i \in \{1, \dots, n\}$ let us set:
$$ \mu_{r, i}^\Z(t) \df \sum_{\ell \in \triangledown_r^\Z(t)} 2 \ell_i - r_i$$
Arguments similar to those in Lemma \ref{lem:PropertiesFunctionMu} show that $\mu_{r, i}^\Z(t)$ is non-negative. 

\begin{deff} Let $\lambda: \Gm \to \SLs_{2, K}^n$ be a one-parameter subgroup. 
\begin{enumerate}[(1)]
\item  For every $i = 1, \dots, n$ there exists a basis $T_{i0}, T_{i1}$ of $K^{2 \vee}$ and a non-negative integer $m_{\lambda, i} \ge 0$ such that 
$$ \lambda(\tau) \ast T_{i 0} = \tau^{m_{\lambda, i}} T_{i0}, \quad \lambda(\tau) \ast T_{i1} = \tau^{-m_{\lambda, i}} T_{i1}$$ for every $\tau \in \Gm(K)$. The $n$-tuple of non-negative integers $m_\lambda = (m_{\lambda, 1}, \dots, m_{\lambda, n})$ is called the \em{weight of $\lambda$} and the bases $T_{i0}, T_{i1}$ (for $i = 1, \dots, n$) is called an \em{adapted basis for $\lambda$}. 

Note that $m_{\lambda, i}$ does not depend on the choice of an adapted basis and if $m_{\lambda, i}$ is non-zero then the lines $\{ T_{i0} = 0 \}$ and $\{ T_{i1} = 0 \}$ are determined by $\lambda$.

\item With the notations introduced here above, for every $\ol{\Q}$-point $y$ of $\P$ we set
$$  \chi_{\lambda, i}(y) \df 
\begin{cases}
1 & \textup{if $T_{i0}$ vanishes at $y$} \\
0 & \textup{otherwise.}
\end{cases}$$
We denote by $y_\lambda$ the unique $K$-point of $\P$ such that $\chi_{\lambda, i}(y_\lambda) = 1$ for all $i = 1, \dots, n$ and we call it the \em{instability point of $\lambda$} (with respect to the chosen adapted bases).
\end{enumerate}
\end{deff}

\begin{prop}[Instability coefficient at the single point] \label{prop:InstabilityCoefficientAtSinglePoint} Let $\lambda : \Gm \to \SLs_2^n$ be a one-parameter subgroup. With the notation introduced above, for every $i \in \{1, \dots, n\}$ we have
$$ 
\mu(\lambda, [K_r(x, t_x)]) = \sum_{i = 1}^n (-1)^{\chi_{\lambda, i}(x)} m_{\lambda, i}  \mu_{r, i}^\Z(t_x).
$$
\end{prop}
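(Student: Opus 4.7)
The plan is to use the fact that the instability coefficient $\mu(\lambda, \cdot)$ on a Grassmannian depends only on the $\tau \to 0$ limit of the orbit, together with the observation that the subspace $K_r(y, t_x)$ varies continuously in $y$. I would first show that the assignment $y \mapsto [K_r(y, t_x)]$ extends to a morphism $\P \to \Grass_{k_r(t_x)}(\Gamma(\P, \O_\P(r)))$: the subschemes $Z_r(y, t_x) \subset \P$ fit together into a flat family over $\P$, so the kernel of the relative evaluation map $\Gamma(\P, \O_\P(r)) \otimes \O_\P \to \pi_\ast \O_{Z_r}(r)$ is a subbundle of constant rank $k_r(t_x)$, defining the desired $\SLs_2^n$-equivariant morphism. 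Equivariance then gives
$$\lim_{\tau \to 0} \lambda(\tau) \cdot [K_r(x, t_x)] = [K_r(x_\lambda, t_x)],$$
where $x_\lambda \df \ol{\lambda}_x(0) \in \P(K)$.

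Next I would check that $x_\lambda$ is a $\lambda$-fixed point and that $\chi_{\lambda, i}(x_\lambda) = \chi_{\lambda, i}(x)$ for every $i$ with $m_{\lambda, i} \ne 0$. Writing $x_i$ in the dual basis to $T_{i0}, T_{i1}$, one sees directly that the $\tau \to 0$ limit of $\lambda_i(\tau) \cdot x_i$ equals the zero of $T_{i0}$ when $x_i$ is already that point (i.e.\ when $\chi_{\lambda, i}(x) = 1$), and equals the zero of $T_{i1}$ otherwise; in both cases the relevant $\chi$-value is preserved. Indices with $m_{\lambda, i} = 0$ do not contribute to the right-hand side.

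Since $x_\lambda$ is $\lambda$-fixed, $K_r(x_\lambda, t_x)$ is $\lambda$-stable and Proposition \ref{prop:BasicPropertiesInstabilityCoefficientLinearSubspaces} computes its instability coefficient as minus the sum of $\lambda$-weights over any weight basis. To produce such a basis I would apply Proposition \ref{prop:DimensionKernelSinglePoint}(1) with the pair $(T_{i1}, T_{i0})$ in the $i$-th factor when $\chi_{\lambda, i}(x) = 1$, and $(T_{i0}, T_{i1})$ otherwise, so that the second entry always vanishes at $x_{\lambda, i}$. The resulting basis element indexed by $\ell \in \triangledown_r^\Z(t_x)$ is then a pure tensor monomial in the $T_{ij}$ whose $i$-th factor has $\lambda$-weight $(-1)^{\chi_{\lambda, i}(x)} m_{\lambda, i}(r_i - 2\ell_i)$. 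Summing over $i$ and then over $\ell \in \triangledown_r^\Z(t_x)$, and using the identity $\mu_{r, i}^\Z(t_x) = \sum_\ell(2\ell_i - r_i)$, yields a total weight of $-\sum_i (-1)^{\chi_{\lambda, i}(x)} m_{\lambda, i} \mu_{r, i}^\Z(t_x)$; negating produces the claimed formula.

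The step most likely to need care is the flatness of the family $\{Z_r(y, t_x)\}_{y \in \P}$ and the consequent regularity of $y \mapsto [K_r(y, t_x)]$, which is what allows the passage from $\lim_{\tau \to 0} \lambda(\tau) \cdot [K_r(x, t_x)]$ to $[K_r(x_\lambda, t_x)]$; once this is granted, the remainder is a straightforward computation of weights of monomials in an adapted basis of $K^{2\vee}$.
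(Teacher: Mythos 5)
Your argument is correct and reaches the stated formula, but it is packaged differently from the paper's proof (paragraph \ref{par:InstabilityCoefficientRationalPoint}). The paper never passes to the limit point: it writes down the explicit basis $T(\ell)=\bigotimes_i T_{i0}^{r_i-\ell_i}(T_{i1}-\xi_i T_{i0})^{\ell_i}$ of $K_r(x,t_x)$ in an adapted basis, observes that the minimal-weight components are the monomials $\bigotimes_i T_{i0}^{r_i-\ell_i}T_{i1}^{\ell_i}$, checks that these are linearly independent, and applies Proposition \ref{prop:BasicPropertiesInstabilityCoefficientLinearSubspaces} (2); this is a purely linear-algebraic computation carried out at $x$ itself (written for $\chi_{\lambda,i}(x)=0$, the other cases being left to the reader). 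You instead identify the limit of the orbit of $[K_r(x,t_x)]$ as $[K_r(x_\lambda,t_x)]$ via an $\SLs_2^n$-equivariant morphism $\P\to\Grass_{k_r(t_x)}(\Gamma(\P,\O_\P(r)))$, use that $\mu(\lambda,\cdot)$ depends only on the limit point, and then count weights of a monomial basis of the $\lambda$-stable subspace $K_r(x_\lambda,t_x)$. The two computations agree in substance: the paper's minimal components $T(\ell)_{\min}$ are exactly your weight basis of $K_r(x_\lambda,t_x)$, so what really differs is how the passage to the limit is justified. You need the constant-rank/flatness argument making $y\mapsto[K_r(y,t_x)]$ a morphism (the step you flag; it does go through, since the fibre dimension $\#\triangledown_r^\Z(t_x)$ is constant by Proposition \ref{prop:DimensionKernelSinglePoint} and the evaluation maps are fibrewise surjective, or alternatively because the map is induced by the orbit map through the transitive action of $\SLs_2^n$ on $\P$ and the invariance of the index), whereas the paper's route via Proposition \ref{prop:BasicPropertiesInstabilityCoefficientLinearSubspaces} (2) sidesteps any such geometric input. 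What your version buys is a uniform treatment of all sign cases $\chi_{\lambda,i}(x)\in\{0,1\}$ and a cleaner conceptual picture (the instability coefficient of the kernel at $x$ equals that of the kernel at the degenerate point $x_\lambda$); the cost is the extra verification about the family of kernels. One small caveat: when $m_{\lambda,i}=0$ your prescription of the ordered pair of forms in the $i$-th factor need not have its second entry vanishing at $x_{\lambda,i}=x_i$; in those factors you should simply take any basis of $K^{2\vee}$ whose second element vanishes at $x_i$ --- the weight contribution is zero regardless, so the formula is unaffected.
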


\begin{prop}[Instability coefficient at the algebraic point] \label{prop:InstabilityCoefficientAtTargetPoints} Let $\delta$ be a positive real number. Under the assumptions of Theorem \ref{thm:Semi-stabilityCondition} there exist a positive real number $\delta_0$ and a positive integer $\alpha_0$ (the two of them possibly depending on $n$, $q$, $r$, $t_{\bs{a}}$ and $t_x$) satisfying the following properties: for every one-parameter subgroup $\lambda : \Gm \to \SLs_2^n$, every integer $\alpha \ge \alpha_0$ and every real number $0 < \rho < \rho_0$ we have
$$\mu(\lambda, [K_{\alpha r}(a, t_{\bs{a}})]) \ge \sum_{i = 1}^n m_{\lambda, i}\left[\mu_{\alpha r, i}^\Z(u_{q,r}(t_{\bs{a}}) + \rho) - \alpha^{n+1} r_i (r_1 \cdots r_n)(\epsilon_{q, r} + \delta) \right].$$
\end{prop}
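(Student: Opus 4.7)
The plan is to apply Dyson's Lemma to relate $W := K_{q, \alpha r}(\bs{a}, t_{\bs{a}})$ to the single-point kernel $K := K_{\alpha r}(y_\lambda, u_{q, r}(t_{\bs{a}}) + \rho)$ at the instability point $y_\lambda$ of $\lambda$, and then to deduce the desired lower bound via the Grassmann and Inclusion formulas of Proposition \ref{prop:PropertiesInstabilityCoefficientLinearSubspaces}. Since $y_\lambda \in \P(K)$ is $K$-rational while, for each index $i$ with $m_{\lambda,i} > 0$, the conjugates $a_i^{(\sigma)}$ have degree $q \ge 2$ over $K$, one automatically has $y_{\lambda, i} \ne a_i^{(\sigma)}$ for every $\sigma$. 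At those indices $i$ with $m_{\lambda, i} = 0$, the coordinate $y_{\lambda, i}$ is not canonically determined and I simply pick any $K$-rational point in $\P^1(K) \setminus \{a_i^{(\sigma)}\}_\sigma$; this choice is harmless because the corresponding contributions to both sides of the claimed inequality are multiplied by $m_{\lambda,i} = 0$.

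Because $u_{q,r}(t_{\bs{a}}) + \rho > u_{q,r}(t_{\bs{a}})$, Proposition \ref{Prop:DimensionKernelAtMultiplePoints}(3) applied to the $(q+1)$-tuple $(a^{(1)}, \dots, a^{(q)}, y_\lambda)$ yields $W \cap K = 0$. Combining the Grassmann formula with the Inclusion formula applied to $W + K \subset V := \Gamma(\P, \O_\P(\alpha r))$, and using $\mu(\lambda, [V]) = 0$, I obtain
\[
\mu(\lambda, [W]) \ge p_{\lambda, \min}\bigl(\dim V - \dim(W + K)\bigr) - \mu(\lambda, [K]).
\]
Applying Proposition \ref{prop:InstabilityCoefficientAtSinglePoint} at the single point $y_\lambda$, with $\chi_{\lambda, i}(y_\lambda) = 1$ whenever $m_{\lambda, i} > 0$, the last term evaluates to
\[
-\mu(\lambda, [K]) = \sum_{i=1}^n m_{\lambda, i}\, \mu_{\alpha r, i}^\Z(u_{q, r}(t_{\bs{a}}) + \rho).
\]

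It remains to bound the error $p_{\lambda, \min}(\dim V - \dim(W + K))$ from below. By Proposition \ref{Prop:DimensionKernelAtMultiplePoints}(2), $\dim V - \dim W \le q\, \#\triangleup_{\alpha r}^\Z(t_{\bs{a}})$, and Proposition \ref{prop:DimensionKernelSinglePoint}(2) combined with the partition $\#\triangledown_{\alpha r}^\Z + \#\triangleup_{\alpha r}^\Z = \dim V$ gives $\dim V - \dim K = \#\triangleup_{\alpha r}^\Z(u_{q, r}(t_{\bs{a}}) + \rho)$, so
\[
\dim V - \dim(W + K) \le q\, \#\triangleup_{\alpha r}^\Z(t_{\bs{a}}) + \#\triangleup_{\alpha r}^\Z(u_{q, r}(t_{\bs{a}}) + \rho) - \dim V.
\]
Dividing by $\alpha^n (r_1 \cdots r_n)$ and letting $\alpha \to \infty$, the right-hand side converges to $(r_1 \cdots r_n)\bigl(q \vol \triangleup_n(t_{\bs{a}}) + \vol \triangleup_n(u_{q, r}(t_{\bs{a}}) + \rho) - 1\bigr)$; by the defining identity of $u_{q, r}$ (and the semi-stability assumption, which puts us in the regime $1 + \epsilon_{q, r} - q \vol \triangleup_n(t_{\bs{a}}) \in (0, 1)$) this tends to $(r_1 \cdots r_n)\, \epsilon_{q, r}$ as $\rho \to 0$. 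Using the bound $p_{\lambda, \min} \ge -\alpha \sum_i m_{\lambda, i} r_i$, the error becomes $-\sum_i m_{\lambda, i}\, \alpha^{n+1} r_i (r_1 \cdots r_n)(\epsilon_{q, r} + \eta)$ with $\eta = \eta(\rho, \alpha) \to 0$, and choosing $\rho_0, \alpha_0$ so that $\eta \le \delta$ yields the claim.

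The main technical point is uniformity in $\lambda$: both $p_{\lambda, \min}$ and the weights $m_{\lambda, i}$ depend on $\lambda$, so the error term must be organized as a linear combination $\sum_i m_{\lambda, i}\, c_i(\rho, \alpha)$ with coefficients $c_i$ independent of $\lambda$ before being absorbed into the right-hand side of the proposition. The structure of the statement exactly accommodates this, since the error $-\alpha^{n+1} r_i (r_1 \cdots r_n)(\epsilon_{q, r} + \delta)$ is already linear in $m_{\lambda, i}$ with $\lambda$-independent coefficients, so a single pair $(\rho_0, \alpha_0)$ depending only on $n, q, r, t_{\bs{a}}, t_x, \delta$ works for every $\lambda$.
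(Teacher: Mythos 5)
Your argument is correct and follows essentially the same route as the paper's own proof: there too the auxiliary point is taken to be the instability point $y_\lambda$, the triviality of $K_{q,\alpha r}(\bs{a}, t_{\bs{a}}) \cap K_{\alpha r}(y_\lambda, u_{q,r}(t_{\bs{a}})+\rho)$ comes from Proposition \ref{Prop:DimensionKernelAtMultiplePoints} (3) (hence from Dyson's Lemma), the Grassmann and inclusion formulas with $p_{\lambda,\min} = -\alpha\sum_i m_{\lambda,i} r_i$ give the same reduction (this is Lemma \ref{lem:InstabilityCoefficientAlgebraicPointGrassmannFormula}), the term $-\mu(\lambda,[K])$ is evaluated via Proposition \ref{prop:InstabilityCoefficientAtSinglePoint}, and the dimension defect is controlled asymptotically, uniformly in $\lambda$ and $\rho$, exactly as in Lemma \ref{lem:TechnicalConditionsOnAlpha}. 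The only blemish is the spurious factor $(r_1\cdots r_n)$ in your intermediate limit after dividing by $\alpha^n(r_1\cdots r_n)$, a harmless slip since your final error term $\alpha^{n+1} r_i (r_1\cdots r_n)(\epsilon_{q,r}+\delta)$ is correctly scaled.
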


Before showing Propositions \ref{prop:InstabilityCoefficientAtSinglePoint} and \ref{prop:InstabilityCoefficientAtTargetPoints} (whose proofs will be respectively expounded in paragraphs \ref{par:InstabilityCoefficientRationalPoint} and \ref{par:InstabilityCoefficientAlgebraicPoint}), we will show how to deduce Theorem \ref{thm:Semi-stabilityCondition}. 
\end{npar}

\begin{npar}{Proof of Theorem  \ref{thm:Semi-stabilityCondition}} \label{par:ProofOfSemiStabilityTheorem} According to the Hilbert-Mumford criterion (Theorem \ref{thm:HilbertMumford}) it suffices to show that there exists $\alpha_0$ such that for every $\alpha \ge \alpha_0$ and every one-parameter subgroup $\lambda : \Gm \to \SLs_2^n$ we have
$$ \mu(\lambda, P_{\alpha r}) = \mu(\lambda, [K_{\alpha r}(x, t_x)]) + \mu(\lambda, [K_{\alpha r}( a, t_{\bs{a}})]) \ge 0.$$

Let $\delta$ be a positive real number. Let $\alpha_0$, $\delta_0$ and $\rho_0$ given by Proposition \ref{prop:InstabilityCoefficientAtTargetPoints}. Up to increasing $\alpha_0$ and decreasing $\delta$ and $\rho_0$ we may assume that for every $i = 1, \dots, n$, every $\alpha \ge \alpha_0$ and every $0 < \rho < \rho_0$ we have:
\begin{equation} 
\tag{SS$'$} \label{eq:EndProofOfSemiStabilityTechnicalCondition1} \mu_{\alpha r, i}^\Z(u_{q, r}(t_{\bs{a}}) + \rho) >  \mu_{\alpha r, i}^\Z(t_x) +  \alpha^{n+1} r_i (r_1 \cdots r_n) (\epsilon_{q, r} + \delta), \\
 \end{equation}
Fix $\alpha \ge \alpha_0$ and $0 < \rho < \rho_0$. Propositions \ref{prop:InstabilityCoefficientAtSinglePoint} and \ref{prop:InstabilityCoefficientAtTargetPoints} yield that $\mu(\lambda, P_{\alpha r})$ is non-negative if:
$$ \sum_{i = 1}^n m_{\lambda, i} \left[  \mu_{\alpha r, i}^\Z(u_{q, r}(t_{\bs{a}}) + \rho)  - \alpha^{n+1} r_i (r_1 \cdots r_n) (\epsilon_{q, r} + \delta) + (-1)^{\chi_{\lambda, i}(x)} \mu_{\alpha r, i}^\Z(t_x) \right] \ge 0.$$
Since the integers $m_{\lambda, i}$ are supposed to be non-negative, this is satisfied according to \eqref{eq:EndProofOfSemiStabilityTechnicalCondition1}. This concludes the proof.
\qed
\end{npar}

\begin{npar}{Proof of Proposition \ref{prop:InstabilityCoefficientAtSinglePoint}} \label{par:InstabilityCoefficientRationalPoint} Let us consider adapted bases $T_{i0}, T_{i1}$ for $\lambda$ ($i = 1, \dots, n$). We suppose $\chi_{\lambda, i}(x) = 0$ for every $i = 1, \dots, n$ and we let the reader adapt the argument in the other cases.

Under this assumption there exists $\xi_i \in K$ such that $T_{i1} - \xi_i T_{i0}$ vanishes at $x_i$. According to Proposition \ref{prop:DimensionKernelSinglePoint} (1), a basis of the $K$-vector space $K_r(x, t_x)$ is given by polynomials of the form
$$ T(\ell) \df \bigotimes_{i = 1}^n T_{i0}^{r_i - \ell_i} (T_{i1} - \xi_i T_{i0})^{\ell_i}.$$
where $\ell = (\ell_1, \dots, \ell_n) \in \triangledown_r(t_x)$. The action of the one-parameter subgroup $\lambda$ is given by 
$$
\lambda_i(\tau) \ast T(\ell) = \bigotimes_{i = 1}^n \tau^{m_{\lambda, i}(r_i - 2 \ell_i)} \left(T_{i0}^{r_i - \ell_i} (T_{i1} - \tau^{2 m_{\lambda, i}} \xi_i T_{i0})^{\ell_i} \right)
$$
Since the integers $m_{\lambda, i}$ are supposed to be non-negative, the component of $T(\ell)$ of minimal weight is the polynomial multiplied by $\sum_{i = 1}^n m_{\lambda, i}(r_i - 2 \ell_i)$, that is
$$ T(\ell)_{\min} = \bigotimes_{i = 1}^n T_{i0}^{r_i - \ell_i} T_{i1}^{\ell_i}. $$
Since the elements $T(\ell)_{\min}$ for $\ell \in \triangledown_r^\Z(t_x)$ are linearly independent, Proposition \ref{prop:BasicPropertiesInstabilityCoefficientLinearSubspaces} (2) yields
$$ \mu(\lambda, [K_r(x, t_x)]) = \sum_{i = 1}^n \sum_{\ell \in \triangledown_r^\Z(t_x)} m_{\lambda, i}(2 \ell_i - r_i) = \sum_{i =1}^n m_{\lambda, i} \mu_{r, i}^\Z(t_x),$$
which concludes the proof. \qed
\end{npar}

\begin{npar}{Proof of Proposition \ref{prop:InstabilityCoefficientAtTargetPoints}} \label{par:InstabilityCoefficientAlgebraicPoint}

\begin{lem} \label{lem:InstabilityCoefficientAlgebraicPointGrassmannFormula} Let $\lambda : \Gm \to \SLs_2^n$ be a one-parameter subgroup. Let us suppose moreover $u_{q,r}(t_{\bs{a}}) \neq 0, n$\footnote{If $u_{q,r}(t_{\bs{a}}) \in \{0, n\}$ then Condition \eqref{eq:SemiStabilityConditionMainTheorem} in Theorem \ref{thm:Semi-stabilityCondition} is not satisfied because $\mu_n(0) = \mu_n(n) = 0$.} and therefore $\vol \triangleup_n(u_{q, r}(t_{\bs{a}})) = 1 + \epsilon_{q, r} - q \vol \triangleup_n(t_{\bs{a}})$ by Definition \ref{def:DefinitionOfMoreCombinatorics} (4).  Let $a^{(0)}$ be a $K$-rational point of $\P^1$. For every $\rho > 0$ we have:
\begin{multline*}
\mu(\lambda, [K_{q,r}(\bs{a}, t_{\bs{a}})]) + \mu(\lambda, [K_r(a^{(0)}, u_{q,r}(t_{\bs{a}}) + \rho)]) \\ \ge  (k_{q, r}(t_{\bs{a}}) + k_r(u_{q,r}(t_{\bs{a}}) + \rho) - \dim \Gamma(\P, \O_\P(r)) ) \left( \sum_{i = 1}^n m_{\lambda, i}r_i \right). \end{multline*}
\end{lem}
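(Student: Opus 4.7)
The plan is to combine the Grassmann formula from Proposition \ref{prop:PropertiesInstabilityCoefficientLinearSubspaces}(2) and the inclusion formula from Proposition \ref{prop:PropertiesInstabilityCoefficientLinearSubspaces}(1) with the Dyson-type intersection vanishing provided by Proposition \ref{Prop:DimensionKernelAtMultiplePoints}(3). Setting $W_1 \df K_{q, r}(\bs{a}, t_{\bs{a}})$ and $W_2 \df K_r(a^{(0)}, u_{q, r}(t_{\bs{a}}) + \rho)$, the Grassmann formula gives
\[ \mu(\lambda, [W_1]) + \mu(\lambda, [W_2]) \ge \mu(\lambda, [W_1 + W_2]) + \mu(\lambda, [W_1 \cap W_2]). \]
First I will observe that $W_1 \cap W_2 = 0$: since $\rho > 0$ forces $u_{q, r}(t_{\bs{a}}) + \rho > u_{q, r}(t_{\bs{a}})$ and $a^{(0)}$ may be taken with projections distinct from those of all $a^{(\sigma)}$'s, this is precisely Proposition \ref{Prop:DimensionKernelAtMultiplePoints}(3). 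Hence $\mu(\lambda, [W_1 \cap W_2]) = 0$ and $\dim_K(W_1+W_2) = k_{q, r}(t_{\bs{a}}) + k_r(u_{q, r}(t_{\bs{a}})+\rho)$.

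Next I will bound $\mu(\lambda, [W_1 + W_2])$ via the inclusion $W_1 + W_2 \subset V \df \Gamma(\P, \O_\P(r))$ using the inclusion formula. Two character-theoretic inputs are needed for the ambient representation $V = \bigotimes_{i=1}^n \Sym^{r_i} K^{2\vee}$. Using adapted bases $T_{i0}, T_{i1}$ of $K^{2\vee}$ for $\lambda$ on each factor (with weights $\pm m_{\lambda, i}$), the monomials $\bigotimes_i T_{i0}^{r_i - \ell_i} T_{i1}^{\ell_i}$ diagonalise the action, with weight $\sum_i (r_i - 2 \ell_i) m_{\lambda, i}$. Since $m_{\lambda, i} \ge 0$, this weight is minimised at $\ell_i = r_i$ for all $i$, giving $p_{\lambda, \min} = - \sum_i r_i m_{\lambda, i}$. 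Moreover Proposition \ref{prop:BasicPropertiesInstabilityCoefficientLinearSubspaces}(1) identifies $\mu(\lambda, [V])$ with the sum of all weights of the representation, and this vanishes because the weights on each $\Sym^{r_i} K^{2\vee}$ are symmetric about zero --- a consequence of $\lambda$ factoring through $\SLs_2^n$ rather than $\GLs_2^n$.

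Plugging these two pieces of data into the inclusion formula gives
\[
\mu(\lambda, [W_1+W_2]) \ge -p_{\lambda, \min}\bigl( \dim_K(W_1+W_2) - \dim_K V \bigr) = \Bigl( \sum_{i=1}^n r_i m_{\lambda, i} \Bigr)\bigl( k_{q, r}(t_{\bs{a}}) + k_r(u_{q, r}(t_{\bs{a}})+\rho) - \dim_K \Gamma(\P, \O_\P(r)) \bigr),
\]
and combining with the Grassmann inequality and the vanishing $\mu(\lambda, [W_1 \cap W_2]) = 0$ yields the statement. The argument is essentially a formal manipulation once the Dyson-type vanishing of the intersection is in hand; the main thing to watch is the sign bookkeeping, noting that $p_{\lambda, \min} \le 0$ while $\dim_K(W_1+W_2) \le \dim_K V$, so the right-hand side of the inclusion formula produces precisely the (non-positive) quantity with coefficient $\sum_i r_i m_{\lambda, i}$ demanded by the lemma.
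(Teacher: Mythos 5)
Your argument is correct and is essentially the paper's own proof: the intersection $K_{q,r}(\bs{a}, t_{\bs{a}}) \cap K_r(a^{(0)}, u_{q,r}(t_{\bs{a}}) + \rho)$ vanishes by Proposition \ref{Prop:DimensionKernelAtMultiplePoints} (3), the Grassmann formula reduces everything to $\mu(\lambda, [K_{q,r}(\bs{a}, t_{\bs{a}}) + K_r(a^{(0)}, u_{q,r}(t_{\bs{a}}) + \rho)])$, and the inclusion formula for the inclusion into $\Gamma(\P, \O_\P(r))$ together with $p_{\lambda, \min} = -\sum_{i=1}^n m_{\lambda, i} r_i$ and $\mu(\lambda, [\Gamma(\P, \O_\P(r))]) = 0$ gives the stated bound, exactly as in the paper. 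The only precision worth making is that the hypothesis $\pr_i(a^{(0)}) \neq \pr_i(a^{(\sigma)})$ is not something one may ``take'': the lemma concerns an arbitrary $K$-rational point $a^{(0)}$ (in the application it is the instability point of $\lambda$, hence not free), and the distinctness is automatic because each $a_i^{(\sigma)}$ generates $K'$ of degree $q \ge 2$ over $K$ while $\pr_i(a^{(0)})$ is $K$-rational, which is the observation the paper makes at this step.
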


\begin{proof} Since the point $a^{(0)}$ is $K$-rational we have $\pr_i(a^{(0)}) \neq \pr_i(a^{(\sigma)})$ for every $i = 1, \dots, n$ and every $\sigma : K' \to \ol{\Q}$. According to Proposition \ref{Prop:DimensionKernelAtMultiplePoints} (3) the intersection $$K_{q,r}(\bs{a}, t_{\bs{a}}) \cap K_r(a^{(0)}, u_{q,r}(t_{\bs{a}}) + \rho)$$ is zero (we may apply it over $\ol{\Q}$ to deduce it over $K$). Grassmann's formula for instability coefficients (Proposition \ref{prop:PropertiesInstabilityCoefficientLinearSubspaces} \ref{eq:GrassmannFormulaInstabilityCoefficient}) applied to the subspaces $K_{q,r}(\bs{a}, t_{\bs{a}})$ and $K_r(a^{(0)}, u_{q,r}(t_{\bs{a}}) + \rho)$ yields:
\begin{multline*} \mu(\lambda, [K_{q,r}(\bs{a}, t_{\bs{a}})]) + \mu(\lambda, [K_r(a^{(0)}, u_{q,r}(t_{\bs{a}}) + \rho)]) \\ \ge \mu(\lambda, [K_{q,r}(\bs{a}, t_{\bs{a}}) + K_r(a^{(0)}, u_{q,r}(t_{\bs{a}}) + \rho)])
\end{multline*}

With the notations introduced in paragraph \ref{par:InstabilityCoefficientLinearSubspaces}, the smallest integer $b$ such that the vector space $\Gamma(\P, \O_\P(r))_b$ is non-zero is $ -  \sum_{i = 1}^n m_{\lambda, i}r_i$ (it occurs only for the monomial $T_{10}^{r_1} \otimes \cdots \otimes T_{n0}^{r_n}$). The inclusion formula (Proposition \ref{prop:PropertiesInstabilityCoefficientLinearSubspaces} \ref{eq:InclusionFormulaInstabilityCoefficient}) applied to the inclusion
$$K_{q,r}(\bs{a}, t_{\bs{a}}) + K_r(y, u_{q,r}(t_{\bs{a}}) + \rho) \subset \Gamma(\P, \O_\P(r)) $$ gives the result.
\end{proof}

\begin{lem} \label{lem:TechnicalConditionsOnAlpha} Let $\delta$ be a positive real number. Under the assumption of Theorem \ref{thm:Semi-stabilityCondition} there exist a positive real number $\rho_0$ and a positive integer $\alpha_0$ (the two of them possibly depending on $n$, $d$, $r$, $t_{\bs{a}}$ and $t_x$) such that, for every integer $\alpha \ge \alpha_0$ and every real number $0  < \rho < \rho_0$ we have:
\begin{enumerate}[(1)]
\item$\displaystyle \left| \frac{\dim_K \Gamma(\P, \O_\P(\alpha r))}{\alpha^n (r_1 \cdots r_n)} - 1\right| < \frac{\delta}{3};$

\item \vspace{7pt} $ \displaystyle \left| \frac{\# \triangledown^\Z_{\alpha r}(u_{q, r}(t_{\bs{a}}) + \rho)}{\alpha^n (r_1 \cdots r_n)} - (q \vol \triangleup_n(t_{\bs{a}}) - \epsilon_{q, r}) \right|  < \frac{\delta}{3}$;

\item \vspace{7pt}$ \displaystyle \frac{k_{q, \alpha r}( t_{\bs{a}})}{\alpha^n (r_1 \cdots r_n)} > (1 - q \vol \triangleup_n(t_{\bs{a}})) - \frac{\delta}{3}.$
\end{enumerate}
\end{lem}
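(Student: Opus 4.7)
The plan is to treat each of the three estimates essentially independently and then take the maximum (respectively the minimum) of the parameters $\alpha_0$ and $\rho_0$ produced in each step.

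For (1), I would use that on a product of projective lines we have the exact formula $\dim_K \Gamma(\P, \O_\P(\alpha r)) = \prod_{i=1}^n (\alpha r_i + 1)$, so the ratio under consideration equals $\prod_{i = 1}^n (1 + 1/(\alpha r_i))$. Since $r_1, \dots, r_n \ge 1$ are fixed, this product tends to $1$ as $\alpha \to +\infty$, and one chooses $\alpha_0^{(1)}$ so that the deviation from $1$ is less than $\delta/3$ as soon as $\alpha \ge \alpha_0^{(1)}$.

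For (2), the first point is to simplify the right-hand side. Under the hypotheses of Theorem \ref{thm:Semi-stabilityCondition} we have $\mu_n(u_{q,r}(t_{\bs{a}})) > \epsilon_{q,r} \ge 0$, so by Lemma \ref{lem:PropertiesFunctionMu} we must have $u_{q,r}(t_{\bs{a}}) \in \left]0, n\right[$, which in view of Definition \ref{def:DefinitionOfMoreCombinatorics} (4) forces the identity
\[ \vol \triangleup_n(u_{q,r}(t_{\bs{a}})) = 1 + \epsilon_{q,r} - q \vol \triangleup_n(t_{\bs{a}}). \]
Complementing in $\square_n$, this rewrites as $\vol \triangledown_n(u_{q,r}(t_{\bs{a}})) = q \vol \triangleup_n(t_{\bs{a}}) - \epsilon_{q,r}$, which is precisely the target constant. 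The map $t \mapsto \vol \triangledown_n(t)$ being continuous, I would choose $\rho_0 > 0$ small enough so that
\[ \bigl| \vol \triangledown_n(u_{q,r}(t_{\bs{a}}) + \rho) - \vol \triangledown_n(u_{q,r}(t_{\bs{a}})) \bigr| < \delta/6 \]
for every $0 < \rho < \rho_0$. Then, fixing any such $\rho$, the standard Riemann-sum argument (which is the content of Proposition \ref{prop:DimensionKernelSinglePoint} (3) applied to a single auxiliary point in the appropriate indexing) gives
\[ \lim_{\alpha \to +\infty} \frac{\# \triangledown^\Z_{\alpha r}(u_{q,r}(t_{\bs{a}}) + \rho)}{\alpha^n (r_1 \cdots r_n)} = \vol \triangledown_n(u_{q,r}(t_{\bs{a}}) + \rho), \]
so one takes $\alpha_0^{(2)} = \alpha_0^{(2)}(\rho)$ large enough that the discrete quantity is within $\delta/6$ of the volume; the triangle inequality then yields the desired $\delta/3$-bound.

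For (3), this is essentially a restatement of Proposition \ref{Prop:DimensionKernelAtMultiplePoints} (2): since $k_{q, \alpha r}(t_{\bs{a}}) = \dim_K K_{q, \alpha r}(\bs{a}, t_{\bs{a}})$, that proposition provides
\[ \liminf_{\alpha \to +\infty} \frac{k_{q, \alpha r}( t_{\bs{a}})}{\alpha^n (r_1 \cdots r_n)} \ge 1 - q \vol \triangleup_n(t_{\bs{a}}), \]
and one extracts an $\alpha_0^{(3)}$ so that the inequality with slack $\delta/3$ holds for all $\alpha \ge \alpha_0^{(3)}$.

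Finally, I would set $\alpha_0 \df \max\{ \alpha_0^{(1)}, \alpha_0^{(2)}(\rho_0/2), \alpha_0^{(3)}\}$ and keep the same $\rho_0$; any $\alpha \ge \alpha_0$ and $0 < \rho < \rho_0$ then satisfy (1), (2), (3) simultaneously. There is no real obstacle here; the only point requiring a modicum of care is the reduction in step (2) of the target constant to $\vol \triangledown_n(u_{q,r}(t_{\bs{a}}))$, which relies on the $\mu_n$-strict inequality of Theorem \ref{thm:Semi-stabilityCondition} to rule out the degenerate cases $u_{q,r}(t_{\bs{a}}) \in \{0, n\}$ of Definition \ref{def:DefinitionOfMoreCombinatorics} (4).
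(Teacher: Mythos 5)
Your overall strategy is the same as the paper's: the published proof consists of exactly the observations you spell out ((1) is the binomial count on $(\P^1)^n$; (2) "follows from the definition of $u_{q,r}(t_{\bs{a}})$", i.e. from ruling out the clamping in Definition \ref{def:DefinitionOfMoreCombinatorics} (4); (3) is Proposition \ref{Prop:DimensionKernelAtMultiplePoints} (2)). Your identification of the constant in (2), via $\mu_n(u_{q,r}(t_{\bs{a}})) > \mu_n(t_x) + \epsilon_{q,r} \ge 0$, hence $u_{q,r}(t_{\bs{a}}) \in \left]0,n\right[$ and $\vol \triangledown_n(u_{q,r}(t_{\bs{a}})) = q \vol \triangleup_n(t_{\bs{a}}) - \epsilon_{q,r}$, is precisely the paper's implicit reasoning (compare the footnote to Lemma \ref{lem:InstabilityCoefficientAlgebraicPointGrassmannFormula}); citing Proposition \ref{prop:DimensionKernelSinglePoint} (3) instead of (2) for the count of $\triangledown^\Z_{\alpha r}$ is immaterial, the two being complementary in $\square^\Z_{\alpha r}$.

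The one flaw is in the final assembly, and it is a quantifier issue: the lemma demands a single $\alpha_0$ valid \emph{for every} $\rho \in \left]0,\rho_0\right[$, whereas your $\alpha_0^{(2)}(\rho_0/2)$ comes from the convergence of $\# \triangledown^\Z_{\alpha r}(u_{q,r}(t_{\bs{a}})+\rho)/(\alpha^n r_1\cdots r_n)$ at the single value $\rho = \rho_0/2$, so estimate (2) is only guaranteed at that one $\rho$, not on all of $\left]0,\rho_0\right[$. The repair stays entirely inside your argument: writing $u = u_{q,r}(t_{\bs{a}})$, the map $t \mapsto \# \triangledown^\Z_{\alpha r}(t)$ is non-increasing, so for $0 < \rho < \rho_0$ one has $\# \triangledown^\Z_{\alpha r}(u+\rho_0) \le \# \triangledown^\Z_{\alpha r}(u+\rho) \le \# \triangledown^\Z_{\alpha r}(u)$; applying the Riemann-sum limit only at the two \emph{fixed} thresholds $u$ and $u+\rho_0$, and choosing $\rho_0$ so that $\vol \triangledown_n(u+\rho_0) \ge \vol \triangledown_n(u) - \delta/6$, yields the two-sided $\delta/3$-bound uniformly in $\rho \in \left]0,\rho_0\right[$ once $\alpha$ is large. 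With this sandwich replacing the pointwise choice $\alpha_0^{(2)}(\rho)$, your proof is complete and fills in exactly what the paper leaves unsaid.
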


\begin{proof} (1) is clear; (2) follows from the definition of $u_{q,r}(t_{\bs{a}})$; (3) follows from Proposition \ref{Prop:DimensionKernelAtMultiplePoints} (2).
\end{proof}

\begin{proof}[Proof of Proposition \ref{prop:InstabilityCoefficientAtTargetPoints}] Let $\delta$ be a positive real number and let $\alpha_0$ and $\rho_0$ given by Lemma \ref{lem:TechnicalConditionsOnAlpha}. Let us take an integer $\alpha \ge \alpha_0$ and a real number $0 < \rho < \rho_0$.

Let $a^{(0)} \in \P(K)$ be the unique point such that $\chi_{\lambda, i}(a^{(0)}) = 1$ for every $i = 1, \dots, n$. Applying Proposition \ref{prop:InstabilityCoefficientAtSinglePoint} to the point $a^{(0)}$ we get : 
$$ \mu(\lambda, [K_{\alpha r}(a^{(0)}, u_{q, r}(t_{\bs{a}}) + \rho)]) =  - \sum_{i = 1}^n m_{\lambda, i} \mu_{\alpha r, i}^\Z(u_{q, r}(t_{\bs{a}}) + \rho). $$
According to Lemma \ref{lem:TechnicalConditionsOnAlpha} and Proposition \ref{prop:DimensionKernelSinglePoint} (2) we have:
$$k_{q, \alpha r}( t_{\bs{a}}) + k_{\alpha r}(u_{q, r}(t_{\bs{a}}) + \rho) - \dim \Gamma(\P, \O_\P(\alpha r))  \ge - \alpha^{n} (r_1 \cdots r_n) (\epsilon_{q, r} + \delta) ,$$
Therefore Lemma \ref{lem:InstabilityCoefficientAlgebraicPointGrassmannFormula} yields:
$$
\mu(\lambda, [K_{\alpha r}(a, t_{\bs{a}})]) 
\ge \sum_{i = 1}^n m_{\lambda, i} \left[ \mu_{\alpha r, i}^\Z(u_{q, r}(t_{\bs{a}}) + \rho)) - \alpha^{n + 1} r_i (r_1 \cdots r_n) (\epsilon_{q, r} + \delta) \right],
$$
which concludes the proof.
\end{proof}

\end{npar}

\subsection{Semi-stability in the two-dimensional case}

In this section we are going to prove semi-stability in the case $n = 2$ in a slightly different way. In this section let us write
$$ \epsilon_{q, r} = (q - 1) \frac{\min\{ r_1, r_2\}}{\max\{r_1, r_2\}}.$$
When $r_1 \ge r_2$ this coincides with previous definition.

\begin{np} The semi-stability statement we prove is the following one:

\begin{theorem} \label{thm:Semi-stabilityConditionN=2} Let $r = (r_1, r_2)$ be a couple of positive integers such that $r_1 \ge r_2$. Let $t_x, t_{\bs{a}} \ge 0$ be non-negative real numbers such that $ 0 \le 1 - q \vol \triangleup_2(t_{\bs{a}}) + \epsilon_{q+1, r} \le 1/2$. If the inequality
\begin{equation} \label{eq:Semi-stabilityConditionN=2}
\mu_2(t_x) < \left(1 - q \vol \triangleup_2(t_{\bs{a}}) \right) \left(1 - 2 \sqrt{2 \left( 1 - q \vol \triangleup_2(t_{\bs{a}}) + \epsilon_{q+1, r} \right)} \right) 
\end{equation}
is satisfied then there exists a positive integer $\alpha_0 = \alpha_0(q, r, t_{\bs{a}}, t_x)$ such that, for every integer $\alpha \ge \alpha_0$, the $K$-point $P_{\alpha r} \in \cal{X}_{\alpha r} (K)$ is semi-stable under the action of $\SLs_2^2$ with respect to the polarization given by the Pl\"ucker embeddings.
\end{theorem}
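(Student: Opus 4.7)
The strategy mirrors the argument of Theorem \ref{thm:Semi-stabilityCondition} given in Section \ref{sec:ProofOfSemiStabilityTheorem}, replacing the use of the Higher Dimensional Dyson's Lemma (Theorem \ref{thm:DysonsLemma}) by a two-dimensional Wronskian construction that accommodates one extra point of index. By the Hilbert--Mumford criterion (Theorem \ref{thm:HilbertMumford}) it suffices, for $\alpha$ large enough, to check that $\mu(\lambda, P_{\alpha r}) \ge 0$ for every one-parameter subgroup $\lambda : \Gm \to \SLs_2^2$. The rational-point contribution $\mu(\lambda, [K_{\alpha r}(x, t_x)])$ is again computed by Proposition \ref{prop:InstabilityCoefficientAtSinglePoint} with no change, so the real task is to bound $\mu(\lambda, [K_{q, \alpha r}(\bs{a}, t_{\bs{a}})])$ from below by a quantity involving $\epsilon_{q+1, r}$ instead of $\epsilon_{q, r}$.

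The key input I would use is the classical \emph{Dyson--Gel'fond inequality} for bi-homogeneous forms on $\P^1\times\P^1$: if $f \in \Gamma(\mathbf{P}, \O_{\mathbf{P}}(r))$ is non-zero and $z^{(0)}, \dots, z^{(q)}$ are $q+1$ points of $\P^1 \times \P^1$ with pairwise distinct coordinates, then
$$ \sum_{\sigma = 0}^{q} \vol \triangleup_2\bigl(\ind_{1/r}(f, z^{(\sigma)})\bigr) \le 1 + q \frac{r_2}{r_1} = 1 + \epsilon_{q+1, r}. $$
This is established by forming the Wronskian of a suitable basis of a subspace of $\Gamma(\mathbf{P}, \O_{\mathbf{P}}(r))$ with respect to a transverse pencil, controlling its bi-degree, and lower-bounding its order of vanishing at each of the $q+1$ points. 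The extra factor $r_2/r_1$ is precisely the gain coming from the second-variable derivatives, which raise the second bi-degree by $r_2$; this is what leaves room for one more point than Theorem \ref{thm:DysonsLemma} in dimension two.

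Let $a^{(0)} \in \mathbf{P}(K)$ be the instability point of $\lambda$ defined as in \ref{par:ComputationInstabilityCoefficients}. Applied to the $q+1$ points $a^{(0)}, a^{(1)}, \dots, a^{(q)}$, the inequality above shows that the intersection $K_{q, \alpha r}(\bs{a}, t_{\bs{a}}) \cap K_{\alpha r}(a^{(0)}, u + \rho)$ vanishes as soon as $\vol \triangleup_2(u) > 1 + \epsilon_{q+1, r} - q \vol \triangleup_2(t_{\bs{a}})$ and $\rho > 0$ is small. Applying Grassmann's formula for instability coefficients (Proposition \ref{prop:PropertiesInstabilityCoefficientLinearSubspaces} \ref{eq:GrassmannFormulaInstabilityCoefficient}) and the inclusion formula exactly as in Lemma \ref{lem:InstabilityCoefficientAlgebraicPointGrassmannFormula} then yields, for any $\delta > 0$ and any sufficiently large $\alpha$,
$$ \mu(\lambda, [K_{q, \alpha r}(\bs{a}, t_{\bs{a}})]) \ge \sum_{i=1}^2 m_{\lambda, i} \bigl[ \mu_{\alpha r, i}^\Z(u + \rho) - \alpha^{3} r_i (r_1 r_2)(\epsilon_{q+1, r} + \delta) \bigr]. $$
Combining this with Proposition \ref{prop:InstabilityCoefficientAtSinglePoint} reduces, as in \ref{par:ProofOfSemiStabilityTheorem}, the non-negativity of $\mu(\lambda, P_{\alpha r})$ to a numerical inequality of the form $\mu_2(u) > \mu_2(t_x) + \epsilon_{q+1, r}$, with $u$ determined by $\vol \triangleup_2(u) = 1 - q \vol \triangleup_2(t_{\bs{a}}) + \epsilon_{q+1, r}$. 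Unfolding the explicit expression $\mu_2(u) = \frac{u^2}{2}\bigl(1 - \frac{2u}{3}\bigr)$ from Lemma \ref{lem:PropertiesFunctionMu}(1) together with $u = \sqrt{2(1 - q\vol\triangle_2(t_{\bs{a}}) + \epsilon_{q+1, r})}$ (which lies in $[0,1]$ thanks to the hypothesis $\delta + \epsilon_{q+1, r} \le 1/2$) and a mild coarsening then produces the right-hand side of \eqref{eq:Semi-stabilityConditionN=2}.

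The main obstacle is pinning down the sharp Wronskian inequality with the correct constant $\epsilon_{q+1, r} = q\,r_2/r_1$: although classical, care must be taken to isolate the contribution of the $(q+1)$-th point so that the coefficient of $r_2/r_1$ is $q$ and not $q+1$, which is what distinguishes Theorem \ref{thm:Semi-stabilityConditionN=2} from the specialization of Theorem \ref{thm:Semi-stabilityCondition} to $n = 2$. The remainder of the argument is an exact transcription of paragraphs \ref{par:ProofOfSemiStabilityTheorem}--\ref{par:InstabilityCoefficientAlgebraicPoint}, with Lemma \ref{lem:TechnicalConditionsOnAlpha} adapted to the new value of $u$.
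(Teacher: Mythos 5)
Your route is genuinely different from the one the paper uses for Theorem \ref{thm:Semi-stabilityConditionN=2}, and most of it is sound. You transport the subspace-level argument of Section \ref{sec:ProofOfSemiStabilityTheorem} to $n=2$: Grassmann plus inclusion formula against the auxiliary space $K_{\alpha r}(a^{(0)}, u+\rho)$ at the instability point, whose instability coefficient Proposition \ref{prop:InstabilityCoefficientAtSinglePoint} computes exactly, with the Esnault--Viehweg--Nakamaye input replaced by a single-weight, $(q+1)$-point Dyson inequality with error $\epsilon_{q+1,r}=q\,r_2/r_1$. That input is legitimate: it is precisely the case $b=1/r$ of Proposition \ref{prop:AlmostDysonsLemmaInstabilityCoefficient}, and it is even weaker than Theorem \ref{thm:DysonsLemma} for $n=2$, which gives $(q-1)r_2/r_1$ for the same $q+1$ points. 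The paper's own proof is organized differently: it never forms the sum $K_{q,r}(\bs{a}, t_{\bs{a}})+K_r(y_\lambda,\cdot)$, but bounds $\mu(\lambda,[f])$ for each member of a $\lambda$-adapted basis via Proposition \ref{prop:ComparingInstabilityCoefficientAndIndex} and a Dyson-type lemma with \emph{two different weights} (Theorem \ref{thm:DysonsLemmaTwoWeights}, proved through the Wronskian of the tensorial-rank decomposition), the two weights being forced because the index at $y_\lambda$ is taken with respect to $m_\lambda$ rather than $1/r$; summing over the basis (Proposition \ref{prop:BasicPropertiesInstabilityCoefficientLinearSubspaces} (2)--(3)) produces exactly the factorized bound in \eqref{eq:Semi-stabilityConditionN=2}. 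Your subspace-level argument neatly sidesteps the two-weight issue, which is the main technical burden of the paper's section.

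The gap is in your last step. Your argument proves semi-stability under $\mu_2(t_x)+\epsilon_{q+1,r}<\mu_2(u)$ with $\vol\triangleup_2(u)=1-q\vol\triangleup_2(t_{\bs{a}})+\epsilon_{q+1,r}$. Writing $\delta'=1-q\vol\triangleup_2(t_{\bs{a}})$ and $u=\sqrt{2(\delta'+\epsilon_{q+1,r})}$, Lemma \ref{lem:PropertiesFunctionMu} (1) gives $\mu_2(u)-\epsilon_{q+1,r}=\delta'-u^3/3$, whereas the right-hand side of \eqref{eq:Semi-stabilityConditionN=2} is $\delta'-2\delta' u$. Now $\delta'-u^3/3\ge\delta'(1-2u)$ if and only if $u^2\le 6\delta'$, i.e. $\epsilon_{q+1,r}\le 2\delta'$, and this is not implied by the hypotheses of the theorem. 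So when $q\,r_2/r_1>2\left(1-q\vol\triangleup_2(t_{\bs{a}})\right)$ (for instance $\delta'$ very small while $r_1/r_2$ is only moderately large) there are admissible $t_x$ satisfying \eqref{eq:Semi-stabilityConditionN=2} for which your condition fails; the ``mild coarsening'' you invoke does not exist, and what you actually prove is the variant of the theorem with right-hand side $\left(1-q\vol\triangleup_2(t_{\bs{a}})\right)-\tfrac{1}{3}\bigl(2(1-q\vol\triangleup_2(t_{\bs{a}})+\epsilon_{q+1,r})\bigr)^{3/2}$. In the regime relevant to the Main Effective Lower Bound, where $\epsilon_{q+1,r}$ is small compared with $\delta'$, your threshold is in fact the larger one, so your argument suffices for the application and is there sharper than \eqref{eq:Semi-stabilityConditionN=2}; but it does not establish Theorem \ref{thm:Semi-stabilityConditionN=2} over its full stated range, for which you would either need to add the restriction $\epsilon_{q+1,r}\le 2\left(1-q\vol\triangleup_2(t_{\bs{a}})\right)$ or switch to the paper's per-polynomial estimate.
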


In particular, given $0 < \delta < 1$ we can apply it with
\begin{itemize}
\item $\displaystyle t_{\bs{a}} = t_{q, 2}(\delta) = \sqrt{\frac{2}{q} (1 - \delta)}$;
\item  $t_x$ that tends to the unique real number $w \in [1, 2]$ such that $$ \mu_2(w) = \delta \left(1 - 2 \sqrt{2 \left( \delta + \epsilon_{q+1, r} \right)} \right).$$
\end{itemize}
This is enough to derive the Main Effective Lower Bound in the case $n = 2$.

\begin{prop}[Instability coefficient at the algebraic point] \label{prop:InstabilityCoefficientAtTargetPointsN=2} Let $\delta$ be a positive real number. Under the assumptions of Theorem \ref{thm:Semi-stabilityConditionN=2} there exists a positive integer $\alpha_0$ (possibly depending on $q$, $r$, $t_{\bs{a}}$ and $t_x$) satisfying the following properties: for every one-parameter subgroup $\lambda : \Gm \to \SLs_{2,K}^2$ and every integer $\alpha \ge \alpha_0$:
\begin{multline*}
\mu(\lambda, [K_{q, \alpha r}(\bs{a}, t_{\bs{a}})]) \ge \alpha^3 r_1 r_2 \langle m_\lambda, r \rangle \left(1 - q \vol \triangleup_2(t_{\bs{a}}) - \delta \right) \times \\ \times
\left(1 - 2 \sqrt{2 \left( 1 - q \vol \triangleup_2(t_{\bs{a}}) + \epsilon_{q+1, r} \right)} \right) ,
\end{multline*}
where $\langle -, - \rangle$ denotes the standard scalar product on $\R^2$.
\end{prop}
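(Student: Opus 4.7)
The strategy parallels the proof of Proposition~\ref{prop:InstabilityCoefficientAtTargetPoints} in arbitrary dimension, but exploits the two-dimensional setting to obtain the sharper square-root factor appearing in the statement. First, fix bases $T_{i0}, T_{i1}$ of $K^{2\vee}$ adapted to the one-parameter subgroup $\lambda$ (with weights $m_\lambda = (m_{\lambda,1}, m_{\lambda,2})$), and let $a^{(0)}$ be the associated instability point. Applying Proposition~\ref{prop:BasicPropertiesInstabilityCoefficientLinearSubspaces}(3), I would select a basis $\{f_j\}_{j=1}^k$ of $W \df K_{q, \alpha r}(\bs{a}, t_{\bs{a}})$ (with $k \df k_{q, \alpha r}(t_{\bs{a}})$) whose components of minimal $\lambda$-weight correspond to pairwise distinct monomial indices $\ell^{(j)} \in \square_{\alpha r}^\Z$, each $\ell^{(j)}$ maximising $m_\lambda \cdot \ell$ over the support of $f_j$. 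Proposition~\ref{prop:BasicPropertiesInstabilityCoefficientLinearSubspaces}(2) then yields
\[
\mu(\lambda, [W]) \;=\; 2 \sum_{j=1}^k m_\lambda \cdot \ell^{(j)} \;-\; k \, \alpha \langle m_\lambda, r \rangle,
\]
so the task reduces to bounding $\sum_j m_\lambda \cdot \ell^{(j)}$ from below.

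The key analytical input is the Higher Dimensional Dyson's Lemma (Theorem~\ref{thm:DysonsLemma}) applied to the $q+1$ points $a^{(0)}, a^{(1)}, \dots, a^{(q)}$. For each threshold $s \ge 0$, let $\tilde V_s$ denote the span of monomials $T^\ell$ with $m_\lambda \cdot \ell \le s$. A polynomial $f \in W \cap \tilde V_s$ has support inside $\{m_\lambda \cdot \ell \le s\}$, and an elementary vertex-optimisation then yields an explicit lower bound on its $(1/\alpha r)$-weighted index at $a^{(0)}$; combined with the hypothesis $\ind_{1/\alpha r}(f, a^{(\sigma)}) \ge t_{\bs{a}}$ for each conjugate, Dyson's Lemma forces an upper bound on $\dim(W \cap \tilde V_s)$, i.e.\ on the number of $j$'s with $m_\lambda \cdot \ell^{(j)} \le s$. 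Intuitively, the indices $\ell^{(j)}$ cannot cluster too heavily near the ``low-$m_\lambda$-weight corner'' without forcing too large an index at $a^{(0)}$, in violation of Dyson.

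Aggregating these stratum-wise bounds and passing to the limit $\alpha \to \infty$ via the dimension counts in Propositions~\ref{prop:DimensionKernelSinglePoint} and~\ref{Prop:DimensionKernelAtMultiplePoints} produces the claimed asymptotic lower bound on $\mu(\lambda, [W])$. The main obstacle is precisely this aggregation step: extracting the exact factor $1 - 2\sqrt{2(1 - q\vol\triangleup_2(t_{\bs{a}}) + \epsilon_{q+1,r})}$ requires a careful continuous optimisation. Since $\vol\triangleup_2(u) = u^2/2$ in dimension $2$, the critical index $u$ saturating Dyson's constraint is $u = \sqrt{2(1 - q\vol\triangleup_2(t_{\bs{a}}) + \epsilon_{q+1,r})}$; the factor $(1 - 2u)$ then emerges from the explicit geometric comparison between the linear functional $\ell \mapsto m_\lambda \cdot \ell$ and the level sets of the $a^{(0)}$-index functional $\ell \mapsto \ell_1/\alpha r_1 + \ell_2/\alpha r_2$. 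It is exactly this quadratic two-dimensional geometry that produces the sharpening unavailable in higher dimensions, and this is where the bulk of the technical work lies.
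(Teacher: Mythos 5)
Your reduction is fine up to the point where Dyson's Lemma enters: choosing a basis of $W = K_{q,\alpha r}(\bs{a}, t_{\bs{a}})$ whose minimal-weight components are linearly independent and writing $\mu(\lambda,[W]) = 2\sum_j m_\lambda\cdot\ell^{(j)} - k\,\alpha\langle m_\lambda, r\rangle$ is correct (the minimal-weight components need not be monomials, but only their weights matter, so this is harmless). The genuine gap is your central claim that ``Dyson's Lemma forces an upper bound on $\dim(W\cap \tilde V_s)$''. Theorem \ref{thm:DysonsLemma} is a statement about one non-zero section: it yields $W\cap \tilde V_s = 0$ as soon as the index forced at the instability point pushes the sum of volumes above $1+\epsilon_{q,r}$, but it gives no dimension bound at all for the strata where that criterion fails. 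Turning non-vanishing criteria into dimension bounds requires an extra Grassmann-type argument with auxiliary index subspaces, as in Proposition \ref{Prop:DimensionKernelAtMultiplePoints} (4), and the subsequent ``aggregation'' and continuous optimisation that is supposed to produce the factor $1 - 2\sqrt{2(1-q\vol\triangleup_2(t_{\bs{a}})+\epsilon_{q+1,r})}$ is exactly what you defer (``this is where the bulk of the technical work lies''). As written, the quantitative heart of the proposition is missing.

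The strategy can in fact be closed much more simply, without any stratification: for \emph{every} non-zero $f\in K_{q,\alpha r}(\bs{a}, t_{\bs{a}})$ apply Theorem \ref{thm:DysonsLemma} once to the $q+1$ points $y_\lambda, a^{(1)},\dots,a^{(q)}$ (projections are pairwise distinct because $K(a_i)=K'$ with $q\ge 2$ and $y_\lambda$ is $K$-rational), obtaining $\ind_{1/\alpha r}(f, y_\lambda)\le \sqrt{2(1-q\vol\triangleup_2(t_{\bs{a}})+\epsilon_{q,r})}$ under the standing hypothesis; then use the elementary one-sided comparison $\ind_{m_\lambda}(f, y_\lambda)\le \max_i\{m_{\lambda,i}\alpha r_i\}\,\ind_{1/\alpha r}(f,y_\lambda)$ (evaluate both weighted sums at a point of the support minimising the $1/\alpha r$-weighted one — this is your ``vertex optimisation'', done once and for all), and Proposition \ref{prop:ComparingInstabilityCoefficientAndIndex} to get $\mu(\lambda,[f])\ge \alpha\langle m_\lambda, r\rangle\bigl(1-2\sqrt{2(1-q\vol\triangleup_2(t_{\bs{a}})+\epsilon_{q+1,r})}\bigr)$; summing over your adapted basis and using $k_{q,\alpha r}(t_{\bs{a}})\ge \alpha^2 r_1 r_2(1-q\vol\triangleup_2(t_{\bs{a}})-\delta)$ (together with the positivity of the bracket, guaranteed by \eqref{eq:Semi-stabilityConditionN=2}) gives the statement. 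This per-element shape is exactly how the paper argues; the difference is that the paper obtains the per-element bound from Proposition \ref{prop:InstabilityCoefficientOfASinglePolynomial}, i.e. from the two-weight Dyson lemma in dimension $2$ (Theorem \ref{thm:DysonsLemmaTwoWeights}) proved via homogeneous Wronskians, precisely so that the two-dimensional case is independent of the Esnault--Viehweg--Nakamaye Theorem \ref{thm:DysonsLemma}. Your reliance on Theorem \ref{thm:DysonsLemma} is logically admissible within the paper (and would even give $\epsilon_{q,r}$ instead of $\epsilon_{q+1,r}$), but it forfeits the self-contained ``classical'' character of this subsection, and in any case the argument you actually wrote does not yet prove the inequality.
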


We leave to the reader adapting the argument given in paragraph \ref{par:ProofOfSemiStabilityTheorem} in order to obtain Theorem \ref{thm:Semi-stabilityConditionN=2} from Proposition \ref{prop:InstabilityCoefficientAtTargetPointsN=2}. The main ingredient in the proof of Proposition \ref{prop:InstabilityCoefficientAtTargetPointsN=2} is the following:

\begin{prop} \label{prop:InstabilityCoefficientOfASinglePolynomial} Let $f \in K_r(\bs{a}, t_{\bs{a}})$ be a non-zero section. If the following condition is satisfied,
$$ 1 - q \vol \triangleup_2(t_{\bs{a}}) + \epsilon_{q+1, r} \le 1/2, $$
then, for every one parameter subgroup $\lambda : \Gm \to \SLs^2_{2, K}$ we have
$$ \mu(\lambda, [f]) \ge  \langle m_\lambda, r \rangle   \left(1 - 2 \sqrt{2 \left( 1 - q \vol \triangleup_2(t_{\bs{a}}) + \epsilon_{q+1, r} \right)} \right)$$
the instability coefficient of $f$ being taken as a point of $\P(\Gamma(\P, \O_\P(r)))$ and with respect to the invertible sheaf $\O(1)$.
\end{prop}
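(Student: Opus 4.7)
The plan is to translate both sides of the desired inequality into combinatorial information about the monomial support of $f$ in a basis adapted to $\lambda$, and then apply the Higher Dimensional Dyson's Lemma (Theorem \ref{thm:DysonsLemma}) to the index at the instability point $y_\lambda$ together with the $q$ conjugates of $a$.

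First I would fix adapted bases $T_{i0}, T_{i1}$ of $K^{2\vee}$ for $\lambda$ (with weights $\pm m_{\lambda,i}$) and expand $f = \sum_{\ell \in \square_r^\Z} c_\ell\, T(\ell)$ with $T(\ell) = T_{10}^{r_1-\ell_1}T_{11}^{\ell_1}\otimes T_{20}^{r_2-\ell_2}T_{21}^{\ell_2}$. Since $T(\ell)$ has $\lambda$-weight $\langle m_\lambda, r-2\ell\rangle$, Proposition \ref{prop:BasicPropertiesInstabilityCoefficientLinearSubspaces}(2) applied to the single point $[f]\in\P(\Gamma(\P,\O_\P(r)))$ gives
\[ \mu(\lambda,[f]) = 2M_f - \langle m_\lambda, r\rangle, \qquad M_f := \max\{\langle m_\lambda,\ell\rangle : c_\ell\neq 0\}. \]
The desired inequality becomes the purely combinatorial statement that $M_f \geq \langle m_\lambda, r\rangle(1 - \sqrt{2\eta})$ with $\eta := 1 - q\vol\triangle_2(t_{\bs a}) + \epsilon_{q+1,r}$.

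Second, I would compute the index at the instability point $y_\lambda$. Using $T_{i0}/T_{i1}$ as local parameters and $T_{11}^{r_1}T_{21}^{r_2}$ as local trivialisation of $\O_\P(r)$ near $y_\lambda$, the monomial $T(\ell)$ has weight $1/r$-index equal to $(1-\ell_1/r_1)+(1-\ell_2/r_2)$, so
\[ I_0 := \ind_{1/r}(f,y_\lambda) = \min_{c_\ell\neq 0}\bigl[(1-\ell_1/r_1)+(1-\ell_2/r_2)\bigr]. \]
Since $q\geq 2$, the $K$-point $y_\lambda$ is automatically distinct, in each projection, from every $a^{(\sigma)}$; in the degenerate case where some $m_{\lambda,i}=0$, one chooses the free coordinate $T_{i0}$ so that $[T_{i0}=0]$ avoids the finitely many $\pr_i(a^{(\sigma)})$, which is possible as $K$ is infinite. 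Applying Theorem \ref{thm:DysonsLemma} to the $q+1$ points $y_\lambda,a^{(1)},\ldots,a^{(q)}$ with respective indices $I_0,t_{\bs a},\ldots,t_{\bs a}$ yields
\[ \vol\triangle_2(I_0) + q\,\vol\triangle_2(t_{\bs a}) \le 1 + \epsilon_{q,r} \le 1 + \epsilon_{q+1,r}, \]
hence $\vol\triangle_2(I_0)\le \eta\le 1/2$; since $\vol\triangle_2(t)=t^2/2$ on $[0,1]$, this forces $I_0\le\sqrt{2\eta}$.

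Finally, letting $\ell^*$ realise the minimum defining $I_0$, both non-negative summands $1-\ell_i^*/r_i$ are bounded by $\sqrt{2\eta}$, so each $\ell_i^*/r_i\ge 1-\sqrt{2\eta}$. Therefore
\[ M_f \ge \langle m_\lambda,\ell^*\rangle = m_{\lambda,1}r_1\cdot(\ell_1^*/r_1) + m_{\lambda,2}r_2\cdot(\ell_2^*/r_2) \ge (1-\sqrt{2\eta})\,\langle m_\lambda,r\rangle, \]
and substituting into $\mu(\lambda,[f])=2M_f-\langle m_\lambda,r\rangle$ delivers the claim. The only delicate point is the edge case $m_{\lambda,i}=0$, where the instability ``point'' is a priori a subvariety and $y_\lambda$ is not canonical; the freedom in choosing the adapted basis must be used to place $y_\lambda$ in general position with respect to $\bs a$, which is why the looser error term $\epsilon_{q+1,r}$ (rather than $\epsilon_{q,r}$) is stated, giving some slack.
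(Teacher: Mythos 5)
Your proof is correct, but it follows a genuinely different route from the paper's. The paper deduces the proposition from Proposition \ref{prop:ComparingInstabilityCoefficientAndIndex} together with the two-weight Dyson Lemma (Theorem \ref{thm:DysonsLemmaTwoWeights}), whose Wronskian-based proof is the whole content of this section and which deliberately avoids any appeal to Theorem \ref{thm:DysonsLemma}: the point of the $n=2$ argument is to be self-contained and independent of the Esnault--Viehweg--Nakamaye input. You instead apply the usual Dyson Lemma with the single weight $1/r$ to the $q+1$ points $y_\lambda, a^{(1)},\dots,a^{(q)}$ (its hypotheses do hold: since $K(a_i)=K'$ and $q\ge 2$, the conjugates have pairwise distinct projections and the $K$-rational point $y_\lambda$ differs from all of them in each factor, with no need to adjust $T_{i0}$ when $m_{\lambda,i}=0$), obtaining $\ind_{1/r}(f,y_\lambda)\le\sqrt{2\eta}$ with your $\eta$, and then convert this into $\ind_{m_\lambda}(f,y_\lambda)\le \sqrt{2\eta}\,\langle m_\lambda,r\rangle$ by the elementary observation that both indices are minima of linear functionals over the same monomial support, so the support element $\ell^*$ realising the $1/r$-minimum has $1-\ell^*_i/r_i\le\sqrt{2\eta}$ for each $i$; combined with $\mu(\lambda,[f])=2M_f-\langle m_\lambda,r\rangle$ (which agrees with Proposition \ref{prop:ComparingInstabilityCoefficientAndIndex}) this gives exactly the stated bound, even with the slightly smaller error $\epsilon_{q,r}$ before you relax it to $\epsilon_{q+1,r}$. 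In effect you show that the comparison of indices needed here is the easy direction, namely $\ind_b\le\max_i\{b_ir_i\}\,\ind_{1/r}$, which holds for an arbitrary section and not only for products as in Proposition \ref{prop:ComparingTheIndexWithRespectTwoWeights}, so the obstruction invoked in the paper before that proposition does not affect the inequality you actually use. What your argument buys is brevity and a marginally better error term; what it gives up is the independence from Theorem \ref{thm:DysonsLemma} and the by-product Theorem \ref{thm:DysonsLemmaTwoWeights}, which the paper's Wronskian route establishes along the way. Two small inaccuracies, neither affecting correctness: the general-position adjustment for $m_{\lambda,i}=0$ is unnecessary, and the $\epsilon_{q+1,r}$ in the statement stems from the Wronskian proof (where $q\,\epsilon_{2,r}=\epsilon_{q+1,r}$ appears), not from slack reserved for degenerate one-parameter subgroups.
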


Let us show how to deduce Proposition \ref{prop:InstabilityCoefficientAtTargetPointsN=2}.

\begin{proof}[Proof of Proposition \ref{prop:InstabilityCoefficientAtTargetPointsN=2}] Let $\delta$ be a positive real number. Analogously to the proof of Lemma \ref{lem:TechnicalConditionsOnAlpha} we may find a positive integer $\alpha_0$ such that for every integer $\alpha \ge \alpha_0$ we have:
\begin{equation} \label{eq:ApproximationOfVolumesForN=2} \frac{k_{q, \alpha r}( t_{\bs{a}})}{\alpha^2 r_1 r_2} > (1 - q \vol \triangleup_2(t_{\bs{a}})) - \delta. \end{equation}
Let $f_1, \dots, f_{k_{q, \alpha r}( t_{\bs{a}}) }$ be a basis of $K_{q, \alpha r}(\bs{a}, t_{\bs{a}})$ such that their components of minimal weight (with respect to $\lambda$)
$f_{1, \min}, \dots, f_{k_{q, \alpha r}( t_{\bs{a}}), \min}$ are linearly independent (such a basis exists according to Proposition \ref{prop:BasicPropertiesInstabilityCoefficientLinearSubspaces}). Then Proposition \ref{prop:BasicPropertiesInstabilityCoefficientLinearSubspaces} (2) entails
\begin{align*} 
\mu(\lambda, [K_{q, \alpha r}(\bs{a}, t_{\bs{a}})]) &= \sum_{\ell = 1}^{k_{q, \alpha r}(t_{\bs{a}})} \mu(\lambda, [f_\ell]) \\
&\ge \alpha k_{q, \alpha r}(t_{\bs{a}}) \langle m_\lambda, r \rangle  \left(1 - 2 \sqrt{2 \left( 1 - q \vol \triangleup_2(t_{\bs{a}}) + \epsilon_{q+1, r} \right)} \right),
\end{align*}
where the second inequality follows from Proposition \ref{prop:InstabilityCoefficientOfASinglePolynomial}. We conclude using \eqref{eq:ApproximationOfVolumesForN=2}.
\end{proof}

The result we are going to prove in what follows is actually the following version of Dyson's Lemma:

\begin{theorem}[{\em{cf}. Corollary \ref{cor:DysonsLemmaWtihTwoWeights}}] \label{thm:DysonsLemmaTwoWeights} Let $f \in \Gamma(\P, \O(r))$ be a non-zero global section and let $b = (b_1, b_2)$ be a couple of non-negative real numbers. 

Let $y$ be a $\ol{\Q}$-point of $\P^1$. Let $q \ge 1$ be an integer and  for every $\sigma = 1, \dots, q$ let $z^{(\sigma)}$ be a $\ol{\Q}$-point of $\P$. For every $i = 1, 2$ let us suppose:
\begin{enumerate}[(1)]
\item $\pr_i(z^{(\sigma)}) \neq \pr_i(z^{(\tau)})$ for every $\sigma \neq \tau$;
\item $\pr_i(z^{(\sigma)}) \neq \pr_i(y)$ for every $\sigma = 1, \dots, q$.
\end{enumerate}
For every $\sigma = 1, \dots, q$ let us suppose $t_\sigma \le 1$ and 
\begin{equation} \label{eq:CorAlmostDysonsLemmaInstabilityCoefficientHypothesis} 
1 - \sum_{\sigma = 1}^q \vol \triangleup_2(\ind_{1/r}(f, z^{(\sigma)})) + \epsilon_{q + 1, r} < \frac{1}{2}. \end{equation}
Then, $\ind_{b}(f, y) < \max \left\{ b_i r_i \right\}$ and consequently
$$
\vol \triangleup_2 \left( \frac{\ind_{b}(f, y)}{\max \left\{ b_i r_i \right\}} \right) \le 
  1 - \sum_{\sigma = 1}^q \vol \triangleup_2(\ind_{1/r}(f, z^{(\sigma)})) + \epsilon_{q + 1, r}.
$$
\end{theorem}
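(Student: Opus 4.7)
The plan is to reduce Theorem \ref{thm:DysonsLemmaTwoWeights} to the two-dimensional case of Esnault--Viehweg--Nakamaye's Dyson Lemma (Theorem \ref{thm:DysonsLemma}) via an elementary rescaling of weights. Since Dyson's Lemma handles the weight $1/r = (1/r_1, 1/r_2)$, the whole point is to translate the $b$-weighted index at $y$ into the $(1/r)$-weighted one while leaving the indices at the $z^{(\sigma)}$ untouched.

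First, I would record the key weight-comparison. Set $M := \max\{b_1 r_1, b_2 r_2\}$. Then for every pair of non-negative integers $(\ell_1, \ell_2)$,
\begin{equation*}
\frac{\ell_1}{r_1} + \frac{\ell_2}{r_2} \;=\; \frac{b_1 \ell_1}{b_1 r_1} + \frac{b_2 \ell_2}{b_2 r_2} \;\ge\; \frac{b_1 \ell_1 + b_2 \ell_2}{M},
\end{equation*}
since $1/(b_i r_i) \ge 1/M$ for $i = 1, 2$. Choosing local parameters at $y$ and expanding $f$ in Taylor series, every monomial with $b$-weight at least $\ind_b(f, y)$ therefore has $(1/r)$-weight at least $\ind_b(f, y)/M$. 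This gives the key inequality
\begin{equation*}
 \ind_{1/r}(f, y) \;\ge\; \frac{\ind_b(f, y)}{M}.
\end{equation*}

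Second, I would invoke Dyson's Lemma at the $q+1$ points $y, z^{(1)}, \dots, z^{(q)}$. Hypotheses (1) and (2) ensure that for each $i = 1, 2$ the images $\pr_i(y), \pr_i(z^{(1)}), \ldots, \pr_i(z^{(q)})$ are pairwise distinct in $\P^1$, which is exactly the assumption needed by Theorem \ref{thm:DysonsLemma}. Taking $t^{(0)} := \ind_{1/r}(f, y)$ and $t^{(\sigma)} := \ind_{1/r}(f, z^{(\sigma)})$, the lemma produces
\begin{equation*}
 \vol \triangleup_2(\ind_{1/r}(f, y)) + \sum_{\sigma = 1}^q \vol \triangleup_2(\ind_{1/r}(f, z^{(\sigma)})) \;\le\; 1 + \epsilon_{q+1, r},
\end{equation*}
the error term $\epsilon_{q+1, r}$ being the one attached to a configuration of $q+1$ points in the convention adopted at the start of this section.

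Third, I would derive the strict bound $\ind_b(f, y) < M$. Combining the two steps with the monotonicity of $\vol \triangleup_2$,
\begin{equation*}
 \vol \triangleup_2\!\left(\tfrac{\ind_b(f, y)}{M}\right) \;\le\; \vol \triangleup_2(\ind_{1/r}(f, y)) \;\le\; 1 - \sum_{\sigma=1}^q \vol \triangleup_2(\ind_{1/r}(f, z^{(\sigma)})) + \epsilon_{q+1, r},
\end{equation*}
and the rightmost quantity is $<1/2$ by hypothesis \eqref{eq:CorAlmostDysonsLemmaInstabilityCoefficientHypothesis}. Since $\vol \triangleup_2(t) = t^2/2$ on $[0, 1]$ and $\vol \triangleup_2(t) \ge 1/2$ on $[1, 2]$, a value strictly less than $1/2$ can only be attained for $t < 1$; hence $\ind_b(f, y)/M < 1$, i.e.\ $\ind_b(f, y) < \max\{b_i r_i\}$. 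The claimed volume bound is then the same chain of inequalities read once more.

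The whole argument is short because the deep geometry is already encoded in Theorem \ref{thm:DysonsLemma}. The subtle point\,---\,and essentially the only obstacle\,---\,is making the weight-comparison precise and keeping the indexing of the error term $\epsilon$ consistent with the conventions of the preceding sections; once that is settled, everything else is routine monotonicity and the explicit quadratic shape of $\vol \triangleup_2$ on $[0, 1]$.
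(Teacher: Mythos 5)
Your proof is correct, but it takes a genuinely different route from the paper's. Your key inequality $\ind_b(f,y)\le\max\{b_ir_i\}\,\ind_{1/r}(f,y)$ is valid: if $\ell^*$ realizes $\ind_{1/r}(f,y)$, then $\ind_b(f,y)\le b_1\ell^*_1+b_2\ell^*_2\le\max\{b_ir_i\}\left(\ell^*_1/r_1+\ell^*_2/r_2\right)$; it is the case $c=1/r$ of Proposition \ref{prop:ComparingTheIndexWithRespectTwoWeights}, and in this normalized direction no splitting hypothesis is needed. Hypotheses (1)--(2) indeed allow Theorem \ref{thm:DysonsLemma} to be applied to the $q+1$ points $y,z^{(1)},\dots,z^{(q)}$, and monotonicity of $\vol\triangleup_2$ then gives the stated bound; since the right-hand side is $<1/2=\vol\triangleup_2(1)$ and $\vol\triangleup_2$ is strictly increasing, $\ind_b(f,y)<\max\{b_ir_i\}$ follows. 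Two bookkeeping points should be made explicit: relabel the factors so that $r_1\ge r_2$ before invoking Theorem \ref{thm:DysonsLemma} (for $q+1$ points its error term is $(q-1)r_2/r_1$, written for that ordering), which is $\le\epsilon_{q+1,r}=q\min\{r_1,r_2\}/\max\{r_1,r_2\}$ --- note the off-by-one between the two conventions, so you actually get a slightly sharper constant --- and exclude the degenerate case $b=(0,0)$. The paper proves the theorem quite differently: it never invokes Theorem \ref{thm:DysonsLemma}, but factors the homogeneous Wronskian $\Wr(f)=\Wr_1(f)\otimes\Wr_2(f)$, bounds $\ind_b(\Wr(f),y)$ from below via Proposition \ref{prop:IndexOfTheWronskian} and from above by counting zeroes of the one-variable Wronskians at the points $\pr_i(z^{(\sigma)})$, i.e.\ it reruns Dyson's classical two-variable argument from scratch. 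The reason is stated in the introduction and at the head of the subsection: the $n=2$ case is meant to be independent of Esnault--Viehweg--Nakamaye, providing a self-contained ``GIT proof'' of the Dyson--Gel'fond theorem. Your reduction is much shorter, needs neither the hypothesis $t_\sigma\le1$ nor any truncation, and yields a marginally smaller error term, but it buys this by importing the deep higher-dimensional Dyson Lemma, which is precisely what this section was written to avoid; the paper's warning that indices with respect to different weights are ``not comparable'' concerns unnormalized comparisons and does not conflict with your inequality, which is normalized by $\max\{b_ir_i\}$ --- the same normalization in which the conclusion is phrased, so no sharpness is lost for the statement at hand.
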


Before showing how to deduce Proposition \ref{prop:InstabilityCoefficientOfASinglePolynomial} let us remark that a similar version of Dyson's Lemma with two different weights is indicated to hold in \cite[page 489]{esnault-viewheg}. The hypothesis \eqref{eq:CorAlmostDysonsLemmaInstabilityCoefficientHypothesis} makes quantitative the assertion of Esnault-Viehweg that $\ind_b(f, y)$ should be ``very small'' (see \em{loc. cit.}).

In order to prove Proposition \ref{prop:InstabilityCoefficientOfASinglePolynomial} let us link the instability coefficient and index through the following easy fact (whose proof is left to the reader):

\begin{prop} \label{prop:ComparingInstabilityCoefficientAndIndex}Let $\lambda : \Gm \to \SLs_{2, K}^2$ be a one parameter subgroup and let us fix an adapted basis for $\lambda$. Then, for every non-zero element $f \in \Gamma(\P, \O_\P(r))$ we have
$$ \mu(\lambda, [f]) = \langle m_\lambda, r \rangle - 2 \ind_{m_\lambda}(f, y_\lambda),$$
where $y_\lambda$ is the instability point of $\lambda$ (with respect to the chosen adapted bases).
\end{prop}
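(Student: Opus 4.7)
The plan is to reduce the identity to a direct computation in an adapted basis. I would first fix an adapted basis $T_{i0}, T_{i1}$ of $K^{2\vee}$ for $\lambda$, so that
$$ \lambda(\tau) \ast T_{i0} = \tau^{m_{\lambda, i}} T_{i0}, \qquad \lambda(\tau) \ast T_{i1} = \tau^{- m_{\lambda, i}} T_{i1}, \qquad i = 1, 2.$$
In the homogeneous coordinates $(T_{i0}:T_{i1})$ on the $i$-th factor of $\P = (\P^1)^2$ the instability point $y_\lambda$ is then the common zero $([0:1],[0:1])$ of $T_{10}$ and $T_{20}$, and I would expand the global section $f$ in the resulting monomial basis
$$ f = \sum_{\ell = (\ell_1, \ell_2) \in \square_r^\Z} f_\ell \, T_{10}^{\ell_1} T_{11}^{r_1 - \ell_1} T_{20}^{\ell_2} T_{21}^{r_2 - \ell_2}, \qquad f_\ell \in K. $$

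Next I would compute $\mu(\lambda, [f])$. The monomial with exponent $\ell$ is a $\lambda$-eigenvector of weight $m_{\lambda,1}(2\ell_1 - r_1) + m_{\lambda,2}(2\ell_2 - r_2) = 2\langle m_\lambda, \ell\rangle - \langle m_\lambda, r\rangle$, so by Proposition \ref{prop:BasicPropertiesInstabilityCoefficientLinearSubspaces} (2) applied to the line $[f] \subset \Gamma(\P, \O_\P(r))$, the instability coefficient is the opposite of the smallest weight appearing with non-zero coefficient:
$$ \mu(\lambda, [f]) = \langle m_\lambda, r\rangle - 2 \min\bigl\{ \langle m_\lambda, \ell\rangle : f_\ell \neq 0 \bigr\}. $$

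Finally I would identify this minimum with $\ind_{m_\lambda}(f, y_\lambda)$. Around $y_\lambda$ the ratios $s_i \df T_{i0}/T_{i1}$ are local parameters and $T_{11}^{r_1} T_{21}^{r_2}$ trivialises $\O_\P(r)$ on a neighbourhood of $y_\lambda$; dividing $f$ by this trivialising section yields the power series
$$ \frac{f}{T_{11}^{r_1} T_{21}^{r_2}} = \sum_{\ell} f_\ell \, s_1^{\ell_1} s_2^{\ell_2}, $$
whence by the definition of the index with respect to the weight $m_\lambda$ one gets $\ind_{m_\lambda}(f, y_\lambda) = \min\{\langle m_\lambda, \ell\rangle : f_\ell \neq 0\}$. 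Substituting this into the previous display gives the announced identity. There is no substantive obstacle here: the argument is pure bookkeeping, the factor $2$ arising from the fact that the $\SLs_2$-weights of $T_{i0}$ and $T_{i1}$ are opposite, and the minus sign from the convention that the limit fibre of $\O(1)$ carries the weight $-\mu$.
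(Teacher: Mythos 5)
Your computation is correct, and since the paper explicitly leaves the proof of this proposition to the reader, your argument is exactly the intended one: expand $f$ in the monomial basis attached to the adapted bases, note that the $\lambda$-weight of the monomial with exponent $\ell$ (in the forms $T_{i0}$ vanishing at $y_\lambda$) is $2\langle m_\lambda,\ell\rangle-\langle m_\lambda,r\rangle$, use that $\mu(\lambda,[w])$ is minus the minimal weight (Proposition \ref{prop:BasicPropertiesInstabilityCoefficientLinearSubspaces} (2), the same convention the paper uses in the proof of Proposition \ref{prop:InstabilityCoefficientAtSinglePoint}), and identify the minimum of $\langle m_\lambda,\ell\rangle$ with $\ind_{m_\lambda}(f,y_\lambda)$ via the local parameters $T_{i0}/T_{i1}$ and the trivialisation $T_{11}^{r_1}T_{21}^{r_2}$. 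No gaps; the bookkeeping of the signs and of the factor $2$ matches the paper's conventions.
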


\begin{proof}[Proof of Proposition \ref{prop:InstabilityCoefficientOfASinglePolynomial}] It suffices to apply Theorem \ref{thm:DysonsLemmaTwoWeights} to the points $z^{(\sigma)} = a^{(\sigma)}$ for $\sigma : K' \to \ol{\Q}$, the weight $b = m_\lambda$ and the point $y = y_\lambda$ (the instability point of $\lambda$) and to remark that, since $t_{\bs{a}} \le 1$ (otherwise \eqref{eq:Semi-stabilityConditionN=2} is not satisfied), we have $\vol \triangleup_2(t_{\bs{a}}) = t_{\bs{a}}^2 / 2$.
\end{proof}

The rest of this section is therefore devoted to prove Theorem \ref{thm:DysonsLemmaTwoWeights}.

In view of Proposition \ref{prop:ComparingInstabilityCoefficientAndIndex}, one would like to use this interpretation of the instability measure to apply the usual Dyson's Lemma --- \em{i.e.} Theorem \ref{thm:DysonsLemma} when $n = 2$ \footnote{This is case originally treated by Dyson \cite{dyson}, whose proof has then been revisited by several authors (see \cite{bombieri}, \cite{ViolaDyson} and \cite{VojtaDysonCurves}).} ---  in order to derive the semi-stability of the point. Unfortunately, the usual Dyson's Lemma can be applied when the weight of the index is the same at all points: here instead we have to apply it to weight $1 / r$ at the points $z^{(\sigma)}$ for $\sigma = 1, \dots, q$ and the weight $m_\lambda$ at the point $y_\lambda$.

The key point in the proof of Theorem \ref{thm:DysonsLemmaTwoWeights} is that in general the index of a polynomial taken with respect to two different weights are not comparable. Anyway this is the case when the polynomial is a product of polynomials in separate variables:

\begin{prop} \label{prop:ComparingTheIndexWithRespectTwoWeights} Let $b = (b_1, b_2)$ and $c = (c_1, c_2)$ be couples of non-negative real numbers. Let us suppose that is made of positive real numbers. 

For all $i = 1, 2$ let $f_i \in \Gamma(\P^1, \O_{\P^1}(r_i))$ be a non-zero section. Then, for all $\ol{\Q}$-point $z$ of $\P^1$ we have
$$ \ind_{b}(f_1 \otimes f_2, z) \le \max \{ b_i / c_i\} \ind_c(f_1 \otimes f_2, z).$$
\end{prop}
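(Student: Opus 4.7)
The plan is to reduce the statement to an explicit formula for the index of a tensor product section and then apply a term-by-term comparison.

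First I would compute $\ind_b(f_1 \otimes f_2, z)$ explicitly. Writing $z = (z_1, z_2)$ and choosing local parameters $t_i$ around $z_i$ for $i = 1, 2$, the sections $f_i$ admit Taylor expansions $f_i = \sum_{j \ge 0} a_{i,j} t_i^j$ with $a_{i, m_i} \neq 0$ where $m_i \df \mathrm{ord}_{z_i}(f_i)$. The product then develops as
\[
f_1 \otimes f_2 = \sum_{j_1, j_2 \ge 0} a_{1, j_1} a_{2, j_2} \, t_1^{j_1} t_2^{j_2},
\]
and the nonzero monomials are exactly those with $j_i \ge m_i$ for $i = 1, 2$. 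Since $b$ has non-negative entries, the minimum of $b_1 j_1 + b_2 j_2$ over such indices is attained at $(m_1, m_2)$, yielding
\[
\ind_b(f_1 \otimes f_2, z) = b_1 m_1 + b_2 m_2,
\]
and analogously $\ind_c(f_1 \otimes f_2, z) = c_1 m_1 + c_2 m_2$ (this uses the hypothesis that $c$ has positive entries so the $c$-index is finite and well-defined even without assuming $b_i > 0$).

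Next I would compare the two quantities termwise. Since $c_1, c_2 > 0$, we have $b_i \le \max\{b_1/c_1, b_2/c_2\} \, c_i$ for each $i = 1, 2$, and multiplying by the non-negative integer $m_i$ and summing gives
\[
b_1 m_1 + b_2 m_2 \le \max\!\left\{ \tfrac{b_1}{c_1}, \tfrac{b_2}{c_2}\right\} (c_1 m_1 + c_2 m_2),
\]
which is precisely the desired inequality. No step here is a real obstacle: the only substantive point is the factorization formula for the index of a product section, and the inequality then reduces to the trivial observation $b_i / c_i \le \max_j \{b_j/c_j\}$.
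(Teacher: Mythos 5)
Your proof is correct; the paper states this proposition without giving a proof, and your argument --- computing $\ind_b(f_1 \otimes f_2, z) = b_1 m_1 + b_2 m_2$ with $m_i$ the vanishing order of $f_i$ at $z_i$, then comparing termwise via $b_i \le \max_j\{b_j/c_j\}\, c_i$ --- is precisely the elementary argument left implicit there. One harmless imprecision: the support of $f_1 \otimes f_2$ is contained in (not equal to) $\{(j_1, j_2) : j_i \ge m_i\}$ and contains $(m_1, m_2)$, which is all your minimization step actually needs.
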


In a nutshell, in the proof of Theorem \ref{thm:DysonsLemmaTwoWeights} we will use the Wronskian to be led back to the latter case.

%In this section for every non-negative integer $r$ let us $\Sym^r K^{2 \vee}$ for the vector space $\Sym^r K^{2 \vee}$.

\end{np}

\begin{npar}{Homogeneous Wronskian} In this paragraph we introduce the wronskian as an invariant under of $\SLs_{2, K}$. We follow the presentation given in \cite[2.8]{Abdesselam}. Let $r, \rho$ be non-negative integers such that $\rho \le r + 1$.

\begin{deff} Let $f_1, \dots, f_\rho \in \Sym^{r} K^{2 \vee} $  and let $T_0, T_1$ be the canonical basis of $K^{2 \vee}$.  The \em{homogeneous Wronskian} of the polynomials $f_1, \dots, f_\rho$ is defined as:
$$ \Wr(f_1, \dots, f_\rho) \df \left( \frac{(r - \rho + 1)!} {r!}\right)^\rho \cdot \det \left( \frac{\partial^{\rho- 1}f_\ell}{\partial T_0^{\rho - j} \partial T_1^{j - 1}} \right)_{j, \ell = 1, \dots, \rho}. $$
It is an element of $\Sym^{\rho(r - \rho + 1)} K^{2 \vee}$, \em{i.e.} it is a homogeneous polynomial of degree $\rho(r - \rho + 1)$ in the variables $T_0, T_1$ (indeed, each entry is a homogeneous polynomial of degree $r - ( \rho -1)$).
\end{deff}

The reader may consult \cite[2.9]{Abdesselam} for the relation with the classical notion of Wronskian. It follows from Wronski's criterion of linear independence \cite[Proposition 6.3.10]{bombieri-gubler} that $f_1, \dots, f_\rho$ are linearly independent if and only if 
 $ \Wr(f_1 , \dots,  f_\rho)$ does not vanish.
 
 The Wronskian is an alternating multi-linear map on $\Sym^{r} K^{2 \vee}$ and therefore it can be extended to a linear map
 $$ \Wr : \bigwedge^\rho \Sym^{r} K^{2 \vee} \too \Sym^{\rho(r - \rho + 1)} K^{2 \vee} .$$
 
\begin{prop}[{\cite[2.3, 2.5 and 2.8]{Abdesselam}}] \label{prop:WronskianIsInvariant} The following properties are satisfied:

\begin{enumerate}[(1)]

\item If $T_0', T_1'$ is a basis and  $ \Wr' : \bigwedge^\rho \Sym^{r} K^{2 \vee} \to \Sym^{\rho(r - \rho + 1)} K^{2 \vee}  $ is the Wronskian map taken with respect to the latter basis, then as linear maps we have
$$ \Wr' = \det (T_0', T_1')^{\rho(r - \rho + 1)} \Wr,$$
where $(T_0', T_1')$ is the linear map sending $T_i$ on $T_i'$ for $i = 0, 1$.

\item The linear map $\Wr$ is equivariant under the natural action of $\SLs_{2, K}$ on the vector spaces $\bigwedge^\rho \Sym^{r} K^{2 \vee}$ and $\Sym^{\rho(r - \rho + 1)} K^{2 \vee} $.

\end{enumerate}
\end{prop}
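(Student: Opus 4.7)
My plan is to first observe that the map $(f_1,\ldots,f_\rho) \mapsto \Wr(f_1,\ldots,f_\rho)$ is multilinear and alternating (both immediate from the determinantal definition), so that it factors through $\bigwedge^\rho \Sym^r K^{2\vee}$. I will then derive both (1) and (2) from a single chain-rule computation tracking how the Wronskian matrix transforms under an arbitrary linear change of basis of $K^{2\vee}$.

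The core computation will go as follows. Given $g \in \GLs_2(K)$ acting on $K^{2\vee}$ via $g \cdot T_i = \sum_j g_{ji}\, T_j$ and extended to $\Sym^r K^{2\vee}$ by $(g \cdot f)(T_0,T_1) = f(gT_0, gT_1)$, the chain rule yields $\partial_i(g\cdot f) = g \cdot (g_{0i}\partial_0 f + g_{1i}\partial_1 f)$. Iterating this gives
$$
\partial_0^{\rho-j}\partial_1^{j-1}(g \cdot f) \;=\; g \cdot \bigl[(g_{00}\partial_0 + g_{10}\partial_1)^{\rho-j}(g_{01}\partial_0 + g_{11}\partial_1)^{j-1} f\bigr],
$$
so the Wronskian matrix $M(g\cdot f_1,\ldots,g\cdot f_\rho)$ factors as $\Sym^{\rho-1}(g) \cdot g\bigl(M(f_1,\ldots,f_\rho)\bigr)$, where $g$ acts entry-wise. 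Because the action of $g$ on the polynomial algebra $\Sym^\bullet K^{2\vee}$ is an algebra endomorphism, $g$ passes through the determinant polynomial, and combining this with the standard identity $\det(\Sym^{\rho-1}(g)) = (\det g)^{\rho(\rho-1)/2}$ (obtainable by diagonalising $g$ over $\overline{K}$) I will conclude the intermediate identity
$$
\Wr(g \cdot f_1,\ldots,g \cdot f_\rho) \;=\; (\det g)^{\rho(\rho-1)/2}\cdot g \cdot \Wr(f_1,\ldots,f_\rho).
$$

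Statement (2) then follows immediately, since $\det g = 1$ for $g \in \SLs_2$ and the right-hand side collapses to $g \cdot \Wr(f_1,\ldots,f_\rho)$. For part (1), I will apply the same computation to the change-of-basis map $g : T_i \mapsto T_i'$, for which $\det g = \det(T_0',T_1')$, and then combine this with the fact that reinterpreting the polynomial $g \cdot \Wr(f_1,\ldots,f_\rho)$, which naturally lives as a polynomial in $T_0,T_1$, as a polynomial in the new basis $T_0',T_1'$ through the identification $T_i' = g(T_i)$ contributes additional powers of $\det(T_0',T_1')$ proportional to the degree $\rho(r-\rho+1)$ of the output, yielding the stated exponent.

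The main obstacle will be the careful bookkeeping in the iterated chain rule: in particular, identifying the operator-valued matrix produced by the expansion as $\Sym^{\rho-1}(g)$ applied to the canonical basis $\{\partial_0^{\rho-j}\partial_1^{j-1}\}_{j=1}^\rho$ of order-$(\rho-1)$ constant-coefficient differential operators, and then checking that the various powers of $\det(T_0',T_1')$ arising from the transformation of the partial derivatives, from the normalising factor $\bigl((r-\rho+1)!/r!\bigr)^\rho$, and from the change of coordinates on the output all combine correctly. The rest of the argument is then an essentially formal consequence of the multiplicativity of $g$ on $\Sym^\bullet K^{2\vee}$.
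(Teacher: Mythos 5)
Your reduction to multilinearity and the alternating property is fine, and your central chain-rule computation is essentially correct: writing the matrix of $(\rho-1)$-st derivatives of the substituted forms as $\Sym^{\rho-1}$ of the substitution matrix times the entrywise-substituted Wronskian matrix, and using $\det \Sym^{\rho-1}(g)=(\det g)^{\rho(\rho-1)/2}$, does give the classical covariance
$$ \Wr(g\cdot f_1,\dots,g\cdot f_\rho)=(\det g)^{\rho(\rho-1)/2}\, g\cdot \Wr(f_1,\dots,f_\rho), $$
i.e.\ the Wronskian is a covariant of weight $\rho(\rho-1)/2$, and part (2) follows at once for $g\in\SLs_2$. (The paper itself offers no proof here --- it cites Abdesselam --- so a self-contained argument of this kind is appropriate, and the index conventions you flag are only bookkeeping.)

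The genuine gap is in your treatment of (1). ``Reinterpreting the polynomial $g\cdot\Wr(f_1,\dots,f_\rho)$ as a polynomial in the new basis'' is not an operation: an element of $\Sym^{\rho(r-\rho+1)}K^{2\vee}$ is unchanged when you rewrite it in another basis, so no factor ``proportional to the degree of the output'' can appear, and the exponent $\rho(r-\rho+1)$ cannot be produced this way. What your covariance identity actually yields, once combined with the transport identity $\Wr'(g\cdot f_1,\dots,g\cdot f_\rho)=g\cdot\Wr(f_1,\dots,f_\rho)$ (valid because $\partial_{T_i'}(g\cdot f)=g\cdot \partial_{T_i}f$ when $g$ is the substitution $T_i\mapsto T_i'$), is
$$ \Wr' = \det(T_0',T_1')^{-\rho(\rho-1)/2}\,\Wr, $$
with the weight, not the order, as exponent, and in particular independent of $r$. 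A direct check makes the discrepancy concrete: for $\rho=2$, $r=2$, $T_0'=\lambda T_0$, $T_1'=T_1$, $f_1=T_0^2$, $f_2=T_1^2$, one finds $\Wr(f_1,f_2)=T_0T_1$ while $\Wr'(f_1,f_2)=\lambda^{-1}T_0T_1$, consistent with the exponent $-\rho(\rho-1)/2=-1$ and not with $\det(T_0',T_1')^{\rho(r-\rho+1)}=\lambda^{2}$. So your final step is precisely where the argument is bent to match the printed exponent rather than derived; you should instead prove the scalar-proportionality statement with the exponent your computation delivers (which is all the paper ever uses, e.g.\ in the proof of Proposition \ref{prop:IndexOfTheWronskian}, where only basis-independence of $\Wr$ up to a nonzero scalar enters), or else trace the normalisation in the cited source, rather than asserting an unspecified ``reinterpretation'' supplies the missing power of the determinant.
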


%Let us relate this definition with its ``inhomogeneous'' version. For every $\ell = 1, \dots, \rho$ let set $\tilde{f}_\ell(T_1 / T_0) = f_\ell(T_0, T_1) / T_0^r$. Let us denote by $x$ the variable of the polynomial $\tilde{f}_\ell$. The \em{inhomogeneous Wronskian} of the polynomials $\tilde{f}_1, \dots, \tilde{f}_\rho$ is the determinant:
%$$ \tilde{\Wr}(\tilde{f}_1, \dots, \tilde{f}_\rho) \df \det \left( \frac{\partial^{j - 1} \tilde{f}_\ell}{\partial x^{j - 1}}\right)_{j , \ell = 1, \dots, \rho}.$$

%\begin{prop}[{\cite[2.9]{Abdesselam}}] \label{prop:InhomogeneousWronskianOneVariable} With the notation introduced above, we have:
%$$ \Wr(f_1, \dots, f_\rho)(T_0, T_1) = \left( \prod_{i = 1}^{\rho - 2} (r - i)^{i - \rho + 1}\right) T_0^{\rho(r - \rho + 1) } \tilde{\Wr}(\tilde{f}_1, \dots, \tilde{f}_\rho) (T_1/ T_0 ). $$
%\end{prop} 
 
 \end{npar}

\begin{npar}{Tensorial rank} Let $V_1, V_2$ be finite-dimensional $K$-vector spaces.

\begin{deff} The \em{tensorial rank} of a non-zero vector $v \in V_1 \otimes_K V_2$ is the minimal integer $\rho \ge 0 $ such that $v$ can be written in the form $v_{11} \otimes v_{21} + \cdots + v_{1\rho} \otimes v_{2\rho}$ with $v_{i \ell} \in V_i$  for $i = 1, 2$  and $\ell = 1, \dots, \rho$. We denote it by $\rk(v)$.
\end{deff}

The tensorial rank is invariant under homotheties and under $\GLs(V_1) \times \GLs(V_2)$ (acting naturally component-wise). The tensorial rank of $v$ coincides with the rank of the linear map $V_1^\vee \to V_2$ associated to $v$ through the canonical isomorphism  $V_1 \otimes_K V_2 \iso \Hom_K(V_1^\vee, V_2)$ (analogously it is the rank of the dual map $V_2^\vee \to V_1$).

Let us also remark that if $v_{i1}, \dots, v_{i\rho} \in V_i$  for $i = 1, 2$ are families of vectors such that $v = \sum_{\ell = 1}^{\rk(v)} v_{1\ell} \otimes v_{2 \ell}$ then
the vectors $v_{i 1}, \dots, v_{i \rk(v)}$ are linearly independent for all $i = 1, 2$.

%\begin{deff} For every integer $\rho \ge 0$ let $\P(V \otimes_K W)_\rho$ be the subset of $\P(V \otimes W)$ of points having tensorial rank $\rho$. It is locally closed and stable under the action of $\GLs(V) \times \GLs(W)$.
%\end{deff}

\end{npar}

\begin{npar}{Splitting polynomials through the Wronskian} Let $r = (r_1, r_2)$ be a couple of positive integers and for $i = 1, 2$ let us set $V_i \df \Sym^{r_i} K^{2 \vee} $. With this notation we have
$$ \Gamma(\P, \O_\P(r)) \iso V_1 \otimes V_2.$$
For every non-zero $f \in \Gamma(\P, \O_\P(r))$ we can consider its tensorial rank $\rk(f)$ with respect to this decomposition. In the notation of \cite[page 266]{bombieri} we have $s_2(f) = \rk(f) + 1$.

For every $i = 1, 2$ let us fix a basis $T_{i0}$, $T_{i1}$ of $K^{2\vee}$.

\begin{deff} Let $ f \in \Gamma(\P, \O_\P(r))$ be a non-zero section and let $\rho = \rk(f)$ be its tensorial rank. For every couple of positive integers $\ell = (\ell_1, \ell_2)$ such that $\ell_i \le \rho$ for $i = 1, 2$, let us set:
$$ \partial^{2(\rho - 1)}_\ell f \df \frac{\partial^{2(\rho - 1)} f}{\partial T_{10}^{\rho - \ell_1} \partial T_{11}^{\ell_1 - 1} \partial T_{20}^{\rho - \ell_2} \partial T_{21}^{\ell_2 - 1}},$$
which is a global section of $\O_\P(r_1 - (\rho - 1), r_2 - (\rho - 1))$. The \em{homogeneous Wronskian} $\Wr(f)$ is the determinant
$$ \Wr(f) \df \left[ \prod_{i = 1}^2 \left( \frac{(r_i - \rho + 1)!} {r_i!}\right)^\rho \right] \cdot \det \left( \partial^{2(\rho - 1)}_\ell f \right)_{\ell_1, \ell_2 = 1, \dots, \rho},$$
seen as a global section of $\O_\P(r_1 - (\rho - 1), r_2 - (\rho - 1))^{\otimes \rho}$.
\end{deff}

Let $ f \in \Gamma(\P, \O_\P(r))$ be a non-zero section and let $\rho = \rk(f)$ be its tensorial rank. Let us write $ f =  \sum_{\ell = 1}^\rho f_{1 \ell} \otimes f_{2 \ell}$ with $f_{i\ell} \in \Gamma(\P^1, \O(r_i))$ for all $i = 1, 2$ and all $\ell = 1, \dots, \rho$. For $i = 1, 2$ let us consider the homogeneous Wronskian
$$ \Wr_i(f) \df \Wr(f_{i1}, \dots, f_{i\rho}),$$
computed with respect to the basis $T_{i0}, T_{i1}$. An elementary computation shows:

\begin{prop} \label{prop:WronskianIsProduct} With the notations introduced above, we have  $$ \Wr(f) = \Wr_1(f) \otimes \Wr_2(f).$$
\end{prop}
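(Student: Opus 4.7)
The plan is to expand each entry of the defining determinant using the decomposition $f = \sum_m f_{1m} \otimes f_{2m}$, recognize the resulting $\rho \times \rho$ matrix as a product of two $\rho \times \rho$ matrices, and then invoke multiplicativity of the determinant.

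Partial derivatives with respect to $T_{1j}$ act only on the first tensor factor, and those with respect to $T_{2j}$ only on the second. Hence by linearity,
$$ \partial^{2(\rho-1)}_\ell f \;=\; \sum_{m=1}^{\rho} \frac{\partial^{\rho-1} f_{1m}}{\partial T_{10}^{\rho-\ell_1} \partial T_{11}^{\ell_1-1}} \otimes \frac{\partial^{\rho-1} f_{2m}}{\partial T_{20}^{\rho-\ell_2} \partial T_{21}^{\ell_2-1}}. $$
Let $A_{\ell_1, m}$ denote the first factor in this sum and $B_{\ell_2, m}$ the second. Working in the polynomial ring $K[T_{10},T_{11},T_{20},T_{21}]$ (in which the tensor product corresponds to multiplication), the $\rho \times \rho$ matrix $M$ with entries $M_{\ell_1,\ell_2} = \partial^{2(\rho-1)}_\ell f$ factors as $M = A \cdot B^\top$, where $A = (A_{\ell_1,m})$ and $B = (B_{\ell_2,m})$ are themselves $\rho\times \rho$ matrices with entries, respectively, in $\Sym^{r_1 - \rho + 1} K^{2\vee}$ and $\Sym^{r_2 - \rho + 1} K^{2\vee}$.

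By multiplicativity of the determinant over the commutative ring $K[T_{10},T_{11},T_{20},T_{21}]$, one has $\det(M) = \det(A)\cdot \det(B)$. The normalization constant in the definition of $\Wr(f)$ splits as $\prod_{i=1,2}\bigl((r_i - \rho+1)!/r_i!\bigr)^{\rho}$, which is exactly the product of the normalization constants appearing in $\Wr_1(f)$ and $\Wr_2(f)$; combining these observations gives $\Wr(f) = \Wr_1(f) \otimes \Wr_2(f)$.

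The only point requiring comment is that $A$ and $B$ are genuinely square of size $\rho\times\rho$: this uses the hypothesis that $\rho$ is exactly the tensorial rank of $f$, which guarantees a decomposition $f = \sum_{m=1}^{\rho} f_{1m}\otimes f_{2m}$ with both families $(f_{1m})$ and $(f_{2m})$ having cardinality $\rho$ (and, incidentally, linearly independent, as noted in the discussion of tensorial rank). I anticipate no substantial obstacle here; the verification is essentially a combination of the Leibniz-type linearity of derivatives in the two sets of variables with the standard identity $\det(AB^\top) = \det(A)\det(B)$.
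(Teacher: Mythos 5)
Your argument is correct and is precisely the elementary computation the paper alludes to without writing out: derivatives in $T_{1\bullet}$ and $T_{2\bullet}$ act on separate tensor factors, so the matrix of mixed partials factors as $A\cdot B^{\top}$ inside $K[T_{10},T_{11},T_{20},T_{21}]$, and multiplicativity of the determinant together with the matching normalization constants gives $\Wr(f)=\Wr_1(f)\otimes\Wr_2(f)$. Your remark that the matrices are square of size $\rho\times\rho$ because the decomposition realizes the tensorial rank is exactly the point that makes the identity well posed.
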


\end{npar}

\begin{npar}{Index of the Wronskian} In this section we link the index of Wronskian with the one of the polynomial we started from.

\begin{prop} \label{prop:IndexOfTheWronskian} Let $f \in \Gamma(\P, \O_\P(r))$ be a non-zero section and let $b = (b_1, b_2)$ be a couple of non-negative real numbers. Then, for every $\ol{\Q}$-point $z$ of $\P$ we have:
$$
\ind_{b}(\Wr(f), z) \ge \max \left\{ b_i r_i \right\} \left( (\rk(f) - 1) \left( 2 - \frac{\rk(f) - 1}{r_2}\right) \vol \triangleup_2(t) - \rk(f) \epsilon_{2, r} \right) ,
$$
where $t \df \min \{1,  \inf_b(f, z) / \max \{ b_i r_i \} \}$.
\end{prop}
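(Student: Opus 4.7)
My plan is to exploit the product structure given by Proposition \ref{prop:WronskianIsProduct} in order to reduce the bivariate bound to a combinatorial estimate on osculating flags. Write $f = \sum_{\ell=1}^{\rho} f_{1,\ell} \otimes f_{2,\ell}$ with $\rho = \rk(f)$ and set $V_i \df \mathrm{span}(f_{i,1},\dots,f_{i,\rho}) \subset \Sym^{r_i} K^{2\vee}$, which is a well-defined $\rho$-dimensional subspace (independent of the decomposition). By Proposition \ref{prop:WronskianIsProduct} we have $\Wr(f) = \Wr_1(f) \otimes \Wr_2(f)$ where $\Wr_i(f)$ depends, up to a nonzero scalar, only on $V_i$. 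Since the index at $z = (z_1, z_2)$ of a product polynomial with weight $b$ decomposes additively:
\[ \ind_b(\Wr(f), z) = b_1\, \mathrm{ord}_{z_1}(\Wr_1(f)) + b_2\, \mathrm{ord}_{z_2}(\Wr_2(f)). \]

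Next I would introduce, for $i = 1, 2$, an osculating (flag) basis $h_{i,1}, \dots, h_{i,\rho}$ of $V_i$ such that $a_{i,\ell} \df \mathrm{ord}_{z_i}(h_{i,\ell})$ satisfies $0 \le a_{i,1} < a_{i,2} < \dots < a_{i,\rho} \le r_i$. The classical identity for the order of the Wronskian of a flag basis gives
\[ \mathrm{ord}_{z_i}(\Wr_i(f)) = \sum_{\ell=1}^{\rho} a_{i,\ell} - \binom{\rho}{2}. \]
Expanding $f = \sum_{\ell,m=1}^{\rho} c_{\ell,m}\, h_{1,\ell} \otimes h_{2,m}$ in these bases, the matrix $C = (c_{\ell,m})$ has rank $\rho$. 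Inspecting Taylor expansions in local parameters $t_1, t_2$ at $z$, each summand contributes leading monomial $t_1^{a_{1,\ell}} t_2^{a_{2,m}}$, so the hypothesis $\ind_b(f,z) \ge s \df \ind_b(f,z)$ translates into: whenever $c_{\ell,m} \ne 0$, one has $b_1 a_{1,\ell} + b_2 a_{2,m} \ge s$.

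The main work is now a combinatorial optimization: minimize $b_1 S_1 + b_2 S_2$ (with $S_i = \sum_\ell a_{i,\ell}$) subject to (i) the constraints above, (ii) the rank-$\rho$ condition on $C$, which yields a permutation $\sigma$ with $c_{\ell,\sigma(\ell)} \ne 0$ for all $\ell$ and hence the transversal inequalities $b_1 a_{1,\ell} + b_2 a_{2,\sigma(\ell)} \ge s$, and (iii) the arithmetic-progression lower bound $a_{i,\ell} \ge \ell - 1$ from distinctness. The additive transversal bound alone gives only a linear estimate $b_1 S_1 + b_2 S_2 \ge \rho s$; to recover the quadratic factor $\vol \triangleup_2(t) = t^2/2$ one must combine it with the pointwise-in-$\ell$ inequalities, observing that when $s/\max\{b_i r_i\}$ is not close to $1$ the $\rho$ lattice points $(a_{1,\ell}, a_{2,\sigma(\ell)})$ are forced to spread along the line $b_1 x + b_2 y = s$ in $[0,r_1]\times[0,r_2]$. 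Passing to the normalized parameter $t = \min\{1, s/\max\{b_i r_i\}\}$ and using $\rho \le r_2 + 1$ (which enters through the factor $(\rho-1)/r_2$) yields the announced lower bound
\[ b_1 S_1 + b_2 S_2 - \tbinom{\rho}{2}(b_1 + b_2) \ge \max\{b_i r_i\}\Bigl[(\rho-1)\bigl(2 - \tfrac{\rho-1}{r_2}\bigr) \vol \triangleup_2(t) - \rho\, \epsilon_{2,r}\Bigr], \]
the error term $\rho\,\epsilon_{2,r}$ absorbing both the $\binom{\rho}{2}(b_1+b_2)$ correction and the discrepancy between the constraints and their continuous relaxation.

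\emph{The hard part} will be carrying out this combinatorial optimization uniformly in the permutation $\sigma$ and in the possible supports of $C$: the minimum depends on which entries of $C$ are forced to be nonzero, so one must argue via the worst-case transversal and show that the interaction between the index constraint $b_1 a_{1,\ell} + b_2 a_{2,\sigma(\ell)} \ge s$ and the distinctness $a_{i,\ell} \ge \ell - 1$ forces the claimed quadratic excess, with the $(\rho-1)/r_2$ correction coming from the fact that $\rho$ is bounded by the available room $r_2 + 1$.
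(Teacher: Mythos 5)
Your framework (factorization $\Wr(f)=\Wr_1(f)\otimes\Wr_2(f)$, flag bases, order formula for the univariate Wronskians, and a nonvanishing transversal of the coefficient matrix) is sound, and it in fact funnels into the same combinatorial problem that the paper solves; but the decisive estimate is exactly the part you leave open. Writing $s\df\ind_b(f,z)$, your term-wise inequalities give
$$ \ind_b(\Wr(f),z)\;=\;b_1S_1+b_2S_2-\binom{\rho}{2}(b_1+b_2)\;\ge\;\sum_{\ell=1}^{\rho}\max\bigl\{0,\;s-b_1(\ell-1)-b_2(\sigma(\ell)-1)\bigr\}, $$
so what remains is to bound $\min_{\sigma\in\frak{S}_\rho}$ of this sum from below by $\max\{b_ir_i\}\bigl((\rho-1)(2-\tfrac{\rho-1}{r_2})\vol\triangleup_2(t)-\rho\,\epsilon_{2,r}\bigr)$. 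This is precisely what you defer as ``the hard part'', with only the heuristic that the points ``spread along the line''; no argument is given that produces the constant $(\rho-1)(2-\tfrac{\rho-1}{r_2})$, the error $\rho\,\epsilon_{2,r}$, or the uniformity in $\sigma$. The paper's proof does exactly this and it is the whole content of the proposition: one coordinate's contribution is absorbed uniformly, $b_1(\ell_1-1)\le\max\{b_ir_i\}\,\epsilon_{2,r}$ per term (using $\ell_1-1\le\rho-1$ and $b_ir_i\le\max\{b_jr_j\}$), which kills the dependence on the permutation; the remaining sum is the truncated arithmetic progression $\sum_{\ell=0}^{r_2u}(t'-\ell/r_2)=(r_2u+1)(t'-u/2)$ with $u=\min\{(\rho-1)/r_2,\,t'\}$; and the elementary but genuinely two-case inequality $u(t'-u/2)\ge\frac{\rho-1}{r_2}\bigl(2-\frac{\rho-1}{r_2}\bigr)\vol\triangleup_2(\tilde t)$ (Lemma \ref{lem:TheTwoCasesInDysonsLemma}) finishes. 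Without some version of these steps your plan does not prove the statement. Note also that the paper reaches the same permutation sum with much less machinery: it expands the determinant defining $\Wr(f)$, uses that the index is a valuation, and bounds each entry by $\ind_b(\partial^{2(\rho-1)}_{(\ell_1,\ell_2)}f,z)\ge\max\{0,\,s-b_1(\ell_1-1)-b_2(\ell_2-1)\}$ in a basis adapted to $z$; the factorization $\Wr=\Wr_1\otimes\Wr_2$ is only needed later, for the \emph{upper} bound in Proposition \ref{prop:AlmostDysonsLemmaInstabilityCoefficient}.

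A secondary gap: the claim that $c_{\ell,m}\neq0$ forces $b_1a_{1,\ell}+b_2a_{2,m}\ge s$ is not immediate, because lower-order tails of $h_{1,\ell'}\otimes h_{2,m'}$ with $\ell'\le\ell$, $m'\le m$ also contribute to the monomial $t_1^{a_{1,\ell}}t_2^{a_{2,m}}$, so the coefficient of that monomial in $f$ is not a nonzero multiple of $c_{\ell,m}$ alone. The statement is true, but you need to argue it: take a pair that is minimal for the product order within the support of $C$ (there the coefficient is a nonzero multiple of $c_{\ell,m}$, so the index hypothesis applies), and then propagate by monotonicity of $b_1a_{1,\cdot}+b_2a_{2,\cdot}$ in the product order. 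This is fixable in a few lines, but together with the missing optimization the proposal is incomplete at its core.
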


\begin{proof} Since the Wronskian does not depend (up to a non-zero scalar factor) on the chosen basis (Proposition \ref{prop:WronskianIsInvariant}), then for $i = 1, 2$ we might chose a basis $T_{i0}$, $T_{i1}$ of $K^{2\vee}$ such that $ T_{i0}(\pr_i(z)) \neq 0 $ and $T_{i1}(\pr_i(z)) = 0$. 

Let $t \df \ind_{b}(f, z)$ be the index of $f$ at $z$. Let $\rho = \rk(f)$ be the tensorial rank of $f$ and, up to permuting the coordinates let us suppose $r_1 \le r_2$. Since deriving with respect to $T_{10}$ and $T_{20}$ does not affect the index on $z$, for every $\ell_1, \ell_2 = 1, \dots, \rho$ we have
\begin{align*} 
\ind_b\left( \partial^{2(\rho - 1)}_{(\ell_1, \ell_2)} f, z \right) 
&\ge \max \left\{ 0,  t - \langle b, (\ell_1 - 1, \ell_2 - 1) \rangle  \right\} \\
&\ge \max \left\{ 0,  t - \frac{\ell_2 - 1}{r_2} \max \left\{ b_i r_i \right\}  \right\} - \epsilon_{2, r} \max \left\{ b_i r_i \right\}
\end{align*}
where we wrote, recalling convention $1 / r = (1/r_1, 1/ r_2)$ and using $\ell_1  - 1\le \rho - 1 \le r_2$, 
$$ \langle b, (\ell_1 - 1, \ell_2 - 1) \rangle \le \max \left\{ b_i r_i \right\} \langle 1 / r, (\ell_1 - 1, \ell_2 - 1) \rangle \le \max \left\{ b_i r_i \right\} \left( \epsilon_{2, r} + \frac{\ell_2 - 1}{r_2}\right).$$
Let us denote by $\frak{S}_\rho$ the permutation group on $\{ 1, \dots, \rho\}$. Since the index is a valuation we have
\begin{align*}
\ind_{b}(\Wr(f), z) &\ge \min_{\pi \in \frak{S}_{\rho }} \left\{ \sum_{\ell = 1}^{\rho} \ind_b\left( \partial^{2(\rho - 1)}_{(\pi(\ell), \ell)} f, z \right)  \right\} \\
&\ge \min_{\pi \in \frak{S}_{\rho}} \left\{ \sum_{\ell = 1}^{\rho}  \max \left\{ 0,  t - \frac{\ell - 1}{r_2} \max \left\{ b_i r_i \right\}  \right\} \right\} - \rho \epsilon_{2, r} \max \left\{ b_i r_i \right\}  
\end{align*}
Writing $t' \df t / \max \left\{ b_i r_i \right\} $ and $u = \min \{ (\rho - 1) / r_2, t' \} $, we obtain
$$ \sum_{\ell = 1}^{\rho}  \max \left\{ 0,  t - \frac{\ell - 1}{r_2} \max \left\{ b_i r_i \right\}  \right\} =  \max \{ b_i r_i \}  \left( \sum_{\ell = 0}^{r_2 u} \left(  t' - \frac{\ell}{r_2} \right) \right).$$
We finally have
$$ \sum_{\ell = 0}^{r_2 u} \left(  t' - \frac{\ell}{r_2} \right) = (r_2 u + 1) \left( t' - \frac{u}{2}\right) \ge r_2 u \left( t' - \frac{u}{2}\right) ,$$
and we conclude by:

\begin{lem} \label{lem:TheTwoCasesInDysonsLemma} With the notations introduced above, let $\tilde{t} \df \min \{ t' , 1 \}$. Then,
$$ u \left( t' - \frac{u}{2} \right) \ge \frac{\rho - 1}{r_2} \left( 2 - \frac{\rho - 1}{r_2} \right) \vol \triangleup_2(\tilde{t}).$$
\end{lem}

\begin{proof}[{Proof of Lemma \ref{lem:TheTwoCasesInDysonsLemma}}] Two cases have to be considered:
\begin{enumerate}
\item $u = t'$;
\item $u = (\rho - 1) / r_2$. 
\end{enumerate}
The first case is trivial since we have by hypothesis $u (t' - u / 2) = t'^2 / 2$. Therefore it suffices to remark that we have
$$ \frac{\rho - 1}{r_2} \left( 2 - \frac{\rho - 1}{r_2} \right) \le 1 $$
because $\rho - 1 \le r_2$. For the second case let us set $\tilde{t} \df \min \{ t', 1 \}$. Clearly we have
$$ u \left( t' - \frac{u}{2} \right) \ge u \left( \tilde{t} - \frac{u}{2} \right).$$
Now it suffices to remark that function
$$
\frac{\xi \left( \tilde{t} - \xi / 2\right)}{\xi \left( 2 - \xi \right)} = \frac{\tilde{t} - \xi/2}{ 2 - \xi},
$$
is decreasing for $\xi \in [0, 1]$ because $\tilde{t} \le 1$. By assumption we have $u \le \tilde{t}$. Therefore, applying this consideration with $\xi = u = (\rho - 1)/r_2$, we find
$$  \frac{u \left( \tilde{t} - u /2 \right)}{u (2 - u)} \ge \frac{\tilde{t}^2/ 2}{ \tilde{t} (2 - \tilde{t})} \ge \tilde{t},$$
where in the last inequality we used again the inequality $\tilde{t}(2 - \tilde{t}) \le 1$. This terminates the proof of the lemma.
\end{proof}

This concludes the proof of the Proposition.
\end{proof}
%\vspace{5pt} $\displaystyle \mu(\lambda, [f]) \ge \frac{ \mu(\lambda, [\Wr(f)])}{\rk(f)}$;

\begin{prop} \label{prop:AlmostDysonsLemmaInstabilityCoefficient} Let $f \in \Gamma(\P, \O_\P(r))$ be a non-zero global section and let $b = (b_1, b_2)$ be a couple of non-negative real numbers. 

Let $q \ge 1$ be an integer and  for every $\sigma = 0, \dots, q$ let $z^{(\sigma)}$ be a $\ol{\Q}$-point of $\P$. For every $i = 1, 2$ let us suppose $\pr_i(z^{(\sigma)}) \neq \pr_i(z^{(\tau)})$ for every $\sigma \neq \tau$. Then,
$$ \sum_{\sigma = 0}^q \vol \triangleup_2 (t^{(\sigma)}) \le 1 + \epsilon_{q + 1, r},$$
where 
$$t^{(\sigma)} = 
\begin{cases}
\min \left\{1, \ind_{1/r}(f, z^{(\sigma)}) \right\} & \text{if } \sigma = 1, \dots, q\\
\min \left\{1,  \ind_b(f, z^{(0)}) / \max \left\{ b_i r_i \right\} \right\} & \text{if } \sigma = 0.\\ 
\end{cases}$$

\end{prop}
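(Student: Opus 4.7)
The plan is to use the homogeneous Wronskian $\Wr(f)$ as a bridge that reduces the general case to the case of a separated polynomial. Let $\rho = \rk(f)$ denote the tensorial rank. When $\rho = 1$, the polynomial $f = f_1 \otimes f_2$ is itself separated, so Proposition \ref{prop:ComparingTheIndexWithRespectTwoWeights} (with $c = 1/r$) yields $\ind_b(f, z^{(0)}) \le \max\{b_i r_i\} \cdot \ind_{1/r}(f, z^{(0)})$ and hence $t^{(0)} \le \min\{1, \ind_{1/r}(f, z^{(0)})\}$. Applying Theorem \ref{thm:DysonsLemma} (the $n=2$ case) to $f$ with uniform weight $1/r$ at all $q+1$ points gives
$$\sum_{\sigma=0}^q \vol\triangleup_2(t^{(\sigma)}) \le \sum_{\sigma=0}^q \vol\triangleup_2\!\left(\min\{1, \ind_{1/r}(f, z^{(\sigma)})\}\right) \le 1 + \epsilon_{q, r} \le 1 + \epsilon_{q+1, r}.$$

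For $\rho \ge 2$, by Proposition \ref{prop:WronskianIsProduct} and Wronski's linear-independence criterion, $\Wr(f) = \Wr_1(f) \otimes \Wr_2(f)$ is a non-zero separated polynomial of bi-degree $R = (\rho(r_1 - \rho + 1), \rho(r_2 - \rho + 1))$. Since $\rk(\Wr(f)) = 1$, I apply the argument of the previous paragraph to $\Wr(f)$, using weight $b$ at $z^{(0)}$ and weight $1/R$ at $z^{(\sigma)}$ for $\sigma \ge 1$. Denoting the corresponding quantities by $t'^{(\sigma)}$, this produces
$$\sum_{\sigma=0}^q \vol\triangleup_2(t'^{(\sigma)}) \le 1 + \epsilon_{q+1, R}.$$

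To transfer this to the statement about $t^{(\sigma)}$, Proposition \ref{prop:IndexOfTheWronskian} supplies the lower bound $\ind_{1/r}(\Wr(f), z^{(\sigma)}) \ge (\rho-1)(2-(\rho-1)/r_2)\vol\triangleup_2(t^{(\sigma)}) - \rho\epsilon_{2,r}$ at the points $\sigma \ge 1$ (and the analogous bound with the multiplicative factor $\max\{b_i r_i\}$ at $z^{(0)}$). Since $\Wr(f)$ is separated, Proposition \ref{prop:ComparingTheIndexWithRespectTwoWeights} lets me convert between $\ind_{1/r}$ (resp.\ $\ind_b$) and $\ind_{1/R}$, producing lower bounds on $t'^{(\sigma)}$ in terms of $\vol\triangleup_2(t^{(\sigma)})$. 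Substituting these into the Dyson bound for $\Wr(f)$ and simplifying should yield the desired inequality.

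The main obstacle is the quantitative matching of constants in this last step. The coefficient $(\rho-1)(2-(\rho-1)/r_2)$ from Proposition \ref{prop:IndexOfTheWronskian} must be weighed against the weight-conversion factor $\max\{R_i/r_i\} = \rho(1 - (\rho-1)/r_2)$, and the error $\epsilon_{q+1, R}$ on the right-hand side must be traded against the target $\epsilon_{q+1, r}$ via $R_i \approx \rho r_i$. The dichotomy of Lemma \ref{lem:TheTwoCasesInDysonsLemma} between the regimes $u = t'$ and $u = (\rho-1)/r_2 < t'$ is crucial in controlling the nonlinearity of $\vol\triangleup_2$ under these transfers, and Proposition \ref{prop:WronskianIsInvariant} guarantees the basis-independence needed to make the whole comparison meaningful.
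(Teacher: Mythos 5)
Your use of the Wronskian to reduce to separated polynomials is the same starting point as the paper's, and your $\rho=1$ observation (Proposition \ref{prop:ComparingTheIndexWithRespectTwoWeights} with $c=1/r$, followed by a single-weight Dyson statement) is sound as far as it goes. But the step you flag as ``quantitative matching of constants'' is not bookkeeping: it is a genuine gap, and the route you describe cannot close it. Your transfer applies $\vol \triangleup_2$ twice. Proposition \ref{prop:IndexOfTheWronskian} together with the weight conversion gives, at each point, roughly $\ind_{1/R}(\Wr(f), z^{(\sigma)}) \gtrsim \frac{(\rho-1)(2-(\rho-1)/r_2)}{\rho(1-(\rho-1)/r_1)}\, \vol \triangleup_2(t^{(\sigma)})$, so that $t'^{(\sigma)}$ is bounded below by a constant times $\vol \triangleup_2(t^{(\sigma)})$; substituting this into the volume-form Dyson bound for $\Wr(f)$ produces an inequality of the shape $\sum_{\sigma} \vol \triangleup_2\bigl(c\,\vol \triangleup_2(t^{(\sigma)})\bigr) \le 1 + \epsilon$, i.e.\ essentially a bound on $\sum_{\sigma} (t^{(\sigma)})^4$, which is strictly weaker than the claimed bound on $\sum_{\sigma} (t^{(\sigma)})^2/2$ and does not imply it: for instance, when $q+1 = 3$ and all $t^{(\sigma)}$ are close to $0.9$ (with $\epsilon_{q+1,r}$ small), your final inequality is comfortably satisfied, whereas the proposition asserts precisely that such a configuration is impossible — so no contradiction can be extracted and the conclusion is out of reach. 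Since in the relevant regime ($\rho-1$ small compared to $r_2 \le r_1$) the conversion factor is at most about $2$, no tracking of constants repairs this composition of the quadratic volume with itself.

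The paper avoids the problem by never applying a volume-form inequality to $\Wr(f)$. Because $\Wr(f) = \Wr_1(f) \otimes \Wr_2(f)$ with $\Wr_i(f)$ a one-variable form of degree $\rho(r_i - \rho + 1)$, one simply counts zeros: $\sum_{\sigma=0}^{q} \mult(\Wr_i(f), \pr_i(z^{(\sigma)})) \le \rho(r_i - \rho + 1)$, which bounds $\ind_{1/r}(\Wr(f), z^{(0)})$ above \emph{linearly} in the indices of $\Wr(f)$ at the remaining points; Proposition \ref{prop:IndexOfTheWronskian} is then invoked once at each point (weight $1/r$ at $z^{(1)}, \dots, z^{(q)}$, weight $b$ at $z^{(0)}$, with Proposition \ref{prop:ComparingTheIndexWithRespectTwoWeights} used only to pass from $b$ to $1/r$ at $z^{(0)}$), and the common factor $(\rho-1)\left(2 - \frac{\rho-1}{r_2}\right)$ cancels upon division. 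This yields $\sum_{\sigma} \vol \triangleup_2(t^{(\sigma)}) \le 1 + \epsilon_{q+1,r} + \frac{1}{\rho-1}(1+\epsilon_{q+1,r})$, and the residual defect is removed by replacing $f$ with powers of $f$ multiplied by suitable linear forms so that $\rho$ may be taken arbitrarily large; this same device disposes of the case $\rk(f) = 1$ and of small ranks, a step your proposal omits. Note finally that your appeal to Theorem \ref{thm:DysonsLemma} also runs against the purpose of this subsection, which is to give a two-dimensional argument independent of the higher dimensional Dyson's Lemma — but even granting it, the plan does not close for the reason above.
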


\begin{proof} Let us suppose $\rk(f) > 1$. The proof is done bounding from above and from below the index $\ind_b(\Wr(f), z^{(0)})$. 

\

\em{Upper bound.} Let us go back to the notation in Proposition \ref{prop:ComparingTheIndexWithRespectTwoWeights}. Since we have $\Wr(f) = \Wr_1(f) \otimes \Wr_2(f)$, Proposition \ref{prop:ComparingTheIndexWithRespectTwoWeights} applied to the weight $c = 1/r$ gives
$$ \ind_b(\Wr(f), z^{(0)}) \le \max\{ b_i r_i \} \ind_{1/r}(\Wr(f),z^{(0)}).$$
We are therefore led back to estimate $\ind_{1/r}(\Wr(f), z^{(0)})$. Let us set $\rho \df \rk(f)$. Using the definition of the index and the fact that $\Wr_i(f)$ is a section of $\O(\rho(r_i - \rho + 1))$ on $\P^1$, we have:
\begin{align*} 
\ind_{1/r}(\Wr(f), z^{(0)}) &= \sum_{i = 1}^2 \frac{1}{r_i} \mult(\Wr_i(f), \pr_i(z^{(0)})) \\
&\le \sum_{i = 1}^2 \frac{1}{r_i} \left( \rho(r_i - \rho + 1) - \sum_{\sigma = 1}^q \mult(\Wr_i(f), \pr_i(z^{(\sigma)}) ) \right) \\
&= \rho \left( 2 - \sum_{i = 1}^2 \frac{\rho - 1}{r_i} \right) - \sum_{\sigma = 1}^q \ind_{1/r}(\Wr(f), z^{(\sigma)}),
\end{align*}
where we used that the projection of the points $z^{(\sigma)}$ are pairwise distinct.  For every $\sigma = 1, \dots, q$, Proposition \ref{prop:IndexOfTheWronskian} (applied to $z = z^{(\sigma)}$ and $b = 1/r$) entails:
$$\ind_{1/r}(\Wr(f), z^{(\sigma)}) \ge (\rho - 1) \left( 2 - \frac{\rho - 1}{r_2}\right) \vol \triangleup_2(t^{(\sigma)}), $$
where $t^{(\sigma)} = \min \left\{ 1,  \ind_{1 / r}(f, z^{(\sigma)}) \right\}$. Summing up, the index $\ind_b(\Wr(f), z^{(0)})$ is bounded above by 
$$ \max \left\{ b_i r_i \right\}  \left[\rho \left( 2 - \sum_{i = 1}^2 \frac{\rho - 1}{r_i} +  \epsilon_{q +1, r} \right)  - (\rho - 1) \left( 2 - \frac{\rho - 1}{r_2}\right)\left( \sum_{\sigma = 1}^q  \vol \triangleup_2(t^{(\sigma)}) \right)  \right],$$
where we noted $q \epsilon_{2, r} = \epsilon_{q + 1, r}$.

\

\em{Lower bound.}  Proposition \ref{prop:IndexOfTheWronskian} applied to the point $z = z^{(0)}$ and to the weight $b$ gives:
$$
\ind_{b}(\Wr(f), y) \ge \max \left\{ b_i r_i \right\} (\rho - 1) \left( 2 - \frac{\rho - 1}{r_2}\right) \vol \triangleup_2(t^{(0)}),
$$
where $t^{(0)} \df \min \left\{ 1, \inf_b(f, z^{(0)}) / \max \{ b_i r_i \} \right\}$.

\ 

Combining the lower bound and the upper bound of $\ind_b(f, z^{(0)})$ we find:
$$ (\rho - 1) \left( 2 - \frac{\rho - 1}{r_2}\right)\left( \sum_{\sigma = 0}^q  \vol \triangleup_2(t^{(\sigma)}) \right) \le \rho \left( 2 - \frac{\rho - 1}{r_2}\right) + \rho \epsilon_{q+1, r},$$
where in the right-hand side we neglected the term $- (\rho -1)/r_1$.  Dividing by  $(\rho - 1 ) \left( 2 - \frac{\rho - 1}{r_2} \right)$ we obtain 
\begin{align*} 
\sum_{\sigma = 0}^q  \vol \triangleup_2(t^{(\sigma)}) &\le \frac{\rho}{\rho -1} + \frac{\rho}{\rho -1} \left( 2 + \frac{\rho -1}{r_2} \right)^{-1} \epsilon_{q+1, r} \\
&\le 1 + \epsilon_{q + 1, r} + \frac{1}{\rho - 1} \left( 1 + \epsilon_{q +1, r}\right),
\end{align*}
where in the second inequality we used $2 - (\rho - 1) / r_2 \ge 1$. Taking powers of $f$ and multiplying by suitable linear polynomials, one can show that $\rho$ can be taken arbitrarily large (even though it could be small compared to $r_2$) --- see \cite[{Lemma 2 and II.4}]{bombieri} for more details. This concludes the proof in the case $\rk(f) >1$.

The same type of argument shows that we can suppose $\rk(f) >1$.
\end{proof}

\begin{cor} \label{cor:DysonsLemmaWtihTwoWeights} Under the assumptions of Proposition \ref{prop:AlmostDysonsLemmaInstabilityCoefficient}, let us moreover suppose that for every $\sigma = 1, \dots, q$ we have $\ind_{1/r}(f, z^{(\sigma)}) \le 1$ and 
\begin{equation} \label{eq:CorAlmostDysonsLemmaInstabilityCoefficientHypothesisProof} 
1 - \sum_{\sigma = 1}^q \vol \triangleup_2(\ind_{1/r}(f, z^{(\sigma)})) + \epsilon_{q + 1, r} < \frac{1}{2}. \end{equation}
Then, $\ind_{b}(f, z^{(0)}) < \max \left\{ b_i r_i \right\}$ and consequently
$$
\vol \triangleup_2 \left( \frac{\ind_{b}(f, z^{(0)})}{\max \left\{ b_i r_i \right\}} \right) \le 
  1 - \sum_{\sigma = 1}^q \vol \triangleup_2(\ind_{1/r}(f, z^{(\sigma)})) + \epsilon_{q + 1, r}.
$$
\end{cor}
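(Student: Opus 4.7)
The plan is to apply Proposition \ref{prop:AlmostDysonsLemmaInstabilityCoefficient} directly and then use the hypothesis \eqref{eq:CorAlmostDysonsLemmaInstabilityCoefficientHypothesisProof} to rule out the possibility that the $\min$ defining $t^{(0)}$ is attained by $1$. The computation is short; the only substantive observation is that $\vol \triangleup_2(1) = 1/2$.

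First, since for every $\sigma = 1, \dots, q$ we are assuming $\ind_{1/r}(f, z^{(\sigma)}) \le 1$, the truncation in Proposition \ref{prop:AlmostDysonsLemmaInstabilityCoefficient} is ineffective at these points, so $t^{(\sigma)} = \ind_{1/r}(f, z^{(\sigma)})$ for $\sigma \ge 1$. Applying that proposition and isolating the term indexed by $\sigma = 0$ yields
$$ \vol \triangleup_2(t^{(0)}) \le 1 + \epsilon_{q+1, r} - \sum_{\sigma = 1}^q \vol \triangleup_2(\ind_{1/r}(f, z^{(\sigma)})), $$
where $t^{(0)} = \min\bigl\{ 1, \ind_b(f, z^{(0)})/\max\{b_i r_i\}\bigr\}$.

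Next, I would invoke hypothesis \eqref{eq:CorAlmostDysonsLemmaInstabilityCoefficientHypothesisProof}, which says precisely that the right-hand side of the displayed inequality is strictly smaller than $1/2$. On the other hand, $\vol \triangleup_2(1) = 1/2$ since $\triangleup_2(1) = \{(\zeta_1, \zeta_2) \in [0,1]^2 : \zeta_1 + \zeta_2 < 1\}$ is a right triangle of legs $1$. Hence $\vol \triangleup_2(t^{(0)}) < \vol \triangleup_2(1)$, and because $t \mapsto \vol \triangleup_2(t)$ is strictly increasing on $[0, 2]$, this forces $t^{(0)} < 1$.

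Consequently the minimum defining $t^{(0)}$ is attained by the second argument, i.e.\ $t^{(0)} = \ind_b(f, z^{(0)})/\max\{b_i r_i\}$, and the inequality $t^{(0)} < 1$ is exactly the first assertion $\ind_b(f, z^{(0)}) < \max\{b_i r_i\}$. Substituting this expression for $t^{(0)}$ into the displayed inequality above yields the second (quantitative) assertion of the corollary. There is no real obstacle; the content of the statement is entirely packaged in Proposition \ref{prop:AlmostDysonsLemmaInstabilityCoefficient}, and the role of hypothesis \eqref{eq:CorAlmostDysonsLemmaInstabilityCoefficientHypothesisProof} is simply to stay strictly below the threshold $\vol \triangleup_2(1) = 1/2$ so that the truncation by $1$ does not interfere.
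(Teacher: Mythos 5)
Your proof is correct and follows essentially the same route as the paper: both rest on applying Proposition \ref{prop:AlmostDysonsLemmaInstabilityCoefficient} and on the threshold observation $\vol \triangleup_2(1) = 1/2$, the only cosmetic difference being that the paper argues by contradiction (assuming $\ind_b(f,z^{(0)}) \ge \max\{b_i r_i\}$ forces $1/2$ on the left) while you argue directly via monotonicity of $t \mapsto \vol \triangleup_2(t)$ to conclude $t^{(0)} < 1$. No gap to report.
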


\begin{proof} Indeed, if $\ind_{b}(f, y) \ge \max \left\{ b_i r_i \right\}$, Proposition \ref{prop:AlmostDysonsLemmaInstabilityCoefficient} entails
$$ \frac{1}{2} \le 1 - \sum_{\sigma = 1}^q \vol \triangleup_2(\ind_{1/r}(f, z^{(\sigma)})) + \epsilon_{q + 1, r}, $$
which contradicts the hypothesis \eqref{eq:CorAlmostDysonsLemmaInstabilityCoefficientHypothesisProof}.
\end{proof}
\end{npar}

\begin{npar}{Wronskian as a covariant} Let us conclude with a final remark. Fix a positive integer $\rho \ge 1$. The Wronskian furnishes a ``covariant'' for the action of $\SLs_{2, K}^2$, \em{i.e.} a rational $\SLs_{2 ,K}^2$-equivariant map
$$ \Wr:  \P(\Gamma(\P, \O_\P(r))) \dashrightarrow \P(\Sym^{\rho(r_1 - \rho + 1)} K^{2 \vee} \otimes_K \Sym^{\rho(r_2 - \rho + 1)} K^{2 \vee}), $$
which is defined on the open subset $U_\rho \subset \P(\Gamma(\P, \O_\P(r)))$ of lines generated by non-zero sections $f \in \Gamma(\P, \O_\P(r))$ of tensorial rank $\ge \rho$. The Wronskian map $\Wr$ moreover induces a $\SLs_{2, K}^2$-equivariant isomorphism of line bundles  $ \Wr^\ast \O(1) \iso \O(\rho)_{\rvert U_\rho}$. In the early stages of the present work, this constituted for us one of the main evidences that the proof of Roth's was connected to Geometric Invariant Theory.

To make this intuition more precise, let us remark that for such a morphism it is a general fact on GIT that one has
$$  \mu_{\O(1)}(\lambda, [f]) \ge \frac{1}{\rho} \mu_{\O(1)}(\lambda, [\Wr(f)]),$$
for every global section $f$ of $\O_\P(r)$ of tensorial rank $\ge \rho$ and every one-parameter subgroup $\lambda : \Gm \to \SLs_{2, K}^2$. If $y_\lambda$ is the instability point associated to the choice of admissible bases for $\lambda$, Proposition \ref{prop:ComparingInstabilityCoefficientAndIndex} leads to the lower bound
$$ \ind_{m_\lambda}(\Wr(f), y_\lambda) \ge \rho  \ind_{m_\lambda}(f, y_\lambda) - \frac{\rho(\rho- 1)}{2} (m_{\lambda, 1} + m_{\lambda, 2} ) . $$
As we explained before, we want to apply Theorem \ref{thm:DysonsLemmaTwoWeights} to the point $y = y_\lambda$ and the weight $b = m_\lambda$. Thus this is the case we are interested in. Unfortunately, this lower bound is not sharp enough to deduce the semi-stability of the point $P_r$ (for $\mu_2(t_x)$ small enough) and we had to use the lower bound given by Proposition \ref{prop:IndexOfTheWronskian} in the proof of Proposition \ref{prop:AlmostDysonsLemmaInstabilityCoefficient}.
\end{npar}

\bibliography{biblio}

\begin{thebibliography}{10}

\bibitem{Abdesselam}
A.~Abdesselam and J.~Chipalkatti.
\newblock On the {W}ronskian combinants of binary forms.
\newblock \em{ J. Pure Appl. Algebra}, 210(1):43--61, 2007.

\bibitem{Atiyah}
M.~Atiyah, R.~Bott, and V.~K. Patodi.
\newblock On the heat equation and the index theorem.
\newblock \em{ Invent. Math.}, 19:279--330, 1973.

\bibitem{bombieri}
E.~Bombieri.
\newblock On the {T}hue-{S}iegel-{D}yson theorem.
\newblock \em{ Acta Math.}, 148:255--296, 1982.

\bibitem{bombieri-gubler}
E.~Bombieri and W.~Gubler.
\newblock \em{ Heights in {D}iophantine geometry}, volume~4 of \em{ New
  Mathematical Monographs}.
\newblock Cambridge University Press, Cambridge, 2006.

\bibitem{borel}
A.~Borel.
\newblock \em{ Linear algebraic groups}, volume 126 of \em{ Graduate Texts in
  Mathematics}.
\newblock Springer-Verlag, New York, second edition, 1991.

\bibitem{bost94}
J.-B. Bost.
\newblock Semi-stability and heights of cycles.
\newblock \em{ Invent. Math.}, 118(2):223--253, 1994.

\bibitem{BostBourbaki}
J.-B. Bost.
\newblock P\'eriodes et isogenies des vari\'et\'es ab\'eliennes sur les corps
  de nombres (d'apr\`es {D}. {M}asser et {G}. {W}\"ustholz).
\newblock \em{ Ast\'erisque}, (237):Exp.\ No.\ 795, 4, 115--161, 1996.
\newblock S{\'e}minaire Bourbaki, Vol. 1994/95.

\bibitem{burnol92}
J.-F. Burnol.
\newblock Remarques sur la stabilit\'e en arithm\'etique.
\newblock \em{ Internat. Math. Res. Notices}, (6):117--127, 1992.

\bibitem{chen_ss}
H.~Chen.
\newblock Maximal slope of tensor product of {H}ermitian vector bundles.
\newblock \em{ J. Algebraic Geom.}, 18(3):575--603, 2009.

\bibitem{conrad}
B.~Conrad.
\newblock Reductive group schemes.
\newblock Notes for ``SGA3 summer school'' held in Luminy (CIRM) in 2011.
  Available at : \url{http://math.stanford.edu/~conrad/}.

\bibitem{dyson}
F.~J. Dyson.
\newblock The approximation to algebraic numbers by rationals.
\newblock \em{ Acta Math.}, 79:225--240, 1947.

\bibitem{esnault-viewheg}
H.~Esnault and E.~Viehweg.
\newblock Dyson's lemma for polynomials in several variables (and the theorem
  of {R}oth).
\newblock \em{ Invent. Math.}, 78(3):445--490, 1984.

\bibitem{faltings_icm}
G.~Faltings.
\newblock Mumford-{S}tabilit\"at in der algebraischen {G}eometrie.
\newblock In \em{ Proceedings of the {I}nternational {C}ongress of
  {M}athematicians, {V}ol.\ 1, 2 ({Z}\"urich, 1994)}, pages 648--655, Basel,
  1995. Birkh\"auser.

\bibitem{faltings-wustholz}
G.~Faltings and G.~W{\"u}stholz.
\newblock Diophantine approximations on projective spaces.
\newblock \em{ Invent. Math.}, 116(1-3):109--138, 1994.

\bibitem{gasbarri00}
C.~Gasbarri.
\newblock Heights and geometric invariant theory.
\newblock \em{ Forum Math.}, 12(2):135--153, 2000.

\bibitem{gaudron}
{\'E}.~Gaudron.
\newblock Pentes des fibr\'es vectoriels ad\'eliques sur un corps global.
\newblock \em{ Rend. Semin. Mat. Univ. Padova}, 119:21--95, 2008.

\bibitem{sga3}
P.~Gille and P.~Polo, editors.
\newblock \em{ Sch\'emas en groupes ({SGA} 3). {T}ome {III}. {S}tructure des
  sch\'emas en groupes r\'eductifs}.
\newblock Documents Math\'ematiques (Paris) [Mathematical Documents (Paris)],
  8. Soci\'et\'e Math\'ematique de France, Paris, 2011.
\newblock S{\'e}minaire de G{\'e}om{\'e}trie Alg{\'e}brique du Bois Marie
  1962--64. [Algebraic Geometry Seminar of Bois Marie 1962--64], A seminar
  directed by M. Demazure and A. Grothendieck with the collaboration of M.
  Artin, J.-E. Bertin, P. Gabriel, M. Raynaud and J-P. Serre, Revised and
  annotated edition of the 1970 French original.

\bibitem{GrothendieckProduitTensoriels}
A.~Grothendieck.
\newblock R\'esum\'e de la th\'eorie m\'etrique des produits tensoriels
  topologiques.
\newblock \em{ Bol. Soc. Mat. S\~ao Paulo}, 8:1--79, 1953.

\bibitem{kempf}
G.~R. Kempf.
\newblock Instability in invariant theory.
\newblock \em{ Ann. of Math. (2)}, 108(2):299--316, 1978.

\bibitem{maculan_these}
M.~Maculan.
\newblock \em{ Applications de la th\'eorie g\'eom\'etrique des invariants \`a
  la g\'eom\'etrie diophantienne}.
\newblock PhD thesis, Universit\'e Paris-Sud XI, 2012.

\bibitem{milman}
V.~D. Milman.
\newblock The heritage of {P}.\ {L}\'evy in geometrical functional analysis.
\newblock \em{ Ast\'erisque}, (157-158):273--301, 1988.
\newblock Colloque Paul L{\'e}vy sur les Processus Stochastiques (Palaiseau,
  1987).

\bibitem{git}
D.~Mumford, J.~Fogarty, and F.~Kirwan.
\newblock \em{ Geometric invariant theory}, volume~34 of \em{ Ergebnisse der
  Mathematik und ihrer Grenzgebiete (2) [Results in Mathematics and Related
  Areas (2)]}.
\newblock Springer-Verlag, Berlin, third edition, 1994.

\bibitem{MumfordSuominen}
D.~Mumford and K.~Suominen.
\newblock Introduction to the theory of moduli.
\newblock In \em{ Algebraic geometry, {O}slo 1970 ({P}roc. {F}ifth {N}ordic
  {S}ummer-{S}chool in {M}ath.)}, pages 171--222. Wolters-Noordhoff, Groningen,
  1972.

\bibitem{nakamaye_id}
M.~Nakamaye.
\newblock Intersection theory and {D}iophantine approximation.
\newblock \em{ J. Algebraic Geom.}, 8(1):135--146, 1999.

\bibitem{Osgood}
C.~F. Osgood.
\newblock Sometimes effective {T}hue-{S}iegel-{R}oth-{S}chmidt-{N}evanlinna
  bounds, or better.
\newblock \em{ J. Number Theory}, 21(3):347--389, 1985.

\bibitem{rousseau}
G.~Rousseau.
\newblock Instabilit\'e dans les espaces vectoriels.
\newblock In \em{ Algebraic surfaces ({O}rsay, 1976--78)}, volume 868 of \em{
  Lecture Notes in Math.}, pages 263--276. Springer, Berlin-New York, 1981.

\bibitem{schwarz}
G.~W. Schwarz.
\newblock Quotients of compact and complex reductive groups.
\newblock In \em{ Th\'eorie des invariants \& G\'eom\'etrie des vari\'et\'es
  quotients}, volume~61 of \em{ Travaux en cours}. Hermann et cie., 2000.

\bibitem{seshadri77}
C.~S. Seshadri.
\newblock Geometric reductivity over arbitrary base.
\newblock \em{ Advances in Math.}, 26(3):225--274, 1977.

\bibitem{Steinmetz}
N.~Steinmetz.
\newblock Eine {V}erallgemeinerung des zweiten {N}evanlinnaschen {H}auptsatzes.
\newblock \em{ J. Reine Angew. Math.}, 368:134--141, 1986.

\bibitem{Totaro}
B.~Totaro.
\newblock Tensor products in {$p$}-adic {H}odge theory.
\newblock \em{ Duke Math. J.}, 83(1):79--104, 1996.

\bibitem{ViolaDyson}
C.~Viola.
\newblock On {D}yson's lemma.
\newblock \em{ Ann. Scuola Norm. Sup. Pisa Cl. Sci. (4)}, 12(1):105--135, 1985.

\bibitem{VojtaDysonCurves}
P.~Vojta.
\newblock Dyson's lemma for products of two curves of arbitrary genus.
\newblock \em{ Invent. Math.}, 98(1):107--113, 1989.

\bibitem{vojta_movingtargets}
P.~Vojta.
\newblock Roth's theorem with moving targets.
\newblock \em{ Internat. Math. Res. Notices}, (3):109--114, 1996.

\bibitem{Weyl}
H.~Weyl.
\newblock \em{ The {C}lassical {G}roups. {T}heir {I}nvariants and
  {R}epresentations}.
\newblock Princeton University Press, Princeton, N.J., 1939.

\bibitem{zhang94}
S.~Zhang.
\newblock Geometric reductivity at archimedean places.
\newblock \em{ Internat. Math. Res. Notices}, (10):425--433, 1994.

\bibitem{ZhangSemistableVarieties}
S.~Zhang.
\newblock Heights and reductions of semi-stable varieties.
\newblock \em{ Compositio Math.}, 104(1):77--105, 1996.

\end{thebibliography}
\bibliographystyle{abbrv}

\small

\ \\

\textsc{Marco Maculan}, Institut Math\'ematique de Jussieu, 4 place Jussieu, 75005 Paris, \em{e-mail:} \url{marco.maculan@imj-prg.fr}

\end{document}